\newtheorem{proposition}{Proposition}[section]
\newtheorem{lemma}[proposition]{Lemma}
\newtheorem{corollary}[proposition]{Corollary}
\newtheorem{theorem}{Theorem}[section]
\theoremstyle{definition}
\newtheorem{definition}{Definition}[section]
\newtheorem{remark}{Remark}[section]
\numberwithin{equation}{section}
\newcommand{\SecondFund}{k}
\newcommand{\Ric}{Ric}
\newcommand{\Mfour}{\mathbf{M}}
\newcommand{\gfour}{\mathbf{g}}
\newcommand{\Chfour}{\mathbf{\Gamma}}
\newcommand{\kfour}{\mathbf{k}}
\newcommand{\Ricfour}{\mathbf{Ric}}
\newcommand{\Riemfour}{\mathbf{Riem}}
\newcommand{\Rfour}{\mathbf{R}}
\newcommand{\Tfour}{\mathbf{T}}
\newcommand{\Nml}{\hat{\mathbf{N}}}
\newcommand{\Dfour}{\mathbf{D}}
\newcommand{\dlap}{\Theta}
\newcommand{\grenormalized}{h}
\newcommand{\christrenormalizedarg}[3]{{^{(h)} \mkern-2mu \Gamma_{#1 \ #3}^{\ #2}}}
\newcommand{\Ricrenormalized}{{^{(h)} \mkern-4mu Ric}}
\newcommand{\Ricrenormalizedarg}[2]{{^{(h)} \mkern-4mu Ric_{\ #2}^{#1}}}
\newcommand{\currenormalized}{{^{(h)} \mkern-4mu R}}
\newcommand{\LinSecondFund}{\upkappa}
\newcommand{\tracefreeLinSecondFund}{\hat{\upkappa}}
\newcommand{\SFRenormalized}{\varphi}
\newcommand{\LapseRenormalized}{\upnu}
\newcommand{\gKasner}{\mathring{g}}
\newcommand{\ginverseKasner}{\mathring{g}^{-1}}
\newcommand{\SecondFundKasner}{\mathring{k}}
\newcommand{\tracefreeSecondFundKasner}{\hat{\mathring{k}}}
\newcommand{\sfKasner}{\mathring{\phi}}
\newcommand{\gdata}{{^{0} \mkern-2mu g}}
\newcommand{\gdatadarg}[2]{{^{0} \mkern-2mu g_{#1 #2}}}
\newcommand{\SecondFunddata}{{^{0} \mkern-2mu k}}
\newcommand{\SecondFunddatadarg}[2]{{^{0} \mkern-2mu k_{#1 #2}}}
\newcommand{\SecondFundupdatadarg}[2]{{^{0} \mkern-2mu k_{\ #2}^{#1}}}
\newcommand{\firstsfdata}{{^{0} \mkern-3mu \phi}}
\newcommand{\secondsfdata}{{^{0} \mkern-2mu \psi}}
\newcommand{\squaresecondsfdata}{{^{0} \mkern-2mu \psi^2}}
\newcommand{\curdata}{{^{0} \mkern-3mu R}}
\newcommand{\nabladataarg}[1]{{^{0} \mkern-1.5mu \nabla_{\mkern-2mu#1}}}
\newcommand{\highnorm}[1]{\mathscr{S}_{(Frame);#1}}
\newcommand{\parabolichighnorm}[1]{\mathscr{S}_{(Parabolic \ Frame);#1}}
\newcommand{\smallparameter}{\uptheta}
\newcommand{\tracefreeparameter}{\upeta}
\newcommand{\Euc}{E}
\newcommand{\ID}{I}
\begin{document}


\title{A Regime of Linear Stability for the Einstein-Scalar Field System with Applications to Nonlinear Big Bang Formation}
\author{Igor Rodnianski$^{*}$
\and 
Jared Speck$^{**}$}

\thanks{$^*$Princeton University, Department of Mathematics, Fine Hall, Washington Road, Princeton, NJ 08544-1000, USA. \texttt{irod@math.princeton.edu}}

\thanks{$^{**}$Massachusetts Institute of Technology, Department of Mathematics, 77 Massachusetts Ave, Room 2-265, Cambridge, MA 02139-4307, USA. \texttt{jspeck@math.mit.edu}}

\thanks{$^{*}$ IR gratefully acknowledges support from 
NSF grant \# DMS-1001500.}	

\thanks{$^{**}$ JS gratefully acknowledges support from NSF grant \# DMS-1162211,
from NSF CAREER grant \# DMS-1454419,
from a Sloan Research Fellowship provided by the Alfred P. Sloan foundation,
and from a Solomon Buchsbaum grant administered by the Massachusetts Institute of Technology.
}	

\begin{abstract}
	We linearize the Einstein-scalar field equations,
	expressed relative to constant mean curvature (CMC)-transported spatial coordinates gauge,
	around members of the well-known family of Kasner solutions on $(0,\infty) \times \mathbb{T}^3$.
	The Kasner solutions model a spatially uniform scalar field evolving 
	in a (typically) spatially anisotropic spacetime that expands towards the future
	and that has a ``Big Bang'' singularity at $\lbrace t = 0 \rbrace$.
	We place initial data for the linearized system along $\lbrace t = 1 \rbrace \simeq \mathbb{T}^3$
	and study the linear solution's behavior in the collapsing direction $t \downarrow 0$.
	Our first main result is the proof of an
	\emph{approximate $L^2$ monotonicity identity}
	for the linear solutions.
	Using it, we prove a linear stability result that holds
	when the background Kasner solution is
	sufficiently close to the Friedmann-Lema\^{\i}tre-Robertson-Walker (FLRW) solution.
	In particular, we show that 
	as $t \downarrow 0$,
	various time-rescaled components of the linear solution converge to
	regular functions defined along $\lbrace t = 0 \rbrace$.
	In addition, we motivate the preferred direction of the approximate monotonicity
	by showing that the CMC-transported spatial coordinates gauge can be viewed as
	a limiting version of a family of parabolic gauges for the lapse variable;
	an approximate monotonicity identity and corresponding linear stability results
	also hold in the parabolic gauges, 
	but the corresponding parabolic PDEs are locally well-posed only in the direction
	$t \downarrow 0$.
	Finally, based on the linear stability results,
	we outline a proof of the following result,
	whose complete proof will appear elsewhere:
	the FLRW solution is globally nonlinearly stable in the collapsing direction 
	$t \downarrow 0$ under small perturbations of its data at $\lbrace t = 1 \rbrace$.
\bigskip

\noindent \textbf{Keywords}: 
	BKL conjectures,
	constant mean curvature, 
	FLRW,
	Kasner solution,
	monotonicity,
	parabolic gauge, 
	quiescent cosmology,
	spatial harmonic coordinates, 
	stable blowup,
	strong cosmic censorship,
	transported spatial coordinates
\bigskip

\noindent \textbf{Mathematics Subject Classification (2010)} Primary: 83C75; Secondary: 35A20, 35Q76, 83C05, 83F05 

\end{abstract}

\maketitle

\centerline{\today}

\setcounter{tocdepth}{1}
\pagenumbering{roman} \tableofcontents \newpage \pagenumbering{arabic}

\section{Introduction} \label{S:INTRO}
This is the first of two papers in which we 
derive a new approximate monotonicity identity
for two Einstein-matter systems and use it to prove
linear and nonlinear stability results
for cosmological\footnote{By ``cosmological,'' we mean that 
the spacetime manifold $\Mfour$ has compact Cauchy hypersurfaces and that the Ricci curvature of 
the spacetime metric $\gfour_{\mu \nu}$ verifies $\Ricfour_{\alpha \beta}\mathbf{X}^{\alpha}\mathbf{X}^{\beta} \geq 0$ for all timelike vectors $\mathbf{X}^{\mu}$. 
For the Einstein-scalar field system,
this Ricci curvature condition is always verified by solutions 
because Einstein's equations imply that 
$
\Ricfour_{\mu \nu} 
= \Tfour_{\mu \nu} 
	- 
	\frac{1}{2} 
	(\gfour^{-1})^{\alpha \beta} \Tfour_{\alpha \beta} \gfour_{\mu \nu} 
$ 
and because the energy-momentum tensor $\Tfour_{\mu \nu}$ 
of a scalar field verifies the strong energy condition.} 
solutions featuring Big Bang singularities. 
By a ``Big Bang'' singularity in a spacetime, 
we roughly mean a spacelike hypersurface such that the solution
exhibits curvature blowup along the entire hypersurface.
In particular, our nonlinear result constitutes a proof of stable 
curvature blowup along a spacelike hypersurface
for an open set of solutions. 
We now briefly summarize the nonlinear result,
which is proved\footnote{More precisely,
Theorem~\ref{T:FLRWROUGHFIRSTVERSION} is a special case
of the results of \cite{iRjS2014b}:
in \cite{iRjS2014b}
we prove an analog of Theorem~\ref{T:FLRWROUGHFIRSTVERSION}
for the stiff fluid matter model. Theorem~\ref{T:FLRWROUGHFIRSTVERSION}
follows as a special case in which the fluid's vorticity is zero; see Subsect.\ \ref{SS:EINSTEINSCALARFIELD} for further
clarification of this point.}  
in our second paper \cite{iRjS2014b};
see Theorem~\ref{T:STABILITYOFFLRW} for a precise statement and
\cite{iRjS2014b} for an even more detailed statement.

\begin{theorem}[\textbf{Stable Big Bang Formation for near-FLRW solutions} (Rough version)]
	\label{T:FLRWROUGHFIRSTVERSION}
	Consider initial data 
	for the Einstein-scalar field system given 
	on the manifold\footnote{Throughout, 
	$\mathbb{T}^n := [-\pi,\pi]^n$
	(with the ends identified) is an $n$-dimensional torus.} 
	$\Sigma_1 = \mathbb{T}^3$,
	which we identify 
	with a Cauchy hypersurface of constant time $t=1$,
	i.e., $\Sigma_1 = \lbrace 1 \rbrace \times \mathbb{T}^3$.
	If the data are close in a suitable Sobolev norm 
	to the data of the 
	Friedmann-Lema\^{\i}tre-Robertson-Walker (FLRW)
	solution (see Subsect.\ \ref{SS:OVERVIEWNONLINEAR}),
	then there exists a system of constant mean curvature-transported spatial coordinates
	$(t,x^1,x^2,x^3)$ such that the perturbed solution exists for 
	$(t,x) \in (0,1] \times \mathbb{T}^3$.
Like the FLRW solution,
the perturbed solution's
Kretschmann scalar $\Riemfour^{\alpha \beta \gamma \delta} \Riemfour_{\alpha \beta \gamma \delta}$
blows up like $t^{-4}$ as $t \downarrow 0$.
Moreover, the solution exhibits \underline{asymptotically velocity term dominated} 
(AVTD) behavior, which means that near $t=0$, the dynamics are
dominated by time derivative terms 
(that is, the spatial derivative terms in the equations become negligible), and
certain $t$-rescaled components of the solution converge in a monotonic fashion 
to regular functions of 
$x$ as $t \downarrow 0$.
In particular, as $t \downarrow 0$, the solution
is asymptotic to a solution of the VTD equations,
which are obtained by setting all spatial derivative terms equal to $0$
in the Einstein-scalar field equations
(expressed relative to the CMC-transported spatial coordinates gauge).
\end{theorem}
See Subsect.\ \ref{SS:OVERVIEWNONLINEAR} 
for further discussion of Theorem~\ref{T:FLRWROUGHFIRSTVERSION}, Subsect.\ \ref{SS:ROUGHSTATEMENTLINEARRESULTS} for 
a summary of our linear results,
and Subsect.\ \ref{SS:FIVESTEPS}
for a discussion of the relationship between the various results.

In addition to deriving stability results,
we also identify a new one-parameter family of parabolic gauges for the lapse function,
which, like the well-known constant mean curvature (CMC)-transported-spatial coordinates gauge, 
leads to a formulation of the equations exhibiting the key structural features
that allow us to prove the main results.
For our purposes here, none of the gauges that we employ are manifestly superior. 
The parabolic lapse gauges are more general/flexible in that one does not need to
construct\footnote{For a general spacetime, 
such a CMC hypersurface does not exist.
However, CMC hypersurfaces do exist for the spacetime solutions studied here; see \cite{iRjS2014b}
for a proof of this fact.} 
a CMC hypersurface to employ them. However, 
in the present context,
they are a bit more unwieldy to use.
For this reason, most of our results here rely on CMC foliations of spacetime.
However, it is conceivable that the parabolic gauges will be useful in 
future studies of cosmological spacetimes.
For this reason, 
in Sect.\ \ref{S:PARABOLICMONOTONICITY},
we provide these gauges in detail 
and re-derive our linear results relative to them.
We stress that all of the gauges under consideration 
lead to a formulation of the equations exhibiting 
infinite speed\footnote{The fundamental (gauge-independent) dynamic variables
in the Einstein-scalar field equations propagate at a finite speed. 
It is only our description of them that involves an infinite speed.}  
of propagation. The infinite speed is fundamental for our analysis 
since our approach is based on \emph{synchronizing}
the singularity across a spacelike hypersurface 
of constant time;
in a purely hyperbolic gauge involving a time coordinate $t$, 
it is not generally possible 
to ensure that a blowup-hypersurface (should one exist)
is of the form $\lbrace t = \mbox{\upshape const} \rbrace$.

\subsection{The Einstein-scalar field equations}
\label{SS:EINSTEINSCALARFIELD}
In the present article, we restrict our attention to the study of the Einstein-scalar field equations
\begin{subequations}
\begin{align}
	\Ricfour_{\mu \nu} - \frac{1}{2}\Rfour \gfour_{\mu \nu} & = \Tfour_{\mu \nu},
		\label{E:EINSTEINSF} \\
	(\gfour^{-1})^{\alpha \beta} \Dfour_{\alpha} \Dfour_{\beta} \phi & = 0  \label{E:WAVEMODEL}
\end{align}
\end{subequations}
with data given on the Cauchy hypersurface $\Sigma_1 = \lbrace 1 \rbrace \times \mathbb{T}^3$. 
Above and throughout,  
$\Ricfour$ denotes the Ricci tensor of 
the spacetime\footnote{By ``spacetime,'' we mean a four-dimensional time-orientable
manifold $\Mfour$ equipped with a Lorentzian metric $\gfour$ of signature $(-,+,+,+)$.} 
metric $\gfour$, 
$\Rfour = (\gfour^{-1})^{\alpha \beta}\Ricfour_{\alpha \beta}$ 
denotes the scalar curvature of $\gfour$, 
$\Dfour$ denotes the Levi--Civita connection of $\gfour$, 
and $\Tfour$ denotes the energy-momentum tensor of the scalar field $\phi$:
\begin{align} \label{E:EMTSCALARFIELD}
	\Tfour_{\mu \nu} = \Dfour_{\mu}\phi \Dfour_{\nu} \phi 
		- \frac{1}{2} \gfour_{\mu \nu} (\gfour^{-1})^{\alpha \beta} \Dfour_{\alpha} \phi \Dfour_{\beta} \phi.
\end{align}
The scalar field is a simple matter model that has
been well-studied in mathematical general relativity in the context 
of asymptotically flat spacetimes; 
see \cites{dC1991,dC1999b,dC1993,dC1987,dC1986b,dC1986c}.
In our complementary article \cite{iRjS2014b},
we study the Einstein-stiff fluid system,
where a stiff fluid has sound speed equal to unity (that is, equal to the speed of light).
The stiff fluid model is more general in the sense that
it reduces\footnote{For scalar fields with a timelike gradient and under an exactness condition tied to the fluid velocity and enthalpy per particle; see \cite{iRjS2013} for further discussion on this point.} 
to the scalar field model when the fluid's vorticity vanishes.
Due to our gauge choices (which we explain in detail below),
one should identify the ``data hypersurface''
$\Sigma_1$ with a surface of constant time $1$. 
We will study the behavior of solutions as $t \downarrow 0$.
The singular behavior that we will uncover occurs along $\Sigma_0$,
which will be identified with a surface of constant time $0$.

Although our results apply when the initial Cauchy hypersurface is $\mathbb{T}^3$,
they can easily be generalized to the case of n spatial dimensions, that is, to
the case of $\mathbb{T}^n$ for $n \geq 1$. We anticipate that similar results might also hold
for some other matter models with special properties and, in the case of very
high spatial dimensions, for the Einstein-vacuum equations; see the discussion
in Subsect.\ \ref{SS:OTHERMATTERMODELSECT}.

\subsection{Paper outline} 
\label{SS:PAPEROUTLINE}

\begin{itemize}
	\item In the remainder of Sect.\ \ref{S:INTRO},
		we summarize our linear and nonlinear stability results,
		discuss their relationship,
		and provide context by discussing prior work.
	\item In Sect.\ \ref{S:NOTATION}, we introduce some notation and conventions that we use throughout the article.
	\item In Sect.\ \ref{S:CMCFORMULATIONOFEINSTEIN}, we provide the Einstein-scalar field
		equations in CMC-transported spatial coordinates. We then linearize
		the equations around members of the (generalized) Kasner family.
	\item In Sect.\ \ref{S:NORMSANDENERGIES}, we provide the norms and energies that we use 
		in our analysis of linear solutions.
	\item In Sect.\ \ref{S:MONOTONICITYIDENTITIES}, we prove
		an approximate monotonicity identity for linear solutions.
		The identity lies at the heart of all of our results.
	\item In Sect.\ \ref{S:ENERGYESTIMATES},
		we use the approximate monotonicity identity to derive
		mildly singular energy estimates for linear solutions
		in the case that the Kasner background is nearly spatially isotropic.
	\item In Sect.\ \ref{S:LINEARSTABILITY},
		we use the mildly singular energy estimates to prove
		a linear stability result for nearly spatially isotropic 
		Kasner backgrounds.
	\item In Sect.\ \ref{S:NONLINEARPROBLEM}, we 
		use the results of the previous sections to
		outline a proof of the nonlinear stability of the FLRW solution 
		near its Big Bang singularity; complete details are located in
		\cite{iRjS2014b}.
	\item In Sect.\ \ref{S:PARABOLICMONOTONICITY},
		we introduce a family of parabolic lapse gauges 
		and re-derive our linear results in these gauges.
\end{itemize}

\subsection{The FLRW solution and 
preliminary context for the results}
\label{SS:OVERVIEWNONLINEAR}
A quintessential example of a Big Bang spacetime 
is the FLRW solution
(referred to in Theorem~\ref{T:FLRWROUGHFIRSTVERSION})
to the Einstein-scalar field system,
which plays a prominent role in cosmology in view of
its spatially isotropic nature. 
It can be expressed in the well-known form
\begin{align} \label{E:FLRWBACKGROUNDSOLUTION}
	\gfour_{FLRW} & = - dt^2 + g_{FLRW}, 
	\qquad g_{FLRW} = t^{2/3} \sum_{i=1}^3 (dx^i)^2, \qquad \phi_{FLRW} = \sqrt{\frac{2}{3}} \ln t, 
	\qquad (t,x) \in (0,\infty) \times \mathbb{T}^3.
\end{align}	
One can compute that the Kretschmann scalar of $\gfour_{FLRW}$, 
namely $\Riemfour^{\alpha \beta \gamma \delta} \Riemfour_{\alpha \beta \gamma \delta}$,
blows up like $t^{-4}$ as $t \downarrow 0$. That is, the FLRW solution 
has a Big Bang singularity at $t=0$.

Theorem~\ref{T:FLRWROUGHFIRSTVERSION}  
shows that like the FLRW solution,
perturbed solutions also exhibit
the same kind of curvature blowup.
We provide the complete proof of Theorem~\ref{T:FLRWROUGHFIRSTVERSION} 
in the companion article \cite{iRjS2014b} 
(for the Einstein-stiff fluid system).
The proof is part of a ``five-step program'' 
encompassing the results of both papers,
which we summarize in Subsect.\ \ref{SS:FIVESTEPS}.
Some key steps in the program 
are of independent interest and hold in a more
general context than the form in which they are used in the proof 
of Theorem~\ref{T:FLRWROUGHFIRSTVERSION}. In this article, we identify 
such a more general context
and give rigorous proofs of those key steps that remain valid.
In particular, here
we study a large family of linearized versions of the Einstein-scalar field
equations, where the backgrounds around which we linearize 
have been well-studied in the mathematical general relativity literature. 
For each linearized system, 
we derive the aforementioned \emph{approximate monotonicity identity}
for the linear solutions. Specifically, we linearize the equations
around members of the family of \emph{generalized Kasner solutions}, 
which are explicit spatially homogeneous (that is, non-$x$-dependent) solutions
whose unique spatially isotropic member is the FLRW solution.
For generalized Kasner solutions, 
the spacetime metric is of the form 
$\gfour = - dt \otimes dt + \sum_{I=1}^3 t^{2 q_I} \omega^I \otimes \omega^I$,
where the $q_I$ are constants verifying certain constraints
and the $\omega^I:= \omega_a^I dx^a$
are a set of three $\gfour$-orthogonal one-forms on $\mathbb{T}^3$.
In particular, relative to standard coordinates $\lbrace x^a \rbrace_{a=1,2,3}$ on $\mathbb{T}^3$,
we have $\omega_a^I = \omega_a^I dx^a$, where the $\omega_a^I$ are constants
and $\mbox{\upshape det} (\omega_a^I) \neq 0$.
See Subsect.\ \ref{SS:KASNER} for more details regarding these 
generalized Kasner solutions.
Here we only note that for brevity, 
we will often refer to these (nonlinear) Einstein-scalar field solutions as 
Kasner solutions. This breaks with the traditional convention,
which reserves the label ``Kasner solution''
for Einstein-vacuum solutions.
A fundamental aspect of the Kasner backgrounds 
(around which we linearize) is that, like the FLRW solution, 
they have Big Bang singularities at $t=0$ (aside from some exceptional cases).
In addition to deriving the approximate monotonicity identity,
we also use it to prove a linear stability result for 
a subset of the Kasner backgrounds,
specifically those that are nearly spatially isotropic
(that is, for near-FLRW Kasner backgrounds).
Before further describing the five-step program
and how our linear/nonlinear stability results fit into it,
we first provide context that clarifies the significance of
Theorem~\ref{T:FLRWROUGHFIRSTVERSION}.

\begin{itemize}
	\item Although the data we consider
		fall under the scope of the
		Hawking-Penrose ``singularity'' theorems\footnote{More precisely,
		see \cite{rW1984}*{Theorem 9.5.1} for a version
		of ``Hawking's theorem'' that can be applied to the initial data considered in Theorem~\ref{T:FLRWROUGHFIRSTVERSION}.}
	\cites{sH1967,rP1965},
	Theorem~\ref{T:FLRWROUGHFIRSTVERSION} goes beyond
	the soft conclusion of geodesic incompleteness
	provided by those theorems in that it
	shows that the incompleteness is due to curvature
	blowup along the hypersurface $\lbrace t = 0 \rbrace$.
	As such, the solutions of Theorem~\ref{T:FLRWROUGHFIRSTVERSION}
	exhibit Strong Cosmic Censorship-type behavior,
	by which we mean that the solution variables cannot be
	extended as $C^2$ tensorfields
	beyond the boundary portion
	$\lbrace t = 0 \rbrace$ of the maximal development
	of the data. This is the first result of this type
	for Einstein's equations that does not involve 
	symmetry or analyticity assumptions on the data.
	\item The AVTD behavior proved in Theorem~\ref{T:FLRWROUGHFIRSTVERSION},
		though predicted via heuristic arguments for the scalar field
		model in \cite{vBiK1972} and for the stiff fluid in \cite{jB1978},
		had not previously been shown
		in solutions without symmetry, except under the assumption of
		spatial analyticity \cite{lAaR2001}; see Subsect.\ \ref{SS:PREVIOUSWORK}
		for further discussion on these works.
		Moreover, as we describe below, 
		the solutions from Theorem~\ref{T:FLRWROUGHFIRSTVERSION} are such that
		at each fixed spatial point $x$, its
		asymptotic behavior is Kasner-like,
		by which we mean that its limiting behavior is well-described
		by fields that are related to members of the aforementioned Kasner family.
		The belief that the ``end states'' should, at each fixed $x$, be Kasner-like
		was part of the heuristics given in \cites{vBiK1972,jB1978}.
		More precisely, the authors in \cite{vBiK1972}
		\emph{assumed} that all spatial derivative terms in the evolution equations
		become negligible near the singularity $\lbrace t=0 \rbrace$.
		The authors then argued that
		the spacetime metric should asymptotically behave like
		$- dt \otimes dt + \sum_{I=1}^3 t^{2 q_I(x)} \omega^I(x) \otimes \omega^I(x)$
		near the singularity,
		that is, like Kasner solutions in which the exponents
		and one-forms are $x$-dependent.
		See just below Theorem~\ref{T:ROUGHVERSIONLINEARSTABILITY} for further 
		comments on the asymptotic behavior
		of solutions to the linearized equations.
	\item The monotonic behavior of the solution as $t \downarrow 0$
		was also predicted in \cites{vBiK1972,jB1978} and 
		in fact is accounted for by the authors' posited 
		asymptotic form of the metric
		$- dt^2 + \sum_{I=1}^3 t^{2 q_I(x)} \omega^I(x) \otimes \omega^I(x)$.
			This \emph{existence} of an interesting set of
			\emph{spatially analytic} solutions
			to the Einstein-scalar field and Einstein-stiff fluid systems
			exhibiting this kind of monotonic asymptotic
			behavior was rigorously shown
			in the aforementioned work \cite{lAaR2001}. 
			Like the heuristic arguments given in \cites{vBiK1972,jB1978}
			and the rigorous results of \cite{lAaR2001},
			our proof of the monotonic behavior
			(via the approximate monotonicity identity and its consequences)
			relies on the particular structure of the scalar field and stiff fluid 
			matter models; see Subsect.\ \ref{SS:PREVIOUSWORK}
			for further discussion on this point.
\end{itemize}

\subsection{Initial value problem formulation of the Einstein equations and gauges}
\label{SS:INITIALVALUEANDCMC}
Before further discussing our results, we
first discuss some basic issues concerning the initial value problem for the
(nonlinear) Einstein-scalar field system \eqref{E:EINSTEINSF}-\eqref{E:WAVEMODEL}
and our gauge choices. The fundamental results \cite{CB1952} and \cite{cBgR1969},
which are respectively by Choquet--Bruhat 
and Choquet--Bruhat + Geroch,
showed that the system \eqref{E:EINSTEINSF}-\eqref{E:WAVEMODEL}
has an initial value problem formulation in which sufficiently regular data
give rise to a unique maximal globally hyperbolic development.\footnote{Roughly, this is the largest possible
classical solution to the Einstein-scalar field equations that is uniquely determined by the data.} 
The rest of our discussion here is adapted to the setup of the present article, 
where the initial Cauchy hypersurface is $\mathbb{T}^3$.
The ``geometric data'' (for the nonlinear equations) consist of the following fields on $\mathbb{T}^3$: 
$(\gdatadarg{i}{j},\SecondFunddatadarg{i}{j},\firstsfdata,\secondsfdata)$.
Here, $\gdatadarg{i}{j}$ is a Riemannian metric,
$\SecondFunddatadarg{i}{j}$ is a symmetric two-tensor,
and $\firstsfdata$ and $\secondsfdata$ are a pair of functions.
A solution launched by the data consists of a four-dimensional time-oriented spacetime
$(\mathbf{M}, \gfour)$, 
a scalar field $\phi$ on $\mathbf{M}$, and  
an embedding $\mathbb{T}^3 \overset{\iota}{\hookrightarrow}\mathbf{M}$ such that 
$\iota(\mathbb{T}^3)$ is a Cauchy hypersurface in $(\mathbf{M},\gfour)$. 
The spacetime fields must verify the equations 
\eqref{E:EINSTEINSF}-\eqref{E:WAVEMODEL}
and be such that $\iota^* \gfour = \gdata$, 
$\iota^* \kfour = \SecondFunddata$, 
$\iota^* \phi = \firstsfdata$, 
$\iota^* \Nml \phi = \secondsfdata$, 
where $\kfour$ is the second fundamental form of $\iota(\mathbb{T}^3)$
(our sign convention is given in \eqref{E:SECONDFUNDDEF}), 
$\Nml \phi$ is the derivative of $\phi$ in the direction of the future-directed normal $\Nml$ to $\iota(\mathbb{T}^3)$,
and $\iota^*$ denotes pullback by $\iota$. 
Throughout the article, we will often suppress the embedding and identify $\mathbb{T}^3$ with $\iota(\mathbb{T}^3)$. 

It is well-known (see also Prop.\ \ref{P:EINSTEINSFCMC}) 
that the data are constrained by the \emph{Gauss} and \emph{Codazzi} equations, 
which take the following form for the Einstein-scalar field system:
\begin{subequations}
\begin{align}
	\curdata 
	- \SecondFundupdatadarg{a}{b} \SecondFundupdatadarg{b}{a}
	+ (\SecondFundupdatadarg{a}{a})^2 
	& = 2 \Tfour(\Nml,\Nml)|_{\mathbb{T}^3} 
	= \squaresecondsfdata + \nabla^a \firstsfdata \nabla_a \firstsfdata, 
	\label{E:GAUSSINTRO} \\
	\nabla_a \SecondFundupdatadarg{a}{j} 
		- \nabladataarg{j} \SecondFundupdatadarg{a}{a}  
		& = - \Tfour(\Nml,\frac{\partial}{\partial x^j})|_{\mathbb{T}^3}
		= - \secondsfdata \nabla_j \firstsfdata. 
		\label{E:CODAZZIINTRO}
\end{align}
\end{subequations}
Above, $\Tfour(\Nml,\Nml) := \Tfour_{\alpha \beta} \Nml^{\alpha} \Nml^{\beta}$,
$\nabla$ denotes the Levi--Civita connection of $\gdata$, $\curdata$ denotes 
the scalar curvature of $\gdata$, and indices are lowered and raised with
$\gdata$ and its inverse. Equations \eqref{E:GAUSSINTRO}-\eqref{E:CODAZZIINTRO} 
are known, respectively, as the \emph{Hamiltonian} and \emph{momentum} constraints.

As is well known, to obtain a hyperbolic formulation, 
an elliptic-hyperbolic formulation, or a parabolic-hyperbolic formulation
of equations \eqref{E:EINSTEINSF}-\eqref{E:WAVEMODEL}, 
suitable for studying the initial value problem,
one must impose gauge choices.
As we mentioned at the beginning,
there are two gauges in which we are able to derive our main results.
The first is the well-known CMC-transported-spatial-coordinates gauge,
which we recall in detail in Sect.\ \ref{S:CMCFORMULATIONOFEINSTEIN}.
In this gauge, the spacetime metric 
$\gfour$ is decomposed into the \emph{lapse} $n$ and the Riemannian $3$-metric $g$ on
$\Sigma_t := \lbrace (s,x) \in (0,1] \times \mathbb{T}^3 \ | \ s = t \rbrace$ as follows:
\begin{align} \label{E:GFOURCMCTRANSPORTED}
	\gfour & = - n^2 dt^2 + g_{ab} dx^a dx^b. 
\end{align}
The spatial coordinates\footnote{Technically, the spatial coordinates are only locally defined on $\mathbb{T}^3$,
even though the coordinate partial derivative vectorfields $\partial_i := \frac{\partial}{\partial x^i}$ 
can be globally defined so as to be smooth.} 
$\lbrace x^a \rbrace_{a=1,2,3}$ are called ``transported" because they are constant
along the integral curves of the vectorfield $\Nml = n^{-1} \partial_t$, 
which is the future-directed unit normal to $\Sigma_t$.
The basic variables to be solved for in the nonlinear equations are
$g_{ij}$, 
$\SecondFund_{ij} := - \frac{1}{2} n^{-1} \partial_t g_{ij}$, 
$n$, 
and $\phi$. 
The hypersurfaces $\Sigma_t$ have mean curvature $\frac{1}{3} \SecondFund_{\ a}^a$
that is constant, that is, that depends only on $t$.
To achieve this, $n$ must verify an elliptic PDE on $\Sigma_t$.
Hence, this gauge leads to an elliptic-hyperbolic formulation of the equations.
Above and throughout, $\SecondFund_{\ j}^i = g^{ia} \SecondFund_{a j}$ 
denotes the (mixed) second fundamental form of the constant-time
hypersurface $\Sigma_t$. We normalize the time coordinate so that 
$\SecondFund_{\ a}^a(t,x) = - t^{-1}$
and we identify $\Sigma_1$ with the initial Cauchy hypersurface. 
To be admissible under this setup, 
the initial mixed second fundamental form must verify
$\SecondFundupdatadarg{a}{a} = - 1$.
See Sect.\ \ref{S:CMCFORMULATIONOFEINSTEIN} for a more detailed discussion of this gauge.
In particular, we provide the corresponding constraint and evolution equations in Prop.\ \ref{P:EINSTEINSFCMC}.
Until Sect.\ \ref{S:PARABOLICMONOTONICITY}, we will work
with CMC-transported spatial coordinates gauge.

The second gauge suitable for our purposes
is a one-parameter family of gauges that is in many ways like
the CMC-transported spatial coordinates gauge, except that the
elliptic CMC lapse equation is replaced with 
a parabolic evolution equation for $n$ that is well-posed in the 
past direction; see Sect.\ \ref{S:PARABOLICMONOTONICITY} for the details.
Gauges for Einstein's equations involving parabolic equations
have been considered in the general relativity literature for
several decades. For example, 
in the work \cite{jBgDeS1996}, 
the authors introduced a family of gauges 
in which the lapse solves a parabolic equation,
and they suggested that such gauges should
lead to efficient and accurate numerical simulations.
We also point out the work \cite{cUhvEjWgE2003} on 
the Euler-Einstein equations under 
the equation of state $p = c_s^2 \rho$, 
where $p$ is the fluid pressure, $\rho$ is its proper energy density,
and the constant $c_s$ verifies $0 < c_s \leq 1$.
In \cite{cUhvEjWgE2003},
the authors introduced the \emph{separable volume gauge},
which is a parabolic gauge that can be viewed as
a Lorentzian version of inverse mean curvature flow.
They posited that the separable volume gauge should be useful for 
proving rigorous theorems concerning
the behavior of inhomogeneous cosmological solutions near 
a spacelike singularity. Their main result was geometric:
they identified a set that is invariant under the flow of their equations
and conjectured that it is the past attractor of the flow.
Interestingly, well-posedness for the equations studied in \cite{cUhvEjWgE2003}
is not known because their principal part is not of any standard type.
In the work \cite{dGcG2005}, the authors slightly modified
the equations of \cite{cUhvEjWgE2003} to produce a system of 
transport-diffusion equations, which they showed to be well-posed. 
Readers can also consult \cite{cGjMG2006} 
for a discussion of local well-posedness 
for the Einstein equations under various gauge conditions involving 
a parabolic equation for the lapse.

\subsection{The five-step program}
\label{SS:FIVESTEPS}
We now summarize the five-step program mentioned in Subsect.\ \ref{SS:OVERVIEWNONLINEAR}.
In particular, we briefly introduce our linear results
and explain in what sense they are tied to/ constitute an extension of
the proof of Theorem~\ref{T:FLRWROUGHFIRSTVERSION}
given in \cite{iRjS2014b}.

\begin{enumerate}
	\item \textbf{(Approximate monotonicity identity)}
		In this article, for \emph{all Kasner backgrounds},
		we first establish an approximate monotonicity identity for solutions
		to the linearized equations.
		More precisely, we derive an integral identity for solutions 
		in which, due to some special cancellations,
		some unfavorable integrals are shown to be equal to
		favorably signed integrals, up to error terms.
		See Theorem~\ref{T:ROUGHVERSIONMONOTONICITY}
		for a rough summary of the integral identity
		and Theorem~\ref{T:CMCMONOTONICITYID} for the precise statement.
		The favorably signed integrals
		encourage some of the linear solution variables
		to decay as $t \downarrow 0$, that is, chronologically
		\emph{towards} the Kasner background's Big Bang.
		The monotonicity is indeed only approximate in the sense that 
		some of the unsigned error terms in the integral identity
		compete against the favorably signed integrals.
		It turns out that for nearly spatially isotropic backgrounds 
		(that is, for near-FLRW backgrounds), 
		the favorably signed integrals are sufficiently strong to \emph{absorb}
		most of the unsigned error terms, which is crucial for the next step.
	\item \textbf{(Mildly singular energy estimates at the lowest order for near-FLRW backgrounds)}
		Next, for \emph{nearly spatially isotropic} Kasner backgrounds,
		we use the approximate monotonicity identity from Step (1) to establish an 
		energy estimate and elliptic estimates 
		for solutions to the linearized equations.
		The elliptic estimates are needed to control the lapse,
		which verifies an elliptic equation in CMC
		gauge. If we were to instead use the parabolic lapse gauge mentioned 
		above, then the elliptic estimates
		would be replaced with parabolic energy estimates; see Sect.\ \ref{S:PARABOLICMONOTONICITY}.
		These estimates are at the level of the non-differentiated
		linearized equations.
		A key aspect is that
		the energy can blow up at the \emph{mild} rate 
		$t^{- c \tracefreeparameter}$
		as $t \downarrow 0$,
		where $c > 0$ is a universal constant and
		the constant $\tracefreeparameter \geq 0$ is a measure of how non-spatially-isotropic
		the Kasner background is. In particular, $\tracefreeparameter = 0$
		for the FLRW background; 
		see \eqref{E:TRACEFREEPARAMETER} for the
		precise definition of $\tracefreeparameter$.
		Because of the energy blowup and because of the precise structure of the 
		$t$-weights in the energies (see Def.\ \ref{D:ENERGIES}),
		\emph{the energy estimate is not in itself sufficient to establish
		linear stability results} that are consistent with the 
		nonlinear stable blowup result provided by Theorem~\ref{T:FLRWROUGHFIRSTVERSION}.
		Another key aspect of the energy estimate is that its proof
		crucially relies on the approximate monotonicity identity from
		Step (1). Without the combined strength of 
		the cancellations and favorably signed integrals provided by the identity, 
		we would have only been able to establish a more severe energy blowup-rate of $t^{-C}$ as 
		$t \downarrow 0$, where $C$ is a large constant.
		Such a severe energy blowup-rate would 
		not have been sufficient for establishing 
		the linear stability of the solution
		(see Step (4) for clarification on this point),
		which in turn would have prevented us from
		controlling the nonlinear error terms that
		we encounter in the proof of Theorem~\ref{T:FLRWROUGHFIRSTVERSION}.
		See Theorem~\ref{T:ROUGHVERSIONENERGYESTIMATESNODERIVATIVESLOSS}
		for a rough statement of the energy estimate
		and Theorem~\ref{T:L2MILDENERGYBLOWUPCMCGAUGE} for the precise statement.
	\item \textbf{(Mildly singular energy estimates up to top-order for near-FLRW backgrounds)}
		Next, we establish energy estimates and elliptic estimates
		for the linear solution's higher 
		spatial derivatives. Specifically, we show that the higher-order energies
		verify the same bounds as the base-level energy from Step (2).
		Since the Kasner backgrounds are spatially homogeneous,
		this step is analytically trivial though conceptually important, 
		as will become clear in Step (4). 
		Again, see Theorem~\ref{T:ROUGHVERSIONENERGYESTIMATESNODERIVATIVESLOSS}
		for a rough statement of the higher-order energy estimates
		and Theorem~\ref{T:L2MILDENERGYBLOWUPCMCGAUGE} for the precise statement.
		We emphasize that \emph{these energy estimates do not incur any loss of derivatives},
		which is of course crucial for closing the nonlinear problem.
	\item \textbf{(Linear stability and AVTD behavior)}
		Next, still within the class of nearly spatially isotropic
		Kasner backgrounds, we prove linear stability 
		using the energy estimates and elliptic estimates from Step (3).
		In particular, we use the energy estimates for the linear solution 
		and its higher-order spatial derivatives
		to establish \emph{improved estimates for the linear solution
		at the lower derivative levels}, 
		including convergence results consistent with 
		the AVTD behavior stated in Theorem~\ref{T:FLRWROUGHFIRSTVERSION}.
		In fact, this step constitutes a proof of the linear solution's AVTD behavior,
		which is a result that
		\emph{does not directly follow from the singular energy estimates of the previous step}.
		This step incurs a loss of derivatives, roughly because
		in deriving the convergence results and proving the AVTD behavior,
		we ``put all spatial derivative terms on the right-hand side'' of the 
		evolution equations.
		Thus, from the perspective of regularity,
		it is critically important
		that we have been able to independently establish the non-derivative-losing
		energy estimates from Step (3). 
		It is also critically important that the energy blowup-rate 
		$t^{- c \tracefreeparameter}$
		from Step (3) is mild for nearly spatially isotropic Kasner backgrounds;
		the mild blowup-rate results in the following: 
		many of the spatial-derivative-involving terms in the linearized equations are
	 \emph{integrable in time near the singularity}, which
		is the key to establishing linear stability.
		By integrable in time, we are roughly referring to the fact that
		$\int_{s=t}^1 s^p \, ds < C_p$
		whenever $p > - 1$, uniformly for $t \in (0,1]$;
		the integrability in time of the error terms
		is one of the main analytical aspects of the solution's AVTD behavior.
	\item \textbf{(Control of nonlinear error terms)}
		To prove Theorem~\ref{T:FLRWROUGHFIRSTVERSION},
		we must similarly establish 
		the following results for solutions to the nonlinear equations:
		\textbf{I)} an approximate monotonicity identity;
		\textbf{II)} a priori energy estimates and elliptic estimates up to top-order;
		and \textbf{III)} improved/AVTD estimates at the lower derivative levels.
		In the usual fashion, we rely on a bootstrap argument to accomplish this.
		Most aspects of the proofs of \textbf{I)}-\textbf{III)} are similar
		to the linear analysis.
		The new feature is that we must 
		also control the nonlinear error terms.
		It turns out that given the 
		framework we have established in
		Steps (1)-(4), the nonlinear terms are
		not too difficult to control. The main
		thing that needs to be checked is that 
		in all of the estimates,
		the ``borderline'' error terms
		(borderline in the sense of their blowup-rate as $t \downarrow 0$)
		generated by the nonlinear interactions
		can either \textbf{i)} be absorbed into the favorably-signed 
		integrals generated by the approximate monotonicity identity
		from Step (1) or \textbf{ii)} are multiplied by a coefficient
		that remains $L^{\infty}$-small as $t \downarrow 0$.
		This allows us to prove that the energy blowup-rate 
		in the nonlinear problem is also mild, 
		roughly at worst $t^{- \delta}$, where $\delta > 0$ is small whenever
		the data are near the FLRW data. 
		The detailed proofs are located
		in the companion article \cite{iRjS2014b}.
		In Sect.\ \ref{S:NONLINEARPROBLEM}, we outline all of the main
		ideas and show how to control several representative nonlinear
		error integrals, including a borderline one.
		All of the main ingredients needed to 
		control the nonlinear terms and to prove the theorem
		are provided by Steps (1)-(4).
\end{enumerate}

\subsection{The (generalized) Kasner solutions}
\label{SS:KASNER}
Before further discussing our results, we first formally
introduce the Kasner solutions. They can be expressed as
\begin{align} \label{E:KASNER}
	\mathring{\gfour} & = - dt^2 + \gKasner,
	\qquad \gKasner = \sum_{i=1}^3 t^{2q_i} (dx^i)^2,
	\qquad \sfKasner = A \ln t,
		\qquad (t,x) \in (0,\infty) \times \mathbb{T}^3,
\end{align}
where the constants $q_i$ are called the \emph{Kasner exponents} 
and $A \geq 0$ is a constant denoting the value of $\partial_t \phi$
at $t=1$. 
Note that we have the following identity (in a slight abuse of notation):
\begin{align} \label{E:KASNERSECONDFUNDONEUP} 
	t \SecondFundKasner_{\ j}^i 
	& = -\mbox{\upshape diag}(q_1,q_2,q_3).
\end{align}
The exponents $q_i$ and $A$ are constrained by the equations
 \begin{subequations} 
\begin{align} 
	\sum_{i = 1}^3 q_i & = 1, 
	\label{E:KASNERTRACECONDITION} 
	\\
	\sum_{i=1}^3 q_i^2 & = 1 - A^2.
	\label{E:KASNERHAMILTONIANCONSTRAINT}
\end{align}
\end{subequations}
\eqref{E:KASNERTRACECONDITION} corresponds to our 
gauge condition $\SecondFund_{\ a}^a(t,x) = - t^{-1}$, 
while \eqref{E:KASNERHAMILTONIANCONSTRAINT} is a consequence of 
the gauge condition $\SecondFund_{\ a}^a(t,x) = - t^{-1}$ and the
Hamiltonian constraint equation \eqref{E:GAUSSINTRO}. 

\begin{remark}
	For convenience, in \eqref{E:KASNER},
	we have written the Kasner metric in diagonal form.
	The diagonal form is a specific case of the more general form
	$- dt^2 + \sum_{I=1}^3 t^{2 q_I} \omega^I \otimes \omega^I$
	mentioned in Subsect.\ \ref{SS:OVERVIEWNONLINEAR} 
	(where the $\omega^I$ are, by assumption, orthogonal with respect to the Kasner metric itself).
	The diagonal form can always be achieved by a change of spatial coordinates.
\end{remark}

Exceptional cases aside, the Kasner solutions
have Big Bang singularities along the past boundary
$\lbrace t = 0 \rbrace$ where their Kretschmann scalars
$\Riemfour^{\alpha \beta \gamma \delta} \Riemfour_{\alpha \beta \gamma \delta}$
blow up like\footnote{One can compute that in terms 
of the Kasner exponents from \eqref{E:KASNER}, 
the Kretschmann scalar is equal to
$4 t^{-4}
		\left\lbrace
			\sum_{i=1}^3 q_i^4
			+ \sum_{1 \leq i < j \leq 3} q_i^2 q_j^2
			+ \sum_{i=1}^3 q_i^2
			- 2 \sum_{i=1}^3 q_i^3
		\right\rbrace 
\geq 4 t^{-4} \sum_{1 \leq i < j \leq 3} q_i^2 q_j^2.
$}
$t^{-4}$.
In our study of solutions to the linearized equations, an important role is played by  
the constants $q_{Max} > 0$ and $0 \leq \tracefreeparameter \leq \sqrt{\frac{2}{3}}$ defined by
\begin{subequations}
\begin{align} 
	q_{Max} 
	& := \max \lbrace q_1, q_2, q_3 \rbrace,
		\label{E:BIGGESTKASNEREXPONENT} \\ 
	\tracefreeparameter^2
	& := \sum_{i=1}^3 q_i^2 - \frac{1}{3} 
		= \sum_{i=1}^3 \left(q_i - \frac{1}{3} \right)^2
		= \frac{2}{3} - A^2.
		\label{E:TRACEFREEPARAMETER}
\end{align}
\end{subequations}
As we have mentioned, many of the results in this article hold only 
for nearly spatially isotropic Kasner backgrounds, that is,
when all three $q_i$ are near $1/3$.
It is important to note that it is not 
even possible to have all three $q_i > 0$ in the absence of matter
due to the Hamiltonian constraint.
The nearly spatially isotropic assumption is equivalent to
$\tracefreeparameter$ being small.
The analytic relevance of $\tracefreeparameter$ is: for Kasner metrics \eqref{E:KASNER},
the trace-free part of the second fundamental form $\SecondFundKasner_{\ j}^i$ 
of $\Sigma_t$ (see \eqref{E:SECONDFUNDDEF}),
defined by $\hat{\SecondFundKasner}_{\ j}^i := \SecondFundKasner_{\ j}^i - \frac{1}{3} \SecondFundKasner_{\ a}^a \ID_{\ j}^i 
= \SecondFundKasner_{\ j}^i + \frac{1}{3} t^{-1} \ID_{\ j}^i$
(where $\ID_{\ j}^i = \mbox{diag}(1,1,1)$ denotes the identity transformation),
verifies 
(with $|\hat{\SecondFundKasner}|_{\gKasner}^2 := \gKasner_{ab} \gKasner^{ij} \hat{\SecondFundKasner}_{\ i}^a \hat{\SecondFundKasner}_{\ j}^b$)
\begin{align} \label{E:KASNERTRACEFREEPARTOFSECONDFUNDAMENTALFORMSIZE}
	|\hat{\SecondFundKasner}|_{\gKasner} = \tracefreeparameter t^{-1}.
\end{align}
We again stress that the parameter $\tracefreeparameter$ drives the 
blowup-rate of our $L^2$-based energies for the linear solutions
as $t \downarrow 0$; see, for example,
inequality \eqref{E:ROUGHVERSIONENERGYESTIMATESNODERIVATIVESLOSS}

\subsection{Rough statement of the main linear results and further discussion}
\label{SS:ROUGHSTATEMENTLINEARRESULTS}
In this subsection, we summarize the main linear results of this paper.
We start by summarizing the approximate monotonicity identity;
see Theorem~\ref{T:CMCMONOTONICITYID} for the precise statement.
The proof is based on combining a collection of 
integration by parts identities in suitable proportions and 
judiciously using the constraint and lapse equations, 
which in total yields the cancellation of dangerous terms
and the emergence of favorable ones.

\begin{theorem}[\textbf{The approximate monotonicity identity} (Rough version)]
	\label{T:ROUGHVERSIONMONOTONICITY}
	Consider the Einstein-scalar field equations,
	written relative to CMC-transported-spatial coordinates
	(see Prop.\ \ref{P:EINSTEINSFCMC}),
	linearized (see Prop.\ \ref{P:LINEARIZEDCMCEQUATIONS})
	about any member of the Kasner family \eqref{E:KASNER},
	and with initial data given at time $1$.
	Then with ``Potential Terms''
	denoting the linearized lapse and its spatial derivatives,
	the spatial derivatives of the linearized scalar field,
	and the spatial derivatives of the linearized spatial metric;
	with ``Solution'' denoting the Potential Terms
	together with the linearized second fundamental form
	and the time derivative of the linearized scalar field;
	and with ``Data'' denoting quantities determined
	by the initial data,
	we have the following schematic identity,
	valid for $t \in (0,1]$:
	\begin{align} \label{E:ROUGHVERSIONMONOTONICITY}
		\int_{\Sigma_t}
			|\mbox{\upshape Solution}|^2
		\, dx
		& =
			\int_{\Sigma_1}
				\mbox{\upshape Data}
			\, dx
				\\
		& \ \
			-
			\int_{s=0}^t
				s^{-1}
				\int_{\Sigma_s}
					|\mbox{\upshape Potential Terms}|^2
				\, dx
			\, ds
			+
			\int_{s=0}^t
				s^{-1}
				\int_{\Sigma_s}
					\mbox{\upshape Error terms}
				\, dx
			\, ds.
			\notag
	\end{align}
\end{theorem}

Next, we roughly summarize the energy estimates that follow as a consequence
of Theorem~\ref{T:ROUGHVERSIONMONOTONICITY}.
See Theorem~\ref{T:L2MILDENERGYBLOWUPCMCGAUGE} for the precise statement
of the energy estimates.

\begin{theorem}[\textbf{Mildly singular energy estimates without derivative loss} (Rough version)]
	\label{T:ROUGHVERSIONENERGYESTIMATESNODERIVATIVESLOSS}
	Consider the linearized equations from the statement of Theorem~\ref{T:ROUGHVERSIONMONOTONICITY}.
	Let $\tracefreeparameter \geq 0$ be as defined by \eqref{E:TRACEFREEPARAMETER}.
	Then there exists an energy
	$\mathscr{E}_{(Total)}(t)$
	for the linear solution
	(see \eqref{E:TOTALENERGY} for the precise definition),
	whose \textbf{square} has the strength of the left-hand side of \eqref{E:ROUGHVERSIONMONOTONICITY},
	and constants $C > 0$ and $c > 0$
	such that the following estimate holds for $t \in (0,1]$
	whenever the Kasner background is nearly spatially isotropic 
	(that is, as long as $\tracefreeparameter$ is sufficiently small):
	\begin{align} \label{E:ROUGHVERSIONENERGYESTIMATESNODERIVATIVESLOSS}
	\mathscr{E}_{(Total)}(t)
	& \leq 
		C	
		\mathscr{E}_{(Total)}(t)(1)
		t^{- c \tracefreeparameter}.
\end
{align}
Moreover, the higher-order spatial derivatives 
of the linear solution verify similar energy estimates featuring the same
blowup-rate $t^{- c \tracefreeparameter}$.
\end{theorem}	

Finally, we roughly summarize our linear stability results, whose
proof relies on the energy estimates of
Theorem~\ref{T:ROUGHVERSIONENERGYESTIMATESNODERIVATIVESLOSS}.
See Theorem~\ref{T:CMCLINEARSTABILITY} for the precise statement.

\begin{theorem}[\textbf{Linear stability} (Rough version)]
	\label{T:ROUGHVERSIONLINEARSTABILITY}
	Let $N \geq 2$ be an integer.
	Consider a Kasner solution
	$(1,\gKasner_{ij},\SecondFundKasner_{\ j}^i,\sfKasner)$
	(where $1$ is the Kasner lapse)
	and let $\tracefreeparameter$
	be as in Theorem~\ref{T:ROUGHVERSIONENERGYESTIMATESNODERIVATIVESLOSS}.
	Consider data (at time $1$) for the linearized (about the Kasner solution) system with 
	enough regularity so that
	the norm $\highnorm{N}$ (see Def.\ \ref{D:NORMS})
	is initially finite, that is,
	$\highnorm{N}(1) < \infty$.
	Let
	$(\LapseRenormalized,\grenormalized_{ij},\LinSecondFund_{\ j}^i,\SFRenormalized)$
	be a solution to the linearized (about the Kasner solution)
	equations of Prop.\ \ref{P:LINEARIZEDCMCEQUATIONS},
	where
	$\LapseRenormalized$ is the linearized variable
	corresponding\footnote{See the beginning of Subsect.\ \ref{SS:LINEARIZEDQUANTITIES}
	for further discussion on the linearization procedure and the linearized variables.} 
	to $n-1$,
	$\grenormalized_{ij}$ is the linearized variable corresponding to
	$g_{ij} - \gKasner_{ij}$,
	$\LinSecondFund_{\ j}^i$
	is the linearized variable corresponding
	to $\SecondFund_{\ j}^i 
		- 
		\SecondFundKasner_{\ j}^i$,
	and
	$\SFRenormalized$
	is the linearized variable corresponding to
	$\phi - \sfKasner$.
	Then there exists a \underline{Kasner footprint state}
	(see below for further discussion)
	such that the linear solution converges towards it 
	as $t \downarrow 0$.
	Specifically, there exist a symmetric type $\binom{0}{2}$ tensorfield
	$h_{Regular} \in H_{Frame}^{N-1}(\mathbb{T}^3)$
	(the norms $\| \cdot \|_{H_{Frame}^M}$ are defined by \eqref{E:SOBOLEVNORMS}),
	a type $\binom{1}{1}$ tensorfield
	$K_{Bang} \in H_{Frame}^{N-1}(\mathbb{T}^3)$
	verifying $(K_{Bang})_{\ a}^a = 0$,
	and a constant $C > 0$
	such that
	if $\tracefreeparameter$ is sufficiently small,
	then the following estimates hold\footnote{On the left-hand sides of 
	\eqref{E:ROUGHMETRICCONVERGESQISQJ}-\eqref{E:ROUGHMETRICCONVERGESQIISNOTQJ}, we do not sum over $i$ or $j$.}
	for $t \in (0,1]$, $(i,j=1,2,3)$:
	\begin{subequations}
	\begin{align}
	\left\|
		\LapseRenormalized 
	\right\|_{H^{N-2}}
	& \leq \frac{C}{\tracefreeparameter} \highnorm{N}(1)
		t^{4/3 - c \tracefreeparameter},
		  \label{E:ROUGHSLAPSEHNMINUSTWOESTIMATE} 
				\\
		\left\|
			t^{-2 q_j} \grenormalized_{ij} 
			+ 
			2 \ln(t) (K_{Bang})_{\ j}^i 
			- 
			(h_{Regular})_{ij}
		\right\|_{H^{N-1}}
		& \leq C \highnorm{N}(1) t^{ 2/3 - c \tracefreeparameter},
		&& (\mbox{\upshape if} \ q_i = q_j),
		\label{E:ROUGHMETRICCONVERGESQISQJ}	\\
		\left\|
			t^{-2 q_j} \grenormalized_{ij} 
			+ \frac{1}{q_i - q_j} t^{2(q_i - q_j)} (K_{Bang})_{\ j}^i 
			- (h_{Regular})_{ij}
		\right\|_{H^{N-1}}
		& \leq C \highnorm{N}(1) t^{2/3 - c \tracefreeparameter},
		&& (\mbox{\upshape if} \ q_i \neq q_j),
		\label{E:ROUGHMETRICCONVERGESQIISNOTQJ}	\\
		\left\|
			t \LinSecondFund_{\ j}^i
			- 
			(K_{Bang})_{\ j}^i
		\right\|_{H^{N-1}}
		& \leq C \highnorm{N}(1) t^{2/3 - c \tracefreeparameter},
		\label{E:ROUGHSECONDFUNDCONVERGENES}	\\
		\left\|
			t \partial_t \SFRenormalized 
			- 
			\Psi_{Bang}
		\right\|_{H^{N-1}}
	& \leq C \highnorm{N}(1)
			t^{2/3 - c \tracefreeparameter},
			\label{E:ROUGHPARTIALTPHICONVERGES} \\
	  \left\|
			\partial_i \SFRenormalized 
			-  
			\ln(t) \partial_i \Psi_{Bang}
		\right\|_{H^{N-2}}
	& \leq C \highnorm{N}(1).
			\label{E:ROUGHPARTIALPHICONVERGES}
	\end{align}
\end{subequations}
\end{theorem}

We now explain the significance of the above convergence estimates,
starting with \eqref{E:ROUGHSLAPSEHNMINUSTWOESTIMATE}.
We first recall that in studying the nonlinear solution,
we decompose the spacetime metric as
$\gfour = - n^2 dt \otimes dt + g$
and that $\LapseRenormalized$ is the linearized variable corresponding to 
$n - 1$. Hence, \eqref{E:ROUGHSLAPSEHNMINUSTWOESTIMATE} shows that
at the linear level, the perturbation of the lapse
converges to $0$, that is, the lapse itself
converges at the linear level to the Kasner state $n = 1$.
To further explain the convergence results stated in Theorem~\ref{T:ROUGHVERSIONLINEARSTABILITY},
we first explain what we mean by a 
``Kasner footprint state.''
Specifically, 
we mean a collection of variables
$(\widetilde{\LapseRenormalized},\widetilde{\grenormalized}_{ij},\widetilde{\LinSecondFund}_{\ j}^i,\widetilde{\SFRenormalized})$
defined by
$\widetilde{\LapseRenormalized} = 0$,
$\widetilde{\LinSecondFund}_{\ j}^i = t^{-1} (K_{Bang})_{\ j}^i$,
$\widetilde{\grenormalized}_{ij} 
=    -
			2 \ln(t)
			t^{2 q_j}   
			(K_{Bang})_{\ j}^i 
			+
			t^{2 q_j} 
			(h_{Regular})_{ij}$
if $q_i = q_j$,
$
\widetilde{\grenormalized}_{ij} 
=
-
\frac{1}{q_i + q_j} t^{2q_i} (K_{Bang})_{\ j}^i 
+
t^{2 q_j} 
(h_{Regular})_{ij}
$
if $q_i \neq q_j$,
$\partial_t \widetilde{\SFRenormalized} 
= t^{-1} \Psi_{Bang}$,
and
$
\partial_i \widetilde{\SFRenormalized}
=  
\ln(t) \partial_i \Psi_{Bang}
$,
where
$(h_{Regular})_{ij}$, 
$(K_{Bang})_{\ j}^i$,
and
$\Psi_{Bang}$
are functions (of $x$) on $\mathbb{T}^3$.
Note that the above definitions of the Kasner footprint states 
are obtained by setting the terms inside the norms on the left-hand sides of the
estimates of Theorem~\ref{T:ROUGHVERSIONLINEARSTABILITY}
equal to $0$.
Roughly, Theorem~\ref{T:ROUGHVERSIONLINEARSTABILITY}
shows that the solutions to the
linearized equations of Prop.\ \ref{P:LINEARIZEDCMCEQUATIONS}
are asymptotic to a Kasner footprint state
$(\widetilde{\LapseRenormalized},\widetilde{\grenormalized}_{ij},\widetilde{\LinSecondFund}_{\ j}^i,\widetilde{\SFRenormalized})$
as $t \downarrow 0$.
Note that the Kasner footprint states
are generally \emph{not solutions to the linear equations}
of Prop.\ \ref{P:LINEARIZEDCMCEQUATIONS}.
For this reason, we will now explain why one might expect them
to emerge as the ``end states'' of linear solutions
and why the the $t$-behaviors stated
on the \emph{left-hand} side of the
estimates of Theorem~\ref{T:ROUGHVERSIONLINEARSTABILITY}
can be saturated.
We will give two explanations, the first
being completely heuristic and the second
one rigorously illustrating the saturation of the $t$-behavior.
First, one can easily check that given any
(sufficiently regular)
functions 
$(h_{Regular})_{ij}$,
$(K_{Bang})_{\ j}^i$,
and
$\Psi_{Bang}$
on $\mathbb{T}^3$,
the corresponding Kasner footprint state
is a solution to a \emph{truncated} version
of the linear equations 
of Prop.\ \ref{P:LINEARIZEDCMCEQUATIONS} in which all spatial derivative terms
are set equal to $0$. The truncated linear equations 
are linear analogs of the VTD equations mentioned 
at the end of the statement of Theorem~\ref{T:FLRWROUGHFIRSTVERSION}.
Thus, Theorem~\ref{T:ROUGHVERSIONLINEARSTABILITY} shows that linear solutions converge
towards solutions of the linear VTD system, which is quite natural since our
proof of Theorem~\ref{T:ROUGHVERSIONLINEARSTABILITY} relies on showing that
spatial derivative terms become negligible as $t \downarrow 0$.

Our second explanation concerning the end state behavior of
linear solutions is through the notion of
variations of one-parameter families of Kasner solutions.
For the sake of illustration, we only consider a one-parameter 
family of Kasner spatial metrics and mixed second fundamental forms.
That is, for convenience, in this part of the discussion, 
we ignore the scalar field by setting it equal to $0$;
this will not have any substantial effect on the main ideas behind our discussion.
Specifically, we consider the $\upalpha$-parameterized family
(where $\upalpha \in \mathbb{R}$)
defined by
$\gKasner_{ij}[\upalpha] := \mbox{\upshape diag}(t^{2Q_1[\upalpha]},t^{2Q_2[\upalpha]},t^{2Q_3[\upalpha]})$
and
$\SecondFundKasner_{\ j}^i[\upalpha] := - t^{-1} \mbox{\upshape diag}(Q_1[\upalpha],Q_2[\upalpha],Q_3[\upalpha])$,
where the $Q_i[\upalpha]$ are a one-parameter family of Kasner exponents.\footnote{The $Q_i$ must verify the constraint conditions \eqref{E:KASNERTRACECONDITION} 
and \eqref{E:KASNERHAMILTONIANCONSTRAINT}, but this is not important for our discussion
here.}
We assume that $Q_i[0] := q_i$, where the $q_i$ are constants.
For each fixed $\upalpha$, 
$(\gKasner_{ij}[\upalpha],\SecondFundKasner_{\ j}^i[\upalpha])$
is a solution to the nonlinear Einstein equations of Prop.\ \ref{P:EINSTEINSFCMC}
(where the lapse is identically $1$ and the scalar field is identically $0$).
Thus, $(\gKasner_{ij}[\upalpha],\SecondFundKasner_{\ j}^i[\upalpha])$ can be viewed
as a family of diagonal Kasner solutions that vary from ``point to point,'' that is, 
that vary with $\upalpha$,
in analogy with the $x$-dependent Kasner-type behavior 
of solutions to the nonlinear equations near singularities that was
predicted in \cites{vBiK1972,jB1978}.
To more fully explain the results of Theorem~\ref{T:ROUGHVERSIONLINEARSTABILITY}, 
we must also account for the following additional degrees of freedom: 
for each fixed $\upalpha$, 
we can perform a change of spatial coordinates.
We can account for this freedom by 
introducing a one-parameter family of
invertible matrices $M_{\ j}^i[\upalpha]$
(not depending on $t$)
that represent a change of spatial coordinates at each fixed $\upalpha$.
From these considerations, we see that a general picture of a
family of Kasner solutions varying from point to point 
can be captured by a one-parameter family of
Kasner solutions $(g_{ij}[\upalpha],k_{\ j}^i[\upalpha])$
of the form\footnote{For fixed $\upalpha$,
the form $g_{ij}[\upalpha]$ of
the Kasner spatial metric given by \eqref{E:ONEPARAMETERKASNERSPATIALMETRIC} 
is equivalent to the form
$\sum_{I=1}^3 t^{2 q_I} \omega^I \otimes \omega^I$
mentioned in Subsect.\ \ref{SS:OVERVIEWNONLINEAR}.}
\begin{align}
	g_{ij}[\upalpha] 
	& := M_{\ i}^a[\upalpha] M_{\ j}^b[\upalpha] \gKasner_{ab}[\upalpha],
		\label{E:ONEPARAMETERKASNERSPATIALMETRIC} \\
	k_{\ j}^i[\upalpha] 
	& := (M^{-1})_{\ a}^i[\upalpha] M_{\ j}^b[\upalpha] \SecondFundKasner_{\ b}^a[\upalpha].
	\label{E:ONEPARAMETERKASNERSECONDFUNDAMENTALFORM}
\end{align}
In what follows, we will 
use the notation 
$Q_i'[0] := \frac{d}{d \upalpha}Q_i[\upalpha]|_{\upalpha = 0}$,
and we use similar notation for other quantities that depend on $\upalpha$.
We now compute that 
\begin{align}
	g'[0]
	& = M^{\top}[0] \cdot \gKasner'[0] \cdot M[0] 
		+ 
		(M')^{\top}[0] \cdot \gKasner[0] \cdot M[0]
		+ 
		M^{\top}[0] \cdot \gKasner[0] \cdot M'[0],
			\label{E:KANSERMETRICVARIATION} \\
	k'[0] 
	& = M^{-1}[0] \cdot \SecondFundKasner'[0] \cdot M[0] 
		- 
		M^{-1}[0] \cdot M'[0] \cdot M^{-1}[0] \cdot \SecondFundKasner[0] \cdot M[0]
		+ 
		M^{-1}[0] \cdot \SecondFundKasner[0] \cdot M'[0],
		\label{E:KANSERSECONDFUNDVARIATION}
\end{align}
where
\begin{align}
	\gKasner[0]
	& = \mbox{\upshape diag}(t^{2q_1},t^{2q_2},t^{2q_3}),
		\label{E:NONVARIEDKASNERMETRIC} \\
	\gKasner'[0]
	& = 2 \ln(t) \mbox{\upshape diag}(t^{2q_1} Q_1'[0], t^{2q_2} Q_2'[0], t^{2q_3} Q_3'[0]),
		\label{E:KASNERNONDISTORTEDMETRICVARIATION} \\
	\SecondFundKasner[0]
	& = - t^{-1} \mbox{\upshape diag}(q_1,q_2,q_3),
		\label{E:NONVARIEDKASNERSECONDFUND} \\
\SecondFundKasner'[0] 
& = - t^{-1} \mbox{\upshape diag}(Q_1'[0],Q_2'[0],Q_3'[0]).
\label{E:KASNERNONDISTORTEDSECONDFUNDVARIATION}
\end{align}
In \eqref{E:KANSERMETRICVARIATION}-\eqref{E:KANSERSECONDFUNDVARIATION},
$\top$ denotes matrix transpose and $\cdot$ denotes matrix multiplication.
We now compare the above computations with
the results of Theorem~\ref{T:ROUGHVERSIONLINEARSTABILITY}.
The key point is to observe that
\emph{the variations 
$g'[0]$ and $k'[0]$
solve the linearized Einstein equations},
where the background spatial metric and second fundamental form 
(about which the equations are linearized)
are respectively $\gKasner[0]$ and $\SecondFundKasner[0]$.
Indeed, one way to obtain the linearized equations is 
by differentiating a one-parameter family of 
nonlinear solutions with respect to the parameter; see Subsect.\ \ref{SS:LINEARIZEDQUANTITIES}
for further discussion on this point.
Thus, to each one-parameter family 
$g_{ij}[\upalpha]$, $k_{\ j}^i[\upalpha]$
of the form \eqref{E:ONEPARAMETERKASNERSPATIALMETRIC}-\eqref{E:ONEPARAMETERKASNERSECONDFUNDAMENTALFORM},  
there exists an associated variation
$g'[0]$ and $k'[0]$
that solves the corresponding linearized equations.
Thus, the variations $g'[0]$ and $k'[0]$
are special (spatially homogeneous)
examples of the Kasner footprint states
stated in Theorem~\ref{T:ROUGHVERSIONLINEARSTABILITY}.
To further connect with the results of 
Theorem~\ref{T:ROUGHVERSIONLINEARSTABILITY},
we will investigate the structure of the variations.
From 
\eqref{E:KANSERSECONDFUNDVARIATION},
\eqref{E:NONVARIEDKASNERSECONDFUND},
and
\eqref{E:KASNERNONDISTORTEDSECONDFUNDVARIATION}, 
it follows that $t k'[0]$ is a $3 \times 3$ matrix with constant entries.
The key point is that this agrees with the fact that the limiting field
$K_{Bang}$ 
from Theorem~\ref{T:ROUGHVERSIONLINEARSTABILITY}
does not depend on $t$.
Moreover, from 
\eqref{E:KANSERMETRICVARIATION},
\eqref{E:NONVARIEDKASNERMETRIC},
and
\eqref{E:KASNERNONDISTORTEDMETRICVARIATION},
we see that the entries of the matrix $g'[0]$ are sums of two kinds of terms: 
pure power-law terms proportional to factors of type $t^p$ (where $p$ is a constant), 
which come from the factors of $\gKasner[0]$ in \eqref{E:KANSERMETRICVARIATION},
and similar power-law terms that are multiplied by a factor of
$\ln(t)$, which come from the factor of $\gKasner'[0]$ in \eqref{E:KANSERMETRICVARIATION}.
This agrees with the limiting behavior of
$\grenormalized_{ij}$ as $t \downarrow 0$
shown by the estimates
\eqref{E:ROUGHMETRICCONVERGESQISQJ}-\eqref{E:ROUGHMETRICCONVERGESQIISNOTQJ}.
To summarize, our consideration of one-parameter Kasner families led us to 
conclude that all variations $g'[0](t)$ and $k'[0](t)$
are spatially homogeneous Kasner footprint states that are solutions
to the linearization of the Einstein equations about the Kasner solution
$(\gKasner[0](t),\SecondFundKasner[0](t))$.
The results of Theorem~\ref{T:ROUGHVERSIONLINEARSTABILITY} 
show that for near-FLRW backgrounds, all linear solutions are asymptotic to 
$x$-dependent Kasner footprint states whose time behavior at each fixed $x$
is similar to the time behavior of one of the variations.
Similar results hold for the scalar field, as is
shown by the estimates \eqref{E:ROUGHPARTIALTPHICONVERGES} 
and \eqref{E:ROUGHPARTIALPHICONVERGES}.
In total, the above picture is closely aligned with the vision
of \cites{vBiK1972,jB1978}, in which the end state of nonlinear solutions 
was posited to be a family Kasner-like solutions parameterized
by the spatial point $x$.

Consistent with the nonlinear stable blowup result provided by
Theorem~\ref{T:FLRWROUGHFIRSTVERSION},
we could also extend the linear stability results of Theorem~\ref{T:ROUGHVERSIONLINEARSTABILITY}
to apply when the background solutions are near-FLRW as measured by a Sobolev norm
(and hence are spatially dependent).
We do not provide such an extension here 
because it would significantly lengthen the paper without contributing substantially
to the main ideas. A related issue connected to the nonlinear problem
is that in our proof of the existence and curvature blowup aspects
Theorem~\ref{T:FLRWROUGHFIRSTVERSION},
we do not rely on having precise knowledge 
of the solution's ``end state'' 
(that is, the asymptotics near $\lbrace t = 0 \rbrace$)
in advance; it suffices to control the difference between the perturbed solution 
and the FLRW solution. Put differently, in proving Theorem~\ref{T:FLRWROUGHFIRSTVERSION},
we could derive the sharp asymptotics/convergence results as $t \downarrow 0$
as a separate argument, after we have already shown that the solution
exists for $(t,x) \in (0,1] \times \mathbb{T}^3$ and that the Kretschmann scalar
blows up as $t \downarrow 0$.
For this reason, 
our proof of Theorem~\ref{T:FLRWROUGHFIRSTVERSION}
would allow for the following margin of error:
the proof would go through if we controlled the difference between
the perturbed solution and any near-FLRW Kasner solution
rather than the perturbed solution and the FLRW solution.

\subsection{Previous work on singularities}
\label{SS:PREVIOUSWORK}
Previous work has provided related results showing
the stability of singular solutions to the Einstein equations
in various contexts, but only under
under symmetry assumptions that reduce the problem to the study of 
$1+1$ dimensional PDEs\footnote{There also are stable singularity formation results in the class of
spatially homogeneous solutions (in which case the equations reduce to ODEs); see
\cite{aR2005b} or \cite{jWgE1997} for an overview.} 
\cites{pCjIvM1990,jIvM1990,hR2009b,hR2010}.
There also is a body of work that provides the construction of (but not the stability of)
singularity-containing solutions to select nonlinear Einstein-matter systems,
but only under the assumption of symmetry
\cites{sKjI1999,gR1996,sKaR1998,kA2000a,yCBjIvM2004,fS2002,fBpL2010c,eAfBjIpL2013b} 
and/or spatial analyticity \cites{lAaR2001,tDmHrAmW2002}.
Readers can also consult \cite{eAfBjIpL2013} for a more general well-posedness result
for singular initial value problems that applies to a class of symmetric hyperbolic
quasilinear systems in more than one spatial dimension. 
More precisely, in \cite{eAfBjIpL2013}, the authors prescribe Sobolev-class asymptotics featuring singular behavior. 
The main result of \cite{eAfBjIpL2013}
is the existence of a Sobolev-class solution that realizes the singular asymptotics.
We note, however, that \cite{eAfBjIpL2013} does not treat Einstein's equations.
A related approach to studying Big Bang singularities involves devising a 
formulation of Einstein's equations that allows one to solve a Cauchy problem
with initial data given on the singular hypersurface
$\lbrace t = 0 \rbrace$ itself;\footnote{This method is based on
formulating the equations in terms of a rescaled metric, conformal to the physical spacetime metric,
in such a way that the rescaled metric remains regular throughout the entire evolution. 
As such, this method can be viewed as an extension of
Friedrich's \emph{conformal method} \cites{hF1986,hF2002}.} 
see, for example,
\cites{kApT1999a,cCpN1998,rN1993a,rN1993b,pT1990,pT1991,pT2002}. 
In some cases, these works included a proof that the singular solutions exhibit AVTD behavior.
Readers can consult \cite{aR2000b} for a precise comparison of these results 
as well as an extension of them to prove the existence of singular solutions to the Einstein-vacuum equations
with Gowdy symmetry.\footnote{Gowdy solutions are a subset of the $\mathbb{T}^2$-symmetric solutions
characterized by the vanishing of the \emph{twist constants} 
$(\gfour^{-1})^{\mu \mu'} \pmb{\epsilon}_{\alpha \beta \mu \nu} \mathbf{X}^{\alpha} \mathbf{Y}^{\beta} \Dfour_{\mu'} \mathbf{X}^{\nu}$
and 
$(\gfour^{-1})^{\mu \mu'} \pmb{\epsilon}_{\alpha \beta \mu \nu} \mathbf{X}^{\alpha} \mathbf{Y}^{\beta} \Dfour_{\mu'} \mathbf{Y}^{\nu}$,
where $\pmb{\epsilon}$ is the volume form of $\gfour$ and $\mathbf{X}$ and $\mathbf{Y}$ are the Killing fields corresponding to the two symmetries.}

In contrast to the regular Cauchy problem studied here and in the companion article \cite{iRjS2014b},
the above works are based on prescribing the asymptotics as $t \downarrow 0$ and then constructing a solution
that achieves those asymptotics. Most of those works are based on
solving a Fuchsian PDE system that is singular at $\lbrace t = 0 \rbrace$. 
We now describe some aspects of the Fuchsian approach.
A representative work is \cite{eAfBjIpL2013b}, 
in which the authors construct singular solutions to the Einstein-vacuum equations\footnote{More general Fuchsian systems 
in one spatial dimension are also treated in \cite{eAfBjIpL2013b}.}
with $\mathbb{T}^2$ symmetry under the polarized or half-polarized condition.
In Sect.\ \ref{S:ENDSTATESCOMMENTS},
 we provide a simple model problem suggesting
that results similar to those of \cite{eAfBjIpL2013b} 
might also hold for the Einstein-scalar field system
without symmetry assumptions.
The Fuchsian PDEs\footnote{Specifically, the PDEs are the $\mathbb{T}^2$-symmetric polarized or half-polarized Einstein-vacuum equations 
in areal coordinates with the singularity at $\lbrace t=0 \rbrace$.}
treated in \cite{eAfBjIpL2013b} are of the form
\begin{align} \label{E:FUCHSIAN}
	A^0(t,x,u) t \partial_t u + A^1(t,x,u)t \partial_x u + B(t,x,u) u = f(t,x,u),
\end{align}
where $u$ is the array of unknowns, $A^{\alpha}$ and $B$ are symmetric matrices
(the energy estimates rely on the symmetric hyperbolic framework), 
and $f$ is an array, all of which verify a collection of technical assumptions.
The analysis in \cite{eAfBjIpL2013b} is based on splitting the solution
as $u = u_0 + w$, where $u_0$ is the ``leading order''
part and $w$ is an error term that one would like to show is small compared to $u_0$ as $t \downarrow 0$.
An important technical assumption made in \cite{eAfBjIpL2013b}, which is used for deriving energy estimates, 
is that for small $w$, one can split $A^0(t,x,u_0 + w) = A_0^0(x,u_0) + A_1^0(t,x,u_0+w)$,
where $A_0^0(x,u_0)$ is symmetric positive definite, 
and the map $w \rightarrow A_1^0(t,x,u_0+w)$ maps certain time-weighted Sobolev spaces
into other time-weighted Sobolev spaces.
There are various methods for constructing $u_0$.
The most relevant way in the context of the present article is
to choose $u_0$ to be a prescribed solution to a truncated ``VTD version'' of \eqref{E:FUCHSIAN}
in which the spatial derivative terms are discarded. 
This approach is complementary to the one taken in the present 
article and \cite{iRjS2014b}, in which we show that AVTD behavior 
\emph{dynamically emerges}
in solutions to the non-truncated equations.
From the VTD system and \eqref{E:FUCHSIAN}, 
one computes that the error term $w$ solves an ``error equation'' 
depending on $u_0$.
The main result of \cite{eAfBjIpL2013b} is that under suitable additional assumptions, 
there exists a solution $w$ 
to the error equation that becomes small relative to $u_0$ as $t \downarrow 0$
and that $w$ is unique within appropriate time-weighted Sobolev spaces.
The main idea of the proof is to derive uniform a priori
symmetric hyperbolic energy estimates
for a sequence $\lbrace w_n \rbrace_{n=1}^{\infty}$ of error equation solutions
on intervals of the form $[t_n,\updelta]$. More precisely, the $w_n$ 
solve a standard symmetric hyperbolic Cauchy problem (to the future) with $0$ initial data at time $t_n$.
Here, $\updelta > 0$ is a small constant and $\lbrace t_n \rbrace_{n=1}^{\infty}$ is a sequence of times decreasing to $0$.
A key aspect of the analysis in \cite{eAfBjIpL2013b}
is that the authors were able to close their estimates by
inserting time weights by hand into the energies. 
More precisely, in the approach of \cite{eAfBjIpL2013b},
one derives energy estimates for
$t^{-P} w_n$, where $t^{-P}$ is a diagonal matrix whose non-zero
entries are well-chosen negative powers of $t$ that are allowed to depend on $x$ (that is, $P = P(x)$).
Another aspect of the approach of \cite{eAfBjIpL2013b} is that the energies are
weighted by an additional overall scalar factor of 
$\displaystyle e^{-\upkappa t^{\upgamma}}$, where $\upkappa$ and $\upgamma$ are positive constants.
The time weights must be chosen to be compatible with the nonlinearities
in the sense that the nonlinear error integrals arising in the energy estimates 
must be controllable. When successfully implemented, 
this leads to controlled energy growth towards the future 
(away from the singularity) in a neighborhood of the singularity. 
In particular, for well-chosen $t$-weights 
(as we illustrate in Remark~\ref{R:FREEDOMINCHOOSINGEXPONENTS},
there is some freedom in choosing them),
one can derive uniform estimates for the $\lbrace w_n \rbrace_{n=1}^{\infty}$
showing that the weighted energies cannot grow too fast towards the future;
see Sect.\ \ref{S:ENDSTATESCOMMENTS} for a very simple linear model problem.
Then through a standard limiting procedure, one can produce a 
solution $w$ to the error equation that exists on 
the interval $(0,\updelta]$, and it is unique within suitable
time-weighted Sobolev spaces.

Although the Fuchsian approach furnishes the existence of 
a set of solutions with singularities, it is inadequate for treating
the true stability problem of solving down towards
$\lbrace t = 0 \rbrace$ starting from Cauchy data for $u$ given along
a hypersurface $\lbrace t = \mbox{\upshape const} \rbrace$ with $\mbox{\upshape const} > 0$.
One difficulty that we encounter in our study of the Einstein equations, 
which we stressed at the beginning,
is that in order to synchronize the singularity across space,
one cannot work with a purely hyperbolic
formulation of the equations such as the one afforded by 
wave coordinates; gauges involving an 
infinite speed of propagation, such as the elliptic and parabolic 
ones for the lapse employed in the present article and in \cite{iRjS2014b}, 
seem essential. Hence, our approach to proving stability 
lies outside of the standard Fuchsian framework, 
which applies only to hyperbolic equations.
Moreover, the Fuchsian strategy of inserting suitable time weights
by hand into the energies is not sufficient for deriving our stability results
because some of the terms in the equations are too singular to be treated in this fashion;
see our discussion in Subsect.\ \ref{SS:OTHERMATTERMODELSECT} 
for further discussion on this point, where we highlight similar difficulties
that would arise in an attempt to extend our approach to prove stability results for far-from-FLRW solutions.
For near-FLRW solutions, our approach is viable only because of the
cancellations that occur in our approximate monotonicity identity,
which are tied to the special structure of the Einstein-scalar field system in our gauges.


The scalar field and stiff fluid
matter models have some special properties that 
we exploit in deriving our results. 
We describe some of these properties in more detail in Subsect.\ \ref{SS:OTHERMATTERMODELSECT}. 
In particular, we expect that our approximate monotonicity/stability 
results do not hold for general matter models. 
Actually, as we now explain, 
for certain fluid models,
Ringstr\"{o}m obtained rigorous results 
showing that solutions behave in a drastically non-monotonic fashion.
In \cite{hR2001}, Ringstr\"{o}m studied fluids verifying the
equation of state $p = c_s^2 \rho$, where the constant $c_s$ 
verifies $0 < c_s \leq 1$ and physically represents the speed of sound. 
For the Euler-Einstein equations with 
a sub-stiff equation of state (that is, with $0 < c_s < 1$), 
he showed that spatially homogeneous solutions
with Bianchi IX symmetry\footnote{Members of the Bianchi symmetry classes are spatially homogeneous and hence the corresponding solutions depend on only a time variable.
For a precise definition of these symmetry classes and the others that we mention, 
readers can consult \cite{pCgGdP2010}.}
generically (that is, for non-Taub solutions) have limit points
in the approach towards the singularity that must be either \emph{vacuum} Bianchi type I (that is, vacuum Kasner), vacuum Bianchi type $\mbox{VII}_0$, or vacuum Bianchi type II.
In particular, Ringstr\"{o}m's work showed that a sub-stiff fluid has a negligible effect on Bianchi IX solutions 
near the singularity. Furthermore, he showed that almost all such solutions are \emph{oscillatory} in the sense that there are at least three distinct limit points, \emph{which stands in stark contrast to the approximately monotonic behavior
of our linear solutions and the nonlinear solutions in \cite{iRjS2014b}.}

Ringstr\"{o}m's work \cite{hR2001} also applied to the Einstein-vacuum equations in Bianchi IX
symmetry and thus yielded the first examples of the oscillatory behavior
conjectured in the work \cite{vBiKeL1970} of Belinsky, Khalatnikov, and Lifschitz (BKL). 
Specifically, in \cite{vBiKeL1970}, the authors gave heuristic arguments suggesting that 
general solutions to the Einstein-vacuum equations 
containing incomplete timelike geodesics
should exhibit highly oscillatory behavior 
near the boundary where the geodesics terminate.
Moreover, their arguments suggested that
the boundary should be a spacelike singularity.
These so-called ``BKL conjectures''\footnote{The statements in \cite{vBiKeL1970} are somewhat vague
and thus it is imprecise to refer to them as ``conjectures.''} 
have been seminal in stimulating the investigation
of solutions to Einstein's equations near singularities.
However, as we now explain, 
despite Ringstr\"{o}m's work,
there is immense controversy surrounding the conjectures.
First, they are false as stated because of, for example, the existence of
Taub solutions, which develop a Cauchy horizon\footnote{Roughly, a Cauchy horizon is a 
boundary along which the solution remains regular
but beyond which it cannot be continued uniquely as a solution due to
lack of information for how to continue.} 
rather than a true singularity.
One might be tempted to weaken the conjectures
by replacing the phrase ``general solutions'' with ``generic solutions.''
However, Luk has constructed
\cite{jL2013} a class of solutions to the Einstein-vacuum equations
without symmetry assumptions such that the boundary of the maximal development
contains \emph{a null portion} along which the metric
remains $C^0$ but its Christoffel symbols blow-up in $L^2$.
His examples, which are stable in a certain sense, contradict the 
BKL vision of spacelike singularities.
Moreover, outside of the class of spatially homogeneous solutions,
there are currently no examples of Einstein-vacuum solutions 
that are rigorously known to exhibit the kind of oscillatory behavior
near a singularity conjectured in \cite{vBiKeL1970}.
In total, given the present-day state of knowledge,
it is not clear to what extent the vision of BKL
is realized in Einstein-vacuum solutions.

In the opposite direction,
we recall the aforementioned work of
Belinsky and Khalatnikov \cite{vBiK1972},
who were the first to suggest
the existence of non-spatially homogeneous
\emph{approximately monotonic} singular solutions to the Einstein-scalar field system.
In a later article \cite{jB1978}, Barrow argued that fluids verifying the equation of state $p = c_s^2 \rho$ 
(where $c_s$ is a non-negative constant)
should induce a similar effect if and only if $c_s = 1;$ he referred to the 
mollifying effect of a stiff fluid as \emph{quiescent cosmology}.
The first rigorous construction of such solutions 
without symmetry was provided by the aforementioned work of Andersson and Rendall \cite{lAaR2001}.
They constructed a family of spatially analytic solutions to the Einstein-scalar field
and Einstein-stiff fluid systems
that have Big Bang singularities and that exhibit approximately monotonic behavior near them.
Their proof was based on a two-step process. In the first step, they constructed
a family of spatially analytic solutions to VTD equations, which were obtained by throwing away the spatial derivative terms
from the Einstein-matter equations.\footnote{In \cite{lAaR2001}, the Einstein equations were formulated
relative to a Gaussian coordinate system in which the spacetime metric
takes the form $\gfour = - dt^2 + g_{ab} dx^a dx^b$.} 
In the second step, they constructed spatially analytic solutions
to the Einstein-matter equations by writing the true solution as a
solution to the VTD equations plus error terms that were shown, by Fuchsian analysis, to go to $0$ as $t \downarrow 0$.
The results of \cite{lAaR2001} were extended to higher dimensions and other matter models in \cite{tDmHrAmW2002}.
The family of solutions constructed in this fashion
is large in the sense that its number of degrees of freedom 
coincides with the number of free functions
in the Einstein initial data. However, since the results
are based on prescribing the asymptotics near the Big Bang 
within the class of spatially analytic solutions, 
they are not true stable singularity formation results. 
In particular, the work left open the possibility that the
map from the set of spatially analytic asymptotic states realized in \cite{lAaR2001}
to the set of Cauchy data (say at $t=1$) 
might be highly degenerate in the sense that it 
cannot be extended as a map (with reasonable properties)
between more physically relevant function spaces such as
Sobolev spaces; see, however, the discussion in Sect.\ \ref{S:ENDSTATESCOMMENTS}.
The primary ingredient needed to upgrade
the work of Andersson and Rendall to a true stable singularity formation result
corresponding to solving a regular Cauchy problem
is a suitable statement of linear stability,
strong enough to control the nonlinear terms.
Our linear stability result (Theorem~\ref{T:CMCLINEARSTABILITY}) provides this missing ingredient 
in the near-FLRW case.

\subsection{Comments on other matter models, higher dimensions, and the analysis of far-from-FLRW-solutions}
\label{SS:OTHERMATTERMODELSECT}
The scalar field and stiff fluid
matter models have two important properties, described in the next paragraph, 
that allow us to prove the stability results of the present paper
and those of \cite{iRjS2014b}.
We anticipate that other matter models with 
similar properties might allow for proofs of similar results. 
Readers can consult \cite{tDmHrAmW2002} for a class of candidate matter models, 
where the authors used Fuchsian techniques to construct families of non-spatially homogeneous
solutions with Big Bang singularities to various nonlinear Einstein-matter systems.
We note that the authors' construction also applied to the 
Einstein-vacuum equations in $10$ or more spatial dimensions
and thus yielded rigorous examples of the 
non-oscillatory and non-spatially-homogeneous
solutions that were heuristically argued to exist
in \cite{jDmHpS1985}. The existence of these spatially inhomogeneous Kasner-like vacuum solutions 
is relevant for the discussion three paragraphs below.

The first important property of the scalar field and stiff fluid matter models is 
simply that they allow for the existence of spatially isotropic
and nearly spatially isotropic Kasner solutions to the Einstein-matter system.
We recall that nearly spatially isotropic Kasner solutions 
have second fundamental forms with trace-free parts that blow up at the rate $\tracefreeparameter t^{-1}$,
where $\tracefreeparameter$ is small (see \eqref{E:KASNERTRACEFREEPARTOFSECONDFUNDAMENTALFORMSIZE}), 
and that this blowup-rate ultimately leads to the mild energy blowup-rate 
\eqref{E:ROUGHVERSIONENERGYESTIMATESNODERIVATIVESLOSS}.
We now contrast this against the case of the Einstein-vacuum equations in three spatial dimensions.
In vacuum, we have $A=0$ in \eqref{E:KASNERHAMILTONIANCONSTRAINT}
and thus \eqref{E:TRACEFREEPARAMETER} and \eqref{E:KASNERTRACEFREEPARTOFSECONDFUNDAMENTALFORMSIZE}
imply that the trace-free part of the Kasner second fundamental form
blows up at the rate $\sqrt{\frac{2}{3}}t^{-1}$. 
Combining this blowup-rate with the methods of this paper, one would only be able to derive 
energy estimates in the spirit of \eqref{E:ROUGHVERSIONENERGYESTIMATESNODERIVATIVESLOSS}
showing that the energy blows up like $t^{-c \sqrt{2/3}}$ as $t \downarrow 0$.
Unfortunately, such a bound for the energy does not appear to be useful
for controlling error terms in the nonlinear problem.
In fact, an energy blowup-rate of $t^{-c \sqrt{2/3}}$ seems to be insufficient
even for proving linear stability results of the type proved 
in Theorem~\ref{T:ROUGHVERSIONLINEARSTABILITY};
see the next paragraph for further discussion on this point.
The second important property of the scalar field matter model
is that \emph{its time derivatives do not appear} in the evolution equations 
for the metric
(equations \eqref{E:PARTIALTGCMC}-\eqref{E:PARTIALTKCMC})
nor in the elliptic PDE for the lapse (equation \eqref{E:LAPSE}).
This property, which \emph{for the scalar field matter model requires the assumption of three spatial dimensions}, 
is closely tied to the fact that the characteristics 
of the scalar field agree with those of the Einstein field equations
(that is, the characteristics for the Einstein-scalar field system 
are precisely the null hypersurfaces relative to $\gfour$).
This property plays a critically important role in allowing us to prove 
our stability results because to close our estimates, 
we rely on the fact that spatial derivatives
are small compared to time derivatives, at least at the lower derivative levels.
Although the stiff fluid matter model exhibits similar good properties,
fluids verifying the sub-stiff equation of state 
$p = c_s^2 \rho$ with $0 < c_s < 1$ do not enjoy these properties, 
even if the fluid is irrotational 
(roughly because the sound cones are necessarily distinct from the gravitational null cones
in the sub-stiff case).
This is consistent with the oscillatory behavior for solutions to the Euler-Einstein system
observed by Ringstr\"{o}m \cite{hR2001} 
in the Bianchi IX symmetry class when $0 < c_s < 1$
(see the discussion in Subsect.\ \ref{SS:PREVIOUSWORK}).

We now further explain some of the obstacles to deriving stability results 
for the Einstein-scalar field system in the far-from-FLRW case
(e.g., when $\tracefreeparameter$ is no longer small in the linear problem).
Although our methods could be used to obtain 
estimates for solutions to the linearized systems,
they do not seem to be strong enough to allow for a proof of 
linear stability or stable blowup in the nonlinear problem.
Our goal is to highlight why, for parameters corresponding to
far-from-spatially isotropic Kasner backgrounds, 
our methods do \emph{not} allow us to prove that 
$|t \LinSecondFund_{\ j}^i|$ remains uniformly bounded over the interval $t \in (0,1]$,
where $\LinSecondFund$ is the linearized second fundamental form variable.
In the nonlinear problem, the same difficulty would arise,
and it is tantamount to not even being able to recover 
(in the context of a bootstrap argument) 
the blowup-rate of $t^{-1}$
exhibited by the trace-free part of the second fundamental form of a Kasner metric.
In the nonlinear problem, such a bad estimate would lead (by a Gronwall estimate) 
to energy estimates that are drastically worse than \eqref{E:ROUGHVERSIONENERGYESTIMATESNODERIVATIVESLOSS}: 
the top-order energies would be allowed to blow up faster than 
$data \times t^{-C}$ for all constants $C > 0$,
where ``$data$'' denotes a term that is controlled by the initial data.
Consequently, our entire approach to linear and nonlinear stability would break down,
and in the nonlinear problem,
we would not even be able to show that 
the solution exists near $\lbrace t = 0 \rbrace$.
To explain the source of the difficulty, we first explain how
we prove the uniform boundedness of
$|t \LinSecondFund_{\ j}^i|$
in the nearly spatially isotropic case.
The main idea is that we can use the evolution equation for
$\LinSecondFund_{\ j}^i$ (see \eqref{E:LINEARIZEDKEVOLUTION})
and the mildly singular energy estimates of Theorem~\ref{T:L2MILDENERGYBLOWUPCMCGAUGE}
to prove (see \eqref{E:PARTIALTSECONDFUNDHNMINUSONE}) the estimate 
$|\partial_t(t \LinSecondFund_{\ j}^i)| \lesssim data \times t^{-1/3 - c \tracefreeparameter}$.
The key point is that the right-hand side is integrable in time over the time interval $(0,1]$
for $\tracefreeparameter$ small. That is, if $\tracefreeparameter$ is small,
then we can express $t \LinSecondFund_{\ j}^i$
as an integral of
$\partial_t(t \LinSecondFund_{\ j}^i)$ and use the time-integrability
to obtain the desired bound
$|t \LinSecondFund_{\ j}^i| \lesssim data$.
In contrast, if $\tracefreeparameter$ is large,
then the bound $|\partial_t(t \LinSecondFund_{\ j}^i)| \lesssim data \times t^{-1/3 - c \tracefreeparameter}$
does \emph{not} imply the time-integrability of
$|\partial_t(t \LinSecondFund_{\ j}^i)|$,
and thus our approach does not work in its current form.

Finally, we make some comments on extending our stability results to higher dimensions. 
For brevity, we limit our discussion to the Einstein-scalar field and
Einstein-vacuum systems. For the Einstein-scalar field system in any number of spatial dimensions, 
we expect that the proofs of our linear and nonlinear stability results 
(in the near-FLRW setting)
would go through without any significant changes. Moreover, in the case of 
the Einstein-vacuum equations in $n$ spatial dimensions with
$n$ sufficiently large, there exists a class of Kasner solutions for which
it might be possible to prove sufficiently strong versions of linear stability 
(similar to the linear stability results of the present paper),
suitable for deriving nonlinear stable blow-up results like those proved in \cite{iRjS2014b}. 
As we mentioned above, the existence of (but not the stability of)
non-spatially homogeneous solutions to the Einstein-vacuum equations with Big Bang singularities
has already been shown in \cite{tDmHrAmW2002} when $n \geq 10$.
We now provide some motivation for our speculation on the existence of stable Einstein-vacuum singularities.
First, we note that it is possible to derive an approximate monotonicity 
identity for the linearized (around a vacuum Kasner solution) 
Einstein-vacuum equations that parallels 
the results for the Einstein-scalar field model
provided by 
Theorems~\ref{T:CMCMONOTONICITYID} and \ref{T:PARABOLICMONOTONICITYID}.
More precisely, the approximate monotonicity identities of
Theorems~\ref{T:CMCMONOTONICITYID} and \ref{T:PARABOLICMONOTONICITYID}
remain valid in the vacuum case;
just set the scalar field and its amplitude $A$ equal to $0$ in the equations.
However, one faces the difficulty that in vacuum,
the trace-free part of the Kasner second fundamental form has
large size 
$
(1 - 1/n)^{1/2} t^{-1}
$, 
a fact that follows from the vacuum Kasner exponent constraints:
\begin{align} \label{E:VACUUMKASNEREXPONENTCONSTRAINTSNDIMENSIONS}
&\sum_{i=1}^n q_i = 1,
&
&
\sum_{i=1}^n q_i^2 = 1.
\end{align}
The expression 
$
(1 - 1/n)^{1/2} t^{-1}
$ 
suggests that the energy blowup-rate for solutions to the linearized 
Einstein-vacuum equations becomes worse as $n \to \infty$, which
seems to be an obstacle to proving stable blowup.
Nonetheless, it might be possible to overcome this difficulty, at least in a certain regime.
The main idea is the following observation: the proof of the energy blowup-rate 
can be somewhat sharpened compared to the proof that leads 
to inequality \eqref{E:ROUGHVERSIONENERGYESTIMATESNODERIVATIVESLOSS}.
More precisely, many of the error terms that contribute to 
the blowup-rate of the energy can be controlled by 
the \emph{eigenvalues} of the second fundamental form\footnote{Specifically, we mean the version of the second
fundamental form with one index up and one down.} 
and its trace-free part.
In particular, a more careful analysis, 
not carried out in this article,\footnote{See the proof of inequality \eqref{E:N2POINTWISE}
regarding the role that the eigenvalues play in deriving energy estimates.} 
shows that most error terms in the energy estimates 
that involve the second fundamental form can cause the energies
energies to blow up at worst like
$\displaystyle \lesssim data \times t^{-c \upalpha}$, 
where $c > 0$ is a universal constant independent of $n$
and $\displaystyle \upalpha := \max_{i=1}^n \lbrace |q_i| \rbrace$.
Moreover, it is not difficult to see that there exists a family of vacuum Kasner solutions such that
$\displaystyle \upalpha \downarrow 0$ as $n \to \infty$.
However, there are a few anomalous terms in the energy estimates that 
could in principle lead to a blowup-rate that is worse than $data \times t^{-c \upalpha}$,
and these terms are therefore a potential obstacle for proving stability.
If one were able to sufficiently control the anomalous terms, then
we expect that one would be able to prove that Kasner solutions with $\upalpha$ 
sufficiently small\footnote{One can think of $1/n$ as a parameter that 
one would like to choose to be sufficiently small to close the estimates.}
are linearly and nonlinearly stable in a neighborhood of the Big Bang by using the methods of 
the present article and those of \cite{iRjS2014b}.
We note that for fixed large $n$, only a small portion of the vacuum Kasner solutions could in principle be 
shown to be stable through this approach. 
If the argument goes through, then it
would also be interesting to discover the threshold value of $n$ beyond which
the stable Kasner solutions exist; it is conceivable that the threshold value $n \geq 10$
from \cite{tDmHrAmW2002}, which is sufficient for the \emph{existence} of non-spatially homogeneous solutions,
is not large enough to imply their \emph{stability}.

\subsection{A related instance in which monotonicity led to global results}
\label{SS:MONOTONICITYINRELATEDCONTEXTS}
We now describe the work \cite{lAvM2011} by Andersson and Moncrief,
in which they proved global existence results for the Einstein-vacuum equations 
using techniques that have some overlap 
with the ones used in the present article and in \cite{iRjS2014b}.
In the next paragraph, we compare and contrast the approach of
\cite{lAvM2011} with that of the present work. We first describe their
result in more detail.
In \cite{lAvM2011}, the authors proved a future-global existence theorem
(that is, in the expanding direction) for perturbations
of spatially compact versions of 
FLRW-like vacuum spacetimes in $1 + m$ dimensions for $m \geq 3$.
The background solutions were of the 
``continuously self-similar'' form 
$
\displaystyle
-dt^2 + \frac{t^2}{m^2} \gamma
$, 
where the spatial metric $\gamma$ verifies the
\emph{Einstein condition}
$
\displaystyle
\Ric = - \frac{m-1}{m^2} \gamma
$, where $\Ric$ is the Ricci curvature of $\gamma$.
Readers can also consult \cites{lAvM2004,mR2009} for proofs of the results of 
\cite{lAvM2011} in the case $m=3$, where
unlike in \cite{lAvM2011} and the present article,
the latter two works rely on curvature-based energies
constructed from the Bel-Robinson tensor.
Andersson and Moncrief made some technical assumptions
on $\gamma$, notably one\footnote{The authors also made additional assumptions. 
Specifically, they assumed that either the moduli space of $\gamma$ is trivial or
that $\gamma$ is contained in an integrable moduli space of Einstein structures.} 
that they called being ``stable.''
This condition states that the eigenvalues of the operator 
$
\displaystyle
h_{ij} \rightarrow 
-\Delta_{\gamma} h_{ij} 
- 2 R_{i \ j}^{\ a \ b} h_{ab},
$
which appears in linearized versions of the evolution equations,
are non-negative. Here, $h_{ij}$ is a symmetric type $\binom{0}{2}$ tensor
and $R_{i \ j}^{\ a \ b}$ is the Riemann curvature tensor of $\gamma$.
In our proof of nonlinear stable blowup \cite{iRjS2014b},
terms like $2 R_{i \ j}^{\ a \ b} h_{ab}$ also appear, but we 
are able to treat them as lower-order nonlinear error terms.
That is,  we do not have to work with combinations
such as $-\Delta_{\gamma} h_{ij} - 2 R_{i \ j}^{\ a \ b} h_{ab}$;
see Subsect.\ \ref{SS:NONLINEARERRORINTEGRALS} for an overview of 
how we handle nonlinear error terms.
In \cite{lAvM2011}, the authors also proved that a rescaled version of the perturbed spatial metric
converges to an element of the moduli space
of $\gamma$. In the case $m=3$, 
the Einstein condition implies that 
$\gamma$ has constant negative sectional curvature,
and Mostow's rigidity theorem implies that 
the moduli space is trivial. Hence, the rescaled solution in fact converges to the background solution.
In contrast, in our nonlinear results \cite{iRjS2014b}
and in the linear convergence results of Theorem~\ref{T:CMCLINEARSTABILITY}, 
the family of possible end states 
(corresponding to the asymptotic behavior of the solution near the Big Bang)
is much larger. In the nonlinear problem, the family of course includes members of the Kasner family \eqref{E:KASNER}.
However, as we described below Theorem~\ref{T:ROUGHVERSIONLINEARSTABILITY}, 
even for the linear problem,
it also includes\footnote{More accurately, we do not rigorously prove
that the family includes $x$-dependent end states. 
However, we recall here the work \cite{lAaR2001} described in Subsect.\ \ref{SS:PREVIOUSWORK}, 
in which Andersson and Rendall constructed solutions with end states that are analytic in $x$ 
(with non-trivial $x$ dependence). Based on their work, 
our results here, and the results of \cite{iRjS2014b},
we expect that it might be possible to remove the analyticity assumption 
(perhaps only in the near-FLRW regime), 
which would yield new information about the set of \emph{achievable} end states; 
see also Sect.\ \ref{S:ENDSTATESCOMMENTS}.} 
a much larger family of ``$x$-dependent'' Kasner-like states.
As Andersson and Moncrief stated in \cite{lAvM2011}, 
their work is closely related to the Fisher--Moncrief work \cite{aFvM2002}, 
in which the authors carried out the linear stability analysis.
Specifically, 
in \cite{aFvM2002},
Fischer-Moncrief found a reduced Hamiltonian description of the 
Einstein-vacuum flow 
(see also the works 
\cites{aFvM1994,aFvM2000a,aFvM2000b,aFvM2001,aFvM1997,aFvM2002,vM1990,vM1989b,lAvMaT1997}
for related results)
that applied to a family of spacetimes containing CMC hypersurfaces.
Their Hamiltonian was the \emph{volume} functional of constant-time hypersurfaces $\Sigma_t$,
where, as in the present article, the $\Sigma_t$ were CMC hypersurfaces.
They showed that the Hamiltonian is monotonic along the flow 
of their reduced equations, 
that its critical points are precisely the continuously self-similar metrics
$
\displaystyle
-dt^2 + \frac{t^2}{m^2} \gamma
$ 
mentioned above
(where $\gamma$ verifies the Einstein condition),
and, crucially for the linear stability analysis 
(on which global existence result \cite{lAvM2011} relied), 
that its second variation is positive definite
when $\gamma$ is stable in the sense described above. 

The analysis in \cite{lAvM2011} has some features in common with the present work,
including its reliance on CMC foliations to reveal monotonicity
and its focus on studying the solution at the level of the metric.
Moreover, the energies for the spatial metric and second fundamental form
defined in \cite{lAvM2011}*{Section~7}
are reminiscent of the metric energies that we use
in the present article (see \eqref{E:MODELMETRICENERGY} and \eqref{E:NONLINEARMETRICENERGY}). 
However, the energy identities of \cite{lAvM2011}*{Section~7}
do not involve subtle cancellations 
of the type that we observe in deriving 
the approximate monotonicity identities
of Props.\ \ref{P:ENERGYESTIMATELAPSEANDSCALARFIELD} and \ref{P:PARABOLICENERGYESTIMATELAPSEANDSCALARFIELD}.
A related fact is that in \cite{lAvM2011}, 
Andersson and Moncrief were able to close their proof 
by bounding the lapse in terms
of the second fundamental form via standard elliptic estimates.
In contrast, to control the lapse,
we rely on the approximate monotonicity identities
and the AVTD-type estimates described in Step (4) of Subsect.\ \ref{SS:FIVESTEPS}.
Another notable difference is that unlike our work here,
the results of \cite{lAvM2011} are based on spatial harmonic coordinates; 
see Remark~\ref{R:NONEEDFORSPATIALHARMONIC} for additional comments
about those coordinates.

\section{Notation and conventions} \label{S:NOTATION}
In this section, we summarize some notation and conventions that we use throughout the article.

\subsection{Indices} \label{SS:INDICES}
Greek ``spacetime'' indices $\alpha, \beta, \cdots$ take on the values $0,1,2,3$, while Latin ``spatial'' indices $a,b,\cdots$ 
take on the values $1,2,3$. Repeated indices are summed over (from $0$ to $3$ if they are Greek, and from $1$ to $3$ if they are Latin). 
We use the same conventions for primed indices such as $a'$ as we do for their non-primed counterparts.
When working with the nonlinear equations
in CMC-transported spatial coordinates gauge or the parabolic lapse gauges,
spatial indices are lowered and raised with the Riemannian $3$-metric $g_{ij}$ and its inverse $g^{ij}$. 
When working with the linearized equations, we will always explicitly raise and lower indices 
with the background Kasner $3$-metric $\gKasner_{ij}$ and its inverse $\gKasner^{ij}$.

\subsection{Spacetime tensorfields and \texorpdfstring{$\Sigma_t$-}{constant-time hypersurface-}tangent tensorfields}
We denote spacetime tensorfields $\Tfour_{\nu_1 \cdots \nu_n}^{\ \ \ \ \ \ \mu_1 \cdots \mu_m}$ in bold font. 
In the nonlinear equations, 
we denote the $\gfour$-orthogonal projection of $\Tfour_{\nu_1 \cdots \nu_n}^{\ \ \ \ \ \ \mu_1 \cdots \mu_m}$ 
onto the constant-time hypersurfaces $\Sigma_t := \lbrace (s,x) \in \mathbb{R} \times \mathbb{T}^3 \ | \ s = t \rbrace$ 
in non-bold font: 
$T_{b_1 \cdots b_n}^{\ \ \ \ \ \ a_1 \cdots a_m}$. 
We also denote general $\Sigma_t$-tangent tensorfields in non-bold font.

\subsection{Coordinate systems and differential operators} \label{SS:COORDINATES}
We often work in a fixed standard local coordinate system $(x^1,x^2,x^3)$ on $\mathbb{T}^3$. The vectorfields 
$\partial_j := \frac{\partial}{\partial x^{j}}$ are globally well-defined even though the coordinates themselves are not. 
Hence, in a slight abuse of notation, we use $\lbrace \partial_1, \partial_2, \partial_3 \rbrace$ to denote the globally defined vectorfield frame. We denote the corresponding dual frame by $\lbrace dx^1, dx^2, dx^3 \rbrace$. 
As we described in Subsect.\ \ref{SS:INITIALVALUEANDCMC}, 
the spatial coordinates can be transported along the unit normal to $\Sigma_t$, 
thus producing a local coordinate system $(x^0,x^1,x^2,x^3)$ on manifolds-with-boundary of the form 
$(T,1] \times \mathbb{T}^3$, and we often write $t$ instead of $x^0$. 
The corresponding vectorfield frame on $(T,1] \times \mathbb{T}^3$ 
is $\lbrace \partial_0, \partial_1, \partial_2, \partial_3 \rbrace$, and the corresponding dual frame is
$\lbrace dx^0, dx^1, dx^2, dx^3 \rbrace$.  Relative to this frame, the Kasner metrics $\mathring{\gfour}$ are of the form 
\eqref{E:KASNER}. The symbol $\partial_{\mu}$ denotes the frame derivative $\frac{\partial}{\partial x^{\mu}}$, and we often write $\partial_t$ instead of $\partial_0$ and $dt$ instead of $dx^0$. Most of our 
equations and estimates are stated relative to the frame 
$\big\lbrace \partial_{\mu} \big\rbrace_{\mu = 0,1,2,3}$ and dual frame 
$\big\lbrace dx^{\mu} \big\rbrace_{\mu = 0,1,2,3}$.

We use the notation $\partial f$ to denote the \emph{spatial coordinate} gradient of the function $f$. Similarly,
if $\dlap$ is a $\Sigma_t$-tangent one-form, 
then $\partial \dlap$ denotes the $\Sigma_t$-tangent type $\binom{0}{2}$
tensorfield with components $\partial_i \dlap_j$ relative to the frame described above.

If $\vec{I} = (n_1,n_2,n_3)$ is a triple of non-negative integers, then we define the \emph{spatial} multi-indexed 
differential operator $\partial_{\vec{I}}$ by 
$\partial_{\vec{I}} := \partial_1^{n_1} \partial_2^{n_2} \partial_3^{n_3}$. The notation 
$|\vec{I}| := n_1 + n_2 + n_3$ denotes the order of $\vec{I}$. 

Throughout, $\Dfour$ denotes the Levi--Civita connection of $\gfour$.  We write 
\begin{align} \label{E:SPACETIMECOV}
	\Dfour_{\nu} \Tfour_{\nu_1 \cdots \nu_n}^{\ \ \ \ \ \ \mu_1 \cdots \mu_m} = 
		\partial_{\nu} \Tfour_{\nu_1 \cdots \nu_n}^{\ \ \ \ \ \ \mu_1 \cdots \mu_m} + 
		\sum_{r=1}^m \Chfour_{\nu \ \alpha}^{\ \mu_r} \Tfour_{\nu_1 \cdots \nu_n}^{\ \ \ \ \ \ \mu_1 \cdots \mu_{r-1} \alpha \mu_{r+1} 
			\cdots \mu_m} 
		- 
		\sum_{r=1}^n \Chfour_{\nu \ \nu_{r}}^{\ \alpha} 
		\Tfour_{\nu_1 \cdots \nu_{r-1} \alpha \nu_{r+1} \cdots \nu_n}^{\ \ \ \ \ \ \ \ \ \ \ \ \ \ \ \ \ \ \mu_1 \cdots \mu_m}
\end{align} 
to denote a component of the covariant derivative of a tensorfield $\Tfour$ 
(with components $\Tfour_{\nu_1 \cdots \nu_n}^{\ \ \ \ \ \ \mu_1 \cdots \mu_m}$) defined on
$(T,1] \times \mathbb{T}^3$. 
The Christoffel symbols of $\gfour$, which we denote by $\Chfour_{\mu \ \nu}^{\ \lambda}$, are defined by
\begin{align}
\Chfour_{\mu \ \nu}^{\ \lambda} & := 
		\frac{1}{2} (\gfour^{-1})^{\lambda \sigma} 
		\left\lbrace
			\partial_{\mu} \gfour_{\sigma \nu} 
			+ \partial_{\nu} \gfour_{\mu \sigma} 
			- \partial_{\sigma} \gfour_{\mu \nu} 
		\right\rbrace.
			\label{E:FOURCHRISTOFFEL}
\end{align}

We use similar notation to denote the covariant derivative of a $\Sigma_t$-tangent tensorfield $T$ 
(with components $T_{b_1 \cdots b_n}^{\ \ \ \ \ a_1 \cdots a_m}$) with respect to the Levi--Civita connection $\nabla$ 
of the Riemannian metric $g$. The Christoffel symbols of $g$, which we denote by $\Gamma_{j \ k}^{\ i}$,
are defined by 
\begin{align} \label{E:THREECHRISTOFFEL}
	\Gamma_{j \ k}^{\  i} 
	:= \frac{1}{2} g^{ia} 	
			\left\lbrace
				\partial_j g_{ak} 
				+ \partial_k g_{ja} 
				- \partial_a g_{jk}
			\right\rbrace.
\end{align}

\subsection{Integrals and \texorpdfstring{$L^2$}{square integral} norms} \label{SS:L2NORMS}
Throughout this subsection, $f$ denotes a scalar function defined on the hypersurface 
$\Sigma_t = \lbrace (s,x) \in \mathbb{R} \times \mathbb{T}^3 \ | \ s = t \rbrace$.
We define
\begin{align} \label{E:SIGMATINTEGRALDEF}
	\int_{\Sigma_t}
		f
	\, dx
	:= \int_{\mathbb{T}^3} 
				f(t,x^1,x^2,x^3) 
			\, dx.
\end{align}
Above, the notation $``\int_{\mathbb{T}^3} f \, dx"$ denotes the integral of $f$ over $\mathbb{T}^3$ with respect to the measure corresponding to the volume form of the \emph{standard Euclidean metric $\Euc$} on $\mathbb{T}^3$, which has the components
$\Euc_{ij} = \mbox{\upshape diag}(1,1,1)$ relative to the coordinate frame described 
in Subsect.\ \ref{SS:COORDINATES}.
All of our Sobolev norms are built out of the (spatial) $L^2$ norms of scalar quantities 
(which may be the components of a tensorfield). 
We define the standard $L^2$ norm $\| \cdot \|_{L^2}$ over $\Sigma_t$ as follows:
\begin{align} \label{E:SOBOLEVNORMDF}
	\left\| f \right\|_{L^2}
	 = \left\| f \right\|_{L^2}(t)
	 := 
	\left(
	\int_{\Sigma_t}
		f^2
	\, dx
	\right)^{1/2}.
\end{align}
For integers $N \geq 0$, we define the standard $H^N$ norm $\| \cdot \|_{H^N}$ over $\Sigma_t$ as follows:
\begin{align} \label{E:HIGHERSOBOLEVNORMDF}
	\left\| f \right\|_{H^N}
	 = \left\| f \right\|_{H^N}(t)
	 := 
		\left(
			\sum_{|\vec{I}| \leq N}
			\left\| \partial_{\vec{I}} f \right\|_{L^2}^2(t)
		\right)^{1/2}.
\end{align}

\subsection{Constants} \label{SS:RUNNINGCONSTANTS}
We use $C$ and $c$ to denote positive numerical constants that are free to vary from line to line.  
If $A$ and $B$ are two quantities, then we often write 
\begin{align}
	A \lesssim B
\end{align}
to indicate that ``there exists a constant $C > 0$ such that $A \leq C B$.'' 
We write $A = \mathcal{O}(B)$ to indicate that $|A| \leq C B$.
Some of the constants $C$ and $c$ in our estimates are allowed to depend on the parameter $N$ which, 
roughly speaking, represents the number of times that the equations have been differentiated
with spatial derivatives.

\section{The Einstein-scalar field equations in CMC-transported spatial coordinates and the linearized equations}
\label{S:CMCFORMULATIONOFEINSTEIN}
In this section, we provide a standard formulation of the Einstein-scalar field equations
relative to CMC-transported spatial coordinates. We then linearize the equations around
a Kasner solution \eqref{E:KASNER}.

\subsection{Preliminary discussion}
We begin by stating some basic facts concerning
the formulation of the equations.
The fundamental unknowns are $g,\SecondFund,n$, and $\phi$,
where $g$ and $n$ are as in \eqref{E:GFOURCMCTRANSPORTED}, and $\SecondFund$ is the second fundamental form of
the hypersurfaces $\Sigma_t$. More precisely, the $\Sigma_t$-tangent type $\binom{0}{2}$ tensorfield 
$\SecondFund$
is defined by requiring that following relation 
holds for all vectorfields $X,Y$ tangent to $\Sigma_t$:
\begin{align} \label{E:SECONDFUNDDEF}
	\gfour(\Dfour_X \Nml, Y) = - \SecondFund(X,Y),
\end{align}
where $\Dfour$ is the Levi--Civita connection of $\gfour$ and
\begin{align}
	\Nml 
	& := n^{-1} \partial_t
\end{align}
is the future-directed normal to $\Sigma_t$.
It is a standard fact that $\SecondFund$ is symmetric:
\begin{align}
	 \SecondFund(X,Y) =  \SecondFund(Y,X).
\end{align}
Let $\nabla$ denote the Levi--Civita connection of $g$.
The action of the Levi--Civita connection $\Dfour$ of $\gfour$ can be decomposed into
the action of $\nabla$ and $\SecondFund$ as follows:
\begin{align} \label{E:DDECOMP}
	\Dfour_X Y = \nabla_X Y - \SecondFund(X,Y)\Nml.
\end{align}

\begin{remark}[\textbf{The mixed form of $\SecondFund$ verifies equations with favorable structure and the meaning of} $\partial_{\alpha} \SecondFund_{\ j}^i$] 
\label{R:ALWAYSMIXED}
	When working with the components of $\SecondFund$, \emph{we will always write it in the mixed form 
	$\SecondFund_{\ j}^i := g^{ia} \SecondFund_{aj}$ 
	with the first index upstairs and the second one downstairs.} 
	The reason is that the nonlinear evolution and constraint equations verified by the components 
	$\SecondFund_{\ j}^i$ have a more favorable structure than 
	the corresponding equations verified by $\SecondFund_{ij}$.
	For this reason, throughout the article, we use the notation 
	$\partial_{\alpha} \SecondFund_{\ j}^i := \partial_{\alpha} (\SecondFund_{\ j}^i)$.
\end{remark}

\subsection{The Einstein-scalar field equations in CMC-transported spatial coordinates}
In the following proposition, we formulate the Einstein-scalar field
equations \eqref{E:EINSTEINSF}-\eqref{E:WAVEMODEL}
relative to CMC-transported spatial coordinates.

\begin{proposition}[\textbf{The Einstein-scalar field equations in CMC-transported spatial coordinates}]
\label{P:EINSTEINSFCMC}
In CMC-transported spatial coordinates normalized by 
\begin{align}\label{E:CMCNORMALIZATIONCHOICE}
	\SecondFund_{	\ a}^a(t,x) = - t^{-1},
\end{align}
the Einstein-scalar field system comprises the following equations.

The \textbf{Hamiltonian and momentum constraint equations} are respectively:
\begin{subequations}
\begin{align}
		R - \SecondFund_{\ b}^a \SecondFund_{\ a}^b + \underbrace{(\SecondFund_{\ a}^a)^2}_{t^{-2}} 
		& = \overbrace{(n^{-1} \partial_t \phi)^2 + g^{ab} \nabla_a \phi \nabla_b \phi}^{2 \Tfour(\Nml,\Nml)}, \label{E:HAMILTONIAN} \\
		\nabla_a \SecondFund_{\ i}^a - \underbrace{\nabla_i \SecondFund_{\ a}^a}_0 & = 
		\underbrace{- n^{-1} \partial_t \phi \nabla_i \phi}_{- \Tfour(\Nml,\partial_i)}, \label{E:MOMENTUM}
\end{align}
\end{subequations}
where $R$ denotes the scalar curvature of $g_{ij}$.

The \textbf{metric evolution equations} are:
\begin{subequations}
\begin{align}
	\partial_t g_{ij} & = - 2 n g_{ia}\SecondFund_{\ j}^a, \label{E:PARTIALTGCMC} \\
	\partial_t \SecondFund_{\ j}^i & = - g^{ia} \nabla_a \nabla_j n
		+ n \Big\lbrace \Ric_{\ j}^i + \underbrace{\SecondFund_{\ a}^a}_{-t^{-1}} \SecondFund_{\ j}^i 
			\underbrace{- g^{ia} \nabla_a \phi \nabla_j \phi}_{- T_{\ j}^i + (1/2)\ID_{\ j}^i \Tfour} 
			\Big\rbrace,  \label{E:PARTIALTKCMC}
\end{align}
\end{subequations}
where $\Ric_{\ j}^i$ denotes the Ricci curvature of $g_{ij}$
(see \eqref{E:LITTLEGRICCIONEUP}),
$\ID_{\ j}^i = \mbox{\upshape diag}(1,1,1)$ denotes the identity transformation,
and $\Tfour := (\gfour^{-1})^{\alpha \beta} \Tfour_{\alpha \beta}$ denotes the trace of the energy-momentum tensor \eqref{E:EMTSCALARFIELD}.

The \textbf{volume form factor} $\sqrt{\mbox{\upshape det} g}$
verifies the auxiliary equation\footnote{This equation, which we do not use in the present article, 
is implied by \eqref{E:PARTIALTGCMC} and 
the CMC condition $\SecondFund_{\ a}^a = - t^{-1}$.}
\begin{align} \label{E:VOLUMEFORMEVOLUTIONEQUATION}
	\partial_t \ln \left(t^{-1} \sqrt{\mbox{\upshape det} g} \right)
	& = \frac{n - 1}{t}.
\end{align}

The \textbf{scalar field wave equation} is:
\begin{align} \label{E:SCALARFIELDCMC}
	\overbrace{- n^{-1} \partial_t(n^{-1} \partial_t \phi)}^{- \Dfour_{\Nml} \Dfour_{\Nml} \phi} + g^{ab} \nabla_a \nabla_b \phi 
		& = \overbrace{\frac{1}{t} n^{-1} \partial_t \phi}^{- \SecondFund_{	\ a}^a \Dfour_{\Nml} \phi} 
		- n^{-1} g^{ab} \nabla_a n \nabla_b \phi.	
\end{align}

The \textbf{elliptic lapse equation}\footnote{Below, when we linearize the equations, we will view $n-1$ as a linearly small quantity.
Hence, we prefer to write \eqref{E:LAPSE} as an equation in $n-1$.} is:
\begin{align} \label{E:LAPSE}
	g^{ab} \nabla_a \nabla_b (n - 1) 
		& = (n - 1) \Big\lbrace R + \underbrace{(\SecondFund_{\ a}^a)^2}_{t^{-2}} - g^{ab} \nabla_a \phi \nabla_b \phi
		\Big\rbrace \\
	& \ \ + R - g^{ab}\nabla_a \phi \nabla_b \phi
			+ \underbrace{(\SecondFund_{\ a}^a)^2 - \partial_t (\SecondFund_{\ a}^a)}_{0}. \notag
\end{align}
The gauge condition \eqref{E:CMCNORMALIZATIONCHOICE} 
and the constraint equations \eqref{E:HAMILTONIAN}-\eqref{E:MOMENTUM}
are preserved by the flow of the remaining equations if they are verified by the data.

\end{proposition}

\begin{proof}[\textbf{Proof of Prop.\ \ref{P:EINSTEINSFCMC}}]
	It is well-known that the constraint equations \eqref{E:HAMILTONIAN}-\eqref{E:MOMENTUM} 
	follow from \eqref{E:EINSTEINSF}; see, for example, \cite{rW1984}*{Chapter~10}, and
	note that our $\SecondFund$ has the opposite sign convention of the one in \cite{rW1984}.	
	It is also well-known that equations \eqref{E:PARTIALTGCMC}-\eqref{E:SCALARFIELDCMC}
	follow from \eqref{E:EINSTEINSF}-\eqref{E:WAVEMODEL};
	see, for example, \cite{aS2010}*{Section~6.2} or \cite{mT1997III}*{Section~10 of Chapter~18}.
	To derive \eqref{E:LAPSE}, we take the trace of \eqref{E:PARTIALTKCMC} and use the CMC condition $\SecondFund_{\ a}^a = - t^{-1}$.
	The preservation of the gauge condition and constraints is a standard result that can be derived
	from a straightforward modification of the argument presented in \cite{lAvM2003}*{Theorem~4.2}.
\end{proof}

\subsection{The linearization procedure and the linearly small quantities}
\label{SS:LINEARIZEDQUANTITIES}
In our linear analysis, 
we work with the ``linearly small quantities'' defined just below in 
Def.\ \ref{D:LINEARIZEDVARIABLES}.
In the definition, $g$ denotes the 
(Riemannian) $3$-metric from Prop.\ \ref{P:EINSTEINSFCMC},
$\SecondFund_{\ j}^i$ denotes its mixed second fundamental form,
$\gKasner$ denotes the $3$-metric of the Kasner solution (see \eqref{E:KASNER}),
$\SecondFundKasner_{\ j}^i$ denotes its mixed second fundamental form
(see \eqref{E:KASNERSECONDFUNDONEUP}),
and similarly for the other quantities.
Before stating the definition of the linearly small quantities,
we first make some remarks about how one
can linearize the equations of Prop.\ \ref{P:EINSTEINSFCMC}
around a given solution. There are two ways that this can 
be achieved. Both approaches lead to the same 
system of linear PDEs but conceptually
are somewhat different.
The first way, which is manifestly invariant, 
is through the notion of
one-parameter family of
solutions to the equations,
similar to our discussion below Theorem~\ref{T:ROUGHVERSIONLINEARSTABILITY}. 
That is, 
one can consider an $\upalpha$-parameterized
family of solutions
$(n[\alpha],g[\alpha],\SecondFund[\alpha],\phi[\alpha])$
to the nonlinear equations of Prop.\ \ref{P:EINSTEINSFCMC}
such that
$(n[0],g[0],\SecondFund[0],\phi[0])$
is the background solution around which one would like to linearize.
We set $n'[\alpha] := \frac{d}{d \upalpha}n[\alpha]$ and similarly
for the other variables.
One can then differentiate the nonlinear equations
with respect to $\upalpha$ and set $\upalpha = 0$
to deduce that the variations 
$(n'[0],g'[0],\SecondFund'[0],\phi'[0])$
solve a system of linear PDEs whose coefficients
depend on $(n[0],g[0],\SecondFund[0],\phi[0])$.
The system thus obtained is the linearization of the 
Einstein-scalar field equations in CMC-transported spatial coordinates gauge
about the background solution $(n[0],g[0],\SecondFund[0],\phi[0])$.

The second way to derive the linearized system
is to perform a first-order 
Taylor expansion of the 
nonlinear equations
of Prop.\ \ref{P:EINSTEINSFCMC} 
about a given solution, in our case
a Kasner solution 
$(1,\gKasner,\SecondFundKasner,\sfKasner)$,
where $1$ is the Kasner lapse.
Equivalently, in the nonlinear equations,
one decomposes the 
nonlinear spatial metric $g_{ij}$ as
$g_{ij} = \gKasner_{ij} + \grenormalized_{ij}$
(where $\grenormalized_{ij}$ is the ``linearly small'' metric perturbation)
and similarly for the other solution variables,
and then discards all terms that are quadratic or smaller
in the perturbation variables 
(where the derivatives of the perturbation variables are also considered to be linearly small).
After one discards the quadratic-or-higher-order small terms
and accounts for the fact that the background Kasner solution 
is a solution to the nonlinear equations,
what remains is a system of linear PDEs 
whose coefficients depend on the Kasner solution.
This is the approach that we take in the proof of Prop.\ \ref{P:LINEARIZEDCMCEQUATIONS}.
Though seemingly less invariant than the first approach,
it is straightforward to see that it yields the same linear PDE system.

Having made these remarks, we now define the 
linearly small ``perturbation variables''
that play a role in our derivation of the linearized equations.

\begin{definition}[\textbf{Linearly small quantities}]
\label{D:LINEARIZEDVARIABLES}
We define 
(for $a,b,i,j = 1,2,3$)
\begin{subequations}
\begin{align}
	\grenormalized_{ij} 
	& := g_{ij} - \gKasner_{ij},
		\\
	\christrenormalizedarg{a}{i}{b}
	& := \frac{1}{2}
		\gKasner^{ic}
		\left\lbrace
			\partial_a \grenormalized_{cb} 				
			+ \partial_b \grenormalized_{ac}
			- \partial_c \grenormalized_{ab}
		\right\rbrace,
			\\
	\currenormalized
	& := - \frac{1}{2} 
				\gKasner^{ab} \gKasner^{ef} \partial_e \partial_f \grenormalized_{ab}
				+ 
			\gKasner^{ef} \partial_a \christrenormalizedarg{e}{a}{f},	
		\label{E:SCALARCURVATURERENORMALIZED} \\
	\Ricrenormalizedarg{i}{j}
	& := - \frac{1}{2} 
				\gKasner^{ia} \gKasner^{ef} \partial_e \partial_f \grenormalized_{ja}
			+ \frac{1}{2} 
				\gKasner^{ef} \partial_j \christrenormalizedarg{e}{i}{f}
			+ \frac{1}{2} 
				\gKasner^{ia}
				\gKasner_{jb}
				\gKasner^{ef} \partial_a \christrenormalizedarg{e}{b}{f},	
		\label{E:RICRENORMALIZED} \\
	\LinSecondFund_{\ j}^i
	& := \SecondFund_{\ j}^i 
			- 
			\SecondFundKasner_{\ j}^i,	
		\label{E:SECONDFUNDRENORMALZIZED} \\
	\SFRenormalized
		& := \phi - \sfKasner,
	 		\label{E:LINEARSCALARFIELD} \\
	 \LapseRenormalized
	 & := n - 1.
\end{align}
\end{subequations}
\end{definition}

\begin{remark}[\textbf{Justification of Def.\ \ref{D:LINEARIZEDVARIABLES}}]
	The main point is that for solutions
	to the nonlinear equations that are near the Kasner solution \eqref{E:KASNER},
	all of the quantities defined in Def.\ \ref{D:LINEARIZEDVARIABLES}
	are linearly small in the sense described above Def.\ \ref{D:LINEARIZEDVARIABLES}.
\end{remark}

\begin{remark} \label{R:TRACEFREEPART}
	Below and throughout, $\hat{T}$ denotes the trace-free part of the $\Sigma_t$-tangent tenor $T$.
\end{remark}

\begin{remark}
	Note that $\LinSecondFund$ is trace-free, that is,
	\begin{align} \label{E:LINEARIZEDSECONDFUNDFORMISTRACEFREE}
		\LinSecondFund 
		& = \tracefreeLinSecondFund.
	\end{align}
	\eqref{E:LINEARIZEDSECONDFUNDFORMISTRACEFREE} 
	follows from definition \eqref{E:SECONDFUNDRENORMALZIZED}, 
	the CMC condition $\SecondFund_{\ a}^a(t,x) = - t^{-1}$,
	and the fact that $\SecondFundKasner_{\ a}^a(t,x) = - t^{-1}$.
\end{remark}

\subsection{The linearized Einstein-scalar field equations in CMC-transported spatial coordinates}
\label{SS:LINEARIZEDEQNSCMC}
In the next proposition, we use the procedure described just above
Def.\ \ref{D:LINEARIZEDVARIABLES} to linearize the equations of Prop.\ \ref{P:EINSTEINSFCMC}
around a given Kasner solution \eqref{E:KASNER}.

\begin{proposition}[\textbf{The linearized Einstein-scalar field equations in CMC-transported spatial coordinates}]
\label{P:LINEARIZEDCMCEQUATIONS}
Consider the equations of Prop.\ \ref{P:EINSTEINSFCMC}
linearized around a Kasner solution \eqref{E:KASNER}.
The linearized equations in the unknowns 
$(\LapseRenormalized,\grenormalized,\LinSecondFund,\SFRenormalized)$,
which are functions of $(t,x) \in (0,\infty) \times \mathbb{T}^3$,
take the following form (see Def.\ \ref{D:LINEARIZEDVARIABLES} for the definitions of some of the quantities).

The \textbf{linearized constant mean curvature condition} is:
\begin{align} \label{E:CMCMODEL}
	\LinSecondFund_{\ a}^a & = 0.
\end{align}

The \textbf{linearized versions of the Hamiltonian and momentum constraint equations} 
\eqref{E:HAMILTONIAN}-\eqref{E:MOMENTUM} are:
\begin{subequations}
\begin{align}
	  t^2 \currenormalized
		- 2 (t \tracefreeSecondFundKasner_{\ b}^a) (t \LinSecondFund_{\ a}^b)
		- 2 A t \partial_t \SFRenormalized
		+ 2 A^2 \LapseRenormalized 
		& = 0,
		\label{E:LINEARIZEDLHAMILTONIAN} \\
	\partial_a (t \LinSecondFund_{\ i}^a)
		& = 
			- A \partial_i \SFRenormalized
			- \christrenormalizedarg{a}{a}{b} (t \tracefreeSecondFundKasner_{\ i}^b)
			+ \christrenormalizedarg{a}{b}{i} (t \tracefreeSecondFundKasner_{\ b}^a),
		\label{E:LINEARIZEDMOMENTUM}
		\\
		\gKasner^{ab} \partial_a (t \LinSecondFund_{\ b}^i)
		& = - A \gKasner^{ia} \partial_a \SFRenormalized
			- \gKasner^{ab} \christrenormalizedarg{a}{i}{c} (t \tracefreeSecondFundKasner_{\ b}^c)
			+ \gKasner^{ab} \christrenormalizedarg{a}{c}{b} (t \tracefreeSecondFundKasner_{\ c}^i),
		\label{E:LINEARIZEDSECONDMOMENTUM}
\end{align}
\end{subequations}
where the constant $0 \leq A \leq \sqrt{2/3}$ is defined by \eqref{E:KASNERHAMILTONIANCONSTRAINT}.

The \textbf{linearized version of the lapse equation} \eqref{E:LAPSE} can be expressed in either of the following two forms:
\begin{subequations}
\begin{align} 
	2 A (t \partial_t \SFRenormalized)
	+ 2 (t \tracefreeSecondFundKasner_{\ b}^a) (t \LinSecondFund_{\ a}^b)
	& = t^2 \gKasner^{ab} \partial_a \partial_b \LapseRenormalized
		+ (2 A^2 - 1) \LapseRenormalized, 
	\label{E:LINEARIZEDLAPSE} 
		\\
	t^2 \gKasner^{ab} \partial_a \partial_b \LapseRenormalized 
	- \LapseRenormalized
	& = t^2 \currenormalized.
		\label{E:LINEARIZEDLAPSELOWER} 
\end{align}
\end{subequations}
Equation \eqref{E:LINEARIZEDLHAMILTONIAN} can be used to show that \eqref{E:LINEARIZEDLAPSE} 
is equivalent to \eqref{E:LINEARIZEDLAPSELOWER}.

The \textbf{linearized versions of the metric evolution equations} 
\eqref{E:PARTIALTGCMC}-\eqref{E:PARTIALTKCMC} are:
\begin{subequations}
\begin{align}
	\partial_t \grenormalized_{ij} 
		& = -2 t^{-1} (t \SecondFundKasner_{\ j}^a) \grenormalized_{ia} 
			- 2 t^{-1} \gKasner_{ia} (t \LinSecondFund_{\ j}^a)
			- 2 t^{-1} \gKasner_{ia} (t \SecondFundKasner_{\ j}^a) \LapseRenormalized, 
		\label{E:LINEARIZEDGEVOLUTION} \\
	\partial_t (t \LinSecondFund_{\ j}^i)
		& = - t \gKasner^{ia} \partial_a \partial_j \LapseRenormalized
		- t^{-1} (t \SecondFundKasner_{\ j}^i) \LapseRenormalized 
		+ t \Ricrenormalizedarg{i}{j}.
		\label{E:LINEARIZEDKEVOLUTION}
\end{align}
\end{subequations}

The \textbf{linearized version of the scalar field wave equation} \eqref{E:SCALARFIELDCMC} is:
\begin{align} \label{E:SCALARFIELDWAVEDECOMPOSED}
	- \partial_t (t \partial_t \SFRenormalized)
	+ t \gKasner^{ab} \partial_a \partial_b \SFRenormalized 
	& = - A \partial_t \LapseRenormalized
			+ 
			A t^{-1} \LapseRenormalized.
\end{align}

\end{proposition}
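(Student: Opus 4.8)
The plan is to execute the linearization exactly as announced in the paragraph preceding the statement: in each nonlinear equation of Proposition~\ref{P:EINSTEINSFCMC} I substitute $g = \gKasner + \grenormalized$, $\SecondFund_{\ j}^i = \SecondFundKasner_{\ j}^i + t^{-1}\SecondFundRenormalized_{\ j}^i$, $\phi = \sfKasner + \SFRenormalized$, $n = 1 + \LapseRenormalized$, expand all products and connection coefficients, subtract the corresponding background identity (which holds because the Kasner solution \eqref{E:KASNER} solves the nonlinear system), and then discard every term that is at least quadratic in $(\grenormalized, \SecondFundRenormalized, \SFRenormalized, \LapseRenormalized)$ and their derivatives. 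Because the CMC normalization $\SecondFund_{\ a}^a \equiv -t^{-1} \equiv \SecondFundKasner_{\ a}^a$ holds exactly, the linearized trace condition \eqref{E:CMCMODEL}, $\SecondFundRenormalized_{\ a}^a = 0$, is immediate and will be used throughout.

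\textbf{Preliminaries.} First I would record the background facts that make everything collapse: (i) since $\gKasner = \sum_i t^{2q_i}(dx^i)^2$ has coefficients independent of $x$, all of its \emph{spatial} Christoffel symbols vanish, hence $\nabla^{(\gKasner)} = \partial$ on each $\Sigma_t$, $\Ric(\gKasner) = 0$ and $R(\gKasner) = 0$; (ii) $\SecondFundKasner_{\ j}^i = -q_i t^{-1}\ID_{\ j}^i$ (no sum), so $\BigSecondFundKasner_{\ j}^i = t\SecondFundKasner_{\ j}^i$ has constant components, $\BigSecondFundKasner_{\ a}^a = -1$, and $\BigTraceFreeSecondFundKasner_{\ j}^i = \BigSecondFundKasner_{\ j}^i + \tfrac13 \ID_{\ j}^i$; (iii) the exact identities $(\SecondFund_{\ a}^a)^2 = t^{-2} = \partial_t\SecondFund_{\ a}^a$, valid before linearizing; (iv) $\sfKasner = A\ln t$ has vanishing spatial gradient, so $\partial_t\sfKasner = At^{-1}$ and every term containing $\nabla_a\sfKasner$ drops out — in particular $g^{ab}\nabla_a\phi\nabla_b\phi$ and $g^{ab}\nabla_a n\nabla_b\phi$ are purely quadratic. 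As a second preliminary, I would confirm that $\christrenormalizedarg{a}{i}{b}$, $\currenormalized$, $\Ricrenormalizedarg{i}{j}$ from Definition~\ref{D:LINEARIZEDVARIABLES} are literally the first variations of $\Gamma_{j\ k}^{\ i}$, $R$, $\Ric_{\ j}^i$ about $\gKasner$: the first-variation formula $\delta\Gamma_{j\ k}^{\ i} = \frac12 g^{ia}(\partial_j \delta g_{ak} + \partial_k \delta g_{ja} - \partial_a \delta g_{jk})$ loses its $\delta(g^{ia})$ piece because $\partial_j\gKasner_{ak} = 0$, and the Palatini identity $\delta\Ric_{jk} = \partial_a \delta\Gamma_{j\ k}^{\ a} - \partial_j \delta\Gamma_{a\ k}^{\ a}$ together with $\Ric(\gKasner)=0$ (when raising an index with $g^{ia}$) reproduce exactly \eqref{E:SCALARCURVATURERENORMALIZED}--\eqref{E:RICRENORMALIZED}.

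\textbf{Equation-by-equation.} With these in hand each equation is a short computation. For the Hamiltonian constraint: $t^2 R \to t^2\currenormalized$, $t^2\SecondFund_{\ b}^a\SecondFund_{\ a}^b \to \BigSecondFundKasner_{\ b}^a\BigSecondFundKasner_{\ a}^b + 2\BigSecondFundKasner_{\ b}^a\SecondFundRenormalized_{\ a}^b$ and then $\BigSecondFundKasner \to \BigTraceFreeSecondFundKasner$ via $\SecondFundRenormalized_{\ a}^a=0$, and $t^2(n^{-1}\partial_t\phi)^2 \to A^2 + 2At\partial_t\SFRenormalized - 2A^2\LapseRenormalized$; subtracting the background relation, which is precisely \eqref{E:KASNERHAMILTONIANCONSTRAINT}, gives \eqref{E:LINEARIZEDLHAMILTONIAN}. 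For the momentum constraint, using $\partial_a\SecondFundKasner_{\ i}^a = 0$ and $\nabla_i\SecondFund_{\ a}^a = 0$, the variation of $\nabla_a\SecondFund_{\ i}^a$ picks up $t^{-1}\big(\partial_a\SecondFundRenormalized_{\ i}^a + \christrenormalizedarg{a}{a}{b}\BigSecondFundKasner_{\ i}^b - \christrenormalizedarg{a}{b}{i}\BigSecondFundKasner_{\ b}^a\big)$ from the Christoffel corrections acting on $\SecondFundKasner$; $\BigSecondFundKasner\to\BigTraceFreeSecondFundKasner$ is again legitimate because the two $\tfrac13\ID$-contributions cancel, yielding \eqref{E:LINEARIZEDMOMENTUM}, and \eqref{E:LINEARIZEDSECONDMOMENTUM} follows the same way after rewriting the constraint with its free index raised, using $\SecondFund_{ij}=\SecondFund_{ji}$ so that $g^{ab}\nabla_a\SecondFund_{\ b}^i = \nabla_a\SecondFund^{ia}$ and $\partial_a(\ginverseKasner)^{ij}=0$. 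For the lapse equation the left side is already linear, $\to t^2(\ginverseKasner)^{ab}\partial_a\partial_b\LapseRenormalized$; on the right, the first brace multiplies the small factor $\LapseRenormalized$ so only its background value $t^{-2}$ survives, the bracket $(\SecondFund_{\ a}^a)^2 - \partial_t\SecondFund_{\ a}^a$ vanishes identically, and $R - g^{ab}\nabla_a\phi\nabla_b\phi \to \currenormalized$, giving \eqref{E:LINEARIZEDLAPSELOWER}; \eqref{E:LINEARIZEDLAPSE} is then obtained by eliminating $t^2\currenormalized$ through \eqref{E:LINEARIZEDLHAMILTONIAN}. The metric evolution equations come from expanding $-2ng_{ia}\SecondFund_{\ j}^a$ and subtracting $\partial_t\gKasner_{ij} = -2\gKasner_{ia}\SecondFundKasner_{\ j}^a$. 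In the $\SecondFund$-evolution equation, expanding $\partial_t(\SecondFundKasner_{\ j}^i + t^{-1}\SecondFundRenormalized_{\ j}^i)$, $-g^{ia}\nabla_a\nabla_j n$, $n\Ric_{\ j}^i$, and $n\SecondFund_{\ a}^a\SecondFund_{\ j}^i = -t^{-1}n\SecondFund_{\ j}^i$, and subtracting the background relation $\partial_t\SecondFundKasner_{\ j}^i = -t^{-2}\BigSecondFundKasner_{\ j}^i$, the two $-t^{-2}\SecondFundRenormalized_{\ j}^i$ contributions cancel, leaving \eqref{E:LINEARIZEDKEVOLUTION} after multiplying by $t$. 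Finally, for the scalar field wave equation I would expand $-n^{-1}\partial_t(n^{-1}\partial_t\phi)$ and $t^{-1}n^{-1}\partial_t\phi$ about $\partial_t\sfKasner = At^{-1}$, use $g^{ab}\nabla_a\nabla_b\phi \to (\ginverseKasner)^{ab}\partial_a\partial_b\SFRenormalized$, subtract the background identity $At^{-2} = At^{-2}$, multiply by $t$, and recognize $-t\partial_t^2\SFRenormalized - \partial_t\SFRenormalized = -\partial_t(t\partial_t\SFRenormalized)$ to reach \eqref{E:SCALARFIELDWAVEDECOMPOSED}.

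\textbf{Main obstacle.} The routine substitution aside, the parts I would track most carefully are twofold. First, the passage from $\BigSecondFundKasner$ to $\BigTraceFreeSecondFundKasner$ in the constraints \eqref{E:LINEARIZEDLHAMILTONIAN}--\eqref{E:LINEARIZEDSECONDMOMENTUM}: one must check that after invoking $\SecondFundRenormalized_{\ a}^a=0$ (Hamiltonian case) or after pairing the two Christoffel-correction terms (momentum cases), the residual $\tfrac13\ID$-terms cancel exactly, so the equations can be phrased — as the authors prefer for the later energy estimates — purely in terms of the trace-free Kasner second fundamental form. Second, the \emph{exact} (not merely linear-order) cancellations $(\SecondFund_{\ a}^a)^2 = t^{-2} = \partial_t\SecondFund_{\ a}^a$ and the disappearance of $-t^{-2}\SecondFundRenormalized_{\ j}^i$ in \eqref{E:LINEARIZEDKEVOLUTION}: these rest on the exact CMC normalization rather than on the linearization, so they must be carried out before discarding higher-order terms. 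Everything else is bookkeeping guided by the four background facts above.
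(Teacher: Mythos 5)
Your proposal is correct and takes essentially the same route as the paper: the paper's proof of Prop.~\ref{P:LINEARIZEDCMCEQUATIONS} is the one-line instruction to expand $g_{ij} = \gKasner_{ij} + \grenormalized_{ij}$ (and analogously for $t\SecondFund_{\ j}^i$, $\phi$, $n$), discard quadratic-and-higher terms, and read off $\christrenormalizedarg{a}{i}{b}$, $\Ricrenormalizedarg{i}{j}$, $\currenormalized$ as the first variations of $\Gamma_{a\ b}^{\ i}$, $\Ric_{\ j}^i$, $R$ from \eqref{E:LITTLEGRICCIONEUP}. Your preliminaries (vanishing spatial Christoffel symbols and Ricci curvature of $\gKasner$, constancy of $\BigSecondFundKasner$, the exact identities $(\SecondFund_{\ a}^a)^2 = \partial_t\SecondFund_{\ a}^a$, the pure-trace cancellations letting one pass to $\BigTraceFreeSecondFundKasner$) are exactly the bookkeeping the paper leaves implicit, and your Palatini route to $\Ricrenormalizedarg{i}{j}$ is algebraically equivalent to linearizing \eqref{E:LITTLEGRICCIONEUP} directly.
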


\begin{remark}[\textbf{An alternate approach}]
	\label{R:ALTERNATEAPPROACH}
	One could adopt an alternate approach to the proof of our stability results
	in which the product $n^{-1} \partial_t \phi$ is treated
	as an independent quantity.
	In such an approach, 
	one would not generate terms in the equations
	that depend on the time derivative of the lapse. This
	would simplify some aspects of the analysis.
	For example, upon linearizing the equations under the alternate approach,
	one would not generate the term $\partial_t \LapseRenormalized$,
	which appears on the right-hand side
	\eqref{E:SCALARFIELDWAVEDECOMPOSED}.
	The alternate approach would not have any substantial effect
	on our main results. 
	For example, notice that $\partial_t \LapseRenormalized$
	does not appear in the approximate monotonicity identity stated in
	Theorem~\ref{T:CMCMONOTONICITYID}
	(though, under the approach of this paper, 
	$\partial_t \LapseRenormalized$ does play a role in its \emph{proof}).
	The alternate approach is closer in spirit to the approach
	that we take in \cite{iRjS2014b} in our study of 
	the Einstein-stiff fluid system, in which we avoid
	having to treat the time derivative of the lapse in the evolution equations.
\end{remark}

\begin{remark}
	Equation \eqref{E:LINEARIZEDMOMENTUM} is the linearized version of the constraint
	$\nabla_a \SecondFund_{\ i}^a = - n^{-1} \partial_t \phi \nabla_i \phi$,
	while equation \eqref{E:LINEARIZEDSECONDMOMENTUM} is the linearized version of 
	$\nabla^a \SecondFund_{\ a}^i = - n^{-1} \partial_t \phi g^{ia} \nabla_a \phi$.
	We use both of these equations when deriving estimates.
\end{remark}

\begin{remark}[\textbf{Propagation of $L^2$ regularity}]
	In deriving the equations of Prop.\ \ref{P:LINEARIZEDCMCEQUATIONS},
	we have linearized a version of the Einstein-scalar field system written relative to
	a dynamic system of coordinates that is adapted to the nonlinear flow.
	It is for this reason that our 
	approximate monotonicity identity 
	for linear solutions, 
	which we derive below in Prop.\ \ref{P:ENERGYESTIMATELAPSEANDSCALARFIELD},
	should be viewed as providing relevant information about the
	$L^2$ regularity of the nonlinear solution.
	In particular, the proof of Prop.\ \ref{P:ENERGYESTIMATELAPSEANDSCALARFIELD} 
	can be modified in a straightforward fashion 
	to yield a coercive integral identity for the nonlinear equations,
	consistent with well-posedness relative to the CMC-transported spatial coordinates gauge.
\end{remark}

\begin{proof}[Proof of Prop.\ \ref{P:LINEARIZEDCMCEQUATIONS}]
	We first note that \eqref{E:CMCMODEL} follows from \eqref{E:LINEARIZEDSECONDFUNDFORMISTRACEFREE}.
	
	We will derive three more equations in detail. The remaining
	equations can be derived using similar arguments and we omit those details.
	The overall strategy is to consider the equations of Prop.\ \ref{P:EINSTEINSFCMC}
	and to expand the Riemannian metric $g$ 
	as an order $0$ ``Kasner term'' and a perturbation term as follows: 
	$g_{ij} = \gKasner_{ij} + \grenormalized_{ij}$, and similarly for
	$(t \SecondFund_{\ j}^i,\phi,n)$. We then discard all terms that are quadratic 
	or higher-order in the perturbations, which yields the proposition. 
	Since this proof features the spatial metrics $g$ and $\gKasner$,
	to avoid confusion,
	we will denote the components of the inverse Kasner spatial metric
	by $(\ginverseKasner)^{ij}$ rather than $\gKasner^{ij}$.
	
	As our first detailed example, we derive \eqref{E:SCALARFIELDWAVEDECOMPOSED}.
	We start by expanding the scalar field wave equation
	\eqref{E:SCALARFIELDCMC} as follows:
	\begin{align} \label{E:EXPANDEDSCALARFIELDCMC}
		- \partial_t(t \partial_t \phi)
		+ 
		n^2 t g^{ab} \nabla_a \nabla_b \phi 
		& = 
			\frac{(n-1)}{t} t \partial_t \phi
			-
			\frac{(\partial_t n)}{n} t \partial_t \phi
			- 
			n t g^{ab} \nabla_a n \nabla_b \phi.
	\end{align}
	Using \eqref{E:EXPANDEDSCALARFIELDCMC}, we compute that
	\begin{align} \label{E:SECONDEXPANDEDSCALARFIELDCMC}
		&
		- 
		\partial_t(t \partial_t \phi - A)
		+
		t g^{ab} \partial_a \partial_b \phi 
		+ 
		t (n+1)(n-1) g^{ab} \nabla_a \nabla_b \phi 
			\\
		& = 
			A \frac{(n-1)}{t} 
			-
			A \partial_t n 
				\notag \\
		& \ \
			+
			n^2 t g^{ab} \Gamma_{a \ b}^{\ j} \partial_j \phi 
			+
			\frac{(n-1)}{t} (t \partial_t \phi - A)
				\notag \\
		& \ \
			-
			(\partial_t n) (t \partial_t \phi - A)
			+
			(\partial_t n) \frac{n-1}{n} t \partial_t \phi
			- 
			t g^{ab} \partial_a n \partial_b \phi
			- 
			(n-1) t g^{ab} \partial_a n \partial_b \phi.
			\notag
	\end{align}
	We now discard the quadratically small terms,
	that is, the term $t (n+1)(n-1) g^{ab} \nabla_a \nabla_b \phi$
	and the terms on the last two lines of
	\eqref{E:SECONDEXPANDEDSCALARFIELDCMC},
	which, in view of Def.\ \ref{D:LINEARIZEDVARIABLES}, 
	yields \eqref{E:SCALARFIELDWAVEDECOMPOSED}.
	
	Next, we derive equation \eqref{E:LINEARIZEDKEVOLUTION}.
	To this end, we expand the evolution equation \eqref{E:PARTIALTKCMC}
	for $\SecondFund_{\ j}^i$ as follows:
	\begin{align} \label{E:EXPANDEDPARTIALTKCMC}
	\partial_t (t \SecondFund_{\ j}^i)
		& 
		= 
		- t g^{ia} \partial_a \partial_j n
		+ t g^{ia} \Gamma_{a \ j}^{\ b} \partial_b n
		- \frac{n-1}{t} (t \SecondFund_{\ j}^i)
		+ t \Ric_{\ j}^i 
		+ t (n-1) \Ric_{\ j}^i 
		- t n g^{ia} \partial_a \phi \partial_j \phi.
	\end{align}
	From \eqref{E:EXPANDEDPARTIALTKCMC}, we compute that
	\begin{align} \label{E:SECONDEXPANDEDPARTIALTKCMC}
	\partial_t 
		\left\lbrace
			t \SecondFund_{\ j}^i - t \SecondFundKasner_{\ j}^i
		\right\rbrace
		& 
		= 
		- 
		t (\ginverseKasner)^{ia} \partial_a \partial_j n
		- 
		\frac{n-1}{t} (t \SecondFundKasner_{\ j}^i)
		+ 
		t \Ric_{\ j}^i
			\\
	& \ \
		- 
		t \left\lbrace
				g^{ia} - (\ginverseKasner)^{ia}
			\right\rbrace
			\partial_a \partial_j n
			+ 
			t g^{ia} \Gamma_{a \ j}^{\ b} \partial_b n
			\notag \\
	& \ \
		- 
		\frac{n-1}{t} (t \SecondFund_{\ j}^i - t \SecondFundKasner_{\ j}^i)
		+
		t (n-1) \Ric_{\ j}^i 
		- 
		t n g^{ia} \partial_a \phi \partial_j \phi.
		\notag
	\end{align}
	Next, we note
	that it is straightforward to see that in Def.\ \ref{D:LINEARIZEDVARIABLES},
	$\christrenormalizedarg{a}{i}{b}$ is
	the linearization of the Christoffel symbol $\Gamma_{a \ b}^{\ i}$
	(see \eqref{E:THREECHRISTOFFEL}) around the Kasner solution,
	and similarly for
	$\Ricrenormalizedarg{i}{j}$
	and
	$\currenormalized$.
	We have obtained the latter two linearizations from the standard expression 
	\begin{align} \label{E:LITTLEGRICCIONEUP}
	\Ric_{\ j}^i =
		g^{ic} \partial_a \Gamma_{c \ j}^{\ a}
		- g^{ic} \partial_c \Gamma_{j \ a}^{\ a}
  	+ g^{ic} \Gamma_{a \ b}^{\ a} \Gamma_{c \ j}^{\ b}
		- g^{ic} \Gamma_{c \ b}^{\ a} \Gamma_{a \ j}^{\ b}
	\end{align}
	for the Ricci curvature of $g$ in terms of its Christoffel symbols \eqref{E:THREECHRISTOFFEL}
	and the definition $R := \Ric_{\ a}^a$.
	From these facts and Def.\ \ref{D:LINEARIZEDVARIABLES},
	it follows that the linearly small terms in \eqref{E:SECONDEXPANDEDPARTIALTKCMC}
	are the term on the left-hand side, 
	the first two terms on the right-hand side,
	and $t \Ricrenormalizedarg{i}{j}$,
	which we obtain from linearizing the third term
	$t \Ric_{\ j}^i$ on the right-hand
	side of \eqref{E:SECONDEXPANDEDPARTIALTKCMC}.
	Discarding the remaining terms, we obtain the linearized equation
	\eqref{E:LINEARIZEDKEVOLUTION} as desired.
	
	As our final example, we derive 
	the linearized Hamiltonian constraint
	equation \eqref{E:LINEARIZEDLHAMILTONIAN}.
	We first expand equation \eqref{E:HAMILTONIAN} 
	to deduce
	\begin{align} \label{E:EXPANDEDHAMILTONIAN}
		t^2 R 
		- 
		(t \SecondFund_{\ b}^a) (t \SecondFund_{\ a}^b)
		+ 
		1
		& 
		= 
		(t \partial_t \phi)^2
		-
		2 (t \partial_t \phi)^2 (n-1) 
			\\
	& \ \
		+
		\frac{(2 n+1)}{n^2}
		(t \partial_t \phi)^2
		(n-1)^2
		+ 
		t^2 g^{ab} \partial_a \phi \partial_b \phi.
		\notag
	\end{align}
	Using \eqref{E:EXPANDEDHAMILTONIAN}, 
	the CMC condition $t \SecondFund_{\ a}^a = - 1$,
	the identity
	$(t \SecondFundKasner_{\ b}^a) (t \SecondFundKasner_{\ a}^b) 
	= \sum_{i=1}^3 q_i^2$,
	and the exponent constraints
	\eqref{E:KASNERTRACECONDITION}-\eqref{E:KASNERHAMILTONIANCONSTRAINT},
	we compute that
	\begin{align} \label{E:SECONDEXPANDEDHAMILTONIAN}
		&
		t^2 R 
		- 
		2 (t \hat{\SecondFund}_{\ b}^a - t \hat{\SecondFundKasner}_{\ b}^a) (t \SecondFundKasner_{\ a}^b)
		+ 
		\underbrace{1 - \sum_{i=1}^3 q_i^2 - A^2}_{=0}
			\\
	& 
		= 
		2 A (t \partial_t \phi - A)
		-
		2 A^2 (n-1)
		\notag \\
	& \ \
		+
		(t \partial_t \phi - A)^2
		-
		4A (n-1) (t \partial_t \phi - A) (t \partial_t \phi + A)
		-
		2 (n-1) (t \partial_t \phi - A)^2
			\notag \\
	& \ \
		+
		\frac{(2 n+1)}{n^2}
		(t \partial_t \phi)^2
		(n-1)^2
		+ 
		t^2 g^{ab} \partial_a \phi \partial_b \phi.
		\notag
	\end{align}
	With the help of Def.\ \ref{D:LINEARIZEDVARIABLES},
	we see that the linearly small terms in \eqref{E:SECONDEXPANDEDHAMILTONIAN}
	are the two terms on the first line of the right-hand side, 
	the term $2 (t \hat{\SecondFund}_{\ b}^a - t \hat{\SecondFundKasner}_{\ b}^a) (t \SecondFundKasner_{\ a}^b)$
	on the left-hand side, and the term
	$t^2 \currenormalized$
	obtained from linearizing the first term 
	$t^2 R$ on the left-hand side. 
	Discarding the remaining terms, we obtain
	the linearized equation \eqref{E:LINEARIZEDLHAMILTONIAN} as desired.
	This completes our proof of Prop.\ \ref{P:LINEARIZEDCMCEQUATIONS}.
\end{proof}

\section{Norms and energies}
\label{S:NORMSANDENERGIES}
In this short section, we define
the norms and energies that play a role in our analysis
of linear solutions.

\subsection{Pointwise norms}
We will use the following two norms.

\begin{definition}  [\textbf{Pointwise norms}] \label{D:POINTWISENORMS}
	Let $T$ be a type $\binom{m}{n}$ $\Sigma_t$-tangent tensor with components 
	$T_{b_1 \cdots b_n}^{\ \ \ \ \ a_1 \cdots a_m}$.
	Then $|T|_{Frame}$ denotes the following norm 
	(involving the \emph{components} of $T$ relative to the transported coordinate frame):
	\begin{subequations}
	\begin{align}
		|T|_{Frame}^2 :=  \sum_{a_1 = 1}^3 \cdots \sum_{a_m = 1}^3 \sum_{b_1 = 1}^3 \cdots \sum_{b_n = 1}^3  
			\left|T_{b_1 \cdots b_n}^{\ \ \ \ \ a_1 \cdots a_m} \right|^2.
	\end{align}
	
	$|T|_{\gKasner}$ denotes the $\gKasner$-norm of $T$, where $\gKasner$ is the background Kasner spatial metric from \eqref{E:KASNER}:
	\begin{align} \label{E:NEWGNORM}
		|T|_{\gKasner}^2 :=  
			\gKasner_{a_1 a_1'} \cdots \gKasner_{a_m a_m'}
			(\gKasner^{-1})^{b_1 b_1'} \cdots (\gKasner^{-1})^{b_n b_n'} 
			T_{b_1 \cdots b_n}^{\ \ \ \ \ a_1 \cdots a_m}
			T_{b_1' \cdots b_n'}^{\ \ \ \ \ a_1' \cdots a_m'}.
	\end{align}
	\end{subequations}
\end{definition}

\subsection{Sobolev and Lebesgue norms}
In our analysis, we will use
the Sobolev norms
$\| \cdot \|_{H_{Frame}^M}$ 
and the Lebesgue norm $\| \cdot \|_{L_{\gKasner}^2}$ defined below in Def.\ \ref{D:SOBOLEVNORMS}.
The norms $\| \cdot \|_{H_{Frame}^M}$ 
are ``less geometric'' than the energies of Def.\ \ref{D:ENERGIES}
because their definition involves the components of tensorfields
relative to the transported coordinate frame
rather than invariant quantities.
The norms $\| \cdot \|_{H_{Frame}^M}$ are important for the 
proof of linear stability (see Theorem~\ref{T:CMCLINEARSTABILITY}).

\begin{definition} [\textbf{Sobolev and Lebesgue norms}]  \label{D:SOBOLEVNORMS}
	Let $T$ be a type $\binom{m}{n}$
	$\Sigma_t$-tangent tensorfield with components $T_{b_1 \cdots b_n}^{\ \ \ \ \ a_1 \cdots a_m}$.
	We define
\begin{align} \label{E:SOBOLEVNORMS}
	\| T \|_{H_{Frame}^M} = \| T \|_{H_{Frame}^M}(t) & := 
		\sum_{|\vec{I}| \leq M} \left\| \left|\partial_{\vec{I}} T(t,\cdot) \right|_{Frame} \right \|_{L^2}, 
\end{align}
where $\left\| f \right\|_{L^2}$ is defined in \eqref{E:SOBOLEVNORMDF},
$\vec{I}$ denotes a spatial coordinate derivative multi-index (see Subsect.\ \ref{SS:COORDINATES}), 
and
\begin{align} \label{E:PARTIALIT}
	(\partial_{\vec{I}} T)_{b_1 \cdots b_n}^{\ \ \ \ \ a_1 \cdots a_m}
	:= \partial_{\vec{I}} (T_{b_1 \cdots b_n}^{\ \ \ \ \ a_1 \cdots a_m}).
\end{align}
We sometimes use the notation $\| T \|_{L_{Frame}^2}$ in place of $\| T \|_{H_{Frame}^0}$.

We also define the Lebesgue norm
\begin{align} \label{E:L2LEBESGUE}
	\| T \|_{L_{\gKasner}^2} = \| T \|_{L_{\gKasner}^2}(t) 
		& := 
		\left\| \left|T(t,\cdot) \right|_{\gKasner} \right \|_{L^2},
\end{align}
where $|T(t,\cdot)|_{\gKasner}$ is defined in \eqref{E:NEWGNORM}.
\end{definition}

\begin{remark}
	If $T$ is a scalar function, then we often write $|T|$ instead of 
	$|T|_{Frame}$
	or
	$|T|_{\gKasner}$,
	$\| T \|_{H^M}$ instead of $\| T \|_{H_{Frame}^M}$,
	and
	$\| T \|_{L^2}$ instead of 
	$\| T \|_{L_{Frame}^2}$
	or
	$\| T \|_{L_{\gKasner}^2}$
	since 
	for scalar functions, there is no danger
	of confusion over how to measure the size of $T$.
\end{remark}

\begin{definition}[\textbf{Solution norms}] \label{D:NORMS}
The specific norms that are most relevant for the linear solutions under study are as follows:
\begin{align}
	\highnorm{M}(t) 
	& := 
			\left\| t \LinSecondFund \right\|_{H_{Frame}^M} 
			+ \| \partial \grenormalized  \|_{H_{Frame}^M} 
			+ \left\| t \partial_t \SFRenormalized \right\|_{H_{Frame}^M} 
			+ t^{2/3} \| \partial \SFRenormalized  \|_{H_{Frame}^M} 
			+ \sum_{p=0}^2 t^{(2/3)p} \left\| \LapseRenormalized \right\|_{H^{M+p}}.
		\label{E:HIGHNORM}  
\end{align}

\end{definition}

\subsection{Energies}
Our monotonicity identities
and our energy estimates
involve the following energies for the linearized variables.

\begin{definition}[\textbf{Energies}]
\label{D:ENERGIES}
For $t \in (0,1]$, we define
$
\mathscr{E}_{(Metric)}(t) \geq 0,
$
$\cdots$,
$
\mathscr{E}_{(Total);\smallparameter}(t)
\geq 0
$
as follows:
\begin{subequations}
\begin{align}
		\mathscr{E}_{(Metric)}^2(t) 
		&:= \int_{\Sigma_t} 
					|t \LinSecondFund|_{\gKasner}^2 
					+ 
					\frac{1}{4} |t \partial \grenormalized|_{\gKasner}^2 
				\, dx,
			\label{E:MODELMETRICENERGY}
				\\
	\mathscr{E}_{(Scalar)}^2(t)
		& := \int_{\Sigma_t} 
					 (t \partial_t \SFRenormalized)^2 
					 + |t \partial \SFRenormalized|_{\gKasner}^2 
				 \, dx, 
			\label{E:MODELSCALARFIELDENERGY} \\
	\mathscr{E}_{(\partial Lapse)}^2(t)
		& := \int_{\Sigma_t} 
				|t \partial \LapseRenormalized|_{\gKasner}^2
			\, dx, 
			\label{E:LINEARIZEDPARTIALLAPSEENERGY} \\
	\mathscr{E}_{(Lapse)}^2(t)
		& := \int_{\Sigma_t} 
				\LapseRenormalized^2
			\, dx, 
			\label{E:LINEARIZEDLAPSEENERGY} 
			\\
		\mathscr{E}_{(Total);\smallparameter}^2(t) 
		& := \mathscr{E}_{(Scalar)}^2(t)
				+ 
				\mathscr{E}_{(\partial Lapse)}^2(t)
				+  
				(1 - A^2) \mathscr{E}_{(Lapse)}^2(t)
				+
				\smallparameter \mathscr{E}_{(Metric)}^2(t),
			\label{E:TOTALENERGY}
\end{align}
\end{subequations}
where the constant $0 \leq A \leq \sqrt{2/3}$ is defined by \eqref{E:KASNERHAMILTONIANCONSTRAINT}
and $\smallparameter$ is a small positive constant that we choose below when we derive
estimates for $\mathscr{E}_{(Total);\smallparameter}^2(t)$.

We will also use up-to-order $M$ energies.
Specifically, we view the energy
$\mathscr{E}_{(Total);\smallparameter}^2$
defined in \eqref{E:TOTALENERGY}
as a functional of 
$\LinSecondFund, 
	\partial \grenormalized, 
  \partial_t \SFRenormalized,
	\partial \SFRenormalized,\partial \LapseRenormalized, 
	\LapseRenormalized
$
(that is, 
$\mathscr{E}_{(Total);\smallparameter}^2 
= 
\mathscr{E}_{(Total);\smallparameter}^2
[\LinSecondFund, 
	\partial \grenormalized, 
  \partial_t \SFRenormalized,
	\partial \SFRenormalized,\partial \LapseRenormalized, 
	\LapseRenormalized]$),
and we define
\begin{align}
	\mathscr{E}_{(Total);\smallparameter;M}^2(t) 
		& :=  \sum_{|\vec{I}| \leq M}
			\mathscr{E}_{(Total);\smallparameter}^2
			[\partial_{\vec{I}} \LinSecondFund, \partial \partial_{\vec{I}} \grenormalized, 
				\partial_t \partial_{\vec{I}} \SFRenormalized,
			\partial \partial_{\vec{I}} \SFRenormalized,\partial \partial_{\vec{I}} \LapseRenormalized, 
			\partial_{\vec{I}}\LapseRenormalized](t).
			\label{E:TOPORDERENERGY}
\end{align}

\end{definition}

In Lemma~\ref{L:ENERGYNORMCOMPARISON} below, we compare the 
strength of the energies to the strength of the norms.
Its proof is straightforward and amounts to tracking powers of $t$.
We first provide the following lemma, whose simple proof we omit.

\begin{lemma}[\textbf{Basic properties of the spatial part of the Kasner metric}]
\label{L:KASNERMETRIC}
Let $\tracefreeparameter \geq 0$ be as defined in \eqref{E:TRACEFREEPARAMETER}.
The components $\gKasner_{ij}$ of the Kasner spatial metric  
(see \eqref{E:KASNER})
and the components $\gKasner^{ij}$ of its inverse verify the following estimates 
for $(t,x) \in (0,1] \times \mathbb{T}^3$,
($i,j=1,2,3$):
\begin{subequations}
\begin{align}
	|\gKasner_{ij}| & \leq t^{2/3 - 2 \tracefreeparameter},
		\label{E:GKASNERBASICESTIMATE} 
		\\
	|\gKasner^{ij}| & \leq t^{-2/3 - 2 \tracefreeparameter}.
		\label{E:GINVERSEKASNERBASICESTIMATE}
\end{align}
\end{subequations}

Furthermore, the $3 \times 3$ matrices $\gKasner_{ij}$ and $\gKasner^{ij}$
have the following positive definiteness properties:
\begin{subequations}
\begin{align}
	t^{2/3 + 2 \tracefreeparameter} \delta_{ab} X^a X^b 
	& \leq \gKasner_{ab} X^a X^b
	\leq t^{2/3 - 2 \tracefreeparameter} \delta_{ab} X^a X^b, && \forall X \in \mathbb{R}^3,
		\label{E:KASNERMETRICPOSITIVITY} \\
	t^{-2/3 + 2 \tracefreeparameter} \delta^{ab} \xi_a \xi_b
	& \leq \gKasner^{ab} \xi_a \xi_b
	\leq t^{- 2/3 - 2 \tracefreeparameter} \delta^{ab} \xi_a \xi_b, && \forall \xi \in \mathbb{R}^3,
		\label{E:INVERSEKASNERMETRICPOSITIVITY} 
\end{align}
\end{subequations}
where $\delta_{ab}$ and $\delta^{ab}$ are standard Kronecker deltas.

Furthermore, 
\begin{align} \label{E:KASNERTIMEDERIVATIVEIDENTITIES} 
	\partial_t \gKasner_{ij} & = - 2 t^{-1} \gKasner_{ia} (t \SecondFundKasner_{\ j}^a),
	&& \partial_t \gKasner^{ij} = 2 t^{-1} \gKasner^{ja} (t \SecondFundKasner_{\ a}^i),
\end{align}
where $t \SecondFundKasner_{\ j}^i = -\mbox{\upshape diag}(q_1,q_2,q_3)$
(see \eqref{E:KASNERSECONDFUNDONEUP}).
\end{lemma}

Before comparing the strength of the energies and the norms, 
we first provide the following simple elliptic estimate,
which will allow us to derive estimates for the top-order derivatives of the linearized lapse.

\begin{lemma} [\textbf{Top-order estimate for} $\LapseRenormalized$]
	\label{L:TOPORDERLAPSEINTEGRATIONBYPARTSINEQUALITY}
	If $\LapseRenormalized$ verifies equation \eqref{E:LINEARIZEDLAPSE},
	then the following\footnote{We note that 
	$\| \partial^2 \LapseRenormalized \|_{L_{\gKasner}^2}^2 =
	\int_{\Sigma_t} 
		\gKasner^{ab} \gKasner^{ef}
	 	\partial_a \partial_e \LapseRenormalized 
	 	\partial_b \partial_f \LapseRenormalized
	 	\, dx$.} 
	elliptic estimate holds:
	\begin{align} \label{E:TOPORDERLAPSEINTEGRATIONBYPARTSINEQUALITY}
		t^2 \| \partial^2 \LapseRenormalized \|_{L_{\gKasner}^2}
		& \lesssim
				|2 A^2 - 1| \| \LapseRenormalized \|_{L^2}
				+ 2 A \| t \partial_t \SFRenormalized \|_{L^2} 
				+ 2 |t \tracefreeSecondFundKasner|_{\gKasner} \| t \LinSecondFund \|_{L_{\gKasner}^2}.
	\end{align}
	
\end{lemma}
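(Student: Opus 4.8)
The plan is to derive the inequality by an energy/integration-by-parts argument applied directly to the linearized lapse equation \eqref{E:LINEARIZEDLAPSE}, viewing the left-hand side operator $t^2 (\ginverseKasner)^{ab} \partial_a \partial_b \LapseRenormalized + (2A^2 - 1) \LapseRenormalized$ as an elliptic operator with constant-in-space coefficients (since the components of $\gKasner$ and $\ginverseKasner$ depend only on $t$). First I would rewrite \eqref{E:LINEARIZEDLAPSE} in the form
\begin{align*}
	t^2 (\ginverseKasner)^{ab} \partial_a \partial_b \LapseRenormalized
	& = 2 A t \partial_t \SFRenormalized
		+ 2 \BigTraceFreeSecondFundKasner_{\ b}^a \SecondFundRenormalized_{\ a}^b
		- (2 A^2 - 1) \LapseRenormalized =: F,
\end{align*}
so that $\LapseRenormalized$ solves a Poisson-type equation on $\Sigma_t = \mathbb{T}^3$ with right-hand side $F$ whose $L^2$ norm is controlled by exactly the three terms on the right of \eqref{E:TOPORDERLAPSEINTEGRATIONBYPARTSINEQUALITY} (using the triangle inequality together with the pointwise bound $|\BigTraceFreeSecondFundKasner_{\ b}^a \SecondFundRenormalized_{\ a}^b| \le |\BigTraceFreeSecondFundKasner|_{\gKasner} |\SecondFundRenormalized|_{\gKasner}$ and Cauchy–Schwarz in $L^2$).

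Next I would multiply the equation by $(\ginverseKasner)^{ef}\partial_e \partial_f \LapseRenormalized$ (equivalently, apply $t^2(\ginverseKasner)^{ef}\partial_e\partial_f$-testing) and integrate over $\mathbb{T}^3$. Since $\mathbb{T}^3$ is closed there are no boundary terms, and because $(\ginverseKasner)^{ab}$ is spatially constant we may integrate by parts twice to obtain
\begin{align*}
	t^4 \int_{\Sigma_t} (\ginverseKasner)^{ab}(\ginverseKasner)^{ef} \partial_a\partial_e \LapseRenormalized \, \partial_b \partial_f \LapseRenormalized \, dx
	& = t^2 \int_{\Sigma_t} F \, (\ginverseKasner)^{ef}\partial_e\partial_f \LapseRenormalized \, dx,
\end{align*}
where the left side is precisely $t^4 \| \partial^2 \LapseRenormalized \|_{L_{\gKasner}^2}^2$ by the footnote's identity. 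Then I would estimate the right-hand side by Cauchy–Schwarz: it is bounded by $t^2 \| F \|_{L^2} \cdot \| (\ginverseKasner)^{ef}\partial_e\partial_f\LapseRenormalized \|_{L^2}$; one still needs to pass from the frame-component $L^2$ norm of the contracted Hessian to $\| \partial^2 \LapseRenormalized \|_{L_{\gKasner}^2}$, which costs only a constant depending on the (bounded, by Lemma~\ref{L:KASNERMETRIC}) eigenvalue ratios of $\gKasner$ — and such constants are absorbed into the $\lesssim$. A cleaner route, which I would use to avoid any index-juggling subtlety, is to bound $\| (\ginverseKasner)^{ef}\partial_e\partial_f\LapseRenormalized \|_{L^2} \lesssim \| \partial^2 \LapseRenormalized \|_{L_{\gKasner}^2}$ directly from \eqref{E:INVERSEKASNERMETRICPOSITIVITY} (after extracting appropriate powers of $t$, which cancel against the $t^4$ versus $t^2$ discrepancy). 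Dividing through by $t^2 \| \partial^2 \LapseRenormalized \|_{L_{\gKasner}^2}$ and using $\| F \|_{L^2} \lesssim |2A^2-1| \| \LapseRenormalized \|_{L^2} + 2A \| t\partial_t \SFRenormalized \|_{L^2} + 2|\BigTraceFreeSecondFundKasner|_{\gKasner}\|\SecondFundRenormalized\|_{L_{\gKasner}^2}$ yields \eqref{E:TOPORDERLAPSEINTEGRATIONBYPARTSINEQUALITY}.

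The one point requiring a little care — the ``main obstacle,'' such as it is — is the bookkeeping of $t$-weights and metric-norm conversions: the displayed inequality mixes the $L_{\gKasner}^2$ norm of a $(1,1)$-type object $\SecondFundRenormalized$, a plain $L^2$ norm of the scalars $\LapseRenormalized$ and $t\partial_t\SFRenormalized$, and the $L_{\gKasner}^2$ norm of the $(0,2)$-type Hessian $\partial^2\LapseRenormalized$, so one must be consistent about which contractions are done with $\gKasner$ versus $\Euc$ and verify that all powers of $t$ balance (they do, because the operator in \eqref{E:LINEARIZEDLAPSE} is scale-consistent: $t^2(\ginverseKasner)^{ab}$ has components of size $t^{4/3}$-ish and $\partial^2$ contributes the rest so that $t^2\|\partial^2\LapseRenormalized\|_{L_{\gKasner}^2}$ is genuinely comparable to $\|t^2(\ginverseKasner)^{ab}\partial_a\partial_b\LapseRenormalized\|_{L^2}$ up to $t$-independent constants governed by $\tracefreeparameter$). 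Once this comparison is pinned down via Lemma~\ref{L:KASNERMETRIC}, the estimate is immediate; no genuine analytic difficulty arises because the torus has no boundary and the coefficients are spatially homogeneous, so integration by parts is lossless.
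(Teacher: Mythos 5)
Your proof is correct and is essentially the same argument the paper uses: multiply equation \eqref{E:LINEARIZEDLAPSE} by $t^2(\ginverseKasner)^{ef}\partial_e\partial_f\LapseRenormalized$, integrate by parts on the closed torus to produce $t^4\|\partial^2\LapseRenormalized\|_{L^2_{\gKasner}}^2$, and close via Cauchy--Schwarz together with $\|(\ginverseKasner)^{ef}\partial_e\partial_f\LapseRenormalized\|_{L^2}\lesssim\|\partial^2\LapseRenormalized\|_{L^2_{\gKasner}}$. The only cosmetic differences are that you first move the $(2A^2-1)\LapseRenormalized$ term to the right-hand side and then divide by $t^2\|\partial^2\LapseRenormalized\|_{L^2_{\gKasner}}$ rather than absorbing via the arithmetic--geometric mean inequality as the paper does, which changes nothing.
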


\begin{proof}
	We multiply equation \eqref{E:LINEARIZEDLAPSE}
	by $t^2 \gKasner^{ef} \partial_e \partial_f \LapseRenormalized$,
	integrate by parts over $\Sigma_t$ (relative to the Euclidean volume form on $\Sigma_t$), 
	and use Cauchy-Schwarz and Young's inequality
	as well as the simple estimate 
	$ \| \gKasner^{ef} \partial_e \partial_f \LapseRenormalized \|_{L^2} 
	\lesssim \| \partial^2 \LapseRenormalized \|_{L_{\gKasner}^2}
	.
	$
\end{proof}

\begin{lemma}[\textbf{Energy-norm comparison lemma}]
\label{L:ENERGYNORMCOMPARISON}
Let $N \geq 0$ be an integer and let
$\tracefreeparameter \geq 0$ be as defined in \eqref{E:TRACEFREEPARAMETER}.
Under the assumptions of Lemma \ref{L:TOPORDERLAPSEINTEGRATIONBYPARTSINEQUALITY},
there exist constants\footnote{As we have mentioned, $C$ and $c$ are free to vary from line to line and can depend on $N$.} 
$C > 0$ and  $c > 0$, depending on $\smallparameter$,
such that the following comparison estimates hold for 
the norm $\highnorm{N}(t)$ defined in \eqref{E:HIGHNORM}
and the total energy $\mathscr{E}_{(Total);\smallparameter;N}(t)$ 
defined in \eqref{E:TOPORDERENERGY}
on the interval $t \in (0,1]$:
\begin{subequations}
\begin{align}  \label{E:ENERGYNORMCOMPARISON}
	\mathscr{E}_{(Total);\smallparameter;N}(t)
	& \leq C t^{-c \tracefreeparameter} \highnorm{N}(t),
		\\
	\highnorm{N}(t)
	& \leq C t^{-c \tracefreeparameter}
		\mathscr{E}_{(Total);\smallparameter;N}(t).
		\label{E:NORMENERGYCOMPARISON}
\end{align}
\end{subequations}

\end{lemma}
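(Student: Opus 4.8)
The plan is to deduce both inequalities from two ingredients: (i) a pointwise comparison of the frame norm $|\cdot|_{Frame}$ and the Kasner norm $|\cdot|_{\gKasner}$ that costs only a factor $t^{\pm c\tracefreeparameter}$ on $(0,1]\times\mathbb{T}^3$, and (ii) the elliptic bound of Lemma~\ref{L:TOPORDERLAPSEINTEGRATIONBYPARTSINEQUALITY}. Ingredient (ii) is needed because, after term-by-term matching, the \emph{only} piece of $\highnorm{N}(t)$ that is not manifestly dominated by $\mathscr{E}_{(Total);\smallparameter;N}(t)$ is the top-order (two spatial derivative) part of the lapse contribution to \eqref{E:HIGHNORM}; everything else is a matter of bookkeeping powers of $t$.

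First I would record the norm comparison. Let $T$ be a $\Sigma_t$-tangent tensor of type $\binom{m}{n}$. Since $\gKasner$ is diagonal with entries $t^{2q_i}$ and $2q_i=\tfrac23+2(q_i-\tfrac13)$ with $|q_i-\tfrac13|\le\tracefreeparameter$, applying the positive-definiteness estimates \eqref{E:KASNERMETRICPOSITIVITY}--\eqref{E:INVERSEKASNERMETRICPOSITIVITY} to each of the $m$ factors of $\gKasner$ and $n$ factors of $\ginverseKasner$ in \eqref{E:NEWGNORM} (equivalently, diagonalizing the induced inner product on the tensor space) gives, pointwise,
\begin{align*}
	t^{\frac13(m-n)+c\tracefreeparameter}\,|T|_{Frame}\;\lesssim\;|T|_{\gKasner}\;\lesssim\;t^{\frac13(m-n)-c\tracefreeparameter}\,|T|_{Frame},
\end{align*}
with $c$ depending only on $m+n$. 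Because the components of $\gKasner$ depend on $t$ alone, this comparison commutes with the spatial operators $\partial_{\vec I}$; squaring and integrating over $\Sigma_t$ yields the same relation between $\|\partial_{\vec I}T\|_{L^2}$ and $\|\partial_{\vec I}T\|_{L_{\gKasner}^2}$ for every $|\vec I|\le N$. Taking $T=\SecondFundRenormalized$ ($m-n=0$), $T=\partial\grenormalized$ ($m-n=-3$, which produces the weight making $|t\partial\partial_{\vec I}\grenormalized|_{\gKasner}$ comparable to $t^{\mp c\tracefreeparameter}|\partial\partial_{\vec I}\grenormalized|_{Frame}$), $T=\partial\SFRenormalized$ and $T=\partial\LapseRenormalized$ ($m-n=-1$, producing the $t^{2/3}$ weights seen in \eqref{E:HIGHNORM}), and noting that $t\partial_t\SFRenormalized$ and $\LapseRenormalized$ are scalars (for which the two norms agree), I can match the summands of $\mathscr{E}_{(Total);\smallparameter;N}^2(t)$ from \eqref{E:TOTALENERGY}, \eqref{E:TOPORDERENERGY} against those of $\highnorm{N}^2(t)$ from \eqref{E:HIGHNORM}. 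This already proves \eqref{E:ENERGYNORMCOMPARISON}, each energy piece being $\lesssim t^{-c\tracefreeparameter}$ times the matching norm piece with the constant absorbing the fixed weight $\smallparameter$; and it proves \eqref{E:NORMENERGYCOMPARISON} except for the contribution $\sum_{|\vec I|\le N}t^{4/3}\|\partial^2\partial_{\vec I}\LapseRenormalized\|_{L^2}$ coming from the $p=2$ term of the lapse norm, since the energies of Def.~\ref{D:ENERGIES} see the lapse only through $\partial_{\vec I}\LapseRenormalized$ and $\partial\partial_{\vec I}\LapseRenormalized$, i.e.\ spatial derivatives of order at most $N+1$.

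To close this gap I would invoke Lemma~\ref{L:TOPORDERLAPSEINTEGRATIONBYPARTSINEQUALITY}. The coefficients in the linearized lapse equation \eqref{E:LINEARIZEDLAPSE} depend on $t$ only, so $\partial_{\vec I}\LapseRenormalized$ again solves \eqref{E:LINEARIZEDLAPSE} with $(\LapseRenormalized,\SFRenormalized,\SecondFundRenormalized)$ replaced by $(\partial_{\vec I}\LapseRenormalized,\partial_{\vec I}\SFRenormalized,\partial_{\vec I}\SecondFundRenormalized)$; hence
\begin{align*}
	t^2\|\partial^2\partial_{\vec I}\LapseRenormalized\|_{L_{\gKasner}^2}\;\lesssim\;\|\partial_{\vec I}\LapseRenormalized\|_{L^2}+\|t\partial_t\partial_{\vec I}\SFRenormalized\|_{L^2}+\tracefreeparameter\,\|\partial_{\vec I}\SecondFundRenormalized\|_{L_{\gKasner}^2},
\end{align*}
where I used $|\BigTraceFreeSecondFundKasner|_{\gKasner}=\tracefreeparameter$, which follows from \eqref{E:KASNERTRACEFREEPARTOFSECONDFUNDAMENTALFORMSIZE}. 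Converting the left-hand side to a frame norm via the comparison above (the tensor $\partial^2\partial_{\vec I}\LapseRenormalized$ has $m-n=-2$) gives $t^{4/3}\|\partial^2\partial_{\vec I}\LapseRenormalized\|_{L^2}\lesssim t^{-c\tracefreeparameter}\,t^2\|\partial^2\partial_{\vec I}\LapseRenormalized\|_{L_{\gKasner}^2}$, while the right-hand side is $\lesssim t^{-c\tracefreeparameter}\mathscr{E}_{(Total);\smallparameter;N}(t)$ directly from the definitions of $\mathscr{E}_{(Lapse)}$, $\mathscr{E}_{(Scalar)}$, and $\mathscr{E}_{(Metric)}$. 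Summing over $|\vec I|\le N$ and combining with the previous step finishes \eqref{E:NORMENERGYCOMPARISON}.

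The only step that is not pure power-counting is this last one: because $\LapseRenormalized$ is recovered from an elliptic equation rather than evolved, its two-derivative norm is genuinely absent from the energy, and Lemma~\ref{L:TOPORDERLAPSEINTEGRATIONBYPARTSINEQUALITY} --- isolated precisely for this purpose --- is what restores it. Apart from that, one need only keep honest track of the powers of $t$ and observe that all implicit constants are allowed to depend on the fixed parameters $\smallparameter$ (via the weight on $\mathscr{E}_{(Metric)}^2$) and $N$ (via the Leibniz rule when differentiating \eqref{E:LINEARIZEDLAPSE}), which is harmless.
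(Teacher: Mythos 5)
Your proof is correct and is an expanded version of the argument the paper gives: the paper's own proof cites exactly the two ingredients you use, namely the Kasner-metric comparison from Lemma~\ref{L:KASNERMETRIC} and the elliptic estimate of Lemma~\ref{L:TOPORDERLAPSEINTEGRATIONBYPARTSINEQUALITY} to recover the $p=2$ lapse term $t^{4/3}\|\LapseRenormalized\|_{H^{N+2}}$, with the rest being power-of-$t$ bookkeeping. Your term-by-term matching and the observation that $\partial_{\vec I}$ commutes through the spatially constant coefficients of \eqref{E:LINEARIZEDLAPSE} are precisely what the paper leaves implicit.
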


\begin{proof}
	Lemma~\ref{L:ENERGYNORMCOMPARISON} 
	follows easily from Lemma~\ref{L:KASNERMETRIC},
	Lemma~\ref{L:TOPORDERLAPSEINTEGRATIONBYPARTSINEQUALITY}
	(which allows us to bound the top-order linearized lapse term 
	$t^{4/3} \left\| \LapseRenormalized \right\|_{H^{N+2}}$ from \eqref{E:HIGHNORM} 
	in terms of the other linear solution variables),
	and the definitions of the quantities involved.
\end{proof}

\section{The approximate monotonicity identity}
\label{S:MONOTONICITYIDENTITIES}

\subsection{Statement of the approximate monotonicity identity}
The next theorem provides the approximate monotonicity identity that
lies at the heart of the linear stability of near-FLRW Kasner solutions.
Unlike the results of Sects.\ \ref{S:ENERGYESTIMATES} and ~\ref{S:LINEARSTABILITY},
\emph{the identity is valid for all Kasner backgrounds}.

\begin{remark}[\textbf{Monotonicity-coaxing terms and error terms}]
	\label{R:MONOTONICITYANDERROR}
	The favorable ``monotonicity-coaxing terms'' are the negative definite spacetime
	integrals on the third and fourth lines of
	\eqref{E:MONOTONICITYID}. 
	The last line of
	\eqref{E:MONOTONICITYID} features unsigned error integrals that 
	compete against the negative definite integrals.
	In Theorem~\ref{T:L2MILDENERGYBLOWUPCMCGAUGE}, we show that
	for near-FLRW Kasner backgrounds,
	the unsigned integrals can be absorbed into the negative definite
	integrals, except for one error integral whose coefficient
	is controlled by the parameter $\tracefreeparameter$.
\end{remark}

\begin{theorem}[\textbf{The approximate monotonicity identity}]
	\label{T:CMCMONOTONICITYID}
	For any constant $\smallparameter > 0$,
	solutions to the linearized equations of Prop.\ \ref{P:LINEARIZEDCMCEQUATIONS}
	verify the following identity for $t \in (0,1]$:
	\begin{align} \label{E:MONOTONICITYID}
		&
		\int_{\Sigma_t} 
			(t \partial_t \SFRenormalized)^2 
			+ |
			t \partial \SFRenormalized|_{\gKasner}^2 
		\, dx
		+
		\int_{\Sigma_t} 
				|t \partial \LapseRenormalized|_{\gKasner}^2
			\, dx
		+
		(1-A^2)
		\int_{\Sigma_t} 
			\LapseRenormalized^2
		\, dx
		+
		\smallparameter
		\int_{\Sigma_t} 
			|t \LinSecondFund|_{\gKasner}^2 
			+ 
			\frac{1}{4} |t \partial \grenormalized|_{\gKasner}^2 
		\, dx
		-
		\int_{\Sigma_t} 
			\mathcal{N}_1
		\, dx
			\\
		&
		=
		\int_{\Sigma_1} 
			(t \partial_t \SFRenormalized)^2 
			+ |
			t \partial \SFRenormalized|_{\gKasner}^2 
		\, dx
		+
		\int_{\Sigma_1} 
				|t \partial \LapseRenormalized|_{\gKasner}^2
			\, dx
		+
		(1-A^2)
		\int_{\Sigma_1} 
			\LapseRenormalized^2
		\, dx
		+
		\smallparameter
		\int_{\Sigma_1} 
			|\LinSecondFund|_{\gKasner}^2 
			+ 
			\frac{1}{4} |t \partial \grenormalized|_{\gKasner}^2 
		\, dx
		-
		\int_{\Sigma_1} 
			\mathcal{N}_1
		\, dx
			\notag
				\\
		& \ \
			- 2
			\int_{s=t}^1
				s^{-1}
				\int_{\Sigma_s}  
					|s \partial \SFRenormalized|_{\gKasner}^2
				\, dx
		 	\, ds
			- 
			 \int_{s=t}^1 
				s^{-1}
				\int_{\Sigma_s}
					|s \partial \LapseRenormalized|_{\gKasner}^2
		 		\, dx 
		 	\, ds
			- 	\int_{s=t}^1 
					s^{-1}
					\int_{\Sigma_s}
						\LapseRenormalized^2
		 			\, dx 
		 		\, ds
				\notag \\
		& \ \
		-
		\frac{1}{2} \smallparameter
		\int_{s=t}^1
			s^{-1}
			\int_{\Sigma_s}
				|s \partial \grenormalized|_{\gKasner}^2
			\, dx
		\, ds
			\notag \\
		& 
			\ \
			+
			\sum_{i=1}^3
			\int_{s=t}^1
				s^{-1}
				\int_{\Sigma_s}  
					\mathcal{N}_i
				 \, dx
		 	\, ds
			+
			\smallparameter
			\sum_{i=4}^{10}
			\int_{s=t}^1
				s^{-1}
				\int_{\Sigma_s}  
					\mathcal{N}_i
				 \, dx
		 	\, ds,
			\notag
	\end{align}
where the constant $0 \leq A \leq \sqrt{2/3}$ is defined by \eqref{E:KASNERHAMILTONIANCONSTRAINT} 
and along $\Sigma_s$, we have
\begin{subequations}
\begin{align}
	\mathcal{N}_1 
	& = \mathcal{N}_1(s \tracefreeSecondFundKasner,s \LinSecondFund,\LapseRenormalized)
	:= - 2 (s \tracefreeSecondFundKasner_{\ b}^a) (s \LinSecondFund_{\ a}^b) \LapseRenormalized,
		\label{E:CUBICFORM1} \\
	\mathcal{N}_2 
	 & = \mathcal{N}_2(s \SecondFundKasner, s \partial \SFRenormalized,s \partial \SFRenormalized)
		:= -2 s^2 \gKasner^{ab} (s \SecondFundKasner_{\ b}^c) \partial_a \SFRenormalized \partial_c \SFRenormalized,
		\label{E:CUBICFORM2} \\
	\mathcal{N}_3 
	& = \mathcal{N}_3(s \partial \SFRenormalized,s \partial \LapseRenormalized)
	:= - 2 A s^2 \gKasner^{ab} \partial_a \SFRenormalized \partial_b \LapseRenormalized,
		\label{E:QUADRATICFORM1}
			\\
\mathcal{N}_4 
& = \mathcal{N}_4(s \SecondFundKasner, s \partial \grenormalized, s \partial \grenormalized)
:=   	-\frac{1}{2}
						s^2
						\gKasner^{ab} \gKasner^{ij} \gKasner^{cf}
						(s \SecondFundKasner_{\ c}^e)
						\partial_e \grenormalized_{ai}
						\partial_f \grenormalized_{bj},
						\label{E:CUBICFORM3}	\\
\mathcal{N}_5 
& = \mathcal{N}_5(s \tracefreeSecondFundKasner,s \LinSecondFund,s \LinSecondFund)
:= 2 \gKasner_{ic} \gKasner^{ab} (s \tracefreeSecondFundKasner_{\ j}^c)
				(s \LinSecondFund_{\ a}^i) (s \LinSecondFund_{\ b}^j)
			- 2 \gKasner_{ij} \gKasner^{ac} (s \tracefreeSecondFundKasner_{\ c}^b)
				(s \LinSecondFund_{\ a}^i) (s \LinSecondFund_{\ b}^j),
					\label{E:CUBICFORM4} \\
	\mathcal{N}_6 
	& = \mathcal{N}_6(s \tracefreeSecondFundKasner,s \partial \grenormalized,s \partial \grenormalized)
	:= 		s^2 
					\gKasner_{ab} \gKasner^{ef} \gKasner^{ij} 
					(s \tracefreeSecondFundKasner_{\ c}^a)
					\christrenormalizedarg{i}{c}{j} 
					\christrenormalizedarg{e}{b}{f}
				- s^2
					\gKasner_{ab} \gKasner^{ef} \gKasner^{ij} 
					(s \tracefreeSecondFundKasner_{\ j}^c)
					\christrenormalizedarg{i}{a}{c} 
					\christrenormalizedarg{e}{b}{f}
		\label{E:CUBICFORM5} 			\\
	& \ \ + s^2 \gKasner^{ef}  (s \tracefreeSecondFundKasner_{\ c}^a) 
						\christrenormalizedarg{a}{c}{b} \christrenormalizedarg{e}{b}{f}	
					- s^2 \gKasner^{ef} (s \tracefreeSecondFundKasner_{\ b}^c) 
							\christrenormalizedarg{a}{a}{c} \christrenormalizedarg{e}{b}{f},
		\notag
			\\
	\mathcal{N}_7 
	& = 
	\mathcal{N}_7(s \SecondFundKasner,s \partial \grenormalized,s \partial \LapseRenormalized) 
	:= 2 s^2 \gKasner^{ij} (s \tracefreeSecondFundKasner_{\ i}^b)  
						\christrenormalizedarg{a}{a}{b} \partial_j \LapseRenormalized
						- 2 s^2 \gKasner^{ij}  (s \tracefreeSecondFundKasner_{\ b}^a) \christrenormalizedarg{a}{b}{i} 
							\partial_j \LapseRenormalized
							\label{E:CUBICFORM6} \\
		& \ \ + s^2 \gKasner^{ij} \gKasner^{ef} (s \SecondFundKasner_{\ j}^a)
				  	\partial_e \grenormalized_{ai} \partial_f \LapseRenormalized,
				  	\notag	\\
	\mathcal{N}_8 
	& = \mathcal{N}_8(s \tracefreeSecondFundKasner,s \LinSecondFund,\LapseRenormalized)
	:= 	2 \gKasner_{ab} \gKasner^{ij} (s \tracefreeSecondFundKasner_{\ i}^a) (s \LinSecondFund_{\ j}^b) \LapseRenormalized,
				\label{E:CUBICFORM7} \\
	\mathcal{N}_9  
	& = \mathcal{N}_9(s \partial \SFRenormalized, s \partial \LapseRenormalized)
	:= 	2 A s^2\gKasner^{ij} \partial_i \SFRenormalized \partial_j \LapseRenormalized,
				\label{E:QUADRATICFORM2} \\
	\mathcal{N}_{10} 
	& = \mathcal{N}_{10}(s \partial \grenormalized, s \partial \SFRenormalized)
	:= - 2 A s^2 \gKasner^{ef} \christrenormalizedarg{e}{a}{f} \partial_a \SFRenormalized.
		\label{E:QUADRATICFORM3}
\end{align}	
\end{subequations}
\end{theorem}

\begin{proof}
	Below we independently derive
	the identities
	\eqref{E:SECONDENERGYESTIMATEMODELSCALARFIELD}
	and
	\eqref{E:METRICENERGYID}.
	To obtain \eqref{E:MONOTONICITYID},
	we simply add
	\eqref{E:SECONDENERGYESTIMATEMODELSCALARFIELD}
	to $\smallparameter$ times \eqref{E:METRICENERGYID}.
\end{proof}	

\begin{corollary}[\textbf{Approximate monotonicity identity for the solution's higher derivatives}]
	\label{C:MONOTONICITY}
	For any spatial derivative multi-index $\vec{I}$ (as defined in Subsect.\ \ref{SS:COORDINATES}),
	the identity \eqref{E:MONOTONICITYID} holds with
	$\LinSecondFund, \partial \grenormalized, \SFRenormalized, \LapseRenormalized$
	replaced with, respectively,
	$\partial_{\vec{I}} \LinSecondFund, \partial \partial_{\vec{I}} \grenormalized, 
	\partial_{\vec{I}} \SFRenormalized, \partial_{\vec{I}} \LapseRenormalized$.
\end{corollary}

\begin{proof}
	Since the Kasner background metric is spatially homogeneous 
	(that is, independent of $x \in \mathbb{T}^3$),
	the operators $\partial_{\vec{I}}$ commute through the linear equations of
	Prop.\ \ref{P:LINEARIZEDCMCEQUATIONS}. Put differently,
	the differentiated quantities
	$\partial_{\vec{I}} \LinSecondFund, \partial \partial_{\vec{I}} \grenormalized, 
	\partial_{\vec{I}} \SFRenormalized, \partial_{\vec{I}} \LapseRenormalized$
	verify the same equations
	satisfied by
	$\LinSecondFund, \partial \grenormalized, \SFRenormalized, \LapseRenormalized$.
	Hence, Theorem~\ref{E:MONOTONICITYID}
	applies to the differentiated quantities as well.
\end{proof}

\subsection{The key integral identity for the linearized lapse and scalar field}
The most important ingredient in the proof of Theorem~\ref{T:CMCMONOTONICITYID}
is the following proposition, which provides 
an integral identity for the linearized scalar field and the linearized lapse.
The proof of the proposition essentially involves combining 
several integration by parts-type identities 
in a manner that replaces dangerous error integrals with favorable ones.

\begin{proposition}[\textbf{The key integral 
	identity for the linearized scalar field and the linearized lapse}]
	\label{P:ENERGYESTIMATELAPSEANDSCALARFIELD}
	Solutions to the linearized equations of Prop.\ \ref{P:LINEARIZEDCMCEQUATIONS}
	verify the following identity for $t \in (0,1]$:
	\begin{align}
	& \int_{\Sigma_t} 
		(t \partial_t \SFRenormalized)^2
		+ 
		|t \partial \SFRenormalized|_{\gKasner}^2
	\, dx  
	+	\int_{\Sigma_t} 
			|t \partial \LapseRenormalized|_{\gKasner}^2
		\, dx
	+   (1 - A^2)
			\int_{\Sigma_t} 
				\LapseRenormalized^2
			\, dx 
	-
	\int_{\Sigma_t} 
		\mathcal{N}_1
	\, dx
			\label{E:SECONDENERGYESTIMATEMODELSCALARFIELD}  
				\\
	& = 
		\int_{\Sigma_1} 
			(\partial_t \SFRenormalized)^2
			+ |\partial \SFRenormalized|_{\gKasner}^2
		\, dx  
	+	\int_{\Sigma_1} 
				|\partial \LapseRenormalized|_{\gKasner}^2
			\, dx
	+ 	(1 - A^2)
			\int_{\Sigma_1} 
				\LapseRenormalized^2
			\, dx 
	-  \int_{\Sigma_1} 
				\mathcal{N}_1
			 \, dx		
			\notag \\
	& \ \ 
			- 2
			\int_{s=t}^1
				s^{-1}
				\int_{\Sigma_s}  
					|s \partial \SFRenormalized|_{\gKasner}^2
				\, dx
		 	\, ds
			- 
			 \int_{s=t}^1 
				s^{-1}
				\int_{\Sigma_s}
					|s \partial \LapseRenormalized|_{\gKasner}^2
		 		\, dx 
		 	\, ds
			- 	\int_{s=t}^1 
					s^{-1}
					\int_{\Sigma_s}
						\LapseRenormalized^2
		 			\, dx 
		 		\, ds
				\notag	\\
	 & \ \
			+
			\sum_{i=1}^3
			\int_{s=t}^1
				s^{-1}
				\int_{\Sigma_s}  
					\mathcal{N}_i
				 \, dx
		 	\, ds,
		 \notag
\end{align}
where the constant $0 \leq A \leq \sqrt{2/3}$ is defined by \eqref{E:KASNERHAMILTONIANCONSTRAINT} and
$\mathcal{N}_1$, $\mathcal{N}_2$, and $\mathcal{N}_3$ are defined in
\eqref{E:CUBICFORM1}-\eqref{E:QUADRATICFORM1}.

\end{proposition}

\begin{remark}
	\emph{The surprising aspect of the identity \eqref{E:SECONDENERGYESTIMATEMODELSCALARFIELD}
	is the presence of the spacetime integrals that are negative definite in 
	$\LapseRenormalized$ and $\partial \LapseRenormalized$.}
 	In Sect.\ \ref{S:PARABOLICMONOTONICITY}, we show that
	a version of \eqref{E:SECONDENERGYESTIMATEMODELSCALARFIELD} 
	also holds when the CMC gauge is replaced with 
	a parabolic lapse gauge.
\end{remark}

\begin{proof}[Proof of Prop.\ \ref{P:ENERGYESTIMATELAPSEANDSCALARFIELD}]
The proof involves combining three integration by parts identities.
Throughout, we silently use the identities in \eqref{E:KASNERTIMEDERIVATIVEIDENTITIES}.
To obtain the first identity, we multiply both sides of the linearized lapse equation 
\eqref{E:LINEARIZEDLAPSE} by
$\LapseRenormalized$ and integrate by parts over $\Sigma_t$ to deduce that
\begin{align}  \label{E:KEYIDENTITY}
		2 A \int_{\Sigma_t}
	 			(t \partial_t \SFRenormalized) \LapseRenormalized 
	 		 \, dx
	& = - 
			\int_{\Sigma_t} 
				|t \partial \LapseRenormalized|_{\gKasner}^2
			\, dx
		+ (2 A^2 - 1)
			\int_{\Sigma_t} 
				\LapseRenormalized^2
			\, dx 
	  - 	2 
	  		\int_{\Sigma_t} 
					(t \tracefreeSecondFundKasner_{\ b}^a) (t \LinSecondFund_{\ a}^b) \LapseRenormalized 
			 	\, dx.
\end{align}

The second identity is an energy identity for the linearized scalar field wave equation.
Specifically, we replace $t$ with the integration variable $s$ in equation \eqref{E:SCALARFIELDWAVEDECOMPOSED},
multiply by $- 2s \partial_t \SFRenormalized$, 
and integrate by parts 
over $(s,x) \in [t,1] \times \mathbb{T}^3$ (we stress that $t \leq 1$) to deduce that the following identity holds for
$t \in (0,1]$:
\begin{align}
	\int_{\Sigma_t} 
		(t \partial_t \SFRenormalized)^2
		+ |t \partial \SFRenormalized|_{\gKasner}^2
	\, dx  
	& = \int_{\Sigma_1} 
				(\partial_t \SFRenormalized)^2 
				+ |\partial \SFRenormalized|_{\gKasner}^2
			\, dx 
		\label{E:FIRSTENERGYESTIMATEMODELSCALARFIELD} \\
	& \ \ 
		- 2
			\int_{s=t}^1
				s^{-1}
				\int_{\Sigma_s}  
					 |s \partial \SFRenormalized|_{\gKasner}^2
				 	 + s^2 \gKasner^{ab} (s \SecondFundKasner_{\ b}^c)
							\partial_a \SFRenormalized \partial_c \SFRenormalized
		 	 	\, dx
		 	\, ds
		\notag  \\
	& \ \ 
			- 2 A
			\int_{s=t}^1 
				\int_{\Sigma_s}
					(s \partial_t \SFRenormalized) \partial_t \LapseRenormalized
		 		\, dx 
		 	\, ds
			+ 2 A
			\int_{s=t}^1 
				s^{-1}
				\int_{\Sigma_s}
					(s \partial_t \SFRenormalized) \LapseRenormalized
		 		\, dx 
		 	\, ds.
		 \notag
\end{align}

Next, we multiply equation \eqref{E:SCALARFIELDWAVEDECOMPOSED} by $\LapseRenormalized$ 
to obtain the following identity:
\begin{align} \label{E:LINEARIZEDSCALARFIELDLAPSEMIXEDEQUATION}
	(t \partial_t \SFRenormalized) \partial_t \LapseRenormalized
	& = \partial_t (t \partial_t \SFRenormalized \LapseRenormalized)
		- \frac{1}{2} A \partial_t (\LapseRenormalized^2)
	 	- t \LapseRenormalized \gKasner^{ab} \partial_a \partial_b \SFRenormalized 
	 	+ A t^{-1} \LapseRenormalized^2.
\end{align}
To obtain the third identity, we now replace $t$ with the integration variable $s$ in equation \eqref{E:LINEARIZEDSCALARFIELDLAPSEMIXEDEQUATION},
multiply by $2A$,
and integrate by parts 
over $(s,x) \in [t,1] \times \mathbb{T}^3$
to deduce that 
\begin{align}  \label{E:LINEARIZEDSCALARFIELDLAPSEMIXEDEQUATIONIBPIDENTITY}
		- 2 A
			\int_{s=t}^1 
				\int_{\Sigma_s}
					(s \partial_t \SFRenormalized) \partial_t \LapseRenormalized
		 		\, dx 
		 	\, ds
		 	& = 
		 		- 2A
		 			\int_{\Sigma_1}
		 				\partial_t \SFRenormalized \LapseRenormalized
		 			\, dx
		 		+ A^2 
		 			\int_{\Sigma_1}
		 				\LapseRenormalized^2
		 			\, dx
		 			\\
		 & \ \ 
		 		+ 2A
		 			\int_{\Sigma_t}
		 				(t \partial_t \SFRenormalized) \LapseRenormalized
		 			\, dx
		 		- A^2 
		 			\int_{\Sigma_t}
		 				\LapseRenormalized^2
		 			\, dx
		 		\notag		\\
		& \ \
			- 2A
			 \int_{s=t}^1 
				s^{-1}
				\int_{\Sigma_s}
					s^2 \gKasner^{ab} \partial_a \SFRenormalized \partial_b \LapseRenormalized
		 		\, dx 
		 	\, ds
		 - 2 A^2 
		 		\int_{s=t}^1 
					s^{-1}
					\int_{\Sigma_s}
						\LapseRenormalized^2
		 			\, dx 
		 		\, ds
		 	\notag \\
		& = 
					\int_{\Sigma_1} 
						|\partial \LapseRenormalized|_{\gKasner}^2
					\, dx
				+ 
				(1- A^2)
		 			\int_{\Sigma_1}
		 				\LapseRenormalized^2
		 			\, dx
		 + 2 
	  		\int_{\Sigma_1} 
					\tracefreeSecondFundKasner_{\ b}^a \LinSecondFund_{\ a}^b \LapseRenormalized 
			 \, dx
			  \notag \\
		 & \ \ 
				- \int_{\Sigma_t} 
						|t \partial \LapseRenormalized|_{\gKasner}^2
					\, dx
				- 
				(1- A^2)
		 			\int_{\Sigma_t}
		 				\LapseRenormalized^2
		 			\, dx
		 - 2 
	  		\int_{\Sigma_t} 
					(t \tracefreeSecondFundKasner_{\ b}^a) (t \LinSecondFund_{\ a}^b) \LapseRenormalized 
			 \, dx
			 \notag \\
		& \ \
			- 2A
			 \int_{s=t}^1 
			  s^{-1}
				\int_{\Sigma_s}
					s^2 \gKasner^{ab} \partial_a \SFRenormalized \partial_b \LapseRenormalized
		 		\, dx 
		 	\, ds
		 - 2 A^2 
		 		\int_{s=t}^1 
					s^{-1}
					\int_{\Sigma_s}
						\LapseRenormalized^2
		 			\, dx 
		 		\, ds, 
		 		\notag
\end{align}
where to obtain the second equality, we substituted the right-hand side
of \eqref{E:KEYIDENTITY} for the integrals
$2A
		 			\int_{\Sigma_1}
		 				\partial_t \SFRenormalized \LapseRenormalized
		 			\, dx$
and		 			
$2A
		 			\int_{\Sigma_t}
		 				(t \partial_t \SFRenormalized) \LapseRenormalized
		 			\, dx$.
We now use the identity \eqref{E:KEYIDENTITY}
with $t$ replaced by $s$ 
to substitute for the integral 
$2A \int_{\Sigma_s}
			(s \partial_t \SFRenormalized) \LapseRenormalized
\, dx$ in the last spacetime integral on the right-hand side
\eqref{E:FIRSTENERGYESTIMATEMODELSCALARFIELD}.
Finally, we substitute the right-hand side of
\eqref{E:LINEARIZEDSCALARFIELDLAPSEMIXEDEQUATIONIBPIDENTITY}
for the next-to-last spacetime integral on the right-hand
side of \eqref{E:FIRSTENERGYESTIMATEMODELSCALARFIELD}.
In total, these steps lead to the identity \eqref{E:SECONDENERGYESTIMATEMODELSCALARFIELD}.

\end{proof}

\subsection{An energy identity for the linearized metric variables}
In the next proposition, we derive an energy identity for
the linearized metric solution variables.

\begin{proposition}[\textbf{Energy identity for the linearized metric variables}]
\label{PCMC:LINEARIZEDMETRICENERGYESTIMATE}
Solutions to the linearized equations of Prop.\ \ref{P:LINEARIZEDCMCEQUATIONS}
verify the following identity for $t \in (0,1]$:
\begin{align} \label{E:METRICENERGYID}
	\int_{\Sigma_t}
		|t \LinSecondFund|_{\gKasner}^2
		+ 
		\frac{1}{4} |t \partial \grenormalized|_{\gKasner}^2
	\, dx
	& = 
		\int_{\Sigma_1}
			|\LinSecondFund|_{\gKasner}^2
			+ 
			\frac{1}{4} |\partial \grenormalized|_{\gKasner}^2
		\, dx 
			\\
	& \ \
		- \frac{1}{2}
		\int_{s=t}^1
			s^{-1}
			\int_{\Sigma_s}
				 		|s \partial \grenormalized|_{\gKasner}^2
		\, dx
		\, ds
			\notag	\\
	& \ \
			+
			\sum_{i=4}^{10}
			\int_{s=t}^1
				s^{-1}
				\int_{\Sigma_s}  
					\mathcal{N}_i
				 \, dx
		 	\, ds,
			\notag
\end{align}			
where
$\mathcal{N}_4$, $\cdots$, $\mathcal{N}_{10}$
are defined in \eqref{E:CUBICFORM3}-\eqref{E:QUADRATICFORM3}.
\end{proposition}

\begin{remark}[\textbf{No need for spatial harmonic coordinates}]
\label{R:NONEEDFORSPATIALHARMONIC}
Prop.\ \ref{PCMC:LINEARIZEDMETRICENERGYESTIMATE} shows in particular that we can
derive energy estimates for solutions to Einstein's equations\footnote{Although the proposition 
addresses only the linearized equations, essentially the same argument can be used to derive
a similar energy identity for the nonlinear equations.} 
directly in CMC-transported spatial coordinates.
Remarkably, we have not seen this observation made in the literature.
Previous authors (see, for example, \cites{lAvM2003,lAvM2011}) 
have instead chosen to 
impose the spatial harmonic coordinate condition $g^{ab} \nabla_a \nabla_b x^i = 0$
to ``reduce'' the Ricci tensor $R_{ij}$ of $g$ to an elliptic operator acting on
the components $g_{ij}$. That is, in spatial harmonic coordinates, we have
$R_{ij} = - \frac{1}{2} g^{ab} \partial_a \partial_b g_{ij} + f_{ij}(g,\partial g)$,
which eliminates the last two products on the right-hand side of \eqref{E:RICRENORMALIZED}
and leads to a simpler proof of a basic $L^2$-type energy identity.
In the proof of Prop.\ \ref{PCMC:LINEARIZEDMETRICENERGYESTIMATE}, we handle
these two products through a procedure involving integration by parts and the constraint equations;
see equations \eqref{E:SECONDLINEARIZEDMOMENTUMDIFFBYPARTS} and \eqref{E:FIRSTLINEARIZEDMOMENTUMDIFFBYPARTS}.
The spatial harmonic coordinate condition, though it might have advantages in certain contexts,
introduces additional complications into the analysis. 
The complications arise from the necessity of including a non-zero ``shift vector'' $X^i$
in the spacetime metric $\gfour$: $\gfour = - n^2 dt^2 + g_{ab}(dx^a + X^a dt)(dx^b + X^b dt)$. To 
enforce the spatial harmonic coordinate condition, the components
$X^i$ must verify a system of elliptic PDEs that are coupled to the other solution variables.
\end{remark}

\begin{proof}[Proof of Prop.\ \ref{PCMC:LINEARIZEDMETRICENERGYESTIMATE}]
The proof involves combining a collection of integration by parts identities.
Throughout, we silently use the identities in \eqref{E:KASNERTIMEDERIVATIVEIDENTITIES}.
To begin, we use the evolution equation \eqref{E:LINEARIZEDKEVOLUTION} to deduce that
\begin{align} \label{E:FIRSTPARTIALTRENORMALIZEDKSQUARED}
	\partial_t (|t \LinSecondFund|_{\gKasner}^2)
	& = - 2 t^{-1} \gKasner_{ic} \gKasner^{ab} (t \SecondFundKasner_{\ j}^c) 
				(t \LinSecondFund_{\ a}^i) (t \LinSecondFund_{\ b}^j)
			+ 2 t^{-1} \gKasner_{ij} \gKasner^{ac} (t \SecondFundKasner_{\ c}^b) 
				(t \LinSecondFund_{\ a}^i) (t \LinSecondFund_{\ b}^j)
				\\
	& \ \ 
		+ 2 \gKasner_{ab} \gKasner^{ij}(t \LinSecondFund_{\ i}^a)
			\left\lbrace
				- t \gKasner^{bc} \partial_c \partial_j \LapseRenormalized
				- t^{-1} \LapseRenormalized (t \SecondFundKasner_{\ j}^b)
				+ t \Ricrenormalizedarg{b}{j}
			\right\rbrace.
			\notag
\end{align}
Note that we can express the 
first line of the right-hand side of 
\eqref{E:FIRSTPARTIALTRENORMALIZEDKSQUARED}
as
\begin{align} \label{E:TRACEFREEREPLACEMENT}
	- 2 t^{-1} \gKasner_{ic} \gKasner^{ab} (t \tracefreeSecondFundKasner_{\ j}^c)
					(t \LinSecondFund_{\ a}^i) (t \LinSecondFund_{\ b}^j)
			+ 2 t^{-1} \gKasner_{ij} \gKasner^{ac} (t \tracefreeSecondFundKasner_{\ c}^b) 
				(t \LinSecondFund_{\ a}^i) (t \LinSecondFund_{\ b}^j)
\end{align}
because the terms corresponding to the pure trace part of
$\SecondFundKasner$ cancel.
Furthermore, 
since equation \eqref{E:CMCMODEL} implies that
$\LinSecondFund = \tracefreeLinSecondFund$,
we can express the second product on the 
second line of the right-hand side of \eqref{E:FIRSTPARTIALTRENORMALIZEDKSQUARED} as
\begin{align} \label{E:ANOTHERTRACEFREEREPLACEMENT}
	- 2 t^{-1} 2 \gKasner_{ab} \gKasner^{ij}(t \LinSecondFund_{\ i}^a) (t \SecondFundKasner_{\ j}^b) \LapseRenormalized 
	& = - 2 \gKasner_{ab} \gKasner^{ij} (s \tracefreeSecondFundKasner_{\ i}^a) (s \LinSecondFund_{\ j}^b) \LapseRenormalized.
\end{align}

Similarly, using the evolution equation \eqref{E:LINEARIZEDGEVOLUTION}, we deduce that\footnote{We recall that 
$
|\partial \grenormalized|_{\gKasner}^2 
= \gKasner^{ab} \gKasner^{ij} \gKasner^{ef} \partial_e \grenormalized_{ai} \partial_f \grenormalized_{bj}.
$}
\begin{align} \label{E:FIRSTPARTIALTTSQUAREDRENORMALIZEDGSQUARED}
	\frac{1}{4} \partial_t (t^2 |\partial \grenormalized|_{\gKasner}^2)
	& = \frac{1}{2} t |\partial \grenormalized|_{\gKasner}^2
		+
		t \gKasner^{bc} \gKasner^{ij} \gKasner^{ef}
				(t \SecondFundKasner_{\ c}^a)
				\partial_e \grenormalized_{ai}
				\partial_f \grenormalized_{bj}
		+ \frac{1}{2} 
				t	
				\gKasner^{ab} \gKasner^{ij} \gKasner^{cf}
				(t \SecondFundKasner_{\ c}^e)
				\partial_e \grenormalized_{ai}
				\partial_f \grenormalized_{bj}
			\\
	& \ \
		+
		\frac{1}{2} \gKasner^{ab} \gKasner^{ij} \gKasner^{ef}	
		\partial_e \grenormalized_{ai}
		\partial_f
		\left\lbrace
			- 2 t \grenormalized_{bc} (t \SecondFundKasner_{\ j}^c)
			- 2 t \gKasner_{bc} (t \LinSecondFund_{\ j}^c)
			- 2 t \gKasner_{bc} (t \SecondFundKasner_{\ j}^c) \LapseRenormalized 
		\right\rbrace.
			\notag
\end{align}

For convenience, in the remainder of this proof, we denote terms that
can be expressed as perfect spatial derivatives by ``$\cdots$."
These terms will vanish when we integrate the identities 
over $\mathbb{T}^3$.
We now use equation \eqref{E:LINEARIZEDMOMENTUM} and differentiation by parts
to express the first product on the 
second line of the right-hand side of \eqref{E:FIRSTPARTIALTRENORMALIZEDKSQUARED} as
\begin{align} \label{E:FIRSTDIFFBYPARTS}
	- 2t \gKasner_{ab} \gKasner^{bc} \gKasner^{ij}(t \LinSecondFund_{\ i}^a)
		\partial_c \partial_j \LapseRenormalized
		& = 2t \gKasner^{ij} \partial_a(t \LinSecondFund_{\ i}^a) \partial_j \LapseRenormalized
			+ \cdots \\
		& = - 2A t \gKasner^{ij} \partial_i \SFRenormalized \partial_j \LapseRenormalized
			\notag \\
		& \ \ 
			- 2 t \gKasner^{ij} \christrenormalizedarg{a}{a}{b} (t \tracefreeSecondFundKasner_{\ i}^b) \partial_j \LapseRenormalized
			+ 2 t \gKasner^{ij} \christrenormalizedarg{a}{b}{i} (t \tracefreeSecondFundKasner_{\ b}^a) \partial_j \LapseRenormalized
				\notag \\
		& \ \
			+ \cdots.
				\notag
\end{align}

Next, we use equation \eqref{E:RICRENORMALIZED} 
to express the third product on the 
second line of the right-hand side of \eqref{E:FIRSTPARTIALTRENORMALIZEDKSQUARED} as
\begin{align} \label{E:LINEARIZEDKEVOLUTIONRICTERMMADEEXPLICIT}
	2 t \gKasner_{ab} \gKasner^{ij}(t \LinSecondFund_{\ i}^a) \Ricrenormalizedarg{b}{j}
	& = - t \gKasner^{ij} \gKasner^{ef} 
				(t \LinSecondFund_{\ i}^a) \partial_e \partial_f \grenormalized_{ja}
				\\
	& \ \ + t \gKasner_{ab} \gKasner^{ij} \gKasner^{ef} 
				(t \LinSecondFund_{\ i}^a)
				\partial_j \christrenormalizedarg{e}{b}{f}
			+ t (t \LinSecondFund_{\ b}^a)
				\gKasner^{ef} \partial_a \christrenormalizedarg{e}{b}{f}.
				\notag
\end{align}

Next, we use differentiation by parts
to express the first product on the right-hand side of 
\eqref{E:LINEARIZEDKEVOLUTIONRICTERMMADEEXPLICIT}
as
\begin{align} \label{E:FIRSTRICCIPRODUCT}
	- t \gKasner^{ij} \gKasner^{ef} 
				(t \LinSecondFund_{\ i}^a) \partial_e \partial_f \grenormalized_{ja}
	& = t \gKasner^{ij} \gKasner^{ef} 
				\partial_e(t \LinSecondFund_{\ i}^a) \partial_f \grenormalized_{ja}
				+ \cdots.
\end{align}

Next, we use equation \eqref{E:LINEARIZEDSECONDMOMENTUM}
and differentiation by parts
to express the second product on the right-hand side of 
\eqref{E:LINEARIZEDKEVOLUTIONRICTERMMADEEXPLICIT}
as
\begin{align} \label{E:SECONDLINEARIZEDMOMENTUMDIFFBYPARTS}
	t \gKasner_{ab} \gKasner^{ij} \gKasner^{ef} 
		(t \LinSecondFund_{\ i}^a)
		\partial_j \christrenormalizedarg{e}{b}{f}
		& = - t \gKasner_{ab} \gKasner^{ij} \gKasner^{ef} 
				\partial_j(t \LinSecondFund_{\ i}^a)
				\christrenormalizedarg{e}{b}{f}
				+ \cdots
				\\
		& = A t \gKasner^{ef} \partial_a \SFRenormalized \christrenormalizedarg{e}{a}{f}
			\notag \\
		& \ \ 
			+ t \gKasner_{ab} \gKasner^{ef} \gKasner^{ij} (t \tracefreeSecondFundKasner_{\ j}^c)
				\christrenormalizedarg{i}{a}{c} 
				\christrenormalizedarg{e}{b}{f}
			- t \gKasner_{ab} \gKasner^{ef} \gKasner^{ij} 
				(t \tracefreeSecondFundKasner_{\ c}^a)
				\christrenormalizedarg{i}{c}{j} 
				\christrenormalizedarg{e}{b}{f}
				\notag \\
		& \ \ 
				+ \cdots. 
				\notag 
\end{align}

Next, we use equation \eqref{E:LINEARIZEDMOMENTUM}
and differentiation by parts
to express the third product on the right-hand side of 
\eqref{E:LINEARIZEDKEVOLUTIONRICTERMMADEEXPLICIT}
as
\begin{align} \label{E:FIRSTLINEARIZEDMOMENTUMDIFFBYPARTS}
	t (t \LinSecondFund_{\ b}^a)
		\gKasner^{ef} \partial_a \christrenormalizedarg{e}{b}{f}
	& = - t \gKasner^{ef} 
			 \partial_a (t \LinSecondFund_{\ b}^a)
			 \christrenormalizedarg{e}{b}{f} 					
			 + \cdots  \\
  & = A t \gKasner^{ef} \partial_b \SFRenormalized \christrenormalizedarg{e}{b}{f}
		+ t \gKasner^{ef} (t \tracefreeSecondFundKasner_{\ b}^c) \christrenormalizedarg{a}{a}{c} \christrenormalizedarg{e}{b}{f}
		- t \gKasner^{ef}  (t \tracefreeSecondFundKasner_{\ c}^a) \christrenormalizedarg{a}{c}{b} \christrenormalizedarg{e}{b}{f}
		+ \cdots. \notag
\end{align}

Combining \eqref{E:FIRSTPARTIALTRENORMALIZEDKSQUARED}-\eqref{E:FIRSTLINEARIZEDMOMENTUMDIFFBYPARTS}
and carrying out straightforward computations,
we deduce that
\begin{align} \label{E:METRICENERGYIDDIFFERNTIALFORM}
	\partial_t (|t \LinSecondFund|_{\gKasner}^2)
	+ \frac{1}{4} \partial_t (t^2 |\partial \grenormalized|_{\gKasner}^2)
	& = \frac{1}{2} t |\partial \grenormalized|_{\gKasner}^2
			+ \frac{1}{2} 
				t \gKasner^{ab} \gKasner^{ij} \gKasner^{cf}
				(t \SecondFundKasner_{\ c}^e)
				\partial_e \grenormalized_{ai}
				\partial_f \grenormalized_{bj}
				\\
	& \ \ 
		- 2 t^{-1} \gKasner_{ic} \gKasner^{ab} (t \tracefreeSecondFundKasner_{\ j}^c) 
				(t \LinSecondFund_{\ a}^i) (t \LinSecondFund_{\ b}^j)
		+ 2 t^{-1} \gKasner_{ij} \gKasner^{ac} (t \tracefreeSecondFundKasner_{\ c}^b) 
				(t \LinSecondFund_{\ a}^i) (t \LinSecondFund_{\ b}^j)
			\notag \\
	& \ \ 
			+ t \gKasner_{ab} \gKasner^{ef} \gKasner^{ij} (t \tracefreeSecondFundKasner_{\ j}^c)
				\christrenormalizedarg{i}{a}{c} 
				\christrenormalizedarg{e}{b}{f}
			- t \gKasner_{ab} \gKasner^{ef} \gKasner^{ij} 
				(t \tracefreeSecondFundKasner_{\ c}^a)
				\christrenormalizedarg{i}{c}{j} 
				\christrenormalizedarg{e}{b}{f}
			\notag \\
	& \ \ 
		+ t \gKasner^{ef} (t \tracefreeSecondFundKasner_{\ b}^c) 
				\christrenormalizedarg{a}{a}{c} \christrenormalizedarg{e}{b}{f}
		- t \gKasner^{ef}  (t \tracefreeSecondFundKasner_{\ c}^a) 
				\christrenormalizedarg{a}{c}{b} \christrenormalizedarg{e}{b}{f}	
		\notag \\
	& \ \	
			- 2 t \gKasner^{ij} (t \tracefreeSecondFundKasner_{\ i}^b)  \christrenormalizedarg{a}{a}{b} \partial_j \LapseRenormalized
			+ 2 t \gKasner^{ij}  (t \tracefreeSecondFundKasner_{\ b}^a) \christrenormalizedarg{a}{b}{i} \partial_j \LapseRenormalized
			- t \gKasner^{ij} \gKasner^{ef} (t \SecondFundKasner_{\ j}^a)
				  \partial_e \grenormalized_{ai} \partial_f \LapseRenormalized
			\notag \\	
& \ \ - 2 A t \gKasner^{ij} \partial_i \SFRenormalized \partial_j \LapseRenormalized 
			- 2 \gKasner_{ab} \gKasner^{ij} (s \tracefreeSecondFundKasner_{\ i}^a) (s \LinSecondFund_{\ j}^b) \LapseRenormalized
			+ 2 A t \gKasner^{ef} \partial_a \SFRenormalized \christrenormalizedarg{e}{a}{f}
			\notag \\	
& \ \
	+ \cdots.
	\notag
\end{align}
To conclude \eqref{E:METRICENERGYID},
we have only to replace $t$ with the integration variable $s$ in the identity \eqref{E:METRICENERGYIDDIFFERNTIALFORM}
and to integrate by parts over $(s,x) \in [t,1] \times \mathbb{T}^3$, where we stress that $t \leq 1$.
\end{proof}

\section{Mildly singular energy estimates without derivative loss for the linearized equations}
\label{S:ENERGYESTIMATES}
In the next result, Theorem~\ref{T:L2MILDENERGYBLOWUPCMCGAUGE}, we use
the approximate monotonicity identity of Theorem~\ref{T:CMCMONOTONICITYID}
to derive energy estimates for solutions to the linearized
equations. A central aspect  
of the estimates is that the energies
can blow up as $t \downarrow 0$.
Consequently, the energy estimates by themselves
do not yield a proof of linear stability.
However, for near FLRW backgrounds, the 
blowup-rate is mild (see \eqref{E:ORDERNTOTALENERGYGRONWALLED}),
which is a key ingredient in our subsequent proof of linear stability
(see Theorem~\ref{T:CMCLINEARSTABILITY}).
We stress that if our proof of Theorem~\ref{T:L2MILDENERGYBLOWUPCMCGAUGE} 
had relied on more standard energy identities 
rather than the approximate monotonicity identity of Theorem~\ref{T:CMCMONOTONICITYID},
then the outcome would have been a much worse energy blowup-rate, which
in turn would have obstructed our proof of linear stability.
In Theorem~\ref{T:L2MILDENERGYBLOWUPCMCGAUGE}, 
we consider only the case of near-FLRW Kasner backgrounds,
though it is possible to derive 
(perhaps very singular) 
energy estimates in the case of a general Kasner background.

\begin{theorem}[\textbf{Mildly singular energy estimates without derivative loss for solutions to the linearized equations}]
\label{T:L2MILDENERGYBLOWUPCMCGAUGE}
Consider a solution to the linear equations of Prop.\ \ref{P:LINEARIZEDCMCEQUATIONS}
corresponding to the data 
$\left(\LinSecondFund(1), \grenormalized(1), \partial_t \SFRenormalized(1),
\partial \SFRenormalized(1) \right)$
(given on $\Sigma_1 = \lbrace 1 \rbrace \times \mathbb{T}^3$),
where $\LapseRenormalized(1)$ is
determined by the elliptic PDEs \eqref{E:LINEARIZEDLAPSE}-\eqref{E:LINEARIZEDLAPSELOWER}.
There exist a small constant $\smallparameter_* > 0$
and constants $C > 0$ and $c > 0$
such that if
$\tracefreeparameter \geq 0$ is sufficiently small (see definition \ref{E:TRACEFREEPARAMETER})
and $\highnorm{0}(1) < \infty$ (see definition \eqref{E:HIGHNORM}),
then the base-level total energy 
$\mathscr{E}_{(Total);\smallparameter_*}(t)$
defined in \eqref{E:TOTALENERGY}
verifies the following inequality\footnote{The explicit numerical 
constants on the right-hand side of \eqref{E:SECONDMODELENERGYGRONWALLREADY} 
are not sharp, but that is not important when $\tracefreeparameter$ is small.} 
for $t \in (0,1]$:
\begin{align} \label{E:SECONDMODELENERGYGRONWALLREADY}
	\mathscr{E}_{(Total);\smallparameter_*}^2(t) 
	& \leq 
			C \mathscr{E}_{(Total);\smallparameter_*}^2(1) 
		\\
	& \ \ \underbrace{- \frac{1}{6} \smallparameter_* 
					\int_{s=t}^1 s^{-1} \int_{\Sigma_s} |s \partial \grenormalized|_{\gKasner}^2 \, dx \, ds}_{
	\mbox{\upshape Past-favorable sign}}
		\underbrace{- \frac{1}{6} \int_{s=t}^1 s^{-1} \int_{\Sigma_s} |s \partial \SFRenormalized|_{\gKasner}^2 \, dx \, ds}_{
		\mbox{\upshape Past-favorable sign}}
		 \notag \\
	& \ \ \underbrace{- \frac{1}{6} \int_{s=t}^1 s^{-1} \int_{\Sigma_s} |s \partial \LapseRenormalized|_{\gKasner}^2 \, dx \, ds}_{
		\mbox{\upshape Past-favorable sign}}
	\underbrace{- \frac{1}{2} \int_{s=t}^1 s^{-1} \int_{\Sigma_s} \LapseRenormalized^2 \, dx \, ds}_{
		\mbox{\upshape Past-favorable sign}}
		\notag \\
	& \ \ + \underbrace{c \tracefreeparameter \int_{s=t}^1 s^{-1} 
				\mathscr{E}_{(Total);\smallparameter_*}^2(s) \, ds}_{\mbox{\upshape 
				Error integral that can create blowup}}. 
		\notag 
\end{align}	

In addition, if $N \geq 0$ is an integer and the solution norm 
$\highnorm{N}(t)$
defined in \eqref{E:HIGHNORM}
verifies $\highnorm{N}(1) < \infty$,
then the up-to-order $N$ energy 
$\mathscr{E}_{(Total);\smallparameter_*;N}(t)$
defined in \eqref{E:TOPORDERENERGY}
verifies the following inequality for $t \in (0,1]$:
\begin{align} \label{E:ORDERNTOTALENERGYGRONWALLED}
	\mathscr{E}_{(Total);\smallparameter_*;N}(t)
	& \leq 
		C	
		\mathscr{E}_{(Total);\smallparameter_*;N}(1)
		t^{- c \tracefreeparameter}.
\end{align}

Furthermore, if $N \geq 0$ is an integer and $\highnorm{N}(1) < \infty$,
then there exist constants $C > 0$ and $c > 0$ 
such that the following inequality holds for $t \in (0,1]$:
\begin{align} \label{E:HIGHNORMBOUND}
	\highnorm{N}(t) 
	& \leq C \highnorm{N}(1) t^{-c \tracefreeparameter}.
\end{align}	

\end{theorem}

\begin{remark}
	Theorem~\ref{T:L2MILDENERGYBLOWUPCMCGAUGE} should be viewed as 
	being relevant for estimating the solution's
	high-order derivatives in the nonlinear problem,
	while Theorem~\ref{T:CMCLINEARSTABILITY} below should be viewed as 
	being relevant for 
	estimating its low-order derivatives; see Sect.\ \ref{S:NONLINEARPROBLEM} for further discussion.
\end{remark}

\begin{remark}
	The proof of Theorem~\ref{T:L2MILDENERGYBLOWUPCMCGAUGE} essentially amounts to combining 
	an intricate collection of integration by parts identities in the right way
	(this step was carried out in Theorem~\ref{T:CMCMONOTONICITYID})
	and absorbing various integrals into favorably signed integrals. 
	Certain aspects of our proof somewhat remind us of
	arguments used in \cite{rB1986}, in which Bartnik gave a new proof of the positive mass theorem
	of Schoen--Yau \cites{rSstY1979,rSstY1981} and Witten \cite{eW1981}.
	His proof was simpler than the previous proofs but was valid 
	only under the assumption that the metric is near-Euclidean
	and required the use of spatial harmonic coordinates. 
	Like our proof, Bartnik's involved expressing the scalar curvature of the Riemannian 
	$3$-metric in terms of Christoffel symbols, integrating with respect to the
	measure corresponding to the Euclidean metric, 
	and absorbing all of the unsigned quadratic terms into favorably signed quadratic terms
	(whose coefficients happened to be sufficiently large).
\end{remark}

\begin{proof}[Proof of Theorem~\ref{T:L2MILDENERGYBLOWUPCMCGAUGE}]
	The prove the theorem, we will use the following pointwise estimates
	for the integrand terms
	$\mathcal{N}_i$, $i=1,2,\cdots,10$
	defined in \eqref{E:CUBICFORM1}-\eqref{E:QUADRATICFORM3}:
	\begin{align}
			|\mathcal{N}_1|
			& \leq 
				2 
				|s \tracefreeSecondFundKasner|_{\gKasner}
				|s \LinSecondFund|_{\gKasner}
				|\LapseRenormalized|
			\leq 
				\tracefreeparameter
				\smallparameter
				|s \LinSecondFund|_{\gKasner}^2
				+
				\frac{\tracefreeparameter}{\smallparameter}
				\LapseRenormalized^2,
					\label{E:N1POINTWISE} \\
			 \mathcal{N}_2
			& \leq 
				\left(\frac{2}{3} + 2 \tracefreeparameter \right) 
				|\partial \SFRenormalized|_{\gKasner}^2,
					\label{E:N2POINTWISE} \\
			|\mathcal{N}_3|
			& \leq  
				|s \partial \SFRenormalized|_{\gKasner}^2
				+ 
				A^2 |s \partial \LapseRenormalized|_{\gKasner}^2
				\leq  
				|s \partial \SFRenormalized|_{\gKasner}^2
				+ 
				\frac{2}{3}|s \partial \LapseRenormalized|_{\gKasner}^2,
					\label{E:N3POINTWISE} \\
			\smallparameter \mathcal{N}_4
			& \leq 
				\left(\frac{1}{6} + \frac{1}{2} \tracefreeparameter \right) 
				\smallparameter
				|s \partial \grenormalized|_{\gKasner}^2,
					\label{E:N4POINTWISE} \\
			\smallparameter |\mathcal{N}_5|
			& \leq 
				C \tracefreeparameter \smallparameter |s \LinSecondFund|_{\gKasner}^2,
					\label{E:N5POINTWISE} \\
			\smallparameter |\mathcal{N}_6|
			& \leq 
				C \tracefreeparameter \smallparameter |s \partial \grenormalized|_{\gKasner}^2,
					\label{E:N6POINTWISE} \\
			\smallparameter |\mathcal{N}_7|
			& \leq 
				\frac{1}{12}
				\smallparameter
				|s \partial \grenormalized|_{\gKasner}^2
				+
				C \smallparameter
				|s \partial \LapseRenormalized|_{\gKasner}^2,
					\label{E:N7POINTWISE} \\
			\smallparameter |\mathcal{N}_8|
			& \leq 
				\tracefreeparameter 
				\smallparameter
				|s \LinSecondFund|_{\gKasner}^2
				+
				C 
				\tracefreeparameter 
				\smallparameter
				\LapseRenormalized^2,
					\label{E:N8POINTWISE} \\		
			\smallparameter |\mathcal{N}_9|
			& \leq 
				C 
				\smallparameter
				|s \partial \SFRenormalized|_{\gKasner}^2
				+
				C 
				\smallparameter
				|s \partial \LapseRenormalized|_{\gKasner}^2,
					\label{E:N9POINTWISE} \\
			\smallparameter |\mathcal{N}_{10}|
			& \leq 
				\frac{1}{12}
				\smallparameter
				|s \partial \grenormalized|_{\gKasner}
				+
				C
				\smallparameter
				|s \partial \SFRenormalized|^2.
					\label{E:N10POINTWISE} 
	\end{align}
	All of the above estimates
	except for \eqref{E:N2POINTWISE}-\eqref{E:N4POINTWISE}
	are straightforward consequences of the Cauchy-Schwarz
	inequality relative to the metric $\gKasner$
	and simple estimates of the form $|ab| \leq (1/2)\delta a^2 + (1/2)\delta^{-1} b^2$,
	for appropriately chosen constants $\delta > 0$.
	To derive \eqref{E:N2POINTWISE}
	and \eqref{E:N4POINTWISE},
	we use the fact that
	the eigenvalues of $t \SecondFundKasner_{\ j}^i$ are 
	$\geq - q_{Max} \geq - \left\lbrace \frac{1}{3} + \tracefreeparameter \right\rbrace$,
	where $q_{Max}$ is defined in \eqref{E:BIGGESTKASNEREXPONENT}.
	To derive the second inequality in \eqref{E:N3POINTWISE},
	we use the simple inequality $A \leq \sqrt{\frac{2}{3}}$.

	We now claim that there exist constants $C > 0$ and $c > 0$ such that
	the following estimate holds
	when $\smallparameter > 0$:
	\begin{align}
		&
		\int_{\Sigma_t} 
			(t \partial_t \SFRenormalized)^2 
			+ 
			|t \partial \SFRenormalized|_{\gKasner}^2 
		\, dx
		+
		\int_{\Sigma_t} 
			|t \partial \LapseRenormalized|_{\gKasner}^2
		\, dx
		+
		\left\lbrace
			1 - A^2 - \frac{\tracefreeparameter}{\smallparameter}
		\right\rbrace
		\int_{\Sigma_t} 
			\LapseRenormalized^2
		\, dx
			\label{E:ALMOSTTHERESECONDMODELENERGYGRONWALLREADY}
				\\
	& \ \
		+
		\smallparameter
		\left\lbrace
			1 - \tracefreeparameter
		\right\rbrace
		\int_{\Sigma_t} 
			|t \LinSecondFund|_{\gKasner}^2 
		\, dx
			+ 
		\frac{1}{4} 
		\smallparameter
		\int_{\Sigma_t}
			|t \partial \grenormalized|_{\gKasner}^2 
		\, dx
			\notag \\
		& 
		\leq
		\int_{\Sigma_1} 
			(\partial_t \SFRenormalized)^2 
			+ 
			|\partial \SFRenormalized|_{\gKasner}^2 
		\, dx
		+
		\int_{\Sigma_1} 
			|\partial \LapseRenormalized|_{\gKasner}^2
		\, dx
		+
		\left\lbrace
			1 - A^2 + \frac{\tracefreeparameter}{\smallparameter}
		\right\rbrace
		\int_{\Sigma_1} 
			\LapseRenormalized^2
		\, dx
			\notag \\
	& \ \
		+
		\smallparameter
		\left\lbrace
			1 + \tracefreeparameter
		\right\rbrace
		\int_{\Sigma_1} 
			|\LinSecondFund|_{\gKasner}^2 
		\, dx
			+ 
		\frac{1}{4} 
		\smallparameter
		\int_{\Sigma_1}
			|\partial \grenormalized|_{\gKasner}^2 
		\, dx
			\notag \\
	& \ \
			- 
			\left\lbrace
				\frac{1}{3} - C \tracefreeparameter - C \smallparameter 
			\right\rbrace
			\int_{s=t}^1
				s^{-1}
				\int_{\Sigma_s}  
					|s \partial \SFRenormalized|_{\gKasner}^2
				\, dx
		 	\, ds
			- 
			\left\lbrace
				\frac{1}{3} 
				-
				C \smallparameter 
			\right\rbrace	
			 \int_{s=t}^1 
				s^{-1}
				\int_{\Sigma_s}
					|s \partial \LapseRenormalized|_{\gKasner}^2
		 		\, dx 
		 	\, ds
				 \notag \\
		& \ \
				- 
					\left\lbrace
						1 
						-
						C \frac{\tracefreeparameter}{\smallparameter}
						-
						C \tracefreeparameter \smallparameter
					\right\rbrace
					\int_{s=t}^1 
					s^{-1}
					\int_{\Sigma_s}
						\LapseRenormalized^2
		 			\, dx 
		 		\, ds
		- 
		\smallparameter
		\left\lbrace
			\frac{1}{6} - C \tracefreeparameter 
		\right\rbrace
		\int_{s=t}^1
			s^{-1}
			\int_{\Sigma_s}
				|s \partial \grenormalized|_{\gKasner}^2
			\, dx
		\, ds
			\notag 
				\\
		& \ \
			+ 
		c
		\tracefreeparameter 
		\smallparameter
		\int_{s=t}^1 
			s^{-1} 
			\int_{\Sigma_s}
				|s \LinSecondFund|_{\gKasner}^2
			\, dx
			\, ds.
		\notag
	\end{align}
	To obtain \eqref{E:ALMOSTTHERESECONDMODELENERGYGRONWALLREADY},
	we simply substitute
	the estimates \eqref{E:N1POINTWISE}-\eqref{E:N10POINTWISE}
	into the approximate monotonicity identity \eqref{E:MONOTONICITYID}
	and keep careful track of the coefficients.
	For example, the 
	coefficient 
	$- \left\lbrace \frac{1}{3} - C \tracefreeparameter - C \smallparameter \right\rbrace$
	found in front
	of the integral
	$
		\int_{s=t}^1
				s^{-1}
				\int_{\Sigma_s}  
					|s \partial \SFRenormalized|_{\gKasner}^2
				\, dx
		 	\, ds
	$
	on the right-hand side of \eqref{E:ALMOSTTHERESECONDMODELENERGYGRONWALLREADY}
	comes from adding the coefficient $-2$ on the third line of \eqref{E:MONOTONICITYID}
	to the coefficient
	$\frac{2}{3} + 2 \tracefreeparameter$
	from \eqref{E:N2POINTWISE},
	the coefficient $1$ from \eqref{E:N3POINTWISE},
	and the coefficients $C \smallparameter$
	from \eqref{E:N9POINTWISE}-\eqref{E:N10POINTWISE}.
	Note that for $i=4,\cdots,10$, the terms $\mathcal{N}_i$ from \eqref{E:MONOTONICITYID}
	are multiplied by $\smallparameter$.
	
	Next, from definition \eqref{E:TOTALENERGY},
	we deduce the following simple bound 
	for the last integral in \eqref{E:ALMOSTTHERESECONDMODELENERGYGRONWALLREADY}:
	\begin{align}  \label{E:BADTERM}
		c
		\tracefreeparameter 
		\smallparameter
		\int_{s=t}^1 
			s^{-1} 
			\int_{\Sigma_s}
				|s \LinSecondFund|_{\gKasner}^2
			\, dx
		 \, ds
		& \leq 
				c \tracefreeparameter 
				\int_{s=t}^1 
					s^{-1} 
					\mathscr{E}_{(Total);\smallparameter}^2(s) 
				\, ds.
	\end{align}
	The desired inequality 
	\eqref{E:SECONDMODELENERGYGRONWALLREADY}
	now follows from definition \eqref{E:TOTALENERGY},
	the identity \eqref{E:TRACEFREEPARAMETER},
	and the estimates \eqref{E:ALMOSTTHERESECONDMODELENERGYGRONWALLREADY} and \eqref{E:BADTERM},
	and from first choosing $\smallparameter := \smallparameter_*$
	to be sufficiently small and then choosing 
	$\tracefreeparameter$ to be sufficiently small in a manner that
	depends on the fixed choice of $\smallparameter_*$.
	We stress that the estimate \eqref{E:BADTERM}
	is precisely what generates the last error integral on the right-hand side of 
	\eqref{E:SECONDMODELENERGYGRONWALLREADY}.
	
	To deduce inequality \eqref{E:ORDERNTOTALENERGYGRONWALLED}, 
	we first use \eqref{E:SECONDMODELENERGYGRONWALLREADY}
	and Gronwall's inequality to deduce
	\begin{align} \label{E:ENERGYGRONWALLED}
	\mathscr{E}_{(Total);\smallparameter_*}^2(t)
	\leq C	
		\mathscr{E}_{(Total);\smallparameter_*}^2(1) t^{- c \tracefreeparameter}.
	\end{align}
	Next, we recall the following fact noted in 
	the proof of Cor.\ \ref{C:MONOTONICITY}: the $\partial_{\vec{I}}$-differentiated linear solution variables
	solve the same equations as the non-differentiated linearized
	solution variables. Thus, the energy of the 
	$\partial_{\vec{I}}$-differentiated linear solution variables
	verifies an analog of the estimate
	\eqref{E:ENERGYGRONWALLED}.
	Summing these estimates 
	for $|\vec{I}| \leq N$ 
	and appealing to definition \eqref{E:TOPORDERENERGY},
	we arrive at \eqref{E:ORDERNTOTALENERGYGRONWALLED}.
	
	Inequality \eqref{E:HIGHNORMBOUND} 
	then follows from \eqref{E:ORDERNTOTALENERGYGRONWALLED}
	and Lemma~\ref{L:ENERGYNORMCOMPARISON}.
\end{proof}

\section{Linear stability for near-FLRW Kasner backgrounds}
\label{S:LINEARSTABILITY}
In this section, we state and prove Theorem~\ref{T:CMCLINEARSTABILITY},
which is our main linear stability result.
The theorem shows \textbf{i)} that for nearly spatially isotropic Kasner backgrounds,
the lower-order derivatives
of the linear solution enjoy improved 
estimates with respect to $t$ (i.e., involving less singular powers of $t$) 
compared to the energy estimates of Theorem~\ref{T:L2MILDENERGYBLOWUPCMCGAUGE}
and \textbf{ii)} that various time-rescaled components of the solution variables
converge as $t \downarrow 0$.
As we outline in Sect.\ \ref{S:NONLINEARPROBLEM}, 
the improved behavior is essential for proving the 
nonlinear stable blow-up results of \cite{iRjS2014b}.
The proof of the theorem is 
essentially based on revisiting the linearized equations 
and treating them as transport equations
\emph{with derivative-losing error terms}
that we control with the energy estimates of Theorem~\ref{T:L2MILDENERGYBLOWUPCMCGAUGE}.
Elliptic estimates for the lapse also play a role.
The main difficulty is finding a suitable order in which to prove the estimates.
In essence, this amounts to finding effective dynamic decoupling.

\begin{theorem} [\textbf{Linear stability}]
\label{T:CMCLINEARSTABILITY}
Assume the hypotheses and conclusions of Theorem~\ref{T:L2MILDENERGYBLOWUPCMCGAUGE}.
Let $N \geq 2$ be an integer
and assume that
$\left\| \grenormalized \right\|_{L_{Frame}^2}(1) < \infty$
and
$\highnorm{N}(1) < \infty$
(see definition \eqref{E:HIGHNORM}).
There exist constants $C > 0$ and $c > 0$ such that
if $\tracefreeparameter > 0$ is sufficiently small (see definition \ref{E:TRACEFREEPARAMETER}),
then the linear solution 
to the equations of Prop.\ \ref{P:LINEARIZEDCMCEQUATIONS}
verifies the following estimates for $t \in (0,1]$:
\begin{subequations}
	\begin{align}
		\left\|
			\partial_t (t \LinSecondFund)
		\right\|_{H_{Frame}^{N-1}}
	& \leq C \highnorm{N}(1) t^{-1/3 - c \tracefreeparameter},
		\label{E:PARTIALTSECONDFUNDHNMINUSONE}
		\\
		\left\|
			t \LinSecondFund
		\right\|_{H_{Frame}^{N-1}}
	& \leq C \highnorm{N}(1),
		\label{E:SECONDFUNDHNMINUSONE} \\
	\left\|
		\grenormalized 
	\right\|_{L_{Frame}^2}
	& \leq C \left\lbrace
							\left\|
								\grenormalized 
							\right\|_{L_{Frame}^2}(1)
							+ 
							\frac{1}{\tracefreeparameter}
							\highnorm{N}(1)
					\right\rbrace
		t^{2/3 - c \tracefreeparameter},
		\label{E:METRICRENORMALIZEDL2} \\
	\left\|
		\partial \grenormalized 
	\right\|_{H_{Frame}^{N-1}}
	& \leq 
				\frac{C}{\tracefreeparameter}
				\highnorm{N}(1)
				t^{2/3 - c \tracefreeparameter},
		\label{E:PARTIALMETRICRENORMALIZEDHNMINUSTWO} \\
	\left\|
		\partial_t (t \partial_t \SFRenormalized - A \LapseRenormalized)
	\right\|_{H^{N-1}}
	& \leq C \highnorm{N}(1)
		t^{- 1/3 - c \tracefreeparameter},
		\label{E:PARTIALTTPARTIALTPHIMINUSLAPSEHNMINUSONE} \\
	\left\|
		t \partial_t \SFRenormalized  
	\right\|_{H^{N-1}}
	& \leq C \highnorm{N}(1),
		\label{E:PARTIALTPHIHNMINUSONE} \\
	\left\|
		\partial \SFRenormalized 
	\right\|_{H_{Frame}^{N-2}}
	& \leq C \highnorm{N}(1)
				\left\lbrace
					1 + |\ln(t)|
				\right\rbrace,
		\label{E:PARTIALPHIHNMINUSTWO} \\
	\left\|
	 	\LapseRenormalized 
	\right\|_{H^N}
	& \leq C \highnorm{N}(1)
		t^{- c \tracefreeparameter},
			\label{E:PARTIALLAPSEHNMINUSONEESTIMATE}	\\
	\left\|
			\LapseRenormalized 
	\right\|_{H^{N-1}}
	& \leq C \highnorm{N}(1)
		t^{2/3 - c \tracefreeparameter},
		  \label{E:LAPSEHNMINUSONEESTIMATE} \\
	\left\|
		\LapseRenormalized 
	\right\|_{H^{N-2}}
	& \leq \frac{C}{\tracefreeparameter} \highnorm{N}(1)
		t^{4/3 - c \tracefreeparameter}.
		\label{E:LAPSEHNMINUSTWOESTIMATE}
\end{align}
\end{subequations}

\noindent{\underline{\textbf{Convergence}}.}
There exist a symmetric type $\binom{0}{2}$ tensorfield
$h_{Regular} \in H_{Frame}^{N-1}(\mathbb{T}^3)$,
a type $\binom{1}{1}$ tensorfield
$K_{Bang} \in H_{Frame}^{N-1}(\mathbb{T}^3)$
verifying $(K_{Bang})_{\ a}^a = 0$,
and a function $\Psi_{Bang} \in H^{N-1}(\mathbb{T}^3)$
such that the following estimates hold\footnote{On the left-hand sides of 
	\eqref{E:METRICCONVERGESQISQJ}-\eqref{E:METRICCONVERGESQIISNOTQJ}, we do not sum over $i$ or $j$.} 
for $t \in (0,1]$:
\begin{subequations}
	\begin{align}
		\left\|
			t^{-2 q_j} \grenormalized_{ij} + 2 \ln(t) (K_{Bang})_{\ j}^i 
			- (h_{Regular})_{ij}
		\right\|_{H^{N-1}}
		& \leq C \highnorm{N}(1) t^{ 2/3 - c \tracefreeparameter},
		&& (\mbox{\upshape if} \ q_i = q_j),
		\label{E:METRICCONVERGESQISQJ}	\\
		\left\|
			t^{-2 q_j} \grenormalized_{ij} + \frac{1}{q_i - q_j} t^{2(q_i - q_j)} (K_{Bang})_{\ j}^i 
			- (h_{Regular})_{ij}
		\right\|_{H^{N-1}}
		& \leq C \highnorm{N}(1) t^{2/3 - c \tracefreeparameter},
		&& (\mbox{\upshape if} \ q_i \neq q_j),
		\label{E:METRICCONVERGESQIISNOTQJ}	\\
		\left\|
			t \LinSecondFund
			- K_{Bang}
		\right\|_{H_{Frame}^{N-1}}
		& \leq C \highnorm{N}(1) t^{2/3 - c \tracefreeparameter},
		\label{E:SECONDFUNDCONVERGENES}	\\
		\left\|
			t \partial_t \SFRenormalized - \Psi_{Bang}
		\right\|_{H^{N-1}}
	& \leq C \highnorm{N}(1)
		t^{2/3 - c \tracefreeparameter},
			\label{E:PARTIALTPHICONVERGES} \\
	  \left\|
			\partial \SFRenormalized -  \ln(t) \partial \Psi_{Bang}
		\right\|_{H_{Frame}^{N-2}}
	& \leq C \highnorm{N}(1),
			\label{E:PARTIALPHICONVERGES}
	\end{align}
\end{subequations}
and
\begin{subequations}
	\begin{align}
		\left\|
			h_{Regular}
			- \grenormalized(1)
		\right\|_{H_{Frame}^{N-1}}
		& \leq C \highnorm{N}(1),
			\label{E:METRICLIMITNEARDATA} \\
		\left\|
			K_{Bang}
			- \LinSecondFund(1)
		\right\|_{H_{Frame}^{N-1}}
		& \leq C \highnorm{N}(1),
			\label{E:SECONDFUNDLIMITNEARDATA} \\
		\left\|
			\Psi_{Bang}
			- \partial_t \SFRenormalized(1)
		\right\|_{H^{N-1}}
	& \leq C \highnorm{N}(1).
			\label{E:PARTIALTPHILIMITNEARDATA} 
	\end{align}
\end{subequations}

In addition, the same estimates hold in the case
$\tracefreeparameter = 0$ with all factors of
$
\displaystyle
\frac{1}{\tracefreeparameter}
$ replaced by $1 + \ln t$.

\end{theorem}

Before proving the theorem, we first make some remarks.

\begin{itemize}
	\item Just below the ``rough'' Theorem~\ref{T:ROUGHVERSIONLINEARSTABILITY}
		(which is a recap of Theorem~\ref{T:CMCLINEARSTABILITY}),
		we gave a detailed explanation of why the 
		convergence results of Theorem~\ref{T:CMCLINEARSTABILITY}
		are natural. Furthermore, we highlighted the connection between
		the convergence results stated in the theorem 
		and the heuristic statements made in
		\cites{vBiK1972,jB1978} concerning the asymptotic behavior of
		solutions to the nonlinear equations near singularities.
	\item The improved behavior in $t$ 
		provided by \eqref{E:PARTIALTSECONDFUNDHNMINUSONE}-\eqref{E:LAPSEHNMINUSTWOESTIMATE}
		is of critical importance in closing the nonlinear problem;
		see Sect.\ \ref{S:NONLINEARPROBLEM}.
\end{itemize}

\begin{proof}[Proof of Theorem~\ref{T:CMCLINEARSTABILITY}]
We give the proof only in the case $\tracefreeparameter > 0$.
The case $\tracefreeparameter = 0$ can be handled by straightforward modifications of the case
$\tracefreeparameter > 0$.
Throughout the proof, we silently use Lemma~\ref{L:KASNERMETRIC},
Lemma~\ref{L:ENERGYNORMCOMPARISON},
and the $t$-weights inherent in Def.\ \ref{D:NORMS}. \\

\noindent{\textbf{Proof of \eqref{E:PARTIALLAPSEHNMINUSONEESTIMATE} and \eqref{E:LAPSEHNMINUSONEESTIMATE}:}}
We commute equation \eqref{E:LINEARIZEDLAPSELOWER} 
with $\partial_{\vec{I}}$, multiply by $\partial_{\vec{I}} \LapseRenormalized$,
and integrate by parts over $\Sigma_t$ to deduce the elliptic estimate
\begin{align} \label{E:PARTIALVECILAPSEFIRSTIMPROVED}
	t \| \partial \partial_{\vec{I}} \LapseRenormalized \|_{L_{\gKasner}^2}
	+ \| \partial_{\vec{I}} \LapseRenormalized \|_{L^2}
	& \leq C t^2 \| \partial_{\vec{I}} \currenormalized \|_{L^2}.
\end{align}
From \eqref{E:SCALARCURVATURERENORMALIZED} and \eqref{E:HIGHNORMBOUND},
we deduce that whenever $|\vec{I}| \leq N-1$, we have
$ \| \partial_{\vec{I}} \currenormalized \|_{L^2} \leq 
C t^{-4/3 - c \tracefreeparameter} \highnorm{N}(1)$.
The estimates \eqref{E:PARTIALLAPSEHNMINUSONEESTIMATE} and \eqref{E:LAPSEHNMINUSONEESTIMATE}
now readily follow. \\

\noindent{\textbf{Proof of \eqref{E:PARTIALTSECONDFUNDHNMINUSONE}:}}
We first deduce from equation \eqref{E:LINEARIZEDKEVOLUTION} that
\begin{align} \label{E:PARTIALTRENORMALIZEDKFIRSTIMPROVED}
	\| \partial_t (t \LinSecondFund) \|_{H_{Frame}^{N-1}} 
		& \leq C t^{1/3 - c \tracefreeparameter} \| \LapseRenormalized \|_{H^{N+1}} 
		+ C t^{-1} \| \LapseRenormalized \|_{H^{N-1}} 
		+ C t \| \Ricrenormalized \|_{H_{Frame}^{N-1}}.
\end{align}
From \eqref{E:RICRENORMALIZED},
\eqref{E:HIGHNORMBOUND},
and \eqref{E:LAPSEHNMINUSONEESTIMATE},
we conclude that the right-hand side of \eqref{E:PARTIALTRENORMALIZEDKFIRSTIMPROVED}
is $\leq C \highnorm{N}(1) t^{-1/3 - c \tracefreeparameter}$ as desired. \\

\noindent{\textbf{Proof of \eqref{E:SECONDFUNDHNMINUSONE},
\eqref{E:SECONDFUNDCONVERGENES},
and
\eqref{E:SECONDFUNDLIMITNEARDATA}:}}
We set
$
f(t)
:=
t \LinSecondFund_{\ j}^i(t,\cdot)
$,
where we are viewing $f$ as a scalar
$H^{N-1}(\mathbb{T}^3)$-valued function of $t$.
From \eqref{E:PARTIALTSECONDFUNDHNMINUSONE},
we deduce that for $0 < s \leq t \leq 1$, 
we have
$
\left\|
	f(t) - f(s)
\right\|_{H^{N-1}}
\leq
C 
\left(
	t^{2/3 - c \tracefreeparameter}
	-
	s^{2/3 - c \tracefreeparameter}
\right)
\highnorm{N}(1)
$.
From this bound and the completeness of $H^{N-1}(\mathbb{T}^3)$,
it follows that if $\tracefreeparameter$ is sufficiently small,
then $\lim_{t \downarrow 0} f(t)$
exists as an element of $H^{N-1}(\mathbb{T}^3)$.
We denote the limit
by $(K_{Bang})_{\ j}^i := f(0)$.
Moreover, the previous estimate yields
$
\left\|
	f(t) - f(0)
\right\|_{H^{N-1}}
\leq
C \highnorm{N}(1) t^{2/3 - c\tracefreeparameter}
$.
The estimates
\eqref{E:SECONDFUNDCONVERGENES}
and
\eqref{E:SECONDFUNDLIMITNEARDATA}
follow from this bound,
while
\eqref{E:SECONDFUNDHNMINUSONE}
follows from
\eqref{E:SECONDFUNDCONVERGENES},
\eqref{E:SECONDFUNDLIMITNEARDATA},
and the bound
$
f(1) \leq \highnorm{N}(1)
$.
\\

\noindent{\textbf{Proof of \eqref{E:METRICRENORMALIZEDL2} and \eqref{E:PARTIALMETRICRENORMALIZEDHNMINUSTWO}:}}
We give the details only for \eqref{E:METRICRENORMALIZEDL2} since the proof of
\eqref{E:PARTIALMETRICRENORMALIZEDHNMINUSTWO} is essentially the same.
To proceed, we first split
$t \SecondFundKasner_{\ j}^a$ into its pure trace and trace-free parts
and use equation \eqref{E:LINEARIZEDGEVOLUTION} to deduce that
\begin{align}
	\partial_t (t^{-2/3} \grenormalized_{ij})
		& = -2 t^{-1} (t^{-2/3} \grenormalized_{ia}) (t \tracefreeSecondFundKasner_{\ j}^a)
			- 2 t^{-5/3} \gKasner_{ia} (t \LinSecondFund_{\ j}^a)
			- 2 t^{-5/3} \gKasner_{ia} (t \SecondFundKasner_{\ j}^a) \LapseRenormalized.
		\label{E:LINEARIZEDGEVOLUTIONREWRITTEN} 
\end{align}
From equation \eqref{E:LINEARIZEDGEVOLUTIONREWRITTEN}, we deduce that
\begin{align} \label{E:LINEARIZEDGEVOLUTIONHNMINSONENORM}
	\| \partial_t (t^{-2/3} \grenormalized) \|_{L_{Frame}^2}
	& \leq C t^{-1} |t \tracefreeSecondFundKasner|_{Frame} \| t^{-2/3} \grenormalized \|_{L_{Frame}^2}
			+ C t^{-5/3} |\gKasner|_{Frame}  \| t \LinSecondFund \|_{L_{Frame}^2}
				\\
	& \ \ + C t^{-5/3} |\gKasner|_{Frame} |t \SecondFundKasner|_{Frame} \| \LapseRenormalized \|_{L^2}.
		\notag
\end{align}
From inequality \eqref{E:HIGHNORMBOUND},
we deduce that the right-hand side of \eqref{E:LINEARIZEDGEVOLUTIONHNMINSONENORM} is 
\[
	\leq c \tracefreeparameter t^{-1} \| t^{-2/3} \grenormalized \|_{L_{Frame}^2} + C \highnorm{N}(1) t^{-1 - c \tracefreeparameter}.
\]
Using this estimate and integrating \eqref{E:LINEARIZEDGEVOLUTIONHNMINSONENORM} in time, we deduce that
\begin{align} \label{E:METRICGRONWALLREADY}
	t^{-2/3} \| \grenormalized \|_{L_{Frame}^2}(t)
	& \leq \| \grenormalized \|_{L_{Frame}^2}(1)
		+	\frac{C}{\tracefreeparameter} \highnorm{N}(1) t^{- c \tracefreeparameter}
		+ c \tracefreeparameter
			\int_{s=t}^1
				s^{-1}
				\left\lbrace
					s^{-2/3} \| \grenormalized \|_{L_{Frame}^2}(s)
				\right\rbrace
			\, ds.
\end{align}
From \eqref{E:METRICGRONWALLREADY} and Gronwall's inequality in the quantity
$t^{-2/3} \| \grenormalized \|_{L_{Frame}^2}(t)$, 
we conclude the desired inequality \eqref{E:METRICRENORMALIZEDL2}. \\

\noindent{\textbf{Proof of \eqref{E:LAPSEHNMINUSTWOESTIMATE}:}}
We need only to revisit the proof of
\eqref{E:LAPSEHNMINUSONEESTIMATE}
and use the fact that the improved estimate \eqref{E:PARTIALMETRICRENORMALIZEDHNMINUSTWO}
allows us to deduce that whenever $|\vec{I}| \leq N-2$, we have
$ 
\displaystyle
\| \partial_{\vec{I}} \currenormalized \|_{L^2} \leq 
\frac{C}{\tracefreeparameter}  \highnorm{N}(1) t^{-2/3 - c \tracefreeparameter}
$. \\

\noindent{\textbf{Proof of \eqref{E:PARTIALTTPARTIALTPHIMINUSLAPSEHNMINUSONE}:}}
We first deduce from equation \eqref{E:SCALARFIELDWAVEDECOMPOSED} that
\begin{align} \label{E:SCALARFIELDWAVEFIRSTEST}
	\| \partial_t (t \partial_t \SFRenormalized - A \LapseRenormalized) \|_{H^{N-1}} 
	& \leq C t \| \gKasner^{ab} \partial_a \partial_b \SFRenormalized \|_{H^{N-1}} 
		+ 
		C t^{-1} \| \LapseRenormalized \|_{H^{N-1}}.
\end{align}
From \eqref{E:HIGHNORMBOUND}
and \eqref{E:LAPSEHNMINUSONEESTIMATE},
we deduce that the right-hand side of \eqref{E:SCALARFIELDWAVEFIRSTEST}
is $\leq$ the right-hand side of \eqref{E:PARTIALTTPARTIALTPHIMINUSLAPSEHNMINUSONE} as
desired. \\

\noindent{\textbf{Proof of \eqref{E:PARTIALTPHIHNMINUSONE},
\eqref{E:PARTIALTPHICONVERGES},
and
\eqref{E:PARTIALTPHILIMITNEARDATA}:}}
The
existence of the limiting tensorfield $\Psi_{Bang}$
and the three estimates under consideration
follow from
inequalities 
\eqref{E:PARTIALTTPARTIALTPHIMINUSLAPSEHNMINUSONE}
and 
\eqref{E:LAPSEHNMINUSONEESTIMATE}
by the same reasoning we used to prove
\eqref{E:SECONDFUNDHNMINUSONE},
\eqref{E:SECONDFUNDCONVERGENES},
and
\eqref{E:SECONDFUNDLIMITNEARDATA}.
\\

\noindent{\textbf{Proof of \eqref{E:PARTIALPHIHNMINUSTWO} and \eqref{E:PARTIALPHICONVERGES}:}}
From \eqref{E:PARTIALTPHICONVERGES},
we deduce
$
\left\|
		\partial_t 
		\left\lbrace
			\partial \SFRenormalized - \ln t \partial \Psi_{Bang}
		\right\rbrace
\right\|_{H_{Frame}^{N-2}}
\leq 
C \highnorm{N}(1)
t^{-1/3 - c \tracefreeparameter}
$.
Integrating from time $t$ to time $1$,
we find that
$
\left\|
		\partial \SFRenormalized - \ln t \partial \Psi_{Bang}
\right\|_{H_{Frame}^{N-2}}
\leq 
C \highnorm{N}(1)
t^{2/3 - c \tracefreeparameter}
+
\left\|
		\partial \SFRenormalized
\right\|_{H_{Frame}^{N-2}}
(1)
\leq 
C \highnorm{N}(1)
$,
which yields
\eqref{E:PARTIALPHICONVERGES}.
\eqref{E:PARTIALPHIHNMINUSTWO} then follows from 
\eqref{E:PARTIALTPHIHNMINUSONE},
\eqref{E:PARTIALPHICONVERGES},
and \eqref{E:PARTIALTPHILIMITNEARDATA}.
\\

\noindent{\textbf{Proof of \eqref{E:METRICCONVERGESQISQJ}, 
\eqref{E:METRICCONVERGESQIISNOTQJ},
and \eqref{E:METRICLIMITNEARDATA}:}}
Throughout this paragraph, we do not use Einstein's summation convention for $i$ or $j$.
Recall that $\gKasner_{ii} = t^{2 q_i}$, that $t (\SecondFundKasner_{\ i}^i) = - q_i$,
and that the off-diagonal components of these tensorfields are $0$.
Multiplying equation \eqref{E:LINEARIZEDGEVOLUTION} by $t^{-2q_j}$,
we deduce the equation
$\partial_t (t^{-2 q_j} \grenormalized_{ij})
	=  - 2 t^{-1 + 2(q_i - q_j)} (t \LinSecondFund_{\ j}^i)
		+ 2 q_i \delta_{ij} t^{-1} \LapseRenormalized.
$
From this equation,
the estimates
\eqref{E:LAPSEHNMINUSONEESTIMATE} and
\eqref{E:SECONDFUNDCONVERGENES},
and the simple estimate
$|q_i - q_j| \leq 2 \tracefreeparameter$
(see \eqref{E:TRACEFREEPARAMETER}), 
we deduce that
for $i,j=1,2,3$,
we have 
\begin{align} \label{E:TIMEDERIVATIVEOFMETRICINEQUALITYSUGGESTSCONVERGENCE}
	\left\|
		\partial_t 
		\left\lbrace
			t^{-2 q_j} \grenormalized_{ij} - 2 \left(\int_{s=t}^1 s^{-1 + 2(q_i - q_j)} \, ds \right)(K_{Bang})_{\ j}^i 
		\right\rbrace
	\right\|_{H^{N-1}}
	& \leq C \highnorm{N}(1) t^{-1/3 - c \tracefreeparameter}.
\end{align}
When $\tracefreeparameter$ is sufficiently small,
the existence of the limiting tensorfield components $(h_{Regular})_{ij}$
and the estimates 
\eqref{E:METRICCONVERGESQISQJ}, 
\eqref{E:METRICCONVERGESQIISNOTQJ},
and \eqref{E:METRICLIMITNEARDATA}
follow from \eqref{E:TIMEDERIVATIVEOFMETRICINEQUALITYSUGGESTSCONVERGENCE}
and \eqref{E:SECONDFUNDLIMITNEARDATA}
by the same reasoning we used to prove
\eqref{E:SECONDFUNDHNMINUSONE},
\eqref{E:SECONDFUNDCONVERGENES},
and
\eqref{E:SECONDFUNDLIMITNEARDATA}.
\end{proof}

\section{Summary of the proof of the nonlinear stability of the FLRW Big Bang singularity}
\label{S:NONLINEARPROBLEM}
Recall that the FLRW metric is $\gfour_{FLRW} = - dt^2 + t^{2/3} \sum_{i=1}^3 (dx^i)^2$
(see \eqref{E:FLRWBACKGROUNDSOLUTION}). 
In this section, we outline the proof of Theorem~\ref{T:STABILITYOFFLRW},
which yields the nonlinear stability of FLRW solution's Big Bang
singularity. Our discussion will provide a detailed overview of the central role
that the monotonicity identities 
and linear stability results play 
in the nonlinear problem.
For complete details in the context of the Einstein-stiff fluid system,
we refer the reader to \cite{iRjS2014b}.

\begin{remark}
	\label{R:TIMERESCALEDVARS}
	In \cite{iRjS2014b}, we formulated the equations and estimates
	in terms of time-rescaled solution variables.
	Here, to keep the discussion short, we do not introduce
	such time-rescaled variables.
	This changes the appearance of various 
	equations and estimates
	compared to \cite{iRjS2014b},
	but not their content.
\end{remark}

\subsection{Norms and energies}
\label{SS:NONLINEARNORMSANDENERGIES}
We start by introducing the norms and energies that we use to control the nonlinear solution.
In our analysis, we view the unknowns to be the solution variables
$n,g_{ij},k_{\ j}^i,\phi$ appearing in the nonlinear equations of Prop.\ \ref{P:EINSTEINSFCMC}. 
Note that the pure trace part of
$k_{\ j}^i$ is controlled by the CMC condition 
$k_{\ a}^a = - t^{-1}$ 
and thus we only need to derive estimates for its trace-free part $\hat{k}_{\ j}^i$.

\begin{definition}[\textbf{The pointwise norm} $|\cdot|_g$]
	\label{D:DYNAMICGNORM}
	Throughout this section, we use the pointwise norm $|\cdot|_g$,
	which is defined by replacing the background Kasner metric
	$\gKasner$ with the metric $g$ on both sides of
	\eqref{E:NEWGNORM}.
\end{definition}

\begin{definition}[\textbf{Solution norms}] 
\label{D:NONLINEARFRAMENORM}
To control the nonlinear solution, we rely 
on norms\footnote{More precisely, in 
\cite{iRjS2014b}, our high-order solution norms 
do not directly control
$\left\|
	g - g_{FLRW}
\right\|_{L_{Frame}^2}$
or
$
\left\|
	g^{-1} - g_{FLRW}^{-1}
\right\|_{L_{Frame}^2}
$,
but that detail is not important for the ensuing discussion.} 
belonging to the following family:
\begin{align}
	\highnorm{M}(t)
	& := 
			\left\| t  \hat{\SecondFund} \right\|_{H_{Frame}^M} 
			+ 
			\| \partial g \|_{H_{Frame}^M} 
			\label{E:NONLINEARHIGHNORM}  \\
	& \ \
			+
			t^{-2/3}
			\left\|
				g - g_{FLRW}
			\right\|_{H_{Frame}^M}
			+
			t^{2/3}
			\left\|
				g^{-1} - g_{FLRW}^{-1}
			\right\|_{H_{Frame}^M}
			\notag
			\\
	& \ \	
			+ 
			\left\| t \partial_t \phi \right\|_{H_{Frame}^M} 
			+ 
			t^{2/3} \| \partial \phi \|_{H_{Frame}^M} 
			+ 
			\sum_{p=0}^2 t^{(2/3)p} \left\| n - 1 \right\|_{H^{M+p}}.
		\notag 
\end{align}

\end{definition}

To control the norms \eqref{E:NONLINEARHIGHNORM},
we will use the energies provided by the next definition.
The energies for the nonlinear solution are tied to 
approximate monotonicity identities 
for the nonlinear solution
in the same way that
the energies of Def.\ \ref{D:ENERGIES}
for the linear solution are
tied to the approximate monotonicity identity 
of Theorem~\ref{T:CMCMONOTONICITYID}.

\begin{definition}[\textbf{Energies}]
\label{D:NONLINEARENERGIES}
Let $\vec{I}$ is a spatial derivative multi-index (as defined in Subsect.\ \ref{SS:COORDINATES}),
let $M \geq 0$ be an integer,
and let $\smallparameter > 0$ be a constant.
We define the energies
$
\mathscr{E}_{(Metric);\vec{I}}(t) \geq 0,
$
$\cdots$,
$
\mathscr{E}_{(Total);\smallparameter;M}(t) \geq 0 
$
as follows:
\begin{subequations}
\begin{align}
	\mathcal{E}_{(Metric);\vec{I}}^2(t)
	&:=
		\int_{\Sigma_t} 
				\left|
					t \partial_{\vec{I}} \hat{\SecondFund}
				\right|_g^2 
				+ 
				\frac{1}{4} 
				\left|
					t \partial \partial_{\vec{I}} g
				\right|_g^2 
			\, dx,
			\label{E:NONLINEARMETRICENERGY}
				\\
	\mathscr{E}_{(Scalar);\vec{I}}^2(t)
		& := \int_{\Sigma_t} 
					(t \partial_t \partial_{\vec{I}} \phi)^2 
					+ 
					n^2 |t \partial \partial_{\vec{I}} \phi|_g^2 
				 \, dx, 
			\label{E:NONLINEARSCALARFIELDENERGY} \\
	\mathscr{E}_{(\partial Lapse);\vec{I}}^2(t)
		& := \int_{\Sigma_t} 
						|t \partial \partial_{\vec{I}} n|_g^2
					\, dx, 
			\label{E:NONLINEARPARTIALLAPSEENERGY} \\
	\mathscr{E}_{(Lapse)}^2(t)
		& := \int_{\Sigma_t} 
						|\partial_{\vec{I}} (n-1)|^2
				 \, dx, 
			\label{E:NONLINEARLAPSEENERGY} 
			\\
		\mathscr{E}_{(Total);\smallparameter;\vec{I}}^2(t) 
		& := \mathscr{E}_{(Scalar);\vec{I}}^2(t)
					+ 
					\mathscr{E}_{(\partial Lapse);\vec{I}}^2(t)
					+	  
					\frac{1}{3} \mathscr{E}_{(Lapse);\vec{I}}^2(t)
					+
					\smallparameter \mathscr{E}_{(Metric);\vec{I}}^2(t),
			\label{E:NONILINEARTOTALENERGY}
				\\
	 \mathscr{E}_{(Total);\smallparameter;M}^2(t) 
		& := \sum_{|\vec{I}| \leq M} \mathscr{E}_{(Scalar);\vec{I}}^2(t).
			\label{E:SUMMEDNONILINEARTOTALENERGY}
\end{align}
\end{subequations}
We clarify that on the right-hand side of \eqref{E:NONLINEARMETRICENERGY},
$\left|
	t \partial \partial_{\vec{I}} g
\right|_g^2 
=
t^2
g^{ab} g^{ij} g^{ef}
\partial_e \partial_{\vec{I}} g_{ai}
\partial_f \partial_{\vec{I}} g_{bj}
$.
\end{definition}

\begin{remark}
	\label{R:ENERGIESDONOTCONTROLMETRIC}
	Note that our energies do not directly control the terms
	$
	t^{2/3}
	\left\|
		g - g_{FLRW}
	\right\|_{H_{Frame}^M}
	$
	or
	$
	t^{-2/3}
	\left\|
		g^{-1} - g_{FLRW}^{-1}
	\right\|_{H_{Frame}^M}
	$,
	which are featured in the norm \eqref{E:NONLINEARHIGHNORM}.
	Therefore, control of these terms 
	does not directly follow from the energy estimates described below
	and instead requires a separate argument
	based on the metric evolution equation \eqref{E:PARTIALTGCMC};
	we will avoid further discussion of this issue here.
\end{remark}

As in our proof of Theorem~\ref{T:L2MILDENERGYBLOWUPCMCGAUGE}, 
in order to close the energy estimates for the nonlinear solutions,
we have to make a suitable choice of $\smallparameter > 0$.
Here we note that the same choice of 
\begin{align} \label{E:SMALLPARAMETERCHOSEN}
	\smallparameter := \smallparameter_*
\end{align}
that we made in the proof of Theorem~\ref{T:L2MILDENERGYBLOWUPCMCGAUGE}
is also sufficient in our study of nonlinear solutions.
The reason is that $\smallparameter$
needs to be adapted only to handle 
various integrals in the approximate monotonicity identity 
that are generated by \emph{linear} terms in the equations;
quadratically small nonlinear terms
do not affect the viability of the choice $\smallparameter := \smallparameter_*$,
but rather generate error integrals that
we explain how to control in Subsect.\ \ref{SS:NONLINEARERRORINTEGRALS}.

\subsection{The nonlinear stability of the FLRW Big Bang singularity}
\label{SS:STATEMENTOFFLROWSATBILITY}
In this subsection, 
we state our main nonlinear result, namely Theorem~\ref{T:STABILITYOFFLRW}.
In the rest of Sect.\ \ref{S:NONLINEARPROBLEM},
we will explain the main ideas behind the proof of the theorem. 
We refer the reader to \cite{iRjS2014b} for complete details in the
case of the Einstein-stiff fluid system.
We note that in Remarks~\ref{R:ALTERNATEAPPROACH} and \ref{R:TIMERESCALEDVARS},
we pointed out some minor differences
between the approach outlined here and the approach taken in
\cite{iRjS2014b}.

\begin{theorem}[\textbf{Stable Big Bang Formation for near-FLRW solutions}]
	\label{T:STABILITYOFFLRW}
	Consider initial data 
	(as described in Subsect.\ \ref{SS:INITIALVALUEANDCMC})
	for the Einstein-scalar field system
	given on the Cauchy hypersurface
	$\Sigma_1 = \lbrace 1 \rbrace \times \mathbb{T}^3$
	that verify the CMC condition
	$\SecondFundupdatadarg{a}{a} = - 1$,
	where $\SecondFundupdatadarg{i}{j} := (\SecondFund_{\ j}^i)|_{\Sigma_1}$
	is the mixed second fundamental form of $\Sigma_1$.
	Assume that $N \geq 8$ and that\footnote{Note that $\highnorm{N}(1)$ is assumed to be quadratically small compared
	to the amplitude $\varepsilon$ featured in the estimate \eqref{E:NORMBOUND}.
	This assumption is non-optimal and could be improved with further effort;
	we have aimed for a clean presentation rather than
	for optimizing powers of $\varepsilon$.
	\label{FN:NONOPTIMALSMALLNESS}} 
	$\highnorm{N}(1) \leq \varepsilon^2$,
	where $\highnorm{N}$ is defined by
	\eqref{E:NONLINEARHIGHNORM} (see also Remark~\ref{R:NOTGEOMETRICSMALLNESS}).
	There exist constants $C > 0$ and $c > 0$ such that if $\varepsilon$ is sufficiently small,
	then the perturbed solution
	to the Einstein-scalar field system
	in CMC-transported-spatial-coordinates gauge
	(that is, to equations \eqref{E:HAMILTONIAN}-\eqref{E:LAPSE}) 
	exists for $(t,x) \in (0,1] \times \mathbb{T}^3$
	and verifies the norm bound
	\begin{align} \label{E:NORMBOUND}
		\highnorm{N}(t)
		& \leq \frac{1}{2} \varepsilon t^{- c \sqrt{\varepsilon}}.
	\end{align}
	Moreover, the Kretschmann scalar verifies the pointwise bound
	\begin{align} \label{E:NONLINEARKRETSCHMANNBLOWUP}
		\left|
			t^4 \Riemfour^{\alpha \beta \gamma \delta} \Riemfour_{\alpha \beta \gamma \delta}(t,x)
			-
			\frac{20}{27}
		\right|
		& \leq C \varepsilon.
	\end{align}
	In particular, 
	$\Riemfour^{\alpha \beta \gamma \delta} \Riemfour_{\alpha \beta \gamma \delta}$
	blows up like $t^{-4}$ as $t \downarrow 0$.
	Moreover, there exists a type $\binom{1}{1}$ tensorfield
	$K_{Bang} \in H^{N-1}(\mathbb{T}^3)$ such that the following convergence results 
	for components holds for $t \in (0,1]$,
	$(i,j=1,2,3)$:
	\begin{subequations}
	\begin{align}
		\left\|
			n-1
		\right\|_{H^{N-2}}
		& \leq C \varepsilon t^{4/3- c \sqrt{\varepsilon}},
				\label{E:LAPSECONVERGENCES} \\
		\left\|
			t k_{\ j}^i
			- 
			(K_{Bang})_{\ j}^i
		\right\|_{H^{N-1}}
		& \leq C \varepsilon t^{2/3- c \sqrt{\varepsilon}},
		\label{E:NONLINEARSECONDFUNDCONVERGENES}
			\\
		\left\|
			(K_{Bang})_{\ j}^i
			+
			\frac{1}{3} \ID_{\ j}^i
		\right\|_{H^{N-1}}
		& \leq C \varepsilon, 
		\label{E:LIMITINGSECFUNDFORMNEARID}
	\end{align}
	\end{subequations}
	where $\ID_{\ j}^i := \mbox{\upshape diag}(1,1,1)$ is the identity transformation.
	Similar convergence results hold for other solution variables,
	in analogy with the convergence results 
	for the linear solution proved in Theorem~\ref{T:CMCLINEARSTABILITY}; 
	see \cite{iRjS2014b} for precise statements
	in the context of the Einstein-stiff fluid system.
\end{theorem}

\begin{remark}
	\label{R:NOTGEOMETRICSMALLNESS}
	In Theorem~\ref{T:STABILITYOFFLRW},
	we formulated our near-FLRW data assumption
	as a smallness condition on the norm
	$\highnorm{N}(1)$.
	We could have instead formulated
	a ``more geometric'' near-FLRW assumption
	by making assumptions only on the ``geometric data''
	from Subsect.\ \ref{SS:INITIALVALUEANDCMC},
	which does not include the lapse.
	We could have then derived
	the smallness of $\highnorm{N}(1)$ as a consequence of the assumptions 
	on the geometric data 
	(essentially by deriving elliptic estimates for the lapse along $\Sigma_1$
	by using equation \eqref{E:LAPSE});
	for convenience, we have avoided doing this.
\end{remark}

\begin{remark}
	As stated, Theorem~\ref{T:STABILITYOFFLRW} applies only to data with 
	constant mean curvature. However, this restriction is not necessary:
	in \cite{iRjS2014b}, we show that
	for perturbations of the FLRW solution,
	it is always possible to find
	a CMC hypersurface $\Sigma_1'$ near $\Sigma_1$.
	One can then use CMC-transported coordinates gauge
	starting from the ``data'' induced on $\Sigma_1'$.
	Alternatively, one could employ the 
	parabolic lapse gauges
	described in Sect.\ \ref{S:PARABOLICMONOTONICITY}
	starting from near-FLRW data on $\Sigma_1$; 
	these gauges do not require the initial Cauchy hypersurface
	to have constant mean curvature.
\end{remark}

\subsection{Outline of the proof}
\label{SS:NONLINEARBLOWUPOUTLINE}
We now outline the main steps in proof of Theorem~\ref{T:STABILITYOFFLRW}.
In the remainder of Sect.\ \ref{S:NONLINEARPROBLEM},
we will provide additional details about the most important aspects
of the proof. 

\begin{enumerate}
\item \textbf{(Big picture)}
	The main step in the proof of the theorem
	is to derive the a priori estimate \eqref{E:NORMBOUND}
	for the ``high-norm'' $\highnorm{N}(t)$, which shows in particular
	that it remains finite
	for $t \in (0,1]$, even though it can blow up
	as $t \downarrow 0$.
	It then follows as a standard result for 
	elliptic-hyperbolic systems (see \cite{lAvM2003})
	that, as a consequence of the a priori norm estimate, 
	the solution must exist
	for $(t,x) \in (0,1] \times \mathbb{T}^3$.
\item \textbf{(High-norm bootstrap assumption)}
	We use a bootstrap argument to obtain the desired estimates for
	the norm $\highnorm{N}$. To this end, we let 
	$(T,1]$ be any time interval on which the solution
	exists, where $0 < T < 1$. 
	We make a bootstrap
	assumption for $\highnorm{N}(t)$ for $t \in (T,1]$; see Subsect.\ \ref{SS:NORMBOOTSTRAP}.
	The bootstrap assumption
	weakly captures the fact
	that the perturbed solution 
	is near-FLRW.
	By the remarks made in Step (1), to prove the existence result of
	Theorem~\ref{T:STABILITYOFFLRW}, it suffices
	to derive the a priori estimate \eqref{E:NORMBOUND} for
	$\highnorm{N}(t)$ for $t \in (T,1]$,
	which is a strict improvement of the bootstrap assumption; 
	by a standard continuity argument,
	this justifies the bootstrap assumption,
	shows that the solution
	exists for $(t,x) \in (0,1] \times \mathbb{T}^3$,
	and shows that in fact, the norm estimate \eqref{E:NORMBOUND} holds for $t \in (0,1]$.
	To obtain the desired a priori norm estimate, 
	we will derive energy estimates
	via a nonlinear analog of
	Theorem~\ref{T:L2MILDENERGYBLOWUPCMCGAUGE}, that is,
	a result showing that appropriately defined nonlinear energies 
	can blow up at most in a very mild fashion as $t \downarrow 0$.
	We carry this out in Step (7). 
	The intermediate steps stated below are mostly in service of Step (7).
\item \textbf{(``Strong'' low-norm bootstrap assumptions)}	
	We make stronger bootstrap assumptions at the low-order derivative levels
	for $t \in (T,1]$,
	that is, bootstrap assumptions that
	involve less singular behavior in $t$ than what is afforded by the bootstrap 
	assumptions for the high-norm $\highnorm{N}(t)$.
	These stronger bootstrap assumptions are 
	key ingredients for controlling error terms 
	in the energy estimates, 
	for exhibiting the AVTD nature of the solution
	(that is, that the spatial derivative terms in the equations are negligible near the singularity),
	and for proving the convergence results such as
	\eqref{E:LAPSECONVERGENCES}-\eqref{E:LIMITINGSECFUNDFORMNEARID}.
	Although we do not explicitly state such stronger bootstrap assumptions in this paper,
	we note that they are essentially nonlinear analogs of the estimates that we 
	proved in the linear stability results of
	Theorem~\ref{T:CMCLINEARSTABILITY}.
	The existence of a $T \in (0,1)$ such that the solution exists and verifies the 
	high-norm and low-norm
	bootstrap assumptions on $(T,1]$
	follows from standard local well-posedness for elliptic-hyperbolic
	systems; see \cite{lAvM2003}.
	\item \textbf{(Improvements of the low-norm bootstrap assumptions)}
		As an intermediate step,
		we derive improvements of the 
		strong low-norm bootstrap assumptions 
		from the previous step,
		thereby closing this portion of the bootstrap argument.
		By improvements, we mean estimates that
		are strictly stronger than the estimates afforded by the low-norm bootstrap assumptions.
		This step is tantamount to justifying the AVTD nature of the solution.
		We state several of the resulting estimates in
		Subsect.\ \ref{SS:NONLINEARIMPROVEDESTIMATESATLOWERDERIVATIVELEVELS}.
		In this paper, we do not provide details behind this step since
		the desired estimates can be obtained by
		using arguments similar to the ones that we used
		in proving the linear stability results of Theorem~\ref{T:CMCLINEARSTABILITY},
		but with the added complication that one must control the nonlinear error terms. 
		We will, however, explain how to bound some representative 
		nonlinear error terms that arise in the energy estimates;
		see Steps (6)-(7).
		As in the proof of Theorem~\ref{T:CMCLINEARSTABILITY},
		the proofs in this step incur a loss of derivatives.
	\item \textbf{(Approximate monotonicity identity)}
		To obtain the desired energy estimates,
		the key starting point is an approximate monotonicity identity,
		that is, a nonlinear analog of Theorem~\ref{T:CMCMONOTONICITYID};
		recall that for linear solutions,
		the approximate monotonicity identity provided by Theorem~\ref{T:CMCMONOTONICITYID}
		is the main ingredient that we use to derive the mildly singular
		energy estimates of Theorem~\ref{T:L2MILDENERGYBLOWUPCMCGAUGE}.
		In this article, we do not derive an approximate monotonicity identity 
		for the nonlinear equations because the derivation would 
		be very similar to the proof of
		Theorem~\ref{T:CMCMONOTONICITYID}
		but would be rather lengthy due to the presence of 
		many nonlinear error integrals. 
		It turns out that these nonlinear error integrals
		have only a small effect on the dynamics in the sense
		that their presence is compatible with 
		the proof of a mild blowup-rate for the nonlinear energies,
		similar to the (at most) mild blowup of the linear solution's energies
		guaranteed by Theorem~\ref{T:L2MILDENERGYBLOWUPCMCGAUGE}.
		In the next two steps, we highlight some key representative
		nonlinear error integrals and 
		overview how we can handle them.
	\item \textbf{(Bounds for nonlinear error integrals)} 
	In Subsects.\ \ref{SS:REPRESENTATIVENONLINEARERRORTERMS}
	and \ref{SS:NONLINEARERRORINTEGRALS},
	we highlight three representative nonlinear error integrals,
	which appear in the approximate monotonicity identity described in
	the previous step,
	and bound the error integrals in terms of the energies.
	The improved estimates at the low derivative levels
	from Step (4) are crucial for this.
	\item  \textbf{(A priori energy estimates)}
		Recall that by using the approximate monotonicity identity for linear solutions,
		we were able to show that they
		verify the estimate \eqref{E:SECONDMODELENERGYGRONWALLREADY},
		which is the integral inequality for linear solutions' energies
		that we used to establish the mild energy blowup rate 
		\eqref{E:ORDERNTOTALENERGYGRONWALLED}.
		In the present nonlinear context, an analog of
		the integral inequality \eqref{E:SECONDMODELENERGYGRONWALLREADY}
		also holds, but the right-hand side features
		all of the nonlinear error integrals
		generated by the previous two steps.
		In Subsect.\ \ref{SS:MAINAPRIORIENERGYNONLINEAR},
		we outline the derivation of the nonlinear energy integral 
		inequalities that result
		from accounting for the nonlinear error integrals.
		We then use Gronwall's inequality to
		obtain the desired a priori energy estimates
		on the bootstrap interval $(T,1]$
		and sketch a proof of how the energy
		estimates allow one to derive strict improvements
		of the bootstrap assumption for the norm $\highnorm{N}(t)$ made in Step (2).
		In particular, this yields the desired a priori estimate \eqref{E:NORMBOUND}.
	\item \textbf{(Additional information)}
		Having derived improvements of both the low-norm and high-norm bootstrap assumptions,
		to complete the proof of the theorem,
		we need only to derive
		the curvature blowup result
		\eqref{E:NONLINEARKRETSCHMANNBLOWUP}
		and convergence results such as
		\eqref{E:LAPSECONVERGENCES}-\eqref{E:LIMITINGSECFUNDFORMNEARID}.
		We omit these details since
		they can be essentially be proved as part of Step (4), 
		that is, by using derivative-losing arguments similar to the ones we
		gave in the proof of the linear stability results of
		Theorem~\ref{T:CMCLINEARSTABILITY}.
	\end{enumerate}

\subsection{Bootstrap assumptions}
\label{SS:NORMBOOTSTRAP}
Let $T \in (0,1)$ be a ``bootstrap time'' such that the solution classically exists on
$(T,1] \times \mathbb{T}^3$ and obeys the following
bootstrap assumption, where the norm $\highnorm{N}(t)$ is defined in \eqref{E:NONLINEARHIGHNORM}:
\begin{align} \label{E:NORMBOOT}
	\highnorm{N}(t)
	& \leq \varepsilon t^{- \upsigma},
	&& t \in (T,1].
\end{align}
In \eqref{E:NORMBOOT}, $\varepsilon$ and $\upsigma$ are two small positive
bootstrap parameters that are constrained in particular by
$0 < \sqrt{\varepsilon} \leq \upsigma < 1$.
We will adjust the allowable smallness of $\varepsilon$ and $\upsigma$ 
throughout the course of the analysis. In particular, we will later impose
a condition of the form $c \sqrt{\varepsilon} < \upsigma$ 
for a large constant $c$
(see just below inequality \eqref{E:MAINAPRIORINORMESTIMATE}).
One can think of $\upsigma$ 
as a rough bound for the maximum possible size of $t \hat{\SecondFund}$, 
in analogy with the role that the parameter 
$\tracefreeparameter$ 
played in driving the energy blowup rates 
of Theorem~\ref{T:L2MILDENERGYBLOWUPCMCGAUGE}
(recall that $\tracefreeparameter$  
is equal to $t$ times the norm of the trace-free part of the background Kasner metric's 
second fundamental form).
Note that our smallness assumption for $\upsigma$ is reasonable 
in the sense that
$t \hat{\SecondFund}$  
is small for perturbations of the FLRW metric.
Note also that \eqref{E:NORMBOOT} allows 
for the possibility that
$\highnorm{N}(t)$ blows up as $t \downarrow 0$,
consistent with the estimates for the linear solutions
that we derived in Theorem~\ref{T:L2MILDENERGYBLOWUPCMCGAUGE}.
In Cor.\ \ref{C:MAINAPRIORIENERGY},
we sketch a proof that 
for near-FLRW data, the following bound holds:
\begin{align} \label{E:IMPROVEDNORMBOOT}
	\highnorm{N}(t)
	& \leq 
		\frac{1}{2} \varepsilon t^{- c \sqrt{\varepsilon}},
	&& t \in (T,1],
\end{align}
which is a strict improvement of the bootstrap assumption
\eqref{E:NORMBOOT} for $\varepsilon$ sufficiently small.
Deriving \eqref{E:IMPROVEDNORMBOOT} is the main
technical step in the proof of Theorem~\ref{T:STABILITYOFFLRW}.

\begin{remark}
	Recall that in Theorem~\ref{T:STABILITYOFFLRW}, we are assuming that $N \geq 8$.
	In \cite{iRjS2014b}, we show that this is sufficient 
	to allow for closure of all nonlinear estimates at all orders.
\end{remark}

\subsection{Statement of the main a priori energy and norm estimates}
\label{SS:MAINAPRIORIENERGYNONLINEAR}
In Prop.\ \ref{P:NONLINEARINTEGRALINEQUALITIESFORENERGIES},
we state the integral inequalities
verified by the energies.
In Cor.\ \ref{C:MAINAPRIORIENERGY},
we use use the integral inequalities 
to derive a Gronwall estimate for the energies,
which leads to the improvement
\eqref{E:IMPROVEDNORMBOOT}
of the norm bootstrap assumption
and completes the main step in the proof of Theorem~\ref{T:STABILITYOFFLRW}.
Following this, we devote the rest of Sect.\ \ref{S:NONLINEARPROBLEM}
to sketching the main ideas behind the proof 
of Prop.\ \ref{P:NONLINEARINTEGRALINEQUALITIESFORENERGIES}.

\begin{proposition}[\textbf{Integral inequalities verified by the energies}]
\label{P:NONLINEARINTEGRALINEQUALITIESFORENERGIES}
There exist constants $C > 0$ and $c > 0$ such that
if the bootstrap assumption \eqref{E:NORMBOOT}
holds for $t \in (T,1]$ and if $\varepsilon$ and $\upsigma$ are sufficiently small,
then the following analog of the linear energy inequality 
\eqref{E:SECONDMODELENERGYGRONWALLREADY}
holds for $0 \leq M \leq N$ and $t \in (T,1]$:
\begin{align} \label{E:NONLINEARENERGYINEQUALITY}
	\mathcal{E}_{(Total);\smallparameter_*;M}^2(t)
	& \leq \mathcal{E}_{(Total);\smallparameter_*;M}^2(1)
		\\
	& \ \
		+ \underbrace{
			c \varepsilon
			\int_{s=t}^1
				s^{-1} \mathcal{E}_{(Total);\smallparameter_*;M}^2(s)
			\, ds
			}_{\mbox{\upshape Borderline term}}
			\notag \\
	& \ \
			+ \underbrace{
			C
			\int_{s=t}^1
				s^{-1/3 - c \sqrt{\varepsilon}} \mathcal{E}_{(Total);\smallparameter_*;M}^2(s)
			\, ds
			}_{\mbox{\upshape Non-borderline term}}
				\notag
				\\
	& \ \
		+
		\mbox{\upshape Similar error integrals not treated here}
		\notag
		\\
	& \ \
		+
		\mbox{\upshape Negative definite spacetime integrals, similar to those in
		\eqref{E:SECONDMODELENERGYGRONWALLREADY}}.
		\notag
\end{align}
\end{proposition}

\begin{corollary}[\textbf{Main a priori energy estimates}]
\label{C:MAINAPRIORIENERGY}
Assume that $N \geq 8$.
Consider the energy $\mathcal{E}_{(Total);\smallparameter_*;N}(t)$
defined in \eqref{E:SUMMEDNONILINEARTOTALENERGY} 
and the norm $\highnorm{N}(t)$
defined in \eqref{E:NONLINEARHIGHNORM}.
Assume that
$\highnorm{N}(1) \leq \varepsilon^2$
(see Footnote~\ref{FN:NONOPTIMALSMALLNESS} regarding this assumption).
There exists a constant $C > 0$ such that 
under the bootstrap assumption \eqref{E:NORMBOOT},
the following a priori estimate holds for 
$t \in (T,1]$ whenever $\varepsilon$ and $\upsigma$ are sufficiently small:
\begin{align} \label{E:MAINAPRIORIENERGYESTIMATE}
	\mathcal{E}_{(Total);\smallparameter_*;N}(t)
	& 
	\leq
	C \varepsilon^2
	t^{- c \sqrt{\varepsilon}}.
\end{align}
Moreover, the following estimate holds
for $t \in (T,1]$:
\begin{align} \label{E:MAINAPRIORINORMESTIMATE}
	\highnorm{N}(t)
	& \leq \frac{1}{2} \varepsilon t^{- c \sqrt{\varepsilon}},
\end{align}
which is an improvement of the bootstrap assumption \eqref{E:NORMBOOT} whenever $c \sqrt{\varepsilon} < \upsigma$.

\end{corollary}	

\begin{proof}[Discussion of the proof]
	We refer readers to \cite{iRjS2014b}*{Section~13} 
	for the complete details of the proof. Here we only sketch the main ideas.
	First, we note that
	it is straightforward to establish comparison estimates
	in the spirit of Lemma~\ref{L:ENERGYNORMCOMPARISON}.
	The comparison estimates show in particular that \eqref{E:MAINAPRIORINORMESTIMATE}
	follows from combining the energy estimate
	\eqref{E:MAINAPRIORIENERGYESTIMATE}
	with estimates for the terms
	$
	t^{2/3}
	\left\|
		g - g_{FLRW}
	\right\|_{H_{Frame}^N}
	$
	and
	$
	t^{-2/3}
	\left\|
		g^{-1} - g_{FLRW}^{-1}
	\right\|_{H_{Frame}^N}
	$,
	which are featured in the nonlinear norm \eqref{E:NONLINEARHIGHNORM}
	but which we do not discuss here.\footnote{Note that we did not include such terms in the norms \eqref{E:HIGHNORM}
	for linear solutions.}
	These additional
	terms are also the reason that the amplitude
	on the right-hand side of \eqref{E:MAINAPRIORINORMESTIMATE}
	is $\mathcal{O}(\varepsilon)$ rather than $\mathcal{O}(\varepsilon^2)$.
	The proofs of the comparison estimates
	rely on estimates for the coordinate components $g_{ij}$ and $g^{ij}$.
	Specifically, they rely on the estimates
	\eqref{E:GANDGINVERSEESTIMATES}
	stated below, which do not follow 
	directly from the bootstrap assumption \eqref{E:NORMBOOT}
	and thus require an independent proof
	(along the lines of the proof of the estimate \eqref{E:METRICRENORMALIZEDL2} for 
	linear solutions).
	
	To derive the energy estimates stated in \eqref{E:MAINAPRIORIENERGYESTIMATE},
	one can use inequality \eqref{E:NONLINEARENERGYINEQUALITY}
	to establish the following estimates by 
	a straightforward argument based on Gronwall's inequality and
	induction in $M$ for $0 \leq M \leq N$:
	\begin{align} \label{E:INDUCTIONMAINAPRIORIENERGYESTIMATE}
	\mathcal{E}_{(Total);\smallparameter_*;M}^2(t)
	& 
	\leq
	C \varepsilon^4
	t^{- c \sqrt{\varepsilon}},
	&&
	t \in (T,1].
\end{align}
We now further comment on two aspects of the estimate 
\eqref{E:INDUCTIONMAINAPRIORIENERGYESTIMATE}.
First, it is only the first ``Borderline'' term on the right-hand side
of \eqref{E:NONLINEARENERGYINEQUALITY} that can cause
$\mathcal{E}_{(Total);\smallparameter_*;M}(t)$
to blow up as $t \downarrow 0$;
the ``Non-borderline'' term on the right-hand side
of \eqref{E:NONLINEARENERGYINEQUALITY}
is harmless in the sense that
the function $s^{-1/3 - c \sqrt{\varepsilon}}$ is integrable
over the interval $s \in (0,1]$ 
whenever $\varepsilon$ is sufficiently small.
Second, we note that the exponent  
on the right-hand side of \eqref{E:INDUCTIONMAINAPRIORIENERGYESTIMATE}
is $t^{- c \sqrt{\varepsilon}}$ due to  
some terms that we have not discussed here, that is,
the terms ``$\mbox{\upshape Similar error integrals not treated here}$''
from \eqref{E:NONLINEARENERGYINEQUALITY}; 
if not for the omitted terms, 
the exponent could be improved to $t^{- c \varepsilon}$
(this is a minor remark that has no substantial bearing on the main results).
\end{proof}

\subsection{A convenient frame and dual frame}
\label{SS:NONLINEARFRAME}
In the ensuing discussion,
we will find it convenient to perform some computations 
relative to the frame\footnote{In
\eqref{E:ALMOSTORTHFRAMEANDCOFRAME} and the remainder of Sect.\ \ref{S:NONLINEARPROBLEM},
$\partial_A := \frac{\partial}{\partial x^A}$, with $\lbrace x^A \rbrace_{A=1,2,3}$ denoting the transported spatial coordinates.
Moreover, $\partial_{\vec{I}}$ is still the coordinate partial derivative multi-indexed operator defined in Subsect.\ \ref{SS:COORDINATES}.}
$\lbrace e_{(A)}' \rbrace_{A=1}^3$
and dual frame $\lbrace \theta^{'(A)} \rbrace_{A=1}^3$,
whose elements are defined as follows:
\begin{align} \label{E:ALMOSTORTHFRAMEANDCOFRAME}
	e_{(A)}' & := t^{-1/3} \partial_A, & \theta^{'(A)} & := t^{1/3} dx^A.
\end{align}
The appeal of the frame $\lbrace e_{(A)}' \rbrace_{A=1}^3$ is that it is orthonormal as measured by the background spatial metric $g_{FLRW} := t^{2/3} \sum_{i=1}^3 (dx^i)^2$, and, as we explain
in Subsect.\ \ref{SS:NONLINEARIMPROVEDESTIMATESATLOWERDERIVATIVELEVELS}, 
it is approximately orthonormal 
for the perturbed metric $g$
(in a sense that we make precise 
via the estimate \eqref{E:FRAMEESTIMATESGANDGINVERSE}).
The perturbed metric and its inverse can respectively be expanded\footnote{Throughout this subsection
and the next one, we use Einstein's summation convention for uppercase Latin indices.}
relative to the dual frame and frame as follows:
\begin{align}
	g & = g_{AB} \theta^{'(A)} \otimes \theta^{'(B)},
	& & g^{-1} = g^{AB} e_{(A)}' \otimes e_{(B)}',
\end{align}
where $g_{AB} := g(e_{(A)}',e_{(B)}')$ and $g^{AB} := g^{-1}(\theta^{'(A)},\theta^{'(B)})$.
We remark that in \cite{iRjS2014b}, 
instead of working with the ``time-rescaled'' frame and dual frame 
\eqref{E:ALMOSTORTHFRAMEANDCOFRAME}, we instead work with solution
variables that are rescaled with respect to powers of $t$; see Remark~\ref{R:TIMERESCALEDVARS}.

The connection coefficients $\gamma_{A C B}$
of the frame relative to $g$ are 
determined by the equation\footnote{Recall that $\nabla$ denotes the Levi--Civita connection of $g$.}
\begin{align}
	\nabla_{e_{(A)}'} e_{(B)}'
	& = g^{CD} \gamma_{A D B} e_{(C)}',
\end{align}
where, since the vectorfield commutators $[e_{(A)}',e_{(B)}']$ vanish, we have\footnote{We are using here the
standard notation 
$X (f)$ to denote the derivative of the scalar function $f$
in the direction of the vectorfield $X$.}
\begin{align} \label{E:LITTLEGAMMALOWER}
	\gamma_{A C B}
	& = \frac{1}{2}
		\left\lbrace
			e_{(A)}'(g_{CB})
			+ e_{(B)}'(g_{AC})
			- e_{(C)}'(g_{AB})
		\right\rbrace.
\end{align}

For use below, we note the following standard expression for the Ricci curvature of $g$
(in type $\binom{1}{1}$ form):
\[
\Ric = \Ric_{\ B}^A e_{(A)}' \otimes \theta^{'(B)},
\]
where
\begin{align} \label{E:RICCIFRAMEEXPANSION}
	\Ric_{\ B}^A 
	& =
		g^{AE} g^{CF} e_{(C)}' (\gamma_{E F B})
	- g^{AE} g^{CF} e_{(E)}' (\gamma_{B C F})
		\\
  & \ \ + g^{AE} g^{CF} g^{DH} \gamma_{C F D} \gamma_{E H B}
		- g^{AE} g^{CF} g^{DH} \gamma_{E F D} \gamma_{C H B}.
		\notag
\end{align}

\subsection{Improved AVTD-type estimates at the lower derivative levels}
\label{SS:NONLINEARIMPROVEDESTIMATESATLOWERDERIVATIVELEVELS}
As we mentioned in Steps (3) and (4) of Subsect.\ \ref{SS:NONLINEARBLOWUPOUTLINE},
to prove Prop.\ \ref{P:NONLINEARINTEGRALINEQUALITIESFORENERGIES},
we need to derive improved estimates at the lower derivative levels.
By improved, we mean that they are less singular
as $t \downarrow 0$
compared to the estimates afforded by the 
bootstrap assumption \eqref{E:NORMBOOT}.
In the context of the linear problem, we derived such estimates
in Theorem~\ref{T:CMCLINEARSTABILITY}.
For brevity, we will take for granted 
here that we can derive similar estimates
for the nonlinear solution, effectively postponing
the discussion of the nonlinear error terms until
Subsect.\ \ref{SS:NONLINEARERRORINTEGRALS},
when we discuss them in the context of energy estimates.
Specifically,
we will 
take for granted
that the following pointwise coordinate component estimates hold for
$t \in (T,1]$ whenever $|\vec{I}| \leq N - 3$ in \eqref{E:GANDGINVERSEESTIMATES}-\eqref{E:PARTIALTPHIESTIMATES},
$1 \leq |\vec{I}| \leq N - 3$ in \eqref{E:SPATIALDERIVATIVESOFPHIESTIMATES},
and $i,j=1,2,3$:
\begin{subequations}
\begin{align} \label{E:GANDGINVERSEESTIMATES}
	\left|
		\partial_{\vec{I}}
		\left\lbrace
			g_{ij} - (g_{FLRW})_{ij}
		\right\rbrace
	\right|
	& \lesssim \sqrt{\varepsilon} t^{2/3 - c \sqrt{\varepsilon}},
	&& 
	\left|
		\partial_{\vec{I}}
		\left\lbrace
			g^{ij} - (g_{FLRW}^{-1})^{ij}
		\right\rbrace
	\right|
	\lesssim \sqrt{\varepsilon} t^{-2/3 - c \sqrt{\varepsilon}},
		\\
	\left|
		\partial_{\vec{I}}
		\left\lbrace
			t \partial_t \phi
			-
			\sqrt{\frac{2}{3}}
		\right\rbrace
	\right|
	& \lesssim \varepsilon,
		\label{E:PARTIALTPHIESTIMATES} \\
	\left|
		\partial_{\vec{I}} \phi
	\right|
	& \lesssim 
			\sqrt{\varepsilon} t^{- c \sqrt{\varepsilon}}.
	\label{E:SPATIALDERIVATIVESOFPHIESTIMATES}
\end{align}
\end{subequations}
Note, for example, 
that \eqref{E:PARTIALTPHIESTIMATES}
is an improvement over the bootstrap assumption \eqref{E:NORMBOOT}
in that \eqref{E:NORMBOOT} and Sobolev embedding
would yield only the bound
$
\left|
		\partial_{\vec{I}}
		\left\lbrace
			t \partial_t \phi
			-
			\sqrt{\frac{2}{3}}
		\right\rbrace
	\right|
	\lesssim \varepsilon t^{- c \upsigma}
$,
which, due to the singular behavior of the right-hand side as $t \downarrow 0$, 
is inadequate for treating the 
borderline integral
that we control in \eqref{E:ANOTHERIFRSTBOUNDSCALARFIELDENERGYESTIMATENONLINEARERRORINTEGRAL}.
Similarly, for $\varepsilon$ sufficiently small, the
factors of
$t^{2/3 - c \sqrt{\varepsilon}}$ and $t^{-2/3 - c \sqrt{\varepsilon}}$
in \eqref{E:GANDGINVERSEESTIMATES}
are improvements\footnote{Note that the amplitude factors of $\sqrt{\varepsilon}$
in \eqref{E:GANDGINVERSEESTIMATES} are worse than the amplitude factor of
$\varepsilon$ that would follow from \eqref{E:NORMBOOT} and Sobolev embedding.
This is an artifact of some inefficiencies in our proof 
and is not important for our main results; the $t$-dependent factors of
$t^{2/3 - c \sqrt{\varepsilon}}$ and $t^{-2/3 - c \sqrt{\varepsilon}}$ in 
\eqref{E:GANDGINVERSEESTIMATES} are what matters.} 
over the factors of
$t^{2/3 - c \upsigma}$ and $t^{-2/3 - c \upsigma}$
that would follow from
\eqref{E:NORMBOOT} and Sobolev embedding.
We refer readers to \cite{iRjS2014b}
for proofs of analogs of
\eqref{E:GANDGINVERSEESTIMATES}-\eqref{E:SPATIALDERIVATIVESOFPHIESTIMATES}
in the context of the Einstein-stiff fluid system.
The estimates stated in \eqref{E:GANDGINVERSEESTIMATES}
are analogs\footnote{Note that the estimates stated in 
\eqref{E:GANDGINVERSEESTIMATES}-\eqref{E:SPATIALDERIVATIVESOFPHIESTIMATES}
are of pointwise type while the estimates of Theorem~\ref{T:CMCLINEARSTABILITY}
are in terms of Sobolev norms. This is a minor point in the sense that
we can obtain pointwise estimates from Sobolev estimates via Sobolev embedding
(at the cost of a few derivatives). } 
of the estimates 
\eqref{E:METRICRENORMALIZEDL2} and \eqref{E:PARTIALMETRICRENORMALIZEDHNMINUSTWO}
from the linear problem 
while the estimates
\eqref{E:PARTIALTPHIESTIMATES} and
\eqref{E:SPATIALDERIVATIVESOFPHIESTIMATES}
are respectively analogs of
\eqref{E:PARTIALTPHIHNMINUSONE}  
and \eqref{E:PARTIALPHIHNMINUSTWO}.

Contracting inequalities \eqref{E:GANDGINVERSEESTIMATES} against the frame/dual frame, we find that
they are approximately orthonormal 
relative to the metric $g$ in the following weak sense
(for $t \in (T,1]$ and $|\vec{I}| \leq N - 3$):
\begin{align} \label{E:FRAMEESTIMATESGANDGINVERSE}
	\left|
		\partial_{\vec{I}}
		\left\lbrace
			g_{AB} - \delta_{AB}
		\right\rbrace
	\right|
	& \lesssim t^{- c \sqrt{\varepsilon}},
	&& 
	\left|
		\partial_{\vec{I}}
		\left\lbrace
			g^{AB} - \delta^{AB}
		\right\rbrace
	\right|
	\lesssim t^{- c \sqrt{\varepsilon}},
\end{align}
where $\delta_{AB}$ and $\delta^{AB}$
are standard Kronecker deltas.

In Subsect.\ \ref{SS:NONLINEARERRORINTEGRALS}, when
we bound some representative energy error integrals,
we will use the following simple consequences of the
above estimates:
\begin{align} \label{E:LOWGAMMTERMLINFTYBOUND}
	\left\|
		|\gamma|_g
	\right\|_{L^{\infty}}(t)
	& \lesssim t^{-1/3 - c \sqrt{\varepsilon}},
		\\
	\left\|
		|\partial \phi|_g
	\right\|_{L^{\infty}}(t)
	& \lesssim t^{-1/3 - c \sqrt{\varepsilon}}.
	\label{E:LOWSCALARFIELDLINFTYBOUND}
\end{align}
To prove \eqref{E:LOWGAMMTERMLINFTYBOUND},
we first note that \eqref{E:GANDGINVERSEESTIMATES} with $|\vec{I}|=1$ implies that
\begin{align} \label{E:LITTLEGAMALOWERFRAMECOMPONENTESTIMATE}
	\left|
		\gamma_{A C E}
	\right|
	& \lesssim t^{-1/3 - c \sqrt{\varepsilon}},
\end{align}
where in deriving \eqref{E:LITTLEGAMALOWERFRAMECOMPONENTESTIMATE},
we have incurred three factors of 
$t^{-1/3}$ relative to the estimate
\eqref{E:GANDGINVERSEESTIMATES},
one for each contraction against a frame vector belonging to $\lbrace e_{(A)}' \rbrace_{A=1}^3$.
We therefore deduce from 
\eqref{E:FRAMEESTIMATESGANDGINVERSE} and
\eqref{E:LITTLEGAMALOWERFRAMECOMPONENTESTIMATE} that
\begin{align}
	\left|
		\gamma
	\right|_g^2
	& = g^{AB} g^{CD} (g^{-1})^{EF} \gamma_{A C E}  \gamma_{B D F}
	\lesssim t^{-2/3-c \sqrt{\varepsilon}},
\end{align}
which yields \eqref{E:LOWGAMMTERMLINFTYBOUND}.
To obtain \eqref{E:LOWSCALARFIELDLINFTYBOUND},
we note that
\eqref{E:SPATIALDERIVATIVESOFPHIESTIMATES}
with $|\vec{I}|=1$ implies that
\begin{align} \label{E:SCALARFIELDFRAMECOMPONENTESTIMATE}
	\left|
		e_{(A)}' \phi
	\right|
	& \lesssim t^{-1/3 - c \sqrt{\varepsilon}},
\end{align}
where in deriving \eqref{E:SCALARFIELDFRAMECOMPONENTESTIMATE},
we have incurred a factor of 
$t^{-1/3}$ relative to the estimate
\eqref{E:SPATIALDERIVATIVESOFPHIESTIMATES}
due to the contraction against the frame vector 
belonging to $\lbrace e_{(A)}' \rbrace_{A=1}^3$.
We therefore deduce from 
\eqref{E:FRAMEESTIMATESGANDGINVERSE} and
\eqref{E:SCALARFIELDFRAMECOMPONENTESTIMATE} that
\begin{align}
	\left|
		\partial \phi
	\right|_g^2
	& = g^{AB} (e_{(A)}' \phi) e_{(B)}' \phi
	\lesssim t^{-2/3-c \sqrt{\varepsilon}},
\end{align}
which yields \eqref{E:LOWSCALARFIELDLINFTYBOUND}.

For future use, we also note the following
relations, which follow in a straightforward fashion from the definitions
of the quantities involved:
\begin{align} \label{E:GAMMAGDERIVATIVESRELATIONSHIP}
	\left|
			g^{AB} g^{CD} g^{EF} 
			(s \partial_{\vec{I}} \gamma_{ACE}) 
			(s \partial_{\vec{I}} \gamma_{BDF})
	\right|
	& 
	=
	\left|
		s \partial_{\vec{I}} \gamma
	\right|_g^2
	\leq
	C
	\left|
		s \partial \partial_{\vec{I}} g
	\right|_g^2.
\end{align}

\subsection{Identifying some representative nonlinear error terms}
\label{SS:REPRESENTATIVENONLINEARERRORTERMS}
In Subsect.\ \ref{SS:NONLINEARERRORINTEGRALS}, 
we will bound three representative nonlinear error integrals
and explain how they contribute to the
terms on the right-hand side of the energy integral inequality
\eqref{E:NONLINEARENERGYINEQUALITY}.
In the present subsection, 
as a preliminary step,
we commute some of the
nonlinear Einstein-scalar field
equations with the spatial derivative
operator $\partial_{\vec{I}}$ 
(as defined in Subsect.\ \ref{SS:COORDINATES})
and identify
the representative nonlinear terms that 
lead to the error integrals.

First, we commute the evolution equation \eqref{E:PARTIALTKCMC}
with $\partial_{\vec{I}}$. Using \eqref{E:RICCIFRAMEEXPANSION},
we see that relative to the frame/dual frame \eqref{E:ALMOSTORTHFRAMEANDCOFRAME},
the commuted equation takes the form
\begin{align} \label{E:COMMUTEDKEVOLUTIONRELATIVETOFRAME}
	\partial_t (t \partial_{\vec{I}} \SecondFund_{\ B}^A)
	& =
		g^{AE} g^{CF} g^{DH} \gamma_{C F D} \partial_{\vec{I}} \gamma_{E H B}
  	+ \cdots,
\end{align}
where, for illustration, 
we have kept only one representative nonlinear product generated by the right-hand side of \eqref{E:RICCIFRAMEEXPANSION}. 

Similarly, we commute 
the scalar field wave equation \eqref{E:SECONDEXPANDEDSCALARFIELDCMC}
with $\partial_{\vec{I}}$ (as defined in Subsect.\ \ref{SS:COORDINATES})
and, for illustration, 
retain two products generated by terms on the last 
two lines of \eqref{E:SECONDEXPANDEDSCALARFIELDCMC},
which yields
\begin{align}
	\partial_t(t \partial_t \partial_{\vec{I}} \phi) 
	+
	n^2 t g^{ab} \partial_a \partial_b \partial_{\vec{I}} \phi
	& = 
	 \left(
		t \partial_t \phi - \sqrt{\frac{2}{3}}
	 \right)	
	 \frac{(\partial_{\vec{I}} n)}{t} 
	- 
	t g^{ab} (\partial_a \partial_{\vec{I}} n) \partial_b \phi
	+
	\cdots.
			\label{E:COMMUTEDSCALARFIELDEVOLUTIONRELATIVETOFRAME}
\end{align}
Note that in writing down
\eqref{E:COMMUTEDKEVOLUTIONRELATIVETOFRAME}-\eqref{E:COMMUTEDSCALARFIELDEVOLUTIONRELATIVETOFRAME},
we have ignored various \emph{linearly} small products in the equations.
Those terms are of crucial importance
for deriving an analog of the approximate monotonicity
identity from Theorem~\ref{T:CMCMONOTONICITYID}
and for this reason, 
they are not part of the nonlinear error term analysis that
we are currently conducting.

\subsection{Bounds for some representative nonlinear error integrals and a proof sketch of 
Prop.\ \ref{P:NONLINEARINTEGRALINEQUALITIESFORENERGIES}}
\label{SS:NONLINEARERRORINTEGRALS}
Recall that in Theorem~\ref{T:CMCMONOTONICITYID},
we derived an approximate monotonicity identity
for linear solutions,
which was the main step in deriving
the energy integral inequality for linear solutions
stated in \eqref{E:SECONDMODELENERGYGRONWALLREADY}.
In the nonlinear problem,
the analog of inequality \eqref{E:SECONDMODELENERGYGRONWALLREADY}
is the energy integral inequality \eqref{E:NONLINEARENERGYINEQUALITY}
provided by Prop.\ \ref{P:NONLINEARINTEGRALINEQUALITIESFORENERGIES}.
The main difference between the 
linear estimate
\eqref{E:SECONDMODELENERGYGRONWALLREADY}
and the nonlinear estimate
\eqref{E:NONLINEARENERGYINEQUALITY}
is, of course, the presence of
nonlinear error integrals, which
arise in the nonlinear analog of
the approximate monotonicity identity.
Ultimately, the nonlinear error integrals
generate terms that appear on the right-hand side of
the nonlinear energy integral inequality
\eqref{E:NONLINEARENERGYINEQUALITY}.
In this subsection, 
to keep the discussion short, 
we consider only three representative error integrals
generated by the quadratic nonlinear terms highlighted 
in Subsect.\ \ref{SS:REPRESENTATIVENONLINEARERRORTERMS}. 
Our main goal is to show that the corresponding error integrals
(which are cubically\footnote{Some of the error integrals that we treat here
are similar to other error integrals that are generated by
integration by parts. For example, cubic error integrals similar to the one in \eqref{E:ANOTHERCUBICENERGYTERM}
arise from the nonlinear analog of \eqref{E:FIRSTRICCIPRODUCT}.}  
small)
are bounded by the right-hand side of
\eqref{E:NONLINEARENERGYINEQUALITY}.
In view of the above remarks,
it follows that the discussion in this subsection constitutes
a proof sketch of Prop.\ \ref{P:NONLINEARINTEGRALINEQUALITIESFORENERGIES}.
We note that the improved estimates 
at the lower derivative levels 
from Subsect.\ \ref{SS:NONLINEARIMPROVEDESTIMATESATLOWERDERIVATIVELEVELS}
are essential for controlling the error integrals,
especially the borderline one 
that we control in \eqref{E:ANOTHERIFRSTBOUNDSCALARFIELDENERGYESTIMATENONLINEARERRORINTEGRAL}.

\subsubsection{A non-borderline error integral involving the scalar field}
\label{SSS:NONBORDERLINESCALARFIELD}
We start by explaining how the error term
$t g^{ab} (\partial_a \partial_{\vec{I}} n) \partial_b \phi$
on the right-hand side of 
\eqref{E:COMMUTEDSCALARFIELDEVOLUTIONRELATIVETOFRAME}
contributes to the right-hand side of \eqref{E:NONLINEARENERGYINEQUALITY}.
Revisiting the \emph{proof} of Prop.\ \ref{P:ENERGYESTIMATELAPSEANDSCALARFIELD},
we see that in the analog of the integral identity 
\eqref{E:SECONDENERGYESTIMATEMODELSCALARFIELD}, the error term
generates the following spacetime integral
(where we are assuming that $1 \leq |\vec{I}| \leq M \leq N$):
\begin{align} \label{E:SCALARFIELDENERGYESTIMATENONLINEARERRORINTEGRAL}
	\int_{s=t}^1
		\int_{\Sigma_s}
			s g^{ab} (\partial_a \partial_{\vec{I}} n) (\partial_b \phi)
			(s \partial_t \partial_{\vec{I}} \phi)
		\, dx
	\, ds.
\end{align}
Using \eqref{E:LOWSCALARFIELDLINFTYBOUND},
Def.\ \ref{D:NONLINEARENERGIES},
and Cauchy-Schwarz relative to $g$,
we bound the magnitude of 
the integral in \eqref{E:SCALARFIELDENERGYESTIMATENONLINEARERRORINTEGRAL} 
as follows:
\begin{align} \label{E:FIRSTBOUNDSCALARFIELDENERGYESTIMATENONLINEARERRORINTEGRAL}
	&
	\leq
	\int_{s=t}^1
			\left\|
				|\partial \phi|_g
			\right\|_{L^{\infty}}(s)
		\int_{\Sigma_s}
			|s \partial \partial_{\vec{I}} n|_g
			|s \partial_t \partial_{\vec{I}} \phi|
		\, dx
	\, ds
		\\
	& 
	\lesssim
	\int_{s=t}^1
			s^{-1/3 - c \sqrt{\varepsilon}}
		\mathscr{E}_{(Total);\smallparameter_*;\vec{I}}^2(s)
	\, ds.
	\notag
\end{align}
We now simply observe that the right-hand side of \eqref{E:FIRSTBOUNDSCALARFIELDENERGYESTIMATENONLINEARERRORINTEGRAL}
is bounded by the non-borderline error integral
on the right-hand side of \eqref{E:NONLINEARENERGYINEQUALITY},
as desired.

\subsubsection{A borderline error integral involving the scalar field}
\label{SSS:BORDERLINESCALARFIELD}
We now explain how the error term
$\left(
			t \partial_t \phi - \sqrt{\frac{2}{3}}
		\right)	
		\frac{(\partial_{\vec{I}} n)}{t}$
on the right-hand side of 
\eqref{E:COMMUTEDSCALARFIELDEVOLUTIONRELATIVETOFRAME}
contributes to the right-hand side of \eqref{E:NONLINEARENERGYINEQUALITY}.
For the same reasons given in Subsubsect.\ \ref{SSS:NONBORDERLINESCALARFIELD},
this error term generates the error integral
\begin{align} \label{E:ANOTHERSCALARFIELDENERGYESTIMATENONLINEARERRORINTEGRAL}
	\int_{s=t}^1
		s^{-1}
		\int_{\Sigma_s}
			\left(
				s \partial_t \phi - \sqrt{\frac{2}{3}}
			\right)	
			(\partial_{\vec{I}} n)
			(s \partial_t \partial_{\vec{I}} \phi)
		\, dx
	\, ds.
\end{align}
Using \eqref{E:PARTIALTPHIESTIMATES}
and Def.\ \ref{D:NONLINEARENERGIES},
we bound the magnitude of 
the integral in \eqref{E:ANOTHERSCALARFIELDENERGYESTIMATENONLINEARERRORINTEGRAL} 
as follows
(where we are again assuming that $1 \leq |\vec{I}| \leq M \leq N$):
\begin{align} \label{E:ANOTHERIFRSTBOUNDSCALARFIELDENERGYESTIMATENONLINEARERRORINTEGRAL}
	&
	\leq
	\int_{s=t}^1
			s^{-1}
			\left\|
				s \partial_t \phi - \sqrt{\frac{2}{3}}
			\right\|_{L^{\infty}}(s)
		\int_{\Sigma_s}
			|\partial_{\vec{I}} n|
			|s \partial_t \partial_{\vec{I}} \phi|
		\, dx
	\, ds
		\\
	& 
	\leq
	c \varepsilon
	\int_{s=t}^1
		s^{-1}
		\mathscr{E}_{(Total);\smallparameter_*;\vec{I}}^2(s)
	\, ds.
	\notag
\end{align}
Note that the right-hand
side of \eqref{E:FIRSTBOUNDSCALARFIELDENERGYESTIMATENONLINEARERRORINTEGRAL}
is bounded by the borderline error integral
on the right-hand side of \eqref{E:NONLINEARENERGYINEQUALITY},
as desired. We stress that the availability of the small coefficient $\varepsilon$
is crucial since, in the Gronwall estimate for 
$\mathscr{E}_{(Total);\smallparameter_*;\vec{I}}^2$,
the right-hand side of \eqref{E:ANOTHERIFRSTBOUNDSCALARFIELDENERGYESTIMATENONLINEARERRORINTEGRAL}
can cause $\mathscr{E}_{(Total);\smallparameter_*;M}^2(t)$ to blowup 
like $t^{-c \varepsilon}$ as $t \downarrow 0$.
Note also that for this argument, it is crucial that
the singular integrand factor 
on the right-hand side of
\eqref{E:ANOTHERIFRSTBOUNDSCALARFIELDENERGYESTIMATENONLINEARERRORINTEGRAL}
is not worse than $s^{-1}$; a slightly worse factor of type $s^{-1 - C \varepsilon}$
would radically alter the Gronwall estimate and would 
prevent us from deriving an improvement of the norm bootstrap assumption. For this reason,
the ``lossless'' AVTD-type estimate \eqref{E:PARTIALTPHIESTIMATES}
is critically important for the proof of nonlinear stability.

\subsubsection{A non-borderline error integral involving the metric}
\label{SSS:NONBORDERLINEMETRIC}
Finally, we will consider the effect of the error term
$g^{AE} g^{CF} g^{DH} 
	\gamma_{C F D} \partial_{\vec{I}} \gamma_{E H B}$
on the right-hand side of \eqref{E:COMMUTEDKEVOLUTIONRELATIVETOFRAME}.
Revisiting the proof of Prop.\ \ref{PCMC:LINEARIZEDMETRICENERGYESTIMATE},
we see that in the analog of the metric energy identity 
\eqref{E:METRICENERGYID}, 
the error term generates the following spacetime integral:
\begin{align} \label{E:ANOTHERCUBICENERGYTERM}
	\int_{s=t}^1
		\int_{\Sigma_s}
			g^{AB} g^{CF} g^{DH} 
			\gamma_{C F D} (s \partial_{\vec{I}} \gamma_{E H B}) (s \partial_{\vec{I}} \hat{\SecondFund}_{\ A}^E)
		\, dx
		\, ds.
\end{align}
Using
\eqref{E:LOWGAMMTERMLINFTYBOUND},
\eqref{E:GAMMAGDERIVATIVESRELATIONSHIP},
Def.\ \ref{D:NONLINEARENERGIES},
and Cauchy-Schwarz relative to $g$,
we bound the magnitude of
the integral in \eqref{E:ANOTHERCUBICENERGYTERM}
as follows
(where we are again assuming that $1 \leq |\vec{I}| \leq M \leq N$):
\begin{align} \label{E:BOUNDFORANOTHERCUBICENERGYTERM}
	& \lesssim
		\int_{s=t}^1
			\int_{\Sigma_s}
			\left|
				\gamma
			\right|_g
			\left|
				s \partial_{\vec{I}} \gamma
			\right|_g	
			\left|
				s \partial_{\vec{I}} \hat{\SecondFund}
			\right|_g
			\, dx
		\, ds
		\\
	& \lesssim 
		\int_{s=t}^1
			\left\|
				|\gamma|_g
			\right\|_{L^{\infty}}(s)
			\mathcal{E}_{(Total);\smallparameter_*;\vec{I}}^2(s)
		\, ds
		\notag
			\\
	& 
	\lesssim
	\int_{s=t}^1
			s^{-1/3 - c \sqrt{\varepsilon}}
		\mathscr{E}_{(Total);\smallparameter_*;\vec{I}}^2(s)
	\, ds.
	\notag
\end{align}
Like the right-hand side of \eqref{E:FIRSTBOUNDSCALARFIELDENERGYESTIMATENONLINEARERRORINTEGRAL},
the right-hand side of \eqref{E:BOUNDFORANOTHERCUBICENERGYTERM}
is bounded by the non-borderline error integral
on the right-hand side of \eqref{E:NONLINEARENERGYINEQUALITY},
as desired.
This completes our discussion of the three
representative nonlinear error integrals and
finishes our proof sketch of
Prop.\ \ref{P:NONLINEARINTEGRALINEQUALITIESFORENERGIES}.

\section{Comments on realizing ``end states"}
\label{S:ENDSTATESCOMMENTS}
The linear stability results of Theorem~\ref{T:CMCLINEARSTABILITY} 
show that for some time-rescaled versions of the linear solution variables,
there is a well-defined map from their ``initial state'' along the data hypersurface $\Sigma_1$ to their
``end state'' along $\Sigma_0$. For example, the estimate \eqref{E:PARTIALTPHICONVERGES} exhibits this fact
for $t \partial_t \SFRenormalized$, in which case the end state is $\Psi_{Bang}$
and the map is from $H^N$ to $H^{N-1}$.
It is natural to inquire whether or not one can realize a given end state
(more precisely, one in which time derivative terms in the equations are dominant)
by finding suitable initial data that lead to it. 
Although we do not give a proof that one can 
``realize all end states in which time derivative terms dominate''
in solutions to the linearized equations of Prop.\ \ref{P:LINEARIZEDCMCEQUATIONS},
we do point to some evidence in this direction by discussing some relevant results
in a simplified context. Our discussion here is closely connected to the 
work described in Subsect.\ \ref{SS:PREVIOUSWORK} in which authors used Fuchsian methods
to construct singular solutions to various Einstein-matter systems under 
symmetry or analyticity assumptions.
In this section, we consider a model equation in $1+1$ dimensions, obtained from the linearized scalar field equation
\eqref{E:SCALARFIELDWAVEDECOMPOSED}
in the case $\gKasner = g_{FLRW} = t^{2/3} \sum_{i=1}^3 (dx^i)^2$
by dropping the linearized lapse terms and making the symmetry assumption that the solution depends
only on $t$ and a single spatial variable $x^1 \in \mathbb{T}$.
We have made the symmetry assumption only to shorten the presentation; 
the arguments we sketch below remain valid without it.
For convenience, in the rest of this section, we will write $x$ instead of $x^1$.
We caution that ignoring the lapse and its elliptic PDE 
is tantamount to sidestepping new difficulties
not found in the standard Fuchsian framework,
which applies to hyperbolic equations.
Specifically, our model equation in $\SFRenormalized = \SFRenormalized(t,x)$ 
on the domain $(t,x) \in (0,1] \times \mathbb{T}$ is
\begin{align} \label{E:SCALARFIELDWAVESIMPLIFIED}
	- \partial_t (t \partial_t \SFRenormalized)
	+ t^{1/3} \partial_x^2 \SFRenormalized 
	& = 0.
\end{align}
 The methods of \cites{fBpL2010a,fBpL2010c} (see also the many other related works cited in Subsect.\ \ref{SS:PREVIOUSWORK}), 
can be used to show that given an asymptotic expansion for the end state
of the form $\ln t \Psi_1(x) + \Psi_2(x)$
(where the $\Psi_i$ have sufficient Sobolev regularity),
one can construct a solution $\SFRenormalized$ to \eqref{E:SCALARFIELDWAVESIMPLIFIED}
existing on a slab of the form $(0,1] \times \mathbb{T}$ such that
\begin{align} \label{E:SFASYMPTOTICEXPANSION}
	\SFRenormalized = \ln t \Psi_1(x) + \Psi_2(x) + \mathcal{R}(t,x).
\end{align}
Furthermore, there is a suitably strong $t$-dependent
Sobolev norm on the time slices $\Sigma_t$
such that the norm of the remainder term $\mathcal{R}$
vanishes as $t \downarrow 0$. In particular, 
$\mathcal{R}$ becomes negligible relative to $\ln t \Psi_1(x) + \Psi_2(x)$
as $t \downarrow 0$. 
We now sketch the proof of these phenomena 
by following the approach outlined in Subsect.\ \ref{SS:PREVIOUSWORK}.
We note that our analysis involves much simpler $t$ weights in the energies 
compared to the weights of \cites{fBpL2010a,fBpL2010c} because we are treating a simple linear scalar 
equation. 
We recall that the overall strategy of the proof is to construct a sequence of standard initial value problems that approximate the ``singular initial value problem with vanishing Cauchy data for $\mathcal{R}$ given along $\Sigma_0$.''
To begin our sketch of a proof, 
we use equation \eqref{E:SCALARFIELDWAVESIMPLIFIED} and the ansatz \eqref{E:SFASYMPTOTICEXPANSION} 
to deduce the following equation for $\mathcal{R}(t,x)$:
\begin{align} \label{E:ERRORTERMSCALARFIELDWAVESIMPLIFIED}
	- \partial_t (t \partial_t \mathcal{R})
	+ t^{1/3} \partial_x^2 \mathcal{R}
	& = - t^{1/3} \ln t \partial_x^2 \Psi_1(x) 
		- t^{1/3} \partial_x^2 \Psi_2(x).
\end{align}
We now derive an estimate for the energy $\mathscr{E}[\mathcal{R}](t) \geq 0$ defined by
\begin{align}
	\mathscr{E}^2[\mathcal{R}](t)
	:= 
	\int_{\Sigma_t}
		(t^{1/3} \partial_t \mathcal{R})^2
		+
		(\partial_x \mathcal{R})^2
	\, dx.
\end{align}
A straightforward integration by parts argument,
based on multiplying equation
\eqref{E:ERRORTERMSCALARFIELDWAVESIMPLIFIED} by
$t^{-1/3} \partial_t \mathcal{R}$,
yields that 
for $0 < t_1 < t_2 \leq 1$, we have
\begin{align} \label{E:ERRORTERMESTIMATE}
	\mathscr{E}[\mathcal{R}](t_2)
	& \leq
		\mathscr{E}[\mathcal{R}](t_1)
		+
		\left\lbrace
			\left\| 
				\partial_x^2 \Psi_1
			\right\|_{L^2}
			+
			\left\| 
				\partial_x^2 \Psi_2
			\right\|_{L^2}
		\right\rbrace
		\int_{s=t_1}^{t_2}
			(1 + \ln s) s^{-1/3} 
		\, ds
			\\
		& \leq
		\mathscr{E}[\mathcal{R}](t_1)
		+
		C
		\left\lbrace
			\left\| 
				\partial_x^2 \Psi_1
			\right\|_{L^2}
			+
			\left\| 
				\partial_x^2 \Psi_2
			\right\|_{L^2}
		\right\rbrace
		\left\lbrace
			t_2^p
			-
			t_1^p
		\right\rbrace,
		\notag
\end{align}
where $p$ is a constant chosen to be slightly smaller than $2/3$.
Inequality \eqref{E:ERRORTERMESTIMATE} is the main ingredient that one needs
to deduce the desired existence result and estimates for $\mathcal{R}$.
Note that the estimate \eqref{E:ERRORTERMESTIMATE}
loses one derivative relative to $\Psi_1$ and $\Psi_2$.
In a detailed proof of the desired results 
(see the methods of \cite{fBpL2010c}), 
one considers a sequence $\lbrace \mathcal{R}_n \rbrace_{n=0}^{\infty}$ of 
solutions to \eqref{E:ERRORTERMSCALARFIELDWAVESIMPLIFIED},
where $\mathcal{R}_n$ has $0$ Cauchy data on $\Sigma_{t_n}$
(and thus $\mathscr{E}[\mathcal{R}_n](t_n) = 0$)
and is a classical solution on
$[t_n,1]$. Here,
$\lbrace t_n \rbrace_{n=0}^{\infty}$ is a sequence of times in $(0,1]$ that decreases to $0$ as $n \to \infty$.
%
%
An argument similar to the one used to prove
\eqref{E:ERRORTERMESTIMATE} yields
that for $m < n$, we have
\begin{align} \label{E:SEQUENCEISCAUCHY}
	\sup_{t \in [t_m,1]}
	\mathscr{E}[\mathcal{R}_n - \mathcal{R}_m](t) 
	\leq
		C
		\left\lbrace
			\left\| 
				\partial_x^2 \Psi_1
			\right\|_{L^2}
			+
			\left\| 
				\partial_x^2 \Psi_2
			\right\|_{L^2}
		\right\rbrace
		\left\lbrace
			t_m^p
			-
			t_n^p
		\right\rbrace.
\end{align}
It follows from \eqref{E:SEQUENCEISCAUCHY} 
that for any $\epsilon > 0$,
$\lbrace \mathcal{R}_n \rbrace_{n=0}^{\infty}$
is Cauchy in the norm\footnote{Higher-order energy estimates for the sequence $\lbrace \mathcal{R}_n \rbrace_{n=0}^{\infty}$ 
can be obtained in a similar fashion.}	
\[
f \rightarrow
\sup_{t \in (\epsilon,1]}
		\left\lbrace
			\| t^{1/3} \partial_t f (t) \|_{L^2}
			+
			\| \partial_x f(t) \|_{L^2}
		\right\rbrace
\]
and thus converges\footnote{In the fully detailed construction of the analog of $\mathcal{R}$ for the 
nonlinear problems treated in \cites{fBpL2010a,fBpL2010c}, 
the authors extend the $\mathcal{R}_n$ to be $0$ on $[0,t_n)$ and show that
this extension implies that $\mathcal{R}_n$ is a weak solution on an interval $[0,\updelta)$.} 
to the desired solution $\mathcal{R}$.

\begin{remark}
\label{R:FREEDOMINCHOOSINGEXPONENTS}
	We could have instead derived energy estimates 
	by multiplying equation \eqref{E:ERRORTERMSCALARFIELDWAVESIMPLIFIED}
	by $t^{-P} \partial_t \mathcal{R}$ for any choice of $P \in [1/3,5/3)$, 
	and a similar argument would yield a uniform bound for the energy
		$
	\int_{\Sigma_t}
		(t^{\frac{1-P}{2}} \partial_t \mathcal{R})^2
		+
		(t^{\frac{1/3-P}{2}}\partial_x \mathcal{R})^2
	\, dx
	$
	for $t \in (0,1]$.
	We could even have allowed $P$ to mildly depend on $x$.
	This illustrates the freedom 
	(mentioned in Subsect.\ \ref{SS:PREVIOUSWORK})
	in choosing viable $t$-weights in the Fuchsian approach.
\end{remark}

It is not difficult to modify the above arguments so that they apply
if one includes the semilinear term\footnote{More precisely, when the term $t^{1/3} (\partial_x \SFRenormalized)^2$ is present,
one can show that the remainder term $\mathcal{R}$ exists and verifies estimates similar to the ones derived above
on a small slab $(0,\updelta] \times \mathbb{T}$, where $\updelta > 0$
depends on a Sobolev norm of $\Psi_1$ and $\Psi_2$. This argument requires higher-order energy estimates
because of the nonlinearity.} $t^{1/3} (\partial_x \SFRenormalized)^2$ on the right-hand side of
\eqref{E:SCALARFIELDWAVESIMPLIFIED};
this term is a model for the kinds of semilinear terms that one finds in the Einstein-scalar field system.
It would be interesting to know to what extent
the arguments can be extended to apply to the full linearized system of Prop.\ \ref{P:LINEARIZEDCMCEQUATIONS}
and the full nonlinear Einstein-scalar field system in three spatial dimensions. 
The framework of \cite{eAfBjIpL2013} provides a possible starting point
for establishing such an extension. However, that framework 
applies only to symmetric hyperbolic Fuchsian systems 
and thus it would need to be modified to treat the 
Einstein-scalar field system in gauges involving an elliptic or parabolic lapse PDE.

\section{Parabolic lapse gauges} 
\label{S:PARABOLICMONOTONICITY}
In this section, we introduce a new family of gauges
for the Einstein-scalar field system. We show that a version of the approximate
monotonicity identity also holds
in solutions to linearized (around the Kasner backgrounds) 
versions of the corresponding equations;
see Theorem~\ref{T:PARABOLICMONOTONICITYID}.
We also show that mildly singular energy estimates
without derivative loss hold for the linear solutions when the Kasner backgrounds
are nearly spatially isotropic; see Theorem~\ref{T:PARABOLICENERGYESTIMATES}.
Using these results, one could also prove linear stability results 
when the Kasner backgrounds
are nearly spatially isotropic,
that is, an analog of Theorem~\ref{T:CMCLINEARSTABILITY}. However, 
for brevity,
we do not explicitly provide such a result here;
given the results of 
Theorems~\ref{T:PARABOLICMONOTONICITYID} and \ref{T:PARABOLICENERGYESTIMATES}, 
one could prove linear stability by making minor modifications
to the proof of Theorem~\ref{T:CMCLINEARSTABILITY}.

The gauge that we study in this section involves 
a parabolic equation for the lapse variable $n$
that depends on a real parameter $\uplambda$.
The mildly singular energy estimates of
Theorem~\ref{T:PARABOLICENERGYESTIMATES}
are valid for near-FLRW Kasner backgrounds
when $2 < \uplambda < \infty$. As we will see, 
for $\uplambda > 0$,
\emph{the parabolic lapse PDEs are locally well-posed only in the past direction},
that is, for $t$ decreasing.
Formally, $\uplambda = \infty$ corresponds to the CMC lapse equation.
However, our proofs in this section are
somewhat different compared to our proofs in CMC gauge
and do not allow us to directly recover the CMC gauge results
by taking a limit $\uplambda \to \infty$.

\subsection{Choice of a gauge and the corresponding formulation of the Einstein-scalar field equations}
In formulating the nonlinear Einstein-scalar field equations in the new gauge,
we continue to use transported spatial coordinates and to decompose  
$\gfour = - n^2 dt^2 + g_{ab} dx^a dx^b$ as in \eqref{E:GFOURCMCTRANSPORTED}.

\subsubsection{Fixing the gauge}
We now fix the lapse gauge.

\begin{definition}[\textbf{Choice of a parabolic lapse gauge}]
\label{D:PARABOLICLAPSEGAUGE}
Let $\uplambda \neq 0$ be a real number. We now impose the following relation, which fixes the lapse gauge:
 \begin{align} \label{E:PARABOLICLAPSERELATION}
	\uplambda^{-1}(n-1) & = t \SecondFund_{\ a}^a + 1.
\end{align}
\end{definition}

\begin{remark}
	Note that the CMC-transported spatial coordinates gauge
	of Sect.\ \ref{S:CMCFORMULATIONOFEINSTEIN}
	corresponds to $\uplambda = \infty$.
\end{remark}

\subsubsection{Formulation of the Einstein-scalar field equations}
We now provide the (nonlinear) Einstein-scalar field equations relative to the gauge
\eqref{E:PARABOLICLAPSERELATION} with transported\footnote{By ``transported,'' we mean in the sense described
below equation \eqref{E:GFOURCMCTRANSPORTED}.} 
spatial coordinates.

\begin{proposition}[\textbf{The Einstein-scalar field equations in the gauge \eqref{E:PARABOLICLAPSERELATION} 
with transported spatial coordinates}]
\label{P:EINSTEINSFPARABOLIC}
Under the gauge condition \eqref{E:PARABOLICLAPSERELATION} 
and with transported spatial coordinates, 
the Einstein-scalar field system consists of the following equations.

The \textbf{Hamiltonian and momentum constraint equations} are respectively:
\begin{subequations}
\begin{align}
		R - \SecondFund_{\ b}^a \SecondFund_{\ a}^b + \underbrace{(\SecondFund_{\ a}^a)^2}_{
			t^{-2}\left\lbrace\uplambda^{-1}(n-1) - 1\right\rbrace^2} 
		& = \overbrace{(n^{-1} \partial_t \phi)^2 + g^{ab} \nabla_a \phi \nabla_b \phi}^{2 \Tfour(\Nml,\Nml)}, \label{E:PARABOLICHAMILTONIAN} \\
		\nabla_a \SecondFund_{\ i}^a - \underbrace{\nabla_i \SecondFund_{\ a}^a}_{\uplambda^{-1}t^{-1} \nabla_i n} & = 
		\underbrace{- n^{-1} \partial_t \phi \nabla_i \phi}_{- \Tfour(\Nml,\partial_i)}, \label{E:PARABOLICMOMENTUM}
\end{align}
\end{subequations}
$R$ denotes the scalar curvature of $g_{ij}$.

The \textbf{metric evolution equations} are:
\begin{subequations}
\begin{align}
	\partial_t g_{ij} & = - 2 n g_{ia}\SecondFund_{\ j}^a, \label{E:PARABOLICPARTIALTGCMC} \\
	\partial_t \SecondFund_{\ j}^i & = - g^{ia} \nabla_a \nabla_j n
		+ n \Big\lbrace \Ric_{\ j}^i + \underbrace{\SecondFund_{\ a}^a \SecondFund_{\ j}^i}_{
			t^{-1}\left\lbrace\uplambda^{-1}(n-1) - 1\right\rbrace \SecondFund_{\ j}^i} 
			 \underbrace{- g^{ia} \nabla_a \phi \nabla_j \phi}_{- T_{\ j}^i + (1/2)\ID_{\ j}^i \Tfour} 
			\Big\rbrace,  \label{E:PARABOLICPARTIALTKCMC}
\end{align}
\end{subequations}
where $\Ric_{\ j}^i$ denotes the Ricci curvature of $g_{ij}$,
$\ID_{\ j}^i = \mbox{\upshape diag}(1,1,1)$ denotes the identity transformation,
and $\Tfour := (\gfour^{-1})^{\alpha \beta} \Tfour_{\alpha \beta}$ denotes the trace of the energy-momentum tensor \eqref{E:EMTSCALARFIELD}.

The \textbf{volume form factor} $\sqrt{\mbox{\upshape det} g}$
verifies the auxiliary equation\footnote{This equation, which we do not use
in the present article, 
is implied by \eqref{E:PARTIALTGCMC} and 
the gauge condition \eqref{E:PARABOLICLAPSERELATION}.}
\begin{align} \label{E:PARABOLICVOLUMEFORMEVOLUTIONEQUATION}
	\partial_t \ln \left(t^{-1} \sqrt{\mbox{\upshape det} g} \right)
	& = (1 - \uplambda^{-1}) \frac{n - 1}{t}.
\end{align}

The \textbf{scalar field wave equation} is:
\begin{align} \label{E:PARABOLICSCALARFIELDCMC}
	\overbrace{- n^{-1} \partial_t(n^{-1} \partial_t \phi)}^{- \Dfour_{\Nml} \Dfour_{\Nml} \phi} + g^{ab} \nabla_a \nabla_b \phi 
		& = \overbrace{\frac{1}{t} n^{-1} 
					\left\lbrace 1 - \uplambda^{-1}(n-1) \right\rbrace \partial_t \phi}^{- \SecondFund_{\ a}^a \Dfour_{\Nml} \phi} 
		- n^{-1} g^{ab} \nabla_a n \nabla_b \phi.	
\end{align}

The \textbf{parabolic lapse equation} is:
\begin{align} \label{E:PARABOLICLAPSE}
	\uplambda^{-1} \frac{1}{t} \partial_t (n-1)
	+ g^{ab} \nabla_a \nabla_b (n - 1) 
		& = (n - 1) 
			\Big\lbrace 
				\frac{1}{t^2}(1 - \uplambda^{-1})
				+ R 
				- g^{ab} \nabla_a \phi \nabla_b \phi
		\Big\rbrace \\
	& \ \ 
		+  \uplambda^{-1}(\uplambda^{-1} - 2) \frac{1}{t^2} (n-1)^2
		+  \uplambda^{-2} \frac{1}{t^2} (n-1)^3
		+ R 
		- g^{ab}\nabla_a \phi \nabla_b \phi. \notag
\end{align}

When $\uplambda > 0$, the gauge condition \eqref{E:PARABOLICLAPSERELATION} 
and the constraint equations \eqref{E:PARABOLICHAMILTONIAN}-\eqref{E:PARABOLICMOMENTUM}
are preserved by the \textbf{past} flow of the remaining equations if they are verified by the data.
\end{proposition}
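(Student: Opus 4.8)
The plan is to treat the bulk of the proposition as a transcription of the standard ADM formalism, with the gauge relation \eqref{E:PARABOLICLAPSERELATION} substituted in, and to reserve the real work for the last assertion. First, relative to the decomposition $\gfour = - n^2\, dt^2 + g_{ab}\, dx^a dx^b$ of \eqref{E:GFOURCMCTRANSPORTED} with transported spatial coordinates, the Hamiltonian and momentum constraints, the first-order equation \eqref{E:PARABOLICPARTIALTGCMC} for $\partial_t g_{ij}$, the evolution equation \eqref{E:PARABOLICPARTIALTKCMC} for $\SecondFund_{\ j}^i$ (before substituting any gauge), and the scalar field wave equation are all consequences of \eqref{E:EINSTEINSF}--\eqref{E:WAVEMODEL} together with the Gauss and Codazzi equations, and hold for \emph{any} choice of lapse; I would simply cite the same standard references invoked in the proof of Prop.~\ref{P:EINSTEINSFCMC}. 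The only differences visible in \eqref{E:PARABOLICHAMILTONIAN}--\eqref{E:PARABOLICSCALARFIELDCMC} arise from replacing the CMC normalization $\SecondFund_{\ a}^a \equiv - t^{-1}$ by the new gauge relation, which says $\SecondFund_{\ a}^a = t^{-1}\big(\uplambda^{-1}(n-1) - 1\big)$ and hence $\nabla_i \SecondFund_{\ a}^a = \uplambda^{-1} t^{-1} \nabla_i n$; inserting these into the terms containing $\SecondFund_{\ a}^a$ produces the underbraced expressions as displayed.

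Next, \eqref{E:PARABOLICVOLUMEFORMEVOLUTIONEQUATION} follows from the standard trace identity $\partial_t \ln \sqrt{\det g} = \tfrac{1}{2} g^{ab}\partial_t g_{ab} = - n \SecondFund_{\ a}^a$, a consequence of \eqref{E:PARABOLICPARTIALTGCMC}, combined with \eqref{E:PARABOLICLAPSERELATION}. The parabolic lapse equation \eqref{E:PARABOLICLAPSE} is obtained by tracing the gauge-free equation \eqref{E:PARABOLICPARTIALTKCMC} over $i = j$, using $\Ric_{\ a}^a = R$ and $\mathrm{tr}(\SecondFund_{\ a}^a \SecondFund_{\ j}^i) = (\SecondFund_{\ a}^a)^2$, which gives
\[
	\partial_t \SecondFund_{\ a}^a = - g^{ab}\nabla_a\nabla_b(n-1) + n\big\{ R + (\SecondFund_{\ a}^a)^2 - g^{ab}\nabla_a\phi\nabla_b\phi \big\},
\]
and then substituting the gauge relation on the left (so that $\partial_t \SecondFund_{\ a}^a = \uplambda^{-1} t^{-1}\partial_t(n-1) - t^{-2}(\uplambda^{-1}(n-1)-1)$) and in $(\SecondFund_{\ a}^a)^2 = t^{-2}(\uplambda^{-1}(n-1)-1)^2$ on the right. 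Expanding the resulting polynomial in $n-1$ and collecting the $t^{-2}$-terms yields \eqref{E:PARABOLICLAPSE}; this is a short and routine computation, and I record in passing the polynomial identity it relies on, since it reappears below.

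The heart of the matter is the last sentence. Write $N := n - 1$ and introduce the gauge defect $\mathcal{G} := \uplambda^{-1} N - t\,\SecondFund_{\ a}^a - 1$, so that \eqref{E:PARABOLICLAPSERELATION} is precisely the statement $\mathcal{G} \equiv 0$. Using the gauge-free trace equation above to evolve $\SecondFund_{\ a}^a$ and \eqref{E:PARABOLICLAPSE} to evolve $N$, I would compute $\partial_t \mathcal{G} = \uplambda^{-1}\partial_t N - \SecondFund_{\ a}^a - t\,\partial_t \SecondFund_{\ a}^a$. The second-order spatial operators and the curvature/scalar-field terms cancel in pairs, and upon substituting $\SecondFund_{\ a}^a = t^{-1}(\uplambda^{-1}N - 1) - t^{-1}\mathcal{G}$ one finds, thanks to the polynomial identity from the derivation of \eqref{E:PARABOLICLAPSE}, that every term independent of $\mathcal{G}$ drops out, leaving a homogeneous evolution equation of the schematic form $\partial_t \mathcal{G} = \big( t^{-1} + 2(1+N)\SecondFund_{\ a}^a + t^{-1}(1+N)\mathcal{G} \big)\mathcal{G}$. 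Since the right-hand side vanishes wherever $\mathcal{G}$ does, uniqueness for this ODE in $t$ (pointwise in $x$) forces $\mathcal{G} \equiv 0$ whenever $\mathcal{G}|_{\Sigma_1} = 0$. Once the gauge relation is known to hold, the propagation of the Hamiltonian and momentum constraints follows by the usual mechanism: the twice-contracted Bianchi identity, together with the reduced equations, produces a closed linear homogeneous first-order system for the constraint quantities, which therefore vanish if they vanish initially --- a straightforward adaptation of \cite{lAvM2003}*{Theorem 4.2}, identical in structure to the CMC case treated in Prop.~\ref{P:EINSTEINSFCMC}. The restriction to $\uplambda > 0$ and to the \emph{past} flow is dictated not by this argument but by local well-posedness of \eqref{E:PARABOLICLAPSE}: rewriting it as $\partial_t N = - \uplambda t\, g^{ab}\nabla_a\nabla_b N + (\text{lower-order terms})$, the diffusion coefficient $-\uplambda t$ is negative precisely when $\uplambda > 0$ and $t > 0$, so the equation is backward parabolic and the coupled parabolic--hyperbolic system is solvable only in the direction of decreasing $t$.

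The main obstacle is the bookkeeping in the last step: one must verify that the polynomial-in-$N$ discrepancy between \eqref{E:PARABOLICLAPSE} and the trace of the $\SecondFund$-evolution equation is \emph{exactly} a multiple of $\mathcal{G}$, with no residual inhomogeneous term --- this is precisely the cancellation noted when deriving \eqref{E:PARABOLICLAPSE}. Everything else --- the ADM transcription, the Bianchi-identity propagation of the constraints, and the sign count establishing backward parabolicity --- is routine.
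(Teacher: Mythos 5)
Your proposal is essentially correct and takes the same underlying route as the paper's terse proof (cite the standard ADM decomposition, obtain the displayed equations by substituting the gauge relation, take the trace of the $\SecondFund$-evolution to get the lapse equation, then propagate the gauge and constraints). The paper handles the propagation step with a single sentence pointing to the Andersson--Moncrief argument; your gauge-defect calculation makes this concrete. I checked your schematic ODE $\partial_t\mathcal{G} = \big(t^{-1} + 2(1+N)\SecondFund_{\ a}^a + t^{-1}(1+N)\mathcal{G}\big)\mathcal{G}$ and it is right, hinging on exactly the polynomial identity $N(1-\uplambda^{-1}) + \uplambda^{-1}(\uplambda^{-1}-2)N^2 + \uplambda^{-2}N^3 - (1+N)(\uplambda^{-1}N-1)^2 = \uplambda^{-1}N - 1$ that you flagged. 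One small caveat: the reduced system one actually evolves uses the $\SecondFund$-equation with the gauge relation already substituted into the $\SecondFund_{\ a}^a\SecondFund_{\ j}^i$ term, so strictly speaking you should trace \emph{that} equation rather than the ``gauge-free'' one. The two differ by $n t^{-1}\mathcal{G}\,\SecondFund_{\ a}^a$, which is again proportional to $\mathcal{G}$, so your conclusion survives, but the coefficients in the schematic form change (the quadratic $\mathcal{G}^2$ term disappears and the factor $2$ becomes a $1$); worth saying explicitly which reduced system is meant.

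One genuine inaccuracy: your claim that \eqref{E:PARABOLICVOLUMEFORMEVOLUTIONEQUATION} ``follows from the standard trace identity combined with \eqref{E:PARABOLICLAPSERELATION}'' is not quite right. Carrying out the calculation you describe gives
\begin{align*}
	\partial_t \ln\big(t^{-1}\sqrt{\det g}\big)
	= -t^{-1} - n\SecondFund_{\ a}^a
	= t^{-1}(n-1)\big(1 - \uplambda^{-1}n\big)
	= (1-\uplambda^{-1})\frac{n-1}{t} - \uplambda^{-1}\frac{(n-1)^2}{t},
\end{align*}
so there is an extra $-\uplambda^{-1}(n-1)^2/t$ term that the proposition (and your proof) silently drops. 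The discrepancy appears to be a harmless typo in the paper (the footnote says the equation is unused, and the linearized version is unaffected since the extra term is quadratic), but if you assert that the stated equation ``follows,'' you should either derive it and note the missing quadratic correction or qualify that it holds only to linear order.

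Your backward-parabolicity argument ($\partial_t N = -\uplambda t\, g^{ab}\nabla_a\nabla_b N + \cdots$, with diffusion coefficient negative when $\uplambda t > 0$) and the Bianchi-identity propagation of the constraints once the gauge is known to hold are both fine, at the same level of detail as the paper.
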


\begin{remark}
	We are primarily interested in the gauge \eqref{E:PARABOLICLAPSERELATION}
	when $\uplambda > 2$ since our main results rely on this inequality.
	Note that when $\uplambda > 0$, the parabolic equation
	\eqref{E:PARABOLICLAPSE} is locally well posed only in the \emph{past} direction.
\end{remark}	

\begin{remark}[\textbf{Data for the lapse}]
	In order to solve the equations of Prop.\ \ref{P:EINSTEINSFPARABOLIC},
	we must prescribe the lapse along the initial Cauchy hypersurface $\Sigma_1$.
	That is, $n|_{\Sigma_1}$ is not determined by the geometric data
	(see Subsect.\ \ref{SS:INITIALVALUEANDCMC} for discussion of the geometric data). This is in contrast 
	to the CMC-transported spatial coordinates gauge, in which 
	$n|_{\Sigma_1}$ is determined by the geometric data
	via the elliptic PDE \eqref{E:LAPSE}.
	A natural choice in the context 
	of proving the nonlinear stability of the FLRW solution's Big Bang singularity
	would be $n|_{\Sigma_1} = 1$.
\end{remark}	

\begin{proof}[Proof of Prop.\ \ref{P:EINSTEINSFPARABOLIC}]
	The proposition can be proved by making simple modifications
	to the standard arguments that yield Prop.\ \ref{P:EINSTEINSFCMC}.
\end{proof}

\subsection{Linearizing around the Kasner solutions}
In the next proposition, we linearize the equations of Prop.\ \ref{P:EINSTEINSFPARABOLIC}
around a Kasner solution \eqref{E:KASNER}. See Subsect.\ \ref{SS:LINEARIZEDQUANTITIES}
for some remarks on the linearization procedure.

\begin{proposition}[\textbf{The linearized Einstein-scalar field equations in the gauge \eqref{E:PARABOLICLAPSERELATION} 
with transported spatial coordinates}]
\label{P:PARABOLICLINEARIZEDCMCEQUATIONS}
Consider the equations of Prop.\ \ref{P:EINSTEINSFPARABOLIC}
linearized around a Kasner solution \eqref{E:KASNER}.
The linearized equations in the unknowns 
$(\LapseRenormalized,\grenormalized,\LinSecondFund,\SFRenormalized)$,
which are functions of $(t,x) \in (0,\infty) \times \mathbb{T}^3$,
take the following form
(see Def.\ \ref{D:LINEARIZEDVARIABLES} for the definitions of some of the quantities).

The \textbf{linearized parabolic gauge condition} \eqref{E:PARABOLICLAPSERELATION} is:
\begin{align} \label{E:PARABOLICTRACECONDITION}
	t \LinSecondFund_{\ a}^a & = \uplambda^{-1} \LapseRenormalized.
\end{align}

The \textbf{linearized versions of the Hamiltonian and momentum constraint equations} \eqref{E:PARABOLICHAMILTONIAN}-\eqref{E:PARABOLICMOMENTUM} are:
\begin{subequations}
\begin{align}
	  t^2 \currenormalized
		- 2 (t \tracefreeSecondFundKasner_{\ b}^a) (t \LinSecondFund_{\ a}^b)
		- 2 A (t \partial_t \SFRenormalized)
		+ 2 (A^2 - \uplambda^{-1}) \LapseRenormalized 
		& = 0,
		\label{E:PARABOLICLINEARIZEDLHAMILTONIAN} \\
	\partial_a (t \LinSecondFund_{\ i}^a) 
		& = \uplambda^{-1} \partial_i \LapseRenormalized
			- A \partial_i \SFRenormalized
			- \christrenormalizedarg{a}{a}{b} (t \tracefreeSecondFundKasner_{\ i}^b)
			+ \christrenormalizedarg{a}{b}{i} (t \tracefreeSecondFundKasner_{\ b}^a),
		\label{E:PARABOLICLINEARIZEDMOMENTUM}
		\\
		\gKasner^{ab} \partial_a (t \LinSecondFund_{\ b}^i)
		& = \uplambda^{-1} \gKasner^{ia} \partial_a \LapseRenormalized
			- A \gKasner^{ia} \partial_a \SFRenormalized
				\label{E:PARABOLICLINEARIZEDSECONDMOMENTUM} \\
		& \ \ 
			- \gKasner^{ab} \christrenormalizedarg{a}{i}{c} (t \tracefreeSecondFundKasner_{\ b}^c)
			+ \gKasner^{ab} \christrenormalizedarg{a}{c}{b} (t \tracefreeSecondFundKasner_{\ c}^i).
			\notag
\end{align}
\end{subequations}

The \textbf{linearized version of the lapse equation} \eqref{E:PARABOLICLAPSE} can be expressed in either of the following two forms:
\begin{subequations}
\begin{align} 
	2 A (t \partial_t \SFRenormalized) 
	+ 2 (t \tracefreeSecondFundKasner_{\ b}^a) (t \LinSecondFund_{\ a}^b)
	& = \uplambda^{-1} t \partial_t \LapseRenormalized
		+ t^2 \gKasner^{ab} \partial_a \partial_b \LapseRenormalized
		+ (2 A^2 - 1 - \uplambda^{-1}) \LapseRenormalized, 
	\label{E:PARABOLICLINEARIZEDLAPSE} 
		\\
	\uplambda^{-1} t \partial_t \LapseRenormalized
	+ t^2 \gKasner^{ab} \partial_a \partial_b \LapseRenormalized 
	- (1 - \uplambda^{-1}) \LapseRenormalized
	& = t^2 \currenormalized.
		\label{E:PARABOLICLINEARIZEDLAPSELOWER} 
\end{align}
\end{subequations}
Equation \eqref{E:PARABOLICLINEARIZEDLHAMILTONIAN} can be used to show that \eqref{E:PARABOLICLINEARIZEDLAPSE} 
is equivalent to \eqref{E:PARABOLICLINEARIZEDLAPSELOWER}.

The \textbf{linearized versions of the metric evolution equations} \eqref{E:PARABOLICPARTIALTGCMC}-\eqref{E:PARABOLICPARTIALTKCMC} are:
\begin{subequations}
\begin{align}
	\partial_t \grenormalized_{ij} 
		& = -2 t^{-1} (t \SecondFundKasner_{\ j}^a) \grenormalized_{ia}
			- 2 t^{-1} \gKasner_{ia} (t \LinSecondFund_{\ j}^a)
			- 2 t^{-1}  \gKasner_{ia} (t \SecondFundKasner_{\ j}^a) \LapseRenormalized, 
		\label{E:PARABOLICLINEARIZEDGEVOLUTION} \\
	\partial_t (t \LinSecondFund_{\ j}^i)
		& = - t \gKasner^{ia} \partial_a \partial_j \LapseRenormalized
		- (1 - \uplambda^{-1}) t^{-1} (t \SecondFundKasner_{\ j}^i) \LapseRenormalized 
		+ t \Ricrenormalizedarg{i}{j}.
		\label{E:PARABOLICLINEARIZEDKEVOLUTION}
\end{align}
\end{subequations}

The \textbf{linearized version of the scalar field wave equation} \eqref{E:PARABOLICSCALARFIELDCMC} is:
\begin{align} \label{E:PARABOLICSCALARFIELDWAVEDECOMPOSED}
	- \partial_t (t \partial_t \SFRenormalized)
	+ t \gKasner^{ab} \partial_a \partial_b \SFRenormalized 
	& = - A \partial_t \LapseRenormalized
		+ A (1 - \uplambda^{-1})t^{-1} \LapseRenormalized.
\end{align}
\end{proposition}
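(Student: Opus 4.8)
The proposition is a linearization of Prop.~\ref{P:EINSTEINSFPARABOLIC}, and my plan is the same mechanical procedure used (but not spelled out) for Prop.~\ref{P:LINEARIZEDCMCEQUATIONS}: substitute the perturbative ansatz $g_{ij} = \gKasner_{ij} + \grenormalized_{ij}$, $t\SecondFund_{\ j}^i = \BigSecondFundKasner_{\ j}^i + \SecondFundRenormalized_{\ j}^i$, $\phi = A\ln t + \SFRenormalized$, $n = 1 + \LapseRenormalized$ into each equation of Prop.~\ref{P:EINSTEINSFPARABOLIC} --- using also the first-order expansions $g^{ij} = (\ginverseKasner)^{ij} - (\ginverseKasner)^{ia}(\ginverseKasner)^{jb}\grenormalized_{ab} + \cdots$, $\SecondFund_{\ j}^i = t^{-1}(\BigSecondFundKasner_{\ j}^i + \SecondFundRenormalized_{\ j}^i)$, and $n^{-1} = 1 - \LapseRenormalized + \cdots$ --- and then discard every term that is quadratic or higher in the array $(\grenormalized, \SecondFundRenormalized, \SFRenormalized, \LapseRenormalized)$. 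By the construction in Def.~\ref{D:LINEARIZEDVARIABLES} (and as recorded in the proof of Prop.~\ref{P:LINEARIZEDCMCEQUATIONS}), the quantities $\christrenormalized$, $\Ricrenormalized$, $\currenormalized$ are the first variations of the Christoffel symbols \eqref{E:THREECHRISTOFFEL}, of the Ricci tensor \eqref{E:LITTLEGRICCIONEUP}, and of the scalar curvature about $\gKasner$; since the Kasner spatial metric in \eqref{E:KASNER} has spatially constant components, its Christoffel symbols and its Ricci and scalar curvature all vanish, so these linearized curvature objects appear in the linearized equations already at first order.

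Most of the equations then come out directly. The linearized gauge relation \eqref{E:PARABOLICTRACECONDITION} is immediate once one uses $t\SecondFund_{\ a}^a = \BigSecondFundKasner_{\ a}^a + \SecondFundRenormalized_{\ a}^a = -1 + \SecondFundRenormalized_{\ a}^a$, where $\BigSecondFundKasner_{\ a}^a = -1$ by \eqref{E:KASNERTRACECONDITION}. For the evolution equation for $\SecondFundRenormalized$, I would rewrite \eqref{E:PARABOLICPARTIALTKCMC} with the help of $\partial_t(t\SecondFund_{\ j}^i) = \SecondFund_{\ j}^i + t\,\partial_t\SecondFund_{\ j}^i$ and $\partial_t\BigSecondFundKasner_{\ j}^i = 0$ (the latter being the background content of the equation); the first-order term $t^{-1}\SecondFundRenormalized_{\ j}^i$ produced this way cancels the one coming from linearizing the term $t^{-1}\{\uplambda^{-1}(n-1) - 1\}\SecondFund_{\ j}^i$, leaving \eqref{E:PARABOLICLINEARIZEDKEVOLUTION}, while \eqref{E:PARABOLICLINEARIZEDGEVOLUTION} follows even more directly, its background part being the first identity in \eqref{E:KASNERTIMEDERIVATIVEIDENTITIES}. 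For the two momentum constraints, I would linearize $\nabla_a\SecondFund_{\ i}^a$ and $(\ginverseKasner)^{ab}\nabla_a\SecondFund_{\ b}^i$, noting that since the background connection vanishes the only first-order connection contributions are $\christrenormalized$ contracted against the background $\BigSecondFundKasner$ (which one may replace by its trace-free part $\BigTraceFreeSecondFundKasner$ via \eqref{E:TRACEFREEPARTBIGSecondFundKasner}, the extra trace terms canceling), together with the matter term $-A\,\partial_i\SFRenormalized$ and the gauge term $\uplambda^{-1}\partial_i\LapseRenormalized$; this yields \eqref{E:PARABOLICLINEARIZEDMOMENTUM}--\eqref{E:PARABOLICLINEARIZEDSECONDMOMENTUM}. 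The lapse equation in the form \eqref{E:PARABOLICLINEARIZEDLAPSELOWER} is obtained by linearizing \eqref{E:PARABOLICLAPSE} directly: the $(n-1)^2$, $(n-1)^3$, $(n-1)R$ and $(n-1)g^{ab}\nabla_a\phi\nabla_b\phi$ terms are all higher order, the linear part of $(n-1)\,t^{-2}(1-\uplambda^{-1})$ survives, and the scalar curvature contributes $\currenormalized$; the equivalent form \eqref{E:PARABOLICLINEARIZEDLAPSE} then follows by substituting the linearized Hamiltonian constraint. Finally, \eqref{E:PARABOLICSCALARFIELDWAVEDECOMPOSED} is the linearization of \eqref{E:PARABOLICSCALARFIELDCMC}, the only point of care being the $(\mbox{tr}\,\SecondFund)$-dependent term, whose $\uplambda^{-1}$ correction is fixed by the gauge condition.

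The most bookkeeping-intensive step, and the one I expect to require the most care, is the linearized Hamiltonian constraint \eqref{E:PARABOLICLINEARIZEDLHAMILTONIAN}: I would multiply \eqref{E:PARABOLICHAMILTONIAN} by $t^2$ and sort the result by order. At background order, $t^2 R = 0$ and $-t^2\SecondFund_{\ b}^a\SecondFund_{\ a}^b + t^2(\SecondFund_{\ a}^a)^2 = -\BigSecondFundKasner_{\ b}^a\BigSecondFundKasner_{\ a}^b + (\BigSecondFundKasner_{\ a}^a)^2 = -\sum_{i} q_i^2 + 1 = A^2$, which matches the matter side precisely by the Kasner Hamiltonian constraint \eqref{E:KASNERHAMILTONIANCONSTRAINT} and $\BigSecondFundKasner_{\ a}^a = -1$. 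At first order, $t^2 R \to t^2\currenormalized$, $-t^2\SecondFund_{\ b}^a\SecondFund_{\ a}^b \to -2\BigSecondFundKasner_{\ b}^a\SecondFundRenormalized_{\ a}^b$, $t^2(\SecondFund_{\ a}^a)^2 \to -2\SecondFundRenormalized_{\ a}^a$, and the matter side $\to 2At\,\partial_t\SFRenormalized - 2A^2\LapseRenormalized$; one then re-expresses $\BigSecondFundKasner_{\ b}^a\SecondFundRenormalized_{\ a}^b$ through $\BigTraceFreeSecondFundKasner_{\ b}^a\SecondFundRenormalized_{\ a}^b$ via \eqref{E:TRACEFREEPARTBIGSecondFundKasner} and eliminates the surviving traces $\SecondFundRenormalized_{\ a}^a$ by the gauge condition \eqref{E:PARABOLICTRACECONDITION}, arriving at \eqref{E:PARABOLICLINEARIZEDLHAMILTONIAN}. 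The real obstacle here is purely organizational --- keeping track of the trace/trace-free reshufflings and of exactly where the gauge condition is invoked --- and a reliable running check throughout the whole proof is that formally setting $\uplambda^{-1} = 0$ must reproduce the CMC linearized equations of Prop.~\ref{P:LINEARIZEDCMCEQUATIONS}.
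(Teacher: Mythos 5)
Your proposal is the same mechanical linearization the paper itself invokes in its (very terse) proof: substitute the perturbative ansatz, discard higher-order terms, and recall that the only departure from the CMC case is that $\SecondFundRenormalized_{\ a}^a = \uplambda^{-1}\LapseRenormalized$ no longer vanishes. Your step-by-step outline of each linearized equation is the right instantiation of this.

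One concrete caution on the very step you flagged as delicate. Carrying the Hamiltonian bookkeeping through, the first-order content of $t^2$ times \eqref{E:PARABOLICHAMILTONIAN} is
\begin{equation*}
	t^2\currenormalized
	- 2\BigSecondFundKasner_{\ b}^a\SecondFundRenormalized_{\ a}^b
	- 2\SecondFundRenormalized_{\ a}^a
	= 2At\partial_t\SFRenormalized - 2A^2\LapseRenormalized,
\end{equation*}
and since $\BigSecondFundKasner_{\ b}^a\SecondFundRenormalized_{\ a}^b
= \BigTraceFreeSecondFundKasner_{\ b}^a\SecondFundRenormalized_{\ a}^b
- \tfrac{1}{3}\SecondFundRenormalized_{\ a}^a$, inserting the gauge relation $\SecondFundRenormalized_{\ a}^a = \uplambda^{-1}\LapseRenormalized$ gives coefficient $2A^2 - \tfrac{4}{3}\uplambda^{-1}$ on $\LapseRenormalized$, not $2(A^2 - \uplambda^{-1})$. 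Equivalently, \eqref{E:PARABOLICLINEARIZEDLHAMILTONIAN} as stated is the outcome of your procedure if $\BigTraceFreeSecondFundKasner$ is replaced by $\BigSecondFundKasner$; keeping $\BigTraceFreeSecondFundKasner$, the coefficient should be $2A^2 - \tfrac{4}{3}\uplambda^{-1}$, and by substitution the parallel coefficient in \eqref{E:PARABOLICLINEARIZEDLAPSE} should read $2A^2 - 1 - \tfrac{1}{3}\uplambda^{-1}$. Neither \eqref{E:PARABOLICLINEARIZEDLAPSELOWER} nor the momentum constraints are affected (in the latter, as you observed, the pure-trace contributions cancel). The key lesson for your proposal: the $\uplambda^{-1}\to 0$ sanity check you propose is structurally blind to exactly this $O(\uplambda^{-1})$ ambiguity, so it cannot certify the claim of ``arriving at \eqref{E:PARABOLICLINEARIZEDLHAMILTONIAN}''; that step really does require carrying the arithmetic through, which you have not done.
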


\begin{proof}
	The proof is essentially the same as that of Prop.\ \ref{P:LINEARIZEDCMCEQUATIONS}
	and we therefore omit the details.
	We point out that
	in the gauge \eqref{E:PARABOLICLAPSERELATION}
	(and therefore in Prop.\ \ref{P:PARABOLICLINEARIZEDCMCEQUATIONS} too), 
	the linearly small quantities are the same as the ones from Def.\ \ref{D:LINEARIZEDVARIABLES},
	except that 
	$t \LinSecondFund_{\ a}^a 
	:= t \SecondFund_{\ a}^a - t \SecondFundKasner_{\ a}^a
	= \uplambda^{-1}(n-1) = \uplambda^{-1} \LapseRenormalized
	$ 
	is now linearly small
	rather than completely vanishing as it did in Prop.\ \ref{P:LINEARIZEDCMCEQUATIONS}.
\end{proof}

\subsection{Energies and norms}
\label{SS:PARABOLICENERGIESANDNORMS}
In our analysis of solutions, we will use the energies
and norms featured in the next two definitions.
These controlling quantities
lead to slightly different estimates
for the lapse compared to the CMC gauge.
The main point is that 
we are no longer able to obtain
control of the highest-order analog of $\| \partial^2 \LapseRenormalized \|_{L_{\gKasner}^2}$
because of the nature of parabolic energy estimates.
We are, however, able to control a spacetime integral
of the highest-order analog of $\partial^2 \LapseRenormalized$, which is provided by
the highest-order analog of the first term on the second line of the 
right-hand side of \eqref{E:TOPORDERLAPSEENERGYESTIMATE}.

\begin{definition}[\textbf{Energies}]
\label{D:PARBOLICENERGY}
In terms of the energies defined in Def.\ \ref{D:ENERGIES},
we define the following energy 
$
\mathscr{E}_{(Almost \ Total);\smallparameter}(t) \geq 0
$
for $t \in (0,1]$:
\begin{align} \label{E:PARABOLICALMOSTOTALENERGY}	
		\mathscr{E}_{(Almost \ Total);\smallparameter}^2(t) 
		& := 
			\mathscr{E}_{(Scalar)}^2(t)
			+ 
			\mathscr{E}_{(Lapse)}^2(t)
			+
			\smallparameter \mathscr{E}_{(Metric)}^2(t).
\end{align}
As in Theorem~\ref{T:L2MILDENERGYBLOWUPCMCGAUGE},
$\smallparameter$ is a small positive constant that we will
choose below in order in to obtain the desired energy estimates.

We will also use an up-to-order $M$ energy.
Specifically, we view the energy
$\mathscr{E}_{(Almost \ Total);\smallparameter}$
defined in \eqref{E:PARABOLICALMOSTOTALENERGY}
as a functional of 
$\LinSecondFund, 
\partial \grenormalized, 
 \partial_t \SFRenormalized,
\partial \SFRenormalized,
\LapseRenormalized
$
(that is, 
$\mathscr{E}_{(Almost \ Total);\smallparameter}
= 
\mathscr{E}_{(Almost \ Total);\smallparameter}
[\LinSecondFund, 
	\partial \grenormalized, 
  \partial_t \SFRenormalized,
	\partial \SFRenormalized,
	\LapseRenormalized]$),
and we define
\begin{align} \label{E:PARABOLICORDERNENERGY}
		\mathscr{E}_{(Almost \ Total);\smallparameter;M}^2(t)
		& := \sum_{|\vec{I}| \leq M}
			\mathscr{E}_{(Almost \ Total);\smallparameter}^2
			[\partial_{\vec{I}} \LinSecondFund, \partial \partial_{\vec{I}} \grenormalized, 
				\partial_t \partial_{\vec{I}} \SFRenormalized,
			\partial \partial_{\vec{I}} \SFRenormalized, \partial_{\vec{I}} \LapseRenormalized](t).
\end{align}
\end{definition}

\begin{definition}[\textbf{Solution norms}]
\label{D:PARBOLICNORM}
In terms of the Sobolev norms of Def.\ \ref{D:SOBOLEVNORMS}, we define
the solution norms
\begin{align}
	\parabolichighnorm{M}(t) 
	& := 
			\left\| t \LinSecondFund \right\|_{H_{Frame}^M} 
			+ \| \partial \grenormalized  \|_{H_{Frame}^M} 
			+ \left\| t \partial_t \SFRenormalized \right\|_{H_{Frame}^M} 
			+ t^{2/3} \| \partial \SFRenormalized  \|_{H_{Frame}^M} 
			+ \sum_{p=0}^1 t^{(2/3)p} \left\| \LapseRenormalized \right\|_{H^{M+p}}.
		\label{E:PARABOLICHIGHNORM}  
\end{align}
\end{definition}

\begin{remark}
	Note that
	$\parabolichighnorm{0}$
	controls one derivative of $\LapseRenormalized$ while
	$\mathscr{E}_{(Almost \ Total);\smallparameter}$
	does not.
\end{remark}

\subsection{The approximate monotonicity identity}
\label{SS:APPROXIMATEMONOTONICITYPARABOLIC}
We now state our approximate monotonicity identity theorem for
solutions to the linear equations of Prop.\ \ref{P:PARABOLICLINEARIZEDCMCEQUATIONS}.
The theorem is a direct analog of Theorem~\ref{T:CMCMONOTONICITYID} in the CMC gauge.

\begin{theorem}[\textbf{The approximate monotonicity identity in the parabolic lapse gauge}]
	\label{T:PARABOLICMONOTONICITYID}
	Assume that the parabolic gauge parameter verifies $\uplambda \neq 0$.
	Then for any constant $\smallparameter > 0$,
	solutions to the linearized equations of Prop.\ \ref{P:PARABOLICLINEARIZEDCMCEQUATIONS}
	verify the following identity for $t \in (0,1]$:
	\begin{align} \label{E:PARABOLICMONOTONICITYID} 
	& 
	\int_{\Sigma_t} 
		(t \partial_t \SFRenormalized)^2
		+ 
		|t \partial \SFRenormalized|_{\gKasner}^2
	\, dx  
	+ \left\lbrace
			A^2 
			+ \frac{1}{2} \uplambda^{-1}(1 - \uplambda^{-1})
		\right\rbrace
		 	\int_{\Sigma_t}
		 		\LapseRenormalized^2
		 \, dx
		+
		\smallparameter
		\int_{\Sigma_t} 
			|t \LinSecondFund|_{\gKasner}^2 
			+ 
			\frac{1}{4} |t \partial \grenormalized|_{\gKasner}^2 
		\, dx
		+ 
		\int_{\Sigma_t}
			\mathcal{N}_1
		\, dx
		\\
	& = 
		\int_{\Sigma_1} 
		(\partial_t \SFRenormalized)^2
		+ t^2 |\partial \SFRenormalized|_{\gKasner}^2
	\, dx  
	+ \left\lbrace
			A^2 
			+ 
			\frac{1}{2} \uplambda^{-1} (1 - \uplambda^{-1})
		\right\rbrace
		 	\int_{\Sigma_1}
		 		\LapseRenormalized^2
		 \, dx
		+
		\smallparameter
		\int_{\Sigma_1} 
			|\LinSecondFund|_{\gKasner}^2 
			+ 
			\frac{1}{4} |t \partial \grenormalized|_{\gKasner}^2 
		\, dx
		+
		\int_{\Sigma_1}
			\mathcal{N}_1
		\, dx	
		\notag \\
		& \ \ 
		- 2
			\int_{s=t}^1
				s^{-1}
				\int_{\Sigma_s}  
					 |s \partial \SFRenormalized|_{\gKasner}^2
				\, dx
		 	\, ds
			- (1 + 2 \smallparameter \uplambda^{-1} - \uplambda^{-1})
			\int_{s=t}^1
				s^{-1}
				\int_{\Sigma_s}  
					 |s \partial \LapseRenormalized|_{\gKasner}^2
				\, dx
		 	\, ds 
		-
		\left\lbrace
			1 - \uplambda^{-2}
		\right\rbrace
		\int_{s=t}^1 
			s^{-1}
			\int_{\Sigma_s} 
				\LapseRenormalized^2
			\, dx
		\, ds
		\notag \\
		& \ \
		- 
		\frac{1}{2}
		\smallparameter
		\int_{s=t}^1
			s^{-1}
			\int_{\Sigma_s}
				 		|s \partial \grenormalized|_{\gKasner}^2
			\, dx
		\, ds
		 \notag	\\
		&  \ \
		+
		\sum_{i=2}^4
		\int_{s=t}^1 
			s^{-1}
			\int_{\Sigma_s} 
				\mathcal{N}_i
			\, dx
		\, ds
		+
		\smallparameter
		\sum_{i=5}^{12}
		\int_{s=t}^1 
			s^{-1}
			\int_{\Sigma_s} 
				\mathcal{N}_i
			\, dx
		\, ds,
	\notag
\end{align}
where the constant $0 \leq A \leq \sqrt{2/3}$ is defined by \eqref{E:KASNERHAMILTONIANCONSTRAINT}
and along $\Sigma_s$, we have
\begin{subequations}
\begin{align}
	\mathcal{N}_1 
	& = \mathcal{N}_1(s \partial_t \SFRenormalized, \LapseRenormalized)
	:= - 2 A (s \partial_t \SFRenormalized) \LapseRenormalized,
		\label{E:PARABOLICCUBICFORM1} \\
	\mathcal{N}_2 
	& = \mathcal{N}_2(s \tracefreeSecondFundKasner,s \LinSecondFund,\LapseRenormalized)
	:= - 2 (1 - \uplambda^{-1})
		(s \tracefreeSecondFundKasner_{\ b}^a) (s \LinSecondFund_{\ a}^b) \LapseRenormalized,
		\label{E:PARABOLICFORM2} \\
	\mathcal{N}_3 
	 & = \mathcal{N}_3(s \SecondFundKasner, s \partial \SFRenormalized,s \partial \SFRenormalized)
		:= -2 s^2 \gKasner^{ab} (s \SecondFundKasner_{\ b}^c) \partial_a \SFRenormalized \partial_c 	
	\SFRenormalized,
		\label{E:PARABOLICCUBICFORM3} \\
	\mathcal{N}_4 
	& = \mathcal{N}_4(s \partial \SFRenormalized,s \partial \LapseRenormalized)
		:= - 2 A s^2 \gKasner^{ab} \partial_a \SFRenormalized \partial_b \LapseRenormalized,
		\label{E:PARABOLICFORM4}
		\\
	\mathcal{N}_5 
& = \mathcal{N}_5(s \SecondFundKasner, s \partial \grenormalized, s \partial \grenormalized)
:=   	-\frac{1}{2}
						s^2
						\gKasner^{ab} \gKasner^{ij} \gKasner^{cf}
						(s \SecondFundKasner_{\ c}^e)
						\partial_e \grenormalized_{ai}
						\partial_f \grenormalized_{bj},
						\label{E:PARABOLICFORM5}	\\
\mathcal{N}_6 
& = \mathcal{N}_6(s \tracefreeSecondFundKasner,s \LinSecondFund,s \LinSecondFund)
:= 2 \gKasner_{ic} \gKasner^{ab} (s \tracefreeSecondFundKasner_{\ j}^c) 
				(s \LinSecondFund_{\ a}^i) (s \LinSecondFund_{\ b}^j)
			- 2 \gKasner_{ij} \gKasner^{ac} (s \tracefreeSecondFundKasner_{\ c}^b) 
				(s \LinSecondFund_{\ a}^i) (s \LinSecondFund_{\ b}^j),
					\label{E:PARABOLICFORM6} \\
	\mathcal{N}_7 
	& = \mathcal{N}_7(s \tracefreeSecondFundKasner,s \partial \grenormalized, s \partial \grenormalized)
	:= 		s^2 
					\gKasner_{ab} \gKasner^{ef} \gKasner^{ij} 
					(s \tracefreeSecondFundKasner_{\ c}^a)
					\christrenormalizedarg{i}{c}{j} 
					\christrenormalizedarg{e}{b}{f}
				- s^2
					\gKasner_{ab} \gKasner^{ef} \gKasner^{ij} 
					(s \tracefreeSecondFundKasner_{\ j}^c)
					\christrenormalizedarg{i}{a}{c} 
					\christrenormalizedarg{e}{b}{f}
		\label{E:PARABOLICFORM7} 			\\
	& \ \ + s^2 \gKasner^{ef}  (s \tracefreeSecondFundKasner_{\ c}^a) 
						\christrenormalizedarg{a}{c}{b} \christrenormalizedarg{e}{b}{f}	
					- s^2 \gKasner^{ef} (s \tracefreeSecondFundKasner_{\ b}^c) 
							\christrenormalizedarg{a}{a}{c} \christrenormalizedarg{e}{b}{f},
		\notag
			\\
	\mathcal{N}_8 
	& = 
	\mathcal{N}_8(s \tracefreeSecondFundKasner,s \partial \grenormalized,s \partial \LapseRenormalized) 
	:= 2 s^2 \gKasner^{ij} (s \tracefreeSecondFundKasner_{\ i}^b)
						\christrenormalizedarg{a}{a}{b} \partial_j \LapseRenormalized
						- 2 s^2 \gKasner^{ij}  (s \tracefreeSecondFundKasner_{\ b}^a) \christrenormalizedarg{a}{b}{i} 
							\partial_j \LapseRenormalized
							\label{E:PARABOLICFORM8} \\
		& \ \ + s^2 \gKasner^{ij} \gKasner^{ef} (s \SecondFundKasner_{\ j}^a)
				  	\partial_e \grenormalized_{ai} \partial_f \LapseRenormalized,
				  	\notag	\\
	\mathcal{N}_9 
	& = 
		\mathcal{N}_9(s \tracefreeSecondFundKasner,s \LinSecondFund,\LapseRenormalized)
		:= 2 (1 - \uplambda^{-1}) \gKasner_{ab} \gKasner^{ij} 
			(s \tracefreeSecondFundKasner_{\ i}^a) 
			(s \LinSecondFund_{\ j}^b) \LapseRenormalized,
				\label{E:PARABOLICFORM9} \\
	\mathcal{N}_{10}
	& = \mathcal{N}_{10}(s \partial \SFRenormalized, s \partial \LapseRenormalized)
	:= 	2 A s^2\gKasner^{ij} \partial_i \SFRenormalized \partial_j \LapseRenormalized,
				\label{E:PARABOLICFORM10} \\
	\mathcal{N}_{11} 
	& = \mathcal{N}_{11}(s \partial \grenormalized, s \partial \SFRenormalized)
	:= - 2 A s^2 \gKasner^{ef} \christrenormalizedarg{e}{a}{f} \partial_a \SFRenormalized,
		\label{E:PARABOLICFORM11}
			\\
	\mathcal{N}_{12} 
	& = \mathcal{N}_{12}(s \partial \grenormalized,s \partial \LapseRenormalized)
	:=  2 \uplambda^{-1} t \gKasner^{ef} \christrenormalizedarg{e}{a}{f} \partial_a \LapseRenormalized.
							\label{E:PARABOLICFORM12}
\end{align}	
\end{subequations}

\end{theorem}	

\begin{remark}
	The terms $\mathcal{N}_i$
	defined in \eqref{E:PARABOLICCUBICFORM1}-\eqref{E:PARABOLICFORM12}
	have different definitions than their counterparts
	from Sects.\ \ref{S:MONOTONICITYIDENTITIES}-\ref{S:ENERGYESTIMATES}.
\end{remark}

\begin{proof}[Proof of Theorem~\ref{T:PARABOLICMONOTONICITYID}]
	We will derive
	the identities \eqref{E:PARABOLICLAPSESFADDEDENERGYESTIMATE}
	and \eqref{E:PARABOLICMETRICENERGYID}
	below using independent arguments.
	To obtain \eqref{T:PARABOLICMONOTONICITYID},
	we simply add
	\eqref{E:PARABOLICLAPSESFADDEDENERGYESTIMATE} to
	$\smallparameter$ times \eqref{E:PARABOLICMETRICENERGYID}.
\end{proof}

As in the proof of Theorem~\ref{T:CMCMONOTONICITYID}, 
the most important step in the proof of Theorem~\ref{T:PARABOLICMONOTONICITYID} is
an energy identity for the linearized scalar field and lapse
that simultaneously yields favorably signed (to the past) integrals for both variables. 
We provide this identity in the next proposition.

\begin{proposition}[\textbf{The key integral identity for the linearized scalar field and linearized lapse
in the parabolic lapse gauge}]
	\label{P:PARABOLICENERGYESTIMATELAPSEANDSCALARFIELD}
	Assume that the parabolic gauge parameter verifies $\uplambda \neq 0$.
	Then solutions to the linearized equations of Prop.\ \ref{P:PARABOLICLINEARIZEDCMCEQUATIONS}
	verify the following identity for $t \in (0,1]$:
	\begin{align}
	& 
	\int_{\Sigma_t} 
		(t \partial_t \SFRenormalized)^2
		+ 
		|t \partial \SFRenormalized|_{\gKasner}^2
	\, dx  
	+ \left\lbrace
			A^2 
			+ \frac{1}{2} \uplambda^{-1}(1 - \uplambda^{-1})
		\right\rbrace
		 	\int_{\Sigma_t}
		 		\LapseRenormalized^2
		 \, dx
			+
		 	\int_{\Sigma_t}
		 		\mathcal{N}_1
		 	\, dx
		\label{E:PARABOLICLAPSESFADDEDENERGYESTIMATE} \\
	& = 
		\int_{\Sigma_1} 
		(\partial_t \SFRenormalized)^2
		+ t^2 |\partial \SFRenormalized|_{\gKasner}^2
	\, dx  
	+ \left\lbrace
			A^2 
			+ \frac{1}{2} \uplambda^{-1} (1 - \uplambda^{-1})
		\right\rbrace
		 	\int_{\Sigma_1}
		 		\LapseRenormalized^2
		 \, dx
			+
		 	\int_{\Sigma_1}
		 		\mathcal{N}_1
		 	\, dx	
		\notag \\
	& \ \ 
		- 2
			\int_{s=t}^1
				s^{-1}
				\int_{\Sigma_s}  
					 |s \partial \SFRenormalized|_{\gKasner}^2
				\, dx
		 	\, ds
			- (1 - \uplambda^{-1})
			\int_{s=t}^1
				s^{-1}
				\int_{\Sigma_s}  
					 |s \partial \LapseRenormalized|_{\gKasner}^2
				\, dx
		 	\, ds 
		-
		(1 - \uplambda^{-2})
		\int_{s=t}^1 
			s^{-1}
			\int_{\Sigma_s} 
				\LapseRenormalized^2
			\, dx
		\, ds
		\notag \\
	&  \ \
		+
		\sum_{i=2}^4
		\int_{s=t}^1 
			s^{-1}
			\int_{\Sigma_s} 
				\mathcal{N}_i
			\, dx
		\, ds,
		\notag
\end{align}
where the constant $0 \leq A \leq \sqrt{2/3}$ is defined by \eqref{E:KASNERHAMILTONIANCONSTRAINT}
and the terms 
$\mathcal{N}_1$,
$\mathcal{N}_2$,
$\mathcal{N}_3$,
and
$\mathcal{N}_4$
are defined in \eqref{E:PARABOLICCUBICFORM1}-\eqref{E:PARABOLICFORM4}.
\end{proposition}

\begin{proof}
The proof has some features in common with
our proof of Prop.\ \ref{P:ENERGYESTIMATELAPSEANDSCALARFIELD},
but other aspects of it are different.
Again, the main idea is to combine three integration by parts identities in
the right way. 
Throughout, we silently use the identities in \eqref{E:KASNERTIMEDERIVATIVEIDENTITIES}.
To obtain the first identity, we divide equation \eqref{E:PARABOLICLINEARIZEDLAPSE}
by $t$ and then replace $t$ with the integration variable $s$,
multiply by $(1 - \uplambda^{-1}) \LapseRenormalized$, 
and integrate by parts 
over $(s,x) \in [t,1] \times \mathbb{T}^3$ (we stress that $t \leq 1$) to deduce that
\begin{align} \label{E:PARABOLICLAPSEFIRSTENERGYIDENTITY}
		\frac{1}{2} \uplambda^{-1} (1 - \uplambda^{-1}) 
	\int_{\Sigma_t}
		\LapseRenormalized^2
	\, dx
	& =
		\frac{1}{2}
		\uplambda^{-1} (1 - \uplambda^{-1})  
		\int_{\Sigma_1}
			\LapseRenormalized^2
		\, dx
	\\
		& \ \ 
		- (1 - \uplambda^{-1})
			\int_{s=t}^1
				s^{-1}
				\int_{\Sigma_s}  
					 |s \partial \LapseRenormalized|_{\gKasner}^2
				\, dx
		 	\, ds \notag \\
		& \ \ 
		+ (2 A^2 - 1 - \uplambda^{-1}) (1 - \uplambda^{-1})
		\int_{s=t}^1 
			s^{-1}
			\int_{\Sigma_s} 
				\LapseRenormalized^2
			\, dx
		\, ds
		\notag \\
& \ \ 
	- 2 A (1 - \uplambda^{-1})
		\int_{s=t}^1 
			s^{-1}
			\int_{\Sigma_s} 
				(s \partial_t \SFRenormalized) \LapseRenormalized
			\, dx
		\, ds
		\notag \\
& \ \
	- 2 (1 - \uplambda^{-1})
		\int_{s=t}^1 
			s^{-1}
			\int_{\Sigma_s} 
				(s \tracefreeSecondFundKasner_{\ b}^a) (s \LinSecondFund_{\ a}^b) \LapseRenormalized
			\, dx
		\, ds.
		\notag
\end{align}

To obtain the second identity, we replace $t$ with the integration variable $s$ in equation 
\eqref{E:PARABOLICSCALARFIELDWAVEDECOMPOSED},
multiply by $- s \partial_t \SFRenormalized$,
and integrate by parts 
over $(s,x) \in [t,1] \times \mathbb{T}^3$ 
(we again stress that $t \leq 1$)
to deduce that
\begin{align}
	\int_{\Sigma_t} 
		(t \partial_t \SFRenormalized)^2
		+ t^2 |\partial \SFRenormalized|_{\gKasner}^2
	\, dx  
	& = \int_{\Sigma_1} 
				(\partial_t \SFRenormalized)^2 
				+ |\partial \SFRenormalized|_{\gKasner}^2
			\, dx 
		\label{E:PARABOLICFIRSTENERGYESTIMATEMODELSCALARFIELD} \\
	& \ \ 
		- 2
			\int_{s=t}^1
				s^{-1}
				\int_{\Sigma_s}  
					 |s \partial \SFRenormalized|_{\gKasner}^2
				 	 + s^2 \gKasner^{ab} (s \SecondFundKasner_{\ b}^c)
							\partial_a \SFRenormalized \partial_c \SFRenormalized
		 	 	\, dx
		 	\, ds
		\notag  \\
	& \ \
			- 2 A
			\int_{s=t}^1 
				\int_{\Sigma_s}
					(s \partial_t \SFRenormalized) \partial_t \LapseRenormalized
		 		\, dx 
		 	\, ds
			+ 2 A (1 - \uplambda^{-1})
			\int_{s=t}^1 
				s^{-1}
				\int_{\Sigma_s}
					(s \partial_t \SFRenormalized) \LapseRenormalized
		 		\, dx 
		 	\, ds.
		 \notag
\end{align}

Next, we multiply equation \eqref{E:PARABOLICSCALARFIELDWAVEDECOMPOSED} by $\LapseRenormalized$ 
to obtain the following identity:
\begin{align} \label{E:PARABOLICLINEARIZEDSCALARFIELDLAPSEMIXEDEQUATION}
	(t \partial_t \SFRenormalized) \partial_t \LapseRenormalized
	& = \partial_t (t \partial_t \SFRenormalized \LapseRenormalized)
		- \frac{1}{2} A \partial_t (\LapseRenormalized^2)
	 	- t \LapseRenormalized \gKasner^{ab} \partial_a \partial_b \SFRenormalized 
	 	+ A (1 - \uplambda^{-1}) t^{-1} \LapseRenormalized^2.
\end{align}

To obtain the third identity, we replace $t$ with the integration variable $s$ in equation \eqref{E:PARABOLICLINEARIZEDSCALARFIELDLAPSEMIXEDEQUATION},
multiply by $2A$,
and integrate by parts 
over $(s,x) \in [t,1] \times \mathbb{T}^3$
to deduce that 
\begin{align}  \label{E:PARABOLICLINEARIZEDSCALARFIELDLAPSEMIXEDEQUATIONIBPIDENTITY}
		&  - 2A
		 			\int_{\Sigma_t}
		 				(t \partial_t \SFRenormalized) \LapseRenormalized
		 			\, dx
		 +	A^2 
		 			\int_{\Sigma_t}
		 				\LapseRenormalized^2
		 			\, dx
		 		\\
		 	& = 
		 		- 2A
		 			\int_{\Sigma_1}
		 				\partial_t \SFRenormalized \LapseRenormalized
		 			\, dx
		 		+ A^2 
		 			\int_{\Sigma_1}
		 				\LapseRenormalized^2
		 			\, dx
		 			\notag \\
		& \ \
			+ 2 A
			\int_{s=t}^1 
				\int_{\Sigma_s}
					(s \partial_t \SFRenormalized) \partial_t \LapseRenormalized
		 		\, dx 
		 	\, ds
			\notag \\
		& \ \
			- 2A
			 \int_{s=t}^1 
				s^{-1}
				\int_{\Sigma_s}
					s^2 \gKasner^{ab} \partial_a \SFRenormalized \partial_b \LapseRenormalized
		 		\, dx 
		 	\, ds
		 - 2 A^2 (1 - \uplambda^{-1}) 
		 		\int_{s=t}^1 
					s^{-1}
					\int_{\Sigma_s}
						\LapseRenormalized^2
		 			\, dx 
		 		\, ds.
		 		\notag
\end{align}
Adding 
\eqref{E:PARABOLICLAPSEFIRSTENERGYIDENTITY},
\eqref{E:PARABOLICFIRSTENERGYESTIMATEMODELSCALARFIELD}, 
and \eqref{E:PARABOLICLINEARIZEDSCALARFIELDLAPSEMIXEDEQUATIONIBPIDENTITY}, 
and noting the cancellation of the integrals
$
\pm 2 A
			\int_{s=t}^1 
				\int_{\Sigma_s}
					(s \partial_t \SFRenormalized) \partial_t \LapseRenormalized
		 		\, dx 
		 	\, ds
$
and
$
\pm 2 A (1 - \uplambda^{-1})
		\int_{s=t}^1 
			s^{-1}
			\int_{\Sigma_s} 
				(s \partial_t \SFRenormalized) \LapseRenormalized
			\, dx
		\, ds,
$
we arrive at the desired identity \eqref{E:PARABOLICLAPSESFADDEDENERGYESTIMATE}.
\end{proof}

In the next proposition, we derive an energy identity 
for the linearized metric solution variables. It is a direct analog of
Prop.\ \ref{PCMC:LINEARIZEDMETRICENERGYESTIMATE}.

\begin{proposition}[\textbf{Energy identity for the linearized metric variables
in the parabolic lapse gauge}]
\label{P:PARABOLICLINEARIZEDMETRICENERGYESTIMATE}
Assume that the parabolic gauge parameter verifies $\uplambda \neq 0$.
Then solutions to the linearized equations of Prop.\ \ref{P:PARABOLICLINEARIZEDCMCEQUATIONS}
verify the following identity for $t \in (0,1]$:
\begin{align} \label{E:PARABOLICMETRICENERGYID}
	\int_{\Sigma_t}
		|t \LinSecondFund|_{\gKasner}^2
		+ 
		\frac{1}{4} |t \partial \grenormalized|_{\gKasner}^2
	\, dx
	& = 
		\int_{\Sigma_1}
			|\LinSecondFund|_{\gKasner}^2
			+ \frac{1}{4} |\partial \grenormalized|_{\gKasner}^2
		\, dx 
			\\
	& \ \
		- 
		\frac{1}{2}
		\int_{s=t}^1
			s^{-1}
			\int_{\Sigma_s}
				 		|s \partial \grenormalized|_{\gKasner}^2
			\, dx
		\, ds
		- 
		2 \uplambda^{-1}
		\int_{s=t}^1
			s^{-1}
			\int_{\Sigma_s}
				|s \partial \LapseRenormalized|_{\gKasner}^2
			\, dx
		\, ds
			\notag	\\
	&  \ \
		+
		\sum_{i=5}^{12}
		\int_{s=t}^1 
			s^{-1}
			\int_{\Sigma_s} 
				\mathcal{N}_i
			\, dx
		\, ds,
		\notag
\end{align}
where the constant $0 \leq A \leq \sqrt{2/3}$ is defined by \eqref{E:KASNERHAMILTONIANCONSTRAINT}
and the terms 
$\mathcal{N}_5$,
$\cdots$,
$\mathcal{N}_{12}$
are defined in \eqref{E:PARABOLICFORM5}-\eqref{E:PARABOLICFORM12}.

\end{proposition}

\begin{proof}[Proof of Prop.\ \ref{P:PARABOLICLINEARIZEDMETRICENERGYESTIMATE}]
We repeat the proof of Prop.\ \ref{PCMC:LINEARIZEDMETRICENERGYESTIMATE}
and take into account the few differences between the linearized
equations of Prop.\ \ref{P:LINEARIZEDCMCEQUATIONS}
and the linearized equations of Prop.\ \ref{P:PARABOLICLINEARIZEDCMCEQUATIONS}.
In particular, the identity 
\eqref{E:METRICENERGYIDDIFFERNTIALFORM} holds in the present context, but with
the next-to-last term
$- 2 t^{-1} \gKasner_{ab} \gKasner^{ij} 
(t \tracefreeSecondFundKasner_{\ i}^a)
(t \LinSecondFund_{\ j}^b)
\LapseRenormalized$
multiplied by the factor $1 - \uplambda^{-1}$
(coming from the second term on the right-hand side of \eqref{E:PARABOLICLINEARIZEDKEVOLUTION})
and two additional terms: \textbf{i)} the term 
$2 \uplambda^{-1} t |\partial \LapseRenormalized|_{\gKasner}^2$
coming from the analog of the step \eqref{E:FIRSTDIFFBYPARTS} and the presence 
of the term $\uplambda^{-1} \partial_i \LapseRenormalized$ on the 
right-hand side of equation \eqref{E:PARABOLICLINEARIZEDMOMENTUM}
and \textbf{ii)} the cross term
$- 2 \uplambda^{-1} t \gKasner^{ef} 
			\christrenormalizedarg{e}{a}{f}
			\partial_a \LapseRenormalized$
coming from the analog of steps \eqref{E:SECONDLINEARIZEDMOMENTUMDIFFBYPARTS}
and \eqref{E:FIRSTLINEARIZEDMOMENTUMDIFFBYPARTS}
and the presence 
of the term $\uplambda^{-1} \partial_i \LapseRenormalized$ on the 
right-hand side of equation \eqref{E:PARABOLICLINEARIZEDMOMENTUM}
and the term $\uplambda^{-1} \gKasner^{ia} \partial_a \LapseRenormalized$
on the right-hand side of \eqref{E:PARABOLICLINEARIZEDSECONDMOMENTUM}.
\end{proof}

\subsection{Mildly singular energy estimates without derivative loss for the linearized equations in the parabolic lapse gauge}
\label{SS:ENERGYESTIMATESWITOUTDERIVATVELOSSPARABOLIC}
In this subsection, we 
use the approximate monotonicity identity provided by Theorem~\ref{T:PARABOLICMONOTONICITYID}
to derive mildly singular energy estimates
for the linear solution when the Kasner background is nearly spatially isotropic.
The results are contained in Theorem~\ref{T:PARABOLICENERGYESTIMATES},
which is a direct analog of Theorem~\ref{T:L2MILDENERGYBLOWUPCMCGAUGE}.
We provide the proof of Theorem~\ref{T:PARABOLICENERGYESTIMATES} 
in Subsubsect.\ \ref{SSS:PROOFOFTHMPARABOLICENERGYESTIMATES}.

\begin{theorem}[\textbf{Mildly singular energy estimates without derivative loss for solutions to the linearized equations in the parabolic lapse gauge}]
\label{T:PARABOLICENERGYESTIMATES}
Consider a solution to the linear equations of Prop.\ \ref{P:PARABOLICLINEARIZEDCMCEQUATIONS}
corresponding to the data 
$\left(\LinSecondFund(1), \grenormalized(1), \partial_t \SFRenormalized(1),
\partial \SFRenormalized(1), \LapseRenormalized(1)\right)$
(given on $\Sigma_1 = \lbrace 1 \rbrace \times \mathbb{T}^3$).
Assume that the parabolic gauge parameter verifies 
$\uplambda \geq \uplambda_0$, where $\uplambda_0 > 2$.
There exist constants $\smallparameter_{\uplambda_0} > 0$,
$\tracefreeparameter_{\uplambda_0} > 0$,
$C_{\uplambda_0} > 0$, 
$c_{\uplambda_0} > 0$,
and $P_{\uplambda_0} > 0$
(depending on $\uplambda_0$)
such that if $0 \leq \tracefreeparameter \leq \tracefreeparameter_{\uplambda_0}$ 
and if the solution norm $\parabolichighnorm{0}(t)$ 
defined in \eqref{E:PARABOLICHIGHNORM} verifies
$\parabolichighnorm{0}(1) < \infty$,
then the energy
$\mathscr{E}_{(Almost \ Total);\smallparameter_{\uplambda_0}}(t)$
defined in \eqref{E:PARABOLICALMOSTOTALENERGY}
verifies the following inequality
for $t \in (0,1]$:
\label{T:PARABOLICL2MONOTONICITY}
\begin{align} \label{E:PARABOLICSECONDMODELENERGYGRONWALLREADY}
	\mathscr{E}_{(Almost \ Total);\smallparameter_{\uplambda_0}}^2(t) 
	& \leq 
	   C_{\uplambda_0} \mathscr{E}_{(Almost \ Total);\smallparameter_{\uplambda_0}}^2(1) 
		 \\
	& \ \ 
	\underbrace{
		- 
			P_{\uplambda_0}
			\smallparameter_{\uplambda_0} 
			\int_{s=t}^1 s^{-1} \int_{\Sigma_s} |s \partial \grenormalized|_{\gKasner}^2 \, dx \, ds}_{
			\mbox{\upshape Past-favorable sign}}
		\underbrace{- P_{\uplambda_0} \int_{s=t}^1 s^{-1} \int_{\Sigma_s} |s \partial \SFRenormalized|_{\gKasner}^2 \, dx \, ds}_{
		\mbox{\upshape Past-favorable sign}}
		 \notag \\
	& \ \ \underbrace{- P_{\uplambda_0} 
		\int_{s=t}^1 s^{-1} \int_{\Sigma_s} |s \partial \LapseRenormalized|_{\gKasner}^2 \, dx \, ds}_{
		\mbox{\upshape Past-favorable sign}}
	\underbrace{- P_{\uplambda_0} \int_{s=t}^1 s^{-1} \int_{\Sigma_s} \LapseRenormalized^2 \, dx \, ds}_{
		\mbox{\upshape Past-favorable sign}}
		\notag \\
	& \ \ + \underbrace{c_{\uplambda_0} \tracefreeparameter \int_{s=t}^1 s^{-1} 
				\mathscr{E}_{(Almost \ Total);\smallparameter_{\uplambda_0}}^2(s) \, ds}_{\mbox{\upshape Error integral that can create energy blowup}}. 
		\notag 
\end{align}	

Furthermore, the following estimate holds for $t \in (0,1]$:
\begin{align} \label{E:PARABOLICALMOSTTOTALENERGYGRONWALLED}
	\mathscr{E}_{(Almost \ Total);\smallparameter_{\uplambda_0}}(t)
	& \leq 
		C_{\uplambda_0}
		\mathscr{E}_{(Almost \ Total);\smallparameter_{\uplambda_0}}(1)
			t^{- c_{\uplambda_0} \tracefreeparameter}.
\end{align}

In addition, if $N \geq 0$ is an integer and 
the solution norm $\parabolichighnorm{N}(t)$ 
defined in \eqref{E:PARABOLICHIGHNORM} verifies
$\parabolichighnorm{N}(1) < \infty$,
then the energy 
$\mathscr{E}_{(Total);\smallparameter_{\uplambda_0};N}(t)$ defined in
\eqref{E:TOPORDERENERGY} verifies the following estimate for $t \in (0,1]$:
\begin{align} \label{E:PARABOLICTOTALENERGYGRONWALLED}
	\mathscr{E}_{(Total);\smallparameter_{\uplambda_0};N}(t)
	& \leq 
		\begin{cases}
		\frac{C_{\uplambda_0}}{\tracefreeparameter} \parabolichighnorm{N}(1) t^{- c_{\uplambda_0} \tracefreeparameter}
		& \mbox{\upshape if} \ \tracefreeparameter \neq 0,
			\\
		C_{\uplambda_0} \parabolichighnorm{N}(1) (1 + |\ln t|)
		& \mbox{\upshape if} \ \tracefreeparameter = 0.
	\end{cases}
\end{align}

In addition, if $N \geq 0$ is an integer and 
$\parabolichighnorm{N}(1) < \infty$,
then the following inequality holds for $t \in (0,1]$:
\begin{align} \label{E:PARABOLICHIGHNORMBOUND}
	\parabolichighnorm{N}(t) 
	& \leq 
	\begin{cases}
		\frac{C_{\uplambda_0}}{\tracefreeparameter} \parabolichighnorm{N}(1) t^{- c_{\uplambda_0} \tracefreeparameter}
		& \mbox{\upshape if} \ \tracefreeparameter \neq 0,
			\\
		C_{\uplambda_0} \parabolichighnorm{N}(1) (1 + |\ln t|)
		& \mbox{\upshape if} \ \tracefreeparameter = 0.
	\end{cases}
\end{align}	

\end{theorem}

\begin{remark}
	See the estimate \eqref{E:PARABOLICFIRSTENERGYESTIMATE}
	for more a more precise inequality that shows how the constants in the
	estimate \eqref{E:PARABOLICSECONDMODELENERGYGRONWALLREADY}
	depend on $\uplambda_0$ and on each other.
\end{remark}

\subsubsection{Preliminary estimates and identities for the proof of Theorem~\ref{T:PARABOLICENERGYESTIMATES}}
\label{SSS:PRELIMINARYFORPARABOLICLAPSEGAUGEENERGYESTIMATE}
In our proof of Theorem~\ref{T:PARABOLICENERGYESTIMATES}, we use the following
comparison lemma, which can be proved
by using arguments similar to the ones we used to prove Lemma~\ref{L:ENERGYNORMCOMPARISON}
(except that clearly we no do not use the elliptic estimate provided by 
Lemma~\ref{L:TOPORDERLAPSEINTEGRATIONBYPARTSINEQUALITY});
we omit the simple proof.

\begin{lemma}[\textbf{Parabolic energy-norm comparison lemma}]
\label{L:PARABOLICENERGYNORMCOMPARISON}
Let $N \geq 0$ be an integer and let
$\tracefreeparameter \geq 0$ be as defined in \eqref{E:TRACEFREEPARAMETER}.
There exist constants
$C > 0$ and $c > 0$, depending on $\smallparameter$,
such that the following comparison estimates hold for 
the norm $\parabolichighnorm{N}(t)$ defined in \eqref{E:PARABOLICHIGHNORM} 
and the energy $\mathscr{E}_{(Total);\smallparameter;N}(t)$ defined in \eqref{E:TOPORDERENERGY}
for $t \in (0,1]$:
\begin{subequations}
\begin{align}  \label{E:PARABOLICENERGYNORMCOMPARISON}
	\mathscr{E}_{(Total);\smallparameter;N}(t)
	& \leq C t^{-c \tracefreeparameter} \parabolichighnorm{N}(t),
		\\
	\parabolichighnorm{N}(t)
	& \leq C t^{-c \tracefreeparameter}
		\mathscr{E}_{(Total);\smallparameter;N}(t).
		\label{E:PARABOLICNORMENERGYCOMPARISON}
\end{align}
\end{subequations}

\end{lemma}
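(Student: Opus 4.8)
The plan is to reduce both sides of \eqref{E:PARABOLICENERGYNORMCOMPARISON}--\eqref{E:PARABOLICNORMENERGYCOMPARISON} to finite sums of spatial $L^2$ norms of the transported-coordinate \emph{components} of the linearized unknowns and their spatial coordinate derivatives, and then to match these sums term by term, absorbing all of the Kasner anisotropy into multiplicative factors of the form $t^{\pm c \tracefreeparameter}$ furnished by Lemma~\ref{L:KASNERMETRIC}. The key structural point is that $\gKasner_{ij} = t^{2 q_i}\delta_{ij}$ depends only on $t$, so the spatial operators $\partial_{\vec{I}}$ commute with raising and lowering indices by $\gKasner,\ginverseKasner$ and with forming the pointwise norm $|\cdot|_{\gKasner}$; consequently it suffices to prove the comparison for a single multi-index $\vec{I}$ and then sum over $|\vec{I}| \le N$.

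For the term-by-term step I would invoke the positive-definiteness bounds \eqref{E:KASNERMETRICPOSITIVITY}--\eqref{E:INVERSEKASNERMETRICPOSITIVITY}: each $\gKasner$ in the definition of $|\cdot|_{\gKasner}$ contributes a factor between $t^{2/3 + 2\tracefreeparameter}$ and $t^{2/3 - 2\tracefreeparameter}$, and each $\ginverseKasner$ a factor between $t^{-2/3 + 2\tracefreeparameter}$ and $t^{-2/3 - 2\tracefreeparameter}$, so that for a $\Sigma_t$-tangent tensor of type $\binom{m}{n}$ one has $|\partial_{\vec{I}} T|_{\gKasner}^2 \sim t^{(2/3)(m - n) \pm c \tracefreeparameter}\,|\partial_{\vec{I}} T|_{Frame}^2$ for a suitable fixed $c$. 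Applying this: $\SecondFundRenormalized$ is of type $\binom{1}{1}$ so $|\partial_{\vec{I}}\SecondFundRenormalized|_{\gKasner} \sim t^{\pm c \tracefreeparameter}|\partial_{\vec{I}}\SecondFundRenormalized|_{Frame}$, which matches the $\|\SecondFundRenormalized\|_{H_{Frame}^N}$ summand of \eqref{E:PARABOLICHIGHNORM}; the scalars $t\partial_t\SFRenormalized$ and $\LapseRenormalized$ require no conversion and match $\|t\partial_t\SFRenormalized\|_{H_{Frame}^N}$ and $\sum_{|\vec{I}|\le N}\mathscr{E}_{(Lapse)}^2[\partial_{\vec{I}}\LapseRenormalized]$; for $\partial\partial_{\vec{I}}\grenormalized$ (type $\binom{0}{3}$) the three factors of $t^{-2/3\pm 2\tracefreeparameter}$ exactly offset the explicit $t^2$ weight in $\mathscr{E}_{(Metric)}$, so $|t\,\partial\partial_{\vec{I}}\grenormalized|_{\gKasner} \sim t^{\pm c\tracefreeparameter}|\partial\partial_{\vec{I}}\grenormalized|_{Frame}$, matching $\|\partial\grenormalized\|_{H_{Frame}^N}$; and for $\partial\partial_{\vec{I}}\SFRenormalized$ and $\partial\partial_{\vec{I}}\LapseRenormalized$ (type $\binom{0}{1}$) one factor of $t^{-2/3\pm 2\tracefreeparameter}$ survives, so $|t\,\partial\partial_{\vec{I}}\SFRenormalized|_{\gKasner}\sim t^{2/3\pm c\tracefreeparameter}|\partial\partial_{\vec{I}}\SFRenormalized|_{Frame}$ and likewise for the lapse. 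The former matches the $t^{2/3}\|\partial\SFRenormalized\|_{H_{Frame}^N}$ term, while for the lapse one additionally uses $\|\LapseRenormalized\|_{H^{N+1}} \lesssim \|\LapseRenormalized\|_{H^N} + \sum_{|\vec{I}|\le N}\|\partial\partial_{\vec{I}}\LapseRenormalized\|_{L^2}$ and $t \le 1$ to see that the lapse part $\sum_{p=0}^1 t^{(2/3)p}\|\LapseRenormalized\|_{H^{N+p}}$ of \eqref{E:PARABOLICHIGHNORM} is equivalent, up to $t^{\pm c\tracefreeparameter}$, to $\sum_{|\vec{I}|\le N}\big(\mathscr{E}_{(Lapse)}^2[\partial_{\vec{I}}\LapseRenormalized] + \mathscr{E}_{(\partial Lapse)}^2[\partial\partial_{\vec{I}}\LapseRenormalized]\big)$.

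Collecting these equivalences and absorbing the fixed weight $\smallparameter$ into $C$ yields both inequalities of the lemma after taking square roots. I do not expect any genuine obstacle: the only point needing care is the bookkeeping of $t$-powers across the various index types, together with the observation that all residual anisotropic mismatches are uniformly of the size $t^{\pm c\tracefreeparameter}$ for one fixed $c$ tied to $\tracefreeparameter$ through \eqref{E:TRACEFREEPARAMETER}. The place where the present lemma is simpler than Lemma~\ref{L:ENERGYNORMCOMPARISON} is that $\parabolichighnorm{N}$ reaches only $H^{N+1}$ for the lapse (the $p \le 1$ terms), whereas $\highnorm{N}$ also contains the top-order term $t^{4/3}\|\LapseRenormalized\|_{H^{N+2}}$; since this has no parabolic counterpart, the elliptic estimate of Lemma~\ref{L:TOPORDERLAPSEINTEGRATIONBYPARTSINEQUALITY} is not invoked, exactly as noted in the statement.
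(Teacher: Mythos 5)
Your proposal is correct and follows essentially the same approach as the paper, which dispatches this lemma as a direct consequence of Lemma~\ref{L:KASNERMETRIC} and the definitions (via the same bookkeeping used for Lemma~\ref{L:ENERGYNORMCOMPARISON}, minus the elliptic estimate of Lemma~\ref{L:TOPORDERLAPSEINTEGRATIONBYPARTSINEQUALITY}). You also correctly identify the structural point — that $\parabolichighnorm{N}$ only reaches $H^{N+1}$ in the lapse, so its derivative count lines up with that of $\mathscr{E}_{(Total);\smallparameter;N}$ and no elliptic recovery is required — which is exactly the distinction the paper emphasizes.
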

\hfill $\qed$

We will also use the following
simple parabolic energy estimate, which 
can be used to derive
top-order $L^2$ estimates for the linearized lapse
variable.

\begin{lemma}[\textbf{Parabolic energy estimate for $\LapseRenormalized$}]
	\label{L:TOPORDERLAPSEENERGYESTIMATE}
	There exists a constant $C > 0$ such that if $\tracefreeparameter \geq 0$
	(see definition \ref{E:TRACEFREEPARAMETER})
	and if the parabolic gauge parameter verifies 
	$\uplambda \geq 1$,
	then solutions $\LapseRenormalized$ to the linear parabolic equation \eqref{E:PARABOLICLINEARIZEDLAPSE} 
	verify the following inequality for $t \in (0,1]$:
	\begin{align} \label{E:TOPORDERLAPSEENERGYESTIMATE}
			\uplambda^{-1}
			\int_{\Sigma_t}
				|t \partial \LapseRenormalized|_{\gKasner}^2
			\, dx
			& \leq
				\uplambda^{-1}
				\int_{\Sigma_1}
					|\partial \LapseRenormalized|_{\gKasner}^2
				\, dx
					\\
		& \ \
			- \int_{s=t}^1
				s^{-1}
				\int_{\Sigma_s}
					|s^2 \partial^2 \LapseRenormalized|_{\gKasner}^2
				\, dx
				\, ds
			-
			\uplambda^{-1}
			\left(\frac{4}{3} - 2 \tracefreeparameter \right)
			\int_{s=t}^1
				s^{-1}
				\int_{\Sigma_s}
					|s \partial \LapseRenormalized|_{\gKasner}^2	
				\, dx
			\, ds	
			\notag \\
		& \ \ 
			+ C
				\int_{s=t}^1
					s^{-1}
					\int_{\Sigma_s}
						\left|
							(s \tracefreeSecondFundKasner_{\ b}^a) (s \LinSecondFund_{\ a}^b)
						\right|^2
				\, dx
				\, ds			
			+ C
				\int_{s=t}^1
					s^{-1}
					\int_{\Sigma_s}
						(s\partial_t \SFRenormalized)^2
				\, dx
				\, ds
				\notag \\
	& \ \
			+ C
				\int_{s=t}^1
					s^{-1}
					\int_{\Sigma_s}
						\LapseRenormalized^2
					\, dx
				\, ds.
				\notag 
		\end{align}
\end{lemma}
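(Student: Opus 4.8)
\textbf{Proof proposal for Lemma~\ref{L:TOPORDERLAPSEENERGYESTIMATE}.}

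The plan is to run a standard parabolic energy estimate on the linearized lapse equation \eqref{E:PARABOLICLINEARIZEDLAPSE}, multiplying by the natural parabolic multiplier and integrating by parts, while being careful about the sign of every term produced by the $t$-dependent Kasner weights. First I would rewrite \eqref{E:PARABOLICLINEARIZEDLAPSE} with $t$ replaced by the integration variable $s$, bring it to the schematic form
\[
	\uplambda^{-1} s \partial_s \LapseRenormalized
	+ s^2 (\ginverseKasner)^{ab} \partial_a \partial_b \LapseRenormalized
	- (1 - \uplambda^{-1} - 2 A^2) \LapseRenormalized
	= 2 A s \partial_s \SFRenormalized + 2 \BigTraceFreeSecondFundKasner_{\ b}^a \SecondFundRenormalized_{\ a}^b,
\]
and then multiply both sides by $- 2 s (\ginverseKasner)^{ef} \partial_e \partial_f \LapseRenormalized$ and integrate over $(s,x) \in [t,1] \times \mathbb{T}^3$. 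Integration by parts in the spatial variables turns the $\uplambda^{-1} s \partial_s \LapseRenormalized$ term into a perfect time derivative plus lower-order terms: $- 2 \uplambda^{-1} s \partial_s \LapseRenormalized (\ginverseKasner)^{ef} \partial_e \partial_f \LapseRenormalized = \partial_s(\uplambda^{-1} s |\partial \LapseRenormalized|_{\gKasner}^2) + (\text{error from } \partial_s \ginverseKasner \text{ and from the explicit } s)$, using Lemma~\ref{L:KASNERMETRIC} and \eqref{E:KASNERTIMEDERIVATIVEIDENTITIES} to evaluate $\partial_s (\ginverseKasner)^{ef}$ in terms of $\BigSecondFundKasner$; the principal such error term is precisely $- \uplambda^{-1} s^{-1} \cdot (\text{eigenvalue factor}) |s \partial \LapseRenormalized|_{\gKasner}^2$, which is where the factor $\frac{4}{3} - 2\tracefreeparameter$ will come from (same eigenvalue bound on $\BigSecondFundKasner_{\ j}^i \geq - q_{Max} \geq -\{\frac{1}{3}+\tracefreeparameter\}$ used in \eqref{E:SIMPLEEIGENVALUEESTIMATESF}).

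Next, the principal elliptic term $- 2 s^3 (\ginverseKasner)^{ab}\partial_a\partial_b\LapseRenormalized (\ginverseKasner)^{ef}\partial_e\partial_f\LapseRenormalized$ is manifestly $\leq - 2 s^{-1} |s^2 \partial^2 \LapseRenormalized|_{\gKasner}^2$ up to the comment that $\|(\ginverseKasner)^{ab}\partial_a\partial_b\LapseRenormalized\|^2$ is comparable to $|\partial^2\LapseRenormalized|_{\gKasner}^2$ — this is the good coercive spacetime term on the left of \eqref{E:TOPORDERLAPSEENERGYESTIMATE}, and the coefficient $2$ gets halved down to $1$ after I use part of it to absorb a cross term. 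The zeroth-order term $-(1-\uplambda^{-1}-2A^2)\LapseRenormalized$ and the two source terms $2A s\partial_s\SFRenormalized$ and $2\BigTraceFreeSecondFundKasner_{\ b}^a\SecondFundRenormalized_{\ a}^b$, after being multiplied by $-2s(\ginverseKasner)^{ef}\partial_e\partial_f\LapseRenormalized$, are each estimated by Cauchy--Schwarz and the arithmetic--geometric mean inequality: split off a small multiple of $s^{-1}|s^2\partial^2\LapseRenormalized|_{\gKasner}^2$ (absorbed into the coercive term) and leave $s^{-1}$-weighted quadratic integrals of $\LapseRenormalized^2$, $(s\partial_s\SFRenormalized)^2$, and $(\BigTraceFreeSecondFundKasner_{\ b}^a\SecondFundRenormalized_{\ a}^b)^2$, which are exactly the three $C$-integrals on the last line of \eqref{E:TOPORDERLAPSEENERGYESTIMATE}. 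Integrating the perfect time derivative over $[t,1]$ produces the boundary terms $\uplambda^{-1}\int_{\Sigma_t}|t\partial\LapseRenormalized|_{\gKasner}^2 - \uplambda^{-1}\int_{\Sigma_1}|\partial\LapseRenormalized|_{\gKasner}^2$; moving the first to the left and the second to the right gives the claimed inequality.

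The main obstacle I anticipate is bookkeeping rather than anything conceptual: one must track that after absorbing the cross-term contributions into the coercive $s^{-1}|s^2\partial^2\LapseRenormalized|_{\gKasner}^2$ integral, enough of that integral survives to retain coefficient $1$, and that the $\uplambda^{-1}$ weight on $\int |s\partial\LapseRenormalized|_{\gKasner}^2$ survives with the sharp eigenvalue constant $\frac{4}{3}-2\tracefreeparameter$ — this is where the hypothesis $\uplambda \geq 3$ enters, since for smaller $\uplambda$ the $\uplambda^{-1}$-weighted terms would be too weak (or the constant $1-\uplambda^{-1}-2A^2$ could lose its favorable sign once $A^2$ is as large as $\tracefreeparameter$-dependent considerations allow, recalling $A^2 = \frac{2}{3}-\tracefreeparameter^2$). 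A secondary subtlety is that, unlike the elliptic CMC case, there is no boundary control of $\partial^2\LapseRenormalized$ at fixed time — only its spacetime integral is bounded — so I must be sure never to need a pointwise-in-$t$ bound on the second derivatives of $\LapseRenormalized$ in the course of the estimate; all uses of $\partial^2\LapseRenormalized$ should occur inside the $ds$ integral.
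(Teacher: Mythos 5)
Your proposal follows the same strategy as the paper's proof: multiplying the equation by the natural parabolic multiplier $-2s(\ginverseKasner)^{ef}\partial_e\partial_f\LapseRenormalized$, recognizing the resulting perfect $\partial_s$-derivative of $\uplambda^{-1}|s\partial\LapseRenormalized|_{\gKasner}^2$ after spatial integration by parts, extracting the eigenvalue factor $\frac{4}{3}-2\tracefreeparameter$ from $\partial_s\ginverseKasner$, and absorbing all cross terms into the coercive $|s^2\partial^2\LapseRenormalized|_{\gKasner}^2$ integral by Cauchy--Schwarz. (The paper organizes the computation slightly differently — first deriving the spacetime IBP identity \eqref{E:IBPNOEQUATION} without the equation and only then substituting \eqref{E:PARABOLICLINEARIZEDLAPSE} for $\uplambda^{-1}s\partial_t\LapseRenormalized$ — and you have a sign slip writing $-(1-\uplambda^{-1}-2A^2)$ instead of $-(1+\uplambda^{-1}-2A^2)$ for the zeroth-order coefficient, but since that term is just bounded in absolute value it does not affect the argument; the role of $\uplambda\geq 3$ here is simply to bound $\uplambda^{-1}$ from above so the constant $C$ is uniform.)
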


\begin{proof}
	Integrating by parts over $[t,1] \times \mathbb{T}^3$
	(we stress that $t \leq 1$) we deduce (without using any equation) 
	\begin{align}  \label{E:IBPNOEQUATION}
		\uplambda^{-1}
		\int_{\Sigma_t}
			|t \partial \LapseRenormalized|_{\gKasner}^2
		\, dx
		& = 
		\uplambda^{-1}
		\int_{\Sigma_1}
			|\partial \LapseRenormalized|_{\gKasner}^2
		\, dx
		- 2 \uplambda^{-1}
			\int_{s=t}^1
				s^{-1}
				\int_{\Sigma_s}
					|s \partial \LapseRenormalized|_{\gKasner}^2	
					+ s^2 \gKasner^{ab} (s \SecondFundKasner_{\ b}^c)
							\partial_a \LapseRenormalized \partial_c \LapseRenormalized
				\, dx
			\, ds
				\\
		& \ \ 
			+ 2 
				\int_{s=t}^1
				\int_{\Sigma_s}
					s \gKasner^{ef} \partial_e \partial_f \LapseRenormalized
					(\uplambda^{-1} s \partial_t \LapseRenormalized)
				\, dx
			\, ds.
			\notag
	\end{align}
	Using equation \eqref{E:PARABOLICLINEARIZEDLAPSE} 
	to substitute for the product $\uplambda^{-1} s \partial_t \LapseRenormalized$ 
	in the last integrand on the right-hand side of \eqref{E:IBPNOEQUATION} 
	and integrating by parts over $\Sigma_s$ on the resulting integrand product
	$\left\lbrace \gKasner^{ef} \partial_e \partial_f \LapseRenormalized \right\rbrace^2$,
	we deduce 
	\begin{align} \label{E:PARABOLICLAPSEEQUATIONINSERTED}
		\uplambda^{-1}
		\int_{\Sigma_t}
			|t \partial \LapseRenormalized|_{\gKasner}^2
		\, dx
		& = 
		\uplambda^{-1}
		\int_{\Sigma_1}
			|\partial \LapseRenormalized|_{\gKasner}^2
		\, dx
			\\
		& \ \
			- 2 \uplambda^{-1}
			\int_{s=t}^1
				s^{-1}
				\int_{\Sigma_s}
					|s \partial \LapseRenormalized|_{\gKasner}^2	
					+ s^2 \gKasner^{ab} (s \SecondFundKasner_{\ b}^c)
							\partial_a \LapseRenormalized \partial_c \LapseRenormalized
				\, dx
			\, ds
				\notag \\
	& \ \ 
			- 2
			\int_{s=t}^1
				s^{-1}
				\int_{\Sigma_s}
					|s^2 \partial^2 \LapseRenormalized|_{\gKasner}^2
				\, dx
			\, ds
			\notag
				\\
		& \ \
			- (2 A^2 - 1 - \uplambda^{-1}) 
				\int_{s=t}^1
				s^{-1}
				\int_{\Sigma_s}
					\LapseRenormalized
					(s^2 \gKasner^{ef} \partial_e \partial_f \LapseRenormalized)
				\, dx
			\, ds
			\notag \\
		& \ \ 
			+ 4 A
				\int_{s=t}^1
					s^{-1}
					\int_{\Sigma_s}
						(s \partial_t \SFRenormalized)
						(s^2 \gKasner^{ef} \partial_e \partial_f \LapseRenormalized)
					\, dx
				\, ds
				\notag \\
	& \ \
			+ 4
				\int_{s=t}^1
					s^{-1}
					\int_{\Sigma_s}
						(s \tracefreeSecondFundKasner_{\ b}^a) (s \LinSecondFund_{\ a}^b)
						(s^2 \gKasner^{ef} \partial_e \partial_f \LapseRenormalized)
					\, dx
					\, ds.
						\notag
	\end{align}
	Arguing as in the proof of \eqref{E:N2POINTWISE}
	(in particular using the fact that
	the eigenvalues of $t \SecondFundKasner_{\ j}^i$ are 
	$\geq - q_{Max} \geq - \left\lbrace \frac{1}{3} + \tracefreeparameter \right\rbrace$),
	we estimate the second integral on the right-hand side of \eqref{E:PARABOLICLAPSEEQUATIONINSERTED}
	as follows:
	\begin{align} \label{E:PARBOLICPARTIALLapseRenormalizedGAMESTIAMTE}
		- 2 \uplambda^{-1}
			\int_{s=t}^1
				s^{-1}
				\int_{\Sigma_s}
					|s \partial \LapseRenormalized|_{\gKasner}^2	
					+ s^2 \gKasner^{ab} (s \SecondFundKasner_{\ b}^c)
							\partial_a \LapseRenormalized \partial_c \LapseRenormalized
				\, dx
			\, ds
			& \leq 
				- \uplambda^{-1} \left(\frac{4}{3} - 2 \tracefreeparameter \right)
				\int_{s=t}^1
					s^{-1}
				 	\int_{\Sigma_s}
						|s \partial \LapseRenormalized|_{\gKasner}^2	
				 	\, dx
				\, ds.
		\end{align}
		Using the simple estimate $A \leq \sqrt{\frac{2}{3}}$,
		Young's inequality,
		and the simple estimate 
		$ \| \gKasner^{ef} \partial_e \partial_f \LapseRenormalized \|_{L^2} 
		\lesssim \| \partial^2 \LapseRenormalized \|_{L_{\gKasner}^2}$,
		we deduce that the four integrals on the 
		third through sixth lines of the right-hand side of \eqref{E:PARABOLICLAPSEEQUATIONINSERTED}
		are collectively bounded by 
		\begin{align} \label{E:PARABOLICLAPSECOLLECTIVEINTEGRALBOUND}
			& \leq
			- 
			\int_{s=t}^1
				s^{-1}
				\int_{\Sigma_s}
					|s^2 \partial^2 \LapseRenormalized|_{\gKasner}^2
				\, dx
			\, ds	
				\\
			& + C
				\int_{s=t}^1
					s^{-1}
					\int_{\Sigma_s}
						\LapseRenormalized^2
					\, dx
				\, ds
			+ C
				\int_{s=t}^1
					s^{-1}
					\int_{\Sigma_s}
						(s \partial_t \SFRenormalized)^2
					\, dx
				\, ds
				+ C
				\int_{s=t}^1
					s^{-1}
					\int_{\Sigma_s}
						\left|
							(s \tracefreeSecondFundKasner_{\ b}^a) (s \LinSecondFund_{\ a}^b)
						\right|^2
					\, dx
				\, ds.
				\notag
		\end{align}
		The desired inequality \eqref{E:TOPORDERLAPSEENERGYESTIMATE}
		now follows easily from \eqref{E:TRACEFREEPARAMETER} and
		\eqref{E:PARABOLICLAPSEEQUATIONINSERTED}
		and inequalities \eqref{E:PARBOLICPARTIALLapseRenormalizedGAMESTIAMTE} 
		and \eqref{E:PARABOLICLAPSECOLLECTIVEINTEGRALBOUND}.

\end{proof}

\subsubsection{Proof of Theorem~\ref{T:PARABOLICENERGYESTIMATES}}
	\label{SSS:PROOFOFTHMPARABOLICENERGYESTIMATES}
		We first note that the following pointwise
		estimates hold for the integrand terms
		$\mathcal{N}_i$, $i=1,2,\cdots,12$
		defined in \eqref{E:PARABOLICCUBICFORM1}-\eqref{E:PARABOLICFORM12},
		where the constants $C > 0$ are independent of
		$\uplambda \geq 1$ and $\smallparameter$:
	\begin{align}
	|\mathcal{N}_1| 
	& \leq 
		\frac{A^2}{A^2 + \frac{1}{4} \uplambda^{-1}(1 - \uplambda^{-1})}
			(t \partial_t \SFRenormalized)^2
		+
		\left\lbrace
			A^2 + \frac{1}{4} \uplambda^{-1}(1 - \uplambda^{-1})
		\right\rbrace
		\LapseRenormalized^2,
		\label{E:POINTWISEPARABOLICCUBICFORM1} \\
	|\mathcal{N}_2| 
	& \leq 
		(1 - \uplambda^{-1}) 
		\tracefreeparameter
		\smallparameter
		|s \LinSecondFund|_{\gKasner}^2
		+
		(1 - \uplambda^{-1}) 
		\frac{\tracefreeparameter}{\smallparameter}
		\LapseRenormalized^2,
		\label{E:POINTWISEPARABOLICFORM2} \\
	\mathcal{N}_3 
	& \leq 
			\left(\frac{2}{3} + 2 \tracefreeparameter \right) 
			|\partial \SFRenormalized|_{\gKasner}^2,
		\label{E:POINTWISEPARABOLICCUBICFORM3} \\
	|\mathcal{N}_4| 
	&  \leq
				\left\lbrace
					\frac{\uplambda}{\uplambda - 2 + \frac{1}{A^2}}
				\right\rbrace
				|s \partial \SFRenormalized|_{\gKasner}^2
				+ 
				\left\lbrace
					\frac{A^2 \uplambda - 2 A^2 + 1}{\uplambda}
				\right\rbrace
				|s \partial \LapseRenormalized|_{\gKasner}^2,
		\label{E:POINTWISEPARABOLICFORM4}
		\\
\smallparameter \mathcal{N}_5 
& \leq
\left(\frac{1}{6} + \frac{1}{2} \tracefreeparameter \right) 
				\smallparameter
				|s \partial \grenormalized|_{\gKasner}^2,
						\label{E:POINTWISEPARABOLICFORM5}	\\
\smallparameter |\mathcal{N}_6|
			& \leq 
				C \tracefreeparameter \smallparameter |s \LinSecondFund|_{\gKasner}^2,
					\label{E:POINTWISEPARABOLICFORM6} \\
	\smallparameter |\mathcal{N}_7|
			& \leq C \tracefreeparameter \smallparameter |s \partial \grenormalized|_{\gKasner}^2,
		\label{E:POINTWISEPARABOLICFORM7} 			\\
	\smallparameter 
	|\mathcal{N}_8|
			& \leq 
				\frac{1}{18} 
				\smallparameter
				|s \partial \grenormalized|_{\gKasner}^2
				+
				C 
				\smallparameter
				|s \partial \LapseRenormalized|_{\gKasner}^2,
							\label{E:POINTWISEPARABOLICFORM8} \\
	\smallparameter |\mathcal{N}_9| 
	& \leq 
		C (1 - \uplambda^{-1}) 
		\tracefreeparameter
		\smallparameter
		|s \LinSecondFund|_{\gKasner}^2
		+
		C (1 - \uplambda^{-1}) 
		\tracefreeparameter
		\smallparameter
		\LapseRenormalized^2,
				\label{E:POINTWISEPARABOLICFORM9} \\
	\smallparameter |\mathcal{N}_{10}|
	& 		\leq
				C
				\smallparameter
				|s \partial \SFRenormalized|_{\gKasner}^2
				+
				C
				\smallparameter
				|s \partial \LapseRenormalized|_{\gKasner}^2,
				\label{E:POINTWISEPARABOLICFORM10} \\
	\smallparameter |\mathcal{N}_{11}| 
	& \leq 
		\frac{1}{18} \smallparameter |s \partial \grenormalized|_{\gKasner}^2
		+
		C \smallparameter |s \partial \SFRenormalized|_{\gKasner}^2,
		\label{E:POINTWISEPOINTWISEPARABOLICFORM11}
			\\
  \smallparameter |\mathcal{N}_{12}| 
	& \leq 
		\frac{1}{18} \uplambda^{-1} \smallparameter |s \partial \grenormalized|_{\gKasner}^2
		+
		C \uplambda^{-1} \smallparameter |s \partial \SFRenormalized|_{\gKasner}^2.
		\label{E:POINTWISEPOINTWISEPARABOLICFORM12}
\end{align}	
The estimates \eqref{E:POINTWISEPARABOLICCUBICFORM1}-\eqref{E:POINTWISEPOINTWISEPARABOLICFORM12}
can be derived by using essentially the same reasoning that we
used to prove 
\eqref{E:N1POINTWISE}-\eqref{E:N10POINTWISE}
and we therefore omit the details.
Note, however, that the $\mathcal{N}_i$ have different definitions in
\eqref{E:POINTWISEPARABOLICCUBICFORM1}-\eqref{E:POINTWISEPOINTWISEPARABOLICFORM12} than they do
in \eqref{E:N1POINTWISE}-\eqref{E:N10POINTWISE}.

We now claim that there exist constants $C > 0$ and $c > 0$
such that the following estimate holds when 
$\smallparameter > 0$
and
$\uplambda \geq 1$:
\begin{align}  \label{E:PARABOLICFIRSTENERGYESTIMATE} 
	& 
	\left\lbrace
		\frac{\frac{1}{4} \uplambda^{-1}(1 - \uplambda^{-1})}{A^2 + \frac{1}{4} \uplambda^{-1}(1 - \uplambda^{-1})}
	\right\rbrace
	\int_{\Sigma_t} 
		(t \partial_t \SFRenormalized)^2
	\, dx  
	+
	\int_{\Sigma_t}
		|t \partial \SFRenormalized|_{\gKasner}^2
	\, dx
	+ 	
		\frac{1}{4} \uplambda^{-1}(1 - \uplambda^{-1})
		\int_{\Sigma_t}
			\LapseRenormalized^2
		\, dx
			\\
	& \ \
		+
		\smallparameter
		\int_{\Sigma_t} 
			|t \LinSecondFund|_{\gKasner}^2 
		\, dx
		+
		\frac{1}{4} 
		\smallparameter
		\int_{\Sigma_t} 
			|t \partial \grenormalized|_{\gKasner}^2 
		\, dx
			\notag \\
	& \leq  
		\left\lbrace
			1 + \frac{A^2}{A^2 + \frac{1}{4} \uplambda^{-1}(1 - \uplambda^{-1})}
		\right\rbrace
		\int_{\Sigma_1} 
			(\partial_t \SFRenormalized)^2
		\, dx  
		+
		\int_{\Sigma_1}
			|\partial \SFRenormalized|_{\gKasner}^2
		\, dx
	+ \left\lbrace
			2 A^2 
			+ 
		\frac{3}{4} \uplambda^{-1} (1 - \uplambda^{-1})
		\right\rbrace
		 	\int_{\Sigma_1}
		 		\LapseRenormalized^2
		 \, dx
			\notag \\
	& \ \
		+
		\smallparameter
		\int_{\Sigma_1} 
			|\LinSecondFund|_{\gKasner}^2 
		\, dx
		+
		\frac{1}{4} 
		\smallparameter
		\int_{\Sigma_1} 
			|\partial \grenormalized|_{\gKasner}^2 
		\, dx
		\notag \\
		& \ \ 
		- \left\lbrace 
				\frac{A^2 \uplambda - 8 A^2 + 4}{3 \left[A^2 (\uplambda - 2) + 1 \right]}
				-
				C \tracefreeparameter 
				-
				C \smallparameter
			\right\rbrace
			\int_{s=t}^1
				s^{-1}
				\int_{\Sigma_s}  
					 |s \partial \SFRenormalized|_{\gKasner}^2
				\, dx
		 	\, ds
				\notag \\
	& \ \
			- 
			\left\lbrace
				(\uplambda-2)
				\frac{1 - A^2}{\uplambda}
				+ 
				2 \smallparameter \uplambda^{-1} 
				-
				C \smallparameter
			\right\rbrace
			\int_{s=t}^1
				s^{-1}
				\int_{\Sigma_s}  
					 |s \partial \LapseRenormalized|_{\gKasner}^2
				\, dx
		 	\, ds 
				\notag \\
	& \ \
		-
		\left\lbrace 
			1 
			- 
			\uplambda^{-2} 
			-
			C (1 - \uplambda^{-1}) 
			\tracefreeparameter
			\smallparameter
			-
			(1 - \uplambda^{-1}) 
			\frac{\tracefreeparameter}{\smallparameter}
		\right\rbrace
		\int_{s=t}^1 
			s^{-1}
			\int_{\Sigma_s} 
				\LapseRenormalized^2
			\, dx
		\, ds
			\notag \\
	& \ \
		- 
		\smallparameter
		\left\lbrace
			\frac{2}{9}
			-
			\uplambda^{-1} \frac{1}{18}
			-
			C \tracefreeparameter 
		\right\rbrace
		\int_{s=t}^1
			s^{-1}
			\int_{\Sigma_s}
				 		|s \partial \grenormalized|_{\gKasner}^2
			\, dx
		\, ds
		 \notag	\\
		& \ \
		+ 
		\left\lbrace
			C 
			(1 - \uplambda^{-1}) 
			\tracefreeparameter 
			\smallparameter
			+
			C
			\tracefreeparameter 
			\smallparameter
		\right\rbrace
		\int_{s=t}^1 
			s^{-1} 
			\int_{\Sigma_s}
				|s \LinSecondFund|_{\gKasner}^2
			\, dx
		 \, ds.
		\notag
\end{align}
	To obtain \eqref{E:PARABOLICFIRSTENERGYESTIMATE},
	we simply substitute
	the estimates \eqref{E:POINTWISEPARABOLICCUBICFORM1}-\eqref{E:POINTWISEPOINTWISEPARABOLICFORM12}
	into the approximate monotonicity identity \eqref{E:PARABOLICMONOTONICITYID} 
	and keep careful track of the coefficients.
	
	Next, we note that by \eqref{E:TRACEFREEPARAMETER},
	if $2 < \uplambda_0 \leq \uplambda$
	and $\tracefreeparameter$ is sufficiently small in a manner that is independent of $\uplambda_0$,
	then the factor
	$
	\frac{A^2 \uplambda - 8 A^2 + 4}{3 \left[A^2 (\uplambda - 2) + 1 \right]}
	$
	in front of the integral
	$
	\int_{s=t}^1
				s^{-1}
				\int_{\Sigma_s}  
					 |s \partial \SFRenormalized|_{\gKasner}^2
				\, dx
		 	\, ds
	$
	on the right-hand side of \eqref{E:PARABOLICFIRSTENERGYESTIMATE} 
	is uniformly positive 
	(with a lower bound that \emph{does} depend on $\uplambda_0$)
	and increases to
	$
	\frac{1}{3}
	$
	as $\uplambda \to \infty$.
	From this observation and definition \eqref{E:PARABOLICALMOSTOTALENERGY},
	we see that 
	if $2 < \uplambda_0 \leq \uplambda$,
	then the desired estimate \eqref{E:PARABOLICSECONDMODELENERGYGRONWALLREADY}
	follows from \eqref{E:PARABOLICFIRSTENERGYESTIMATE}
	by first choosing $\smallparameter := \smallparameter_{\uplambda_0}$
	to be sufficiently small in a manner that depends on $\uplambda_0$ 
	and then choosing 
	$\tracefreeparameter$ to be sufficiently small in a manner that
	depends on $\uplambda_0$ and $\smallparameter_{\uplambda_0}$.
	
	The estimate \eqref{E:PARABOLICALMOSTTOTALENERGYGRONWALLED}
	then follows from \eqref{E:PARABOLICSECONDMODELENERGYGRONWALLREADY}
	and Gronwall's inequality.
	
	Our next goal is to prove the estimate \eqref{E:PARABOLICTOTALENERGYGRONWALLED}
	for $\mathscr{E}_{(Total);\smallparameter_{\uplambda_0};N}(t)$.
	As a first step, we will use the estimate
	\eqref{E:PARABOLICALMOSTTOTALENERGYGRONWALLED}
	to control the top-order terms in
$\mathscr{E}_{(Total);\smallparameter_{\uplambda_0};0}$
(see definition \eqref{E:TOPORDERENERGY})
that are not present in the definition \eqref{E:PARABOLICORDERNENERGY}
of $\mathscr{E}_{(Almost \ Total);\smallparameter_{\uplambda_0};0}$,
namely the term $\mathscr{E}_{(\partial Lapse)}^2(t)$
defined in \eqref{E:LINEARIZEDPARTIALLAPSEENERGY}.
To this end, we insert the estimates
implied by \eqref{E:PARABOLICALMOSTTOTALENERGYGRONWALLED}
into the last three integrals on the
right-hand side of
\eqref{E:TOPORDERLAPSEENERGYESTIMATE},
carry out straightforward computations,
and use Lemma \ref{L:PARABOLICENERGYNORMCOMPARISON} at $t=1$,
thereby deducing that
\begin{align} \label{E:ORDER0PARABOLICTOTALENERGYGRONWALLED}
	\mathscr{E}_{(Total);\smallparameter_{\uplambda_0};0}(t)
	& \leq 
		\begin{cases}
		\frac{C_{\uplambda_0}}{\tracefreeparameter} \parabolichighnorm{0}(1) t^{- c_{\uplambda_0} \tracefreeparameter}
		& \mbox{\upshape if} \ \tracefreeparameter \neq 0,
			\\
		C_{\uplambda_0} \parabolichighnorm{0}(1) (1 + |\ln t|)
		& \mbox{\upshape if} \ \tracefreeparameter = 0.
	\end{cases}
\end{align}
Next, we note that since the $\partial_{\vec{I}}$-differentiated quantities
$\partial_{\vec{I}} \LinSecondFund, 
	\partial \partial_{\vec{I}} \grenormalized, 
	\partial_{\vec{I}} \SFRenormalized,
	\partial_{\vec{I}}\LapseRenormalized$
verify the same linear equations as their non-differentiated counterparts
(for reasons similar to the ones given in the proof of Cor.\ \ref{C:MONOTONICITY}),	
it follows the energy of the $\partial_{\vec{I}}$-differentiated linear solution variables
verifies an analog of the estimate
\eqref{E:ORDER0PARABOLICTOTALENERGYGRONWALLED}.
Summing these estimates 
for $|\vec{I}| \leq N$ 
and appealing to the definition \eqref{E:TOPORDERENERGY} of $\mathscr{E}_{(Total);\smallparameter_{\uplambda_0};N}(t)$,
we arrive at the desired estimate \eqref{E:PARABOLICTOTALENERGYGRONWALLED}.
Finally, we note that inequality \eqref{E:PARABOLICHIGHNORMBOUND}
follows from inequality \eqref{E:PARABOLICTOTALENERGYGRONWALLED}
and Lemma~\ref{L:PARABOLICENERGYNORMCOMPARISON}.
This completes the proof of Theorem~\ref{T:PARABOLICENERGYESTIMATES}.
	
\hfill $\qed$

\section*{Acknowledgments}
The authors thank Mihalis Dafermos for 
offering enlightening comments on an earlier version of this work.
They also thank the anonymous referees for providing valuable feedback that helped
improve the exposition. IR gratefully acknowledges support from NSF grant \# DMS-1001500.
JS gratefully acknowledges support from NSF grant \# DMS-1162211,
from NSF CAREER grant \# DMS-1454419,
from a Sloan Research Fellowship provided by the Alfred P. Sloan foundation,
and from a Solomon Buchsbaum grant administered by the Massachusetts Institute of Technology.

\bibliographystyle{amsalpha}
\bibliography{JBib}

\end{document}